\documentclass[11pt]{amsart}
\usepackage{amssymb, latexsym, mathrsfs,   color }
\usepackage[all]{xy}
\usepackage[colorlinks=true, pdfstartview=FitV,linkcolor=blue,citecolor=blue,urlcolor=blue]{hyperref}
\usepackage{chngcntr}
\counterwithin{figure}{section}

%
%
    \hfuzz 30pt
    \vfuzz 30pt


    \topmargin 0pt
    \advance \topmargin by -\headheight
    \advance \topmargin by -\headsep

    \textheight 8.8in

    \oddsidemargin 0pt
    \evensidemargin \oddsidemargin
    \marginparwidth 0.5in

    \textwidth 6.5in

\setlength{\parskip}{0.3cm}

\setlength{\parskip}{0.2cm}
\newtheorem {theorem}    {Theorem}[section]

\newtheorem {lemma}      [theorem]    {Lemma}
\newtheorem {corollary}  [theorem]    {Corollary}
\newtheorem {proposition}[theorem]    {Proposition}

\newcommand{\bb}{\mathbb}
\renewcommand{\rm}{\mathrm}

\newcommand{\cal}{\mathcal}

\newcommand{\GGL}{\mathrm{GL}}

\renewcommand{\sp}{\mathrm{Sp}}
\renewcommand{\o}{\mathrm{O}}
\newcommand{\so}{\mathrm{SO}}
\newcommand{\Fq}{\bb{F}_q}
\newcommand{\fq}{(\bb{F}_q)}
\theoremstyle{definition}
\newtheorem{definition}[theorem]{Definition}
\newtheorem{remark}[theorem]{Remark}

\newcommand{\E}{\mathcal{E}}
\newcommand{\pl}{\pi_{\Lambda}}

\newcommand{\pll}{\pi_{\Lambda'}}
\newcommand{\prll}{\pi_{\rho,\Lambda,\Lambda'}}

\newcommand{\prwll}{\pi_{\rho,\Lambda,\Lambda'}}

\newcommand{\prllc}{\pi_{\rho,\Lambda_1,\Lambda_1'}}
\newcommand{\prlld}{\pi_{\rho,\Lambda_2,\Lambda_2'}}
\newcommand{\pkh}{\pi_{\rho,k,h}}
\newcommand{\pkhp}{\pi_{\rho',k',h'}}

\newcommand{\pw}{{\pi}}

\newcommand{\wla}{\widetilde{\Lambda}}

\newcommand{\dg}{G^{(1)}(s)}
\newcommand{\ddg}{G^{(2)}(s)}
\newcommand{\ddgg}{(G^{(2)}(s))^*}
\newcommand{\dddg}{G^{(3)}(s)}

\newcommand{\sgn}{\rm{sgn}\cdot}

\newcommand{\df}{G^{\prime(1)}(s')}
\newcommand{\ddf}{G^{\prime(2)}(s')}

\newcommand{\dddf}{G^{\prime(3)}(s')}

\newcommand{\p}{\pi^{(1)}}
\newcommand{\pp}{\pi^{(2)}}
\newcommand{\ppp}{\pi^{(3)}}

\newcommand{\CD}{{\mathcal{D}}}

\newcommand{\UUl}{\underline}
\newcommand{\e}{{\epsilon'}}
\newcommand{\ee}{\epsilon_{-1}}

\numberwithin{equation}{section}

\begin{document}

\title{On the Gan-Gross-Prasad problem for finite classical groups}

\date{\today}

\author[Zhicheng Wang]{Zhicheng Wang}

\address{School of Mathematical Science, Soochow University, Suzhou 310027, Jiangsu, P.R. China}

\email{11735009@zju.edu.cn}
\subjclass[2010]{Primary 20C33; Secondary 22E50}
\begin{abstract}
In this paper we study the Gan-Gross-Prasad problem for finite classical groups. Our results provide complete answers for unipotent representations, and we obtain the explicit branching laws for these representations. Moreover, for arbitrary representations, we give a formula to reduce the Gan-Gross-Prasad problem to the restriction problem of Deligne-Lusztig characters.
\end{abstract}

\maketitle

\section{Introduction}

Let $\overline{\mathbb{F}}_q$ be an algebraic closure of a finite field $\mathbb{F}_q$, which is of characteristic $p>2$. Consider a connected reductive algebraic group $G$ defined over $\Fq$, with Frobenius map $F$. Let $Z$ be the center of $G^F$. We will assume that $q$ is large enough such that the main theorem in \cite{S} holds, namely assume that
 \begin{itemize}
 \item $T^F/Z$ has at least two Weyl group orbits of regular characters, for every $F$-stable maximal torus $T$ of $G$.
 \end{itemize}
 Let $H$ be a subgroup of $G$. Let $\pi$ (resp. $\sigma$) be a representation of $G^F$ (resp. $H^F$). We write
\[
\langle \pi,\sigma \rangle_{H^F} = \dim \mathrm{Hom}_{H^F}(\pi,\sigma ).
\]

In this paper, we focus on Gan-Gross-Prasad problem for orthogonal and symplectic groups over finite fields.
Let $V$ be an $\Fq$-vector space endowed with a nondegenerate bilinear form $(,)$ with sign $\epsilon$, i.e. $(v, w)=\epsilon (w,v)$ for any $v, w\in V$. Moreover, suppose that $W\subset V$ is a non-degenerate subspace satisfying:
\begin{itemize}

\item $\epsilon \cdot (-1)^{\rm{dim} W^\bot} = -1$,

\item $W^\bot$ is a split space.
\end{itemize}
According to whether $n-m$ is odd or even, Gan-Gross-Prasad problem is called the Bessel case or Fourier-Jacobi case.

\subsection{Bessel case}

We first consider the Bessel case. Let $V_n$ be an $n$-dimensional space over $\mathbb{F}_{q}$ with a nondegenerate symmetric bilinear form $(,)$, which defines the special orthogonal group $\so(V_n)$. We will consider various pairs of symmetric spaces $V_n\supset V_{n-2\ell}$ and the following partitions of $n$,
\[
\UUl{p}_\ell=[2\ell+1, 1^{n-2\ell-1}],\quad 0\leq \ell< n/2.
\]
Assume that  $V_n$ has a decomposition
\[
V_n=X+V_{n-2\ell}+X^\vee
\]
where $X+X^\vee=V_{n-2\ell}^\perp$ is a polarization. Let $\{e_1,\ldots, e_\ell\}$ be a basis of $X$, $\{e_1',\ldots, e_\ell'\}$ be the dual basis of $X^\vee$, and
let $X_i=\rm{Span}_{\mathbb{F}_{q}}\{e_1,\ldots, e_i\}$, $i=1,\ldots, \ell$. Let $P$ be the parabolic subgroup of $\so(V_n)$ stabilizing the flag
\[
X_1\subset\cdots\subset X_\ell,
\]
so that its Levi component is $M\cong \GGL_1^\ell\times \so(V_{n-2\ell})$. Its unipotent radical can be written in the form
\[
N_{\UUl{p}_\ell}=\left\{n=
\begin{pmatrix}
z & y & x\\
0 & I_{n-2\ell} & y'\\
0 & 0 & z^*
\end{pmatrix}
: z\in U_{\GGL_\ell}
\right\},
\]
where the superscript ${}^*$ denotes the transpose inverse, and $U_{\GGL_\ell}$ is the subgroup of unipotent upper triangular matrices of $\GGL_\ell$.
Fix a nontrivial additive character $\psi$ of $\mathbb{F}_{q}$. Pick up an anisotropic vector $v_0\in V_{n-2\ell}$ and define a generic character $\psi_{\UUl{p}_\ell, v_0}$ of $N_{\UUl{p}_\ell}(\Fq)$ by
\[
\psi_{\UUl{p}_\ell, v_0}(n)=\psi\left(\sum^{\ell-1}_{i=1}z_{i,i+1}+(y_\ell, v_0)\right), \quad n\in N_{\UUl{p}_\ell}(\Fq),
\]
where $y_\ell$ is the last row of $y$. The identity component of the stabilizer of $\psi_{\UUl{p}_\ell, v_0}$ in $M(\Fq)$ is the special orthogonal group $\rm{SO}(W)$, where $W$ is the orthogonal complement of $v_0$ in $V_{n-2\ell}$.
Put
\begin{equation}\label{hnu}
H=\so(W)\ltimes N_{\UUl{p}_\ell},\quad \nu=\psi_{\UUl{p}_\ell, v_0}.
\end{equation}
Let $\pi$ and $\pi'$ be two irreducible cuspidal representations of $\so(V_{n})$ and $\so(W)$ respectively. The Gan-Gross-Prasad problem is concerned with the multiplicity:
\begin{equation}\label{ggpb}
m(\pi,\pi'):=\dim\rm{Hom}_{H(\Fq)}(\pi, \pi'\otimes\nu).
\end{equation}

Note that $m(\pi,\pi')$ depends on the choice of $v_0$. Let $Q$ be the quadratic form associated to $(,)$.
Pick up two anisotropic vectors $v_0$, $v_0'\in V_{n-2\ell}$ such that $Q(v_0)/Q(v_0')$ is a non-square in $\Fq$. The identity component of the stabilizer of $\psi_{\UUl{p}_\ell, v_0'}$ in $M(\Fq)$ is the special orthogonal group $\so(W')$ of the orthogonal complement $W'$ of $v_0'$ in $V_{n-2\ell}$. If $n-2\ell$ is even, then $\so(W)\cong \so(W')$, but the groups $\so(W)$, $\so(W')$ are not conjugate in $\so(V_{n-2\ell})$.
If $n-2\ell$ is odd, then there are two choices of anisotropic vectors $v_0$, $v_0'\in V_{n-2\ell}$ such that $W$ is split but $W'$ not. Thus we get $\so(W)\ncong \so(W')$ in this case. In general, we have
\begin{equation}\label{disc}
\rm{disc} \ V= (-1)^{n-1} \cdot Q(v_0) \cdot \rm{disc} \ W,
\end{equation}
where both sides are regarded as square classes in $\Fq^\times/ (\Fq^\times)^2\cong\{\pm1\}$. Here the discriminant is normalized by
\[
\rm{disc} \ V=(-1)^{n(n-1)/2}\det V \in \Fq^\times/ (\Fq^\times)^2,
\]
such that when $\dim V$ is even, $\rm{disc} \ V=+1$ if and only if $\rm{SO}(V)$ is split.

The above discussions are valid for full orthogonal groups as well. In this paper, we will focus on the full orthogonal groups case. If $\rm{dim}V-\rm{dim}W=1$, then
 \begin{equation}\label{1o}
m(\pi,\pi')=\langle\pi, \pi'\rangle_{G(W)}.
\end{equation}
We call the above multiplicity the basic case for Bessel case.

\subsection{Fourier-Jacobi case}

We next turn to the Fourier-Jacobi case. Let $W_{2n}$ be a symplectic space of dimension $2n$ over $\Fq$, which gives the symplectic group $\sp_{2n}(\Fq)$. Consider pairs of symplectic spaces $W_{2n}\supset W_{2n-2\ell}$ and partitions
\[
\UUl{p}'_\ell=[2\ell, 1^{2n-2\ell}],\quad 0\leq \ell\leq n.
\]
We use similar notations for various subspaces and subgroups as in the Bessel case. Note that if we let $P_\ell$ be the parabolic subgroup of $\rm{Sp}_{2n}$ stabilizing $X_\ell$ and let $N_\ell$ be its unipotent radical, then $N_{\UUl{p}_\ell}=U_{\GGL_\ell}\ltimes N_\ell$. Let $\omega_\psi$ be the Weil representation (see \cite{Ger}) of $\sp_{2(n-\ell)}(\Fq)\ltimes \mathcal{H}_{2n-2\ell}$ depending on $\psi$, where $\mathcal{H}_{2n-2\ell}$ is the Heisenberg group of $W_{2n-2\ell}$. Roughly speaking, there is a natural homomorphism $N_\ell(\Fq)\to \mathcal{H}_{2n-2\ell}$ invariant under the conjugation action of $U_{\GGL_\ell}(\Fq)$ on $N_\ell(\Fq)$, which enables us to view $\omega_\psi$ as a representation of $\sp_{2(n-\ell)}(\Fq)\ltimes N_{\UUl{p}_\ell}(\Fq)$. Let $\psi_\ell$ be the character of $U_{\GGL_\ell}(\Fq)$ given by
\[
\psi_\ell(z)=\psi\left(\sum^{\ell-1}_{i=1}z_{i,i+1}\right),\quad z\in U_{\GGL_\ell}(\Fq).
\]
For the Fourier-Jacobi case, put
\begin{equation}\label{hnu'}
H=\sp_{2(n-\ell)}\ltimes N_{\UUl{p}_\ell},\quad \nu=\omega_\psi\otimes\psi_\ell.
\end{equation}
Similar to the Bessel case, the Gan-Gross-Prasad problem is concerned with the multiplicity:
\begin{equation}\label{ggpfj}
m_\psi(\pi,\pi'):=\rm{dim}\rm{Hom}_{H(\Fq)}(\pi, \pi'\otimes\nu).
\end{equation}
If $\ell=0$, then
   \begin{equation}\label{1sp}
m_\psi(\pi,\pi')=\langle\pi\otimes\overline{\omega_\psi}, \pi'\rangle_{G(V)}.
\end{equation}
We call the above multiplicity the basic case for Fourier-Jacobi case.

 In the $p$-adic case, the local Gan-Gross-Prasad conjecture \cite{GP1, GP2, GGP1} provides explicit answers. To be a little more precise, let $G$ be a classical group defined over a local field and $\pi$ belongs to a generic Vogan $L$-packet. The multiplicity one property holds for this situation, namely
\[
m(\pi, \sigma) \ (\textrm{resp. }m_{\psi}(\pi, \sigma))\leq 1,
\]
and the invariants attached to $\pi$ and $\sigma$ that detect the multiplicity $m(\pi,\sigma)$ is the local root number associated to their Langlands parameters.  In the $p$-adic case, the local Gan-Gross-Prasad conjecture has been resolved by J.-L. Waldspurger and C. M\oe glin and J.-L. Waldspurger \cite{W2, W3, W4, MW} for orthogonal groups, by R. Beuzart-Plessis \cite{BP1, BP2} and W. T. Gan and A. Ichino \cite{GI} for unitary groups, and by H. Atobe \cite{Ato} for symplectic-metaplectic groups. On the other hand, D. Jiang and L. Zhang \cite{JZ1} study the local descents for $p$-adic orthogonal groups, whose results can be viewed as a refinement of the local Gan-Gross-Prasad conjecture, and the descent method has important applications towards the global problem (see \cite{JZ2}).

There are also some multiplicity one results over finite fields, proved via known multiplicity one result for local field. However, we can not get the the multiplicity one in the Gan-Gross-Prasad problem for arbitrary representations directly in this way. In previous works \cite{LW2,LW3,LW4}, we have studied the Gan-Gross-Prasad problem of unipotent representations of finite unitary groups and the descent problem of unipotent cuspidal representations of finite orthogonal groups and finite symplectic groups. In this paper, we generalize our previous results and our main tool is the theta correspondence over finite fields.  To apply the theta correspondence, we first show that the parabolic induction preserves the multiplicity (\ref{ggpb}) and (\ref{ggpfj}), and thereby make a reduction to the basic case as follows. For the Bessel case, let $l=\frac{n+1-m}{2}$, and let $P$ be an $F$-stable maximal parabolic subgroup of $\rm{O}(V_{n+1})$ with Levi factor $\GGL_{l} \times \rm{O}(W)$. There exists a cuspidal representation $\tau$ satisfying some technical conditions such that
\begin{equation}\label{2o}
m(\pi,\sigma)=\langle\pi,I^{\rm{O}(V_{n+1})}_{P}(\tau\otimes\sigma)\rangle_{\rm{O}(V_{n})}
\end{equation}
where $I^{\rm{O}({V_{n+1}})}_{P}(\tau\otimes\sigma)$ is the parabolic induction. For Fourier-Jacobi case, let $l=n-m$, and let $P$ be an $F$-stable maximal parabolic subgroup of $\sp_{2n}$ with Levi factor $\GGL_{l} \times \sp_{2m}$. There exists a cuspidal representation $\tau$ as before such that
\begin{equation}\label{2sp}
m_{\psi}(\pi, \sigma)=\langle\pi,I^{\sp_{2n}}_{P}(\tau\otimes\sigma)\otimes\omega_{n,\psi}\rangle_{\sp_{2n}\fq}
\end{equation}
where $I^{\sp_{2n}}_{P}(\tau\otimes\sigma)$ is the parabolic induction. Then we compute the right side of (\ref{2o}) and (\ref{2sp}) by the standard arguments of theta correspondence and see-saw dual pairs, which are used in the proof of local Gan-Gross-Prasad conjecture (see \cite{GI,Ato}). We can conclude that each cases can be reduced to the multiplicity (\ref{1o}).

 For an $F$-stable maximal torus $T$ of $G$ and a character $\theta$ of $T^F$,  let $R_{T,\theta}^G$ be the virtual character of $G^F$ defined by P. Deligne and G. Lusztig in \cite{DL}. We say a complex irreducible representation is uniform if it is a linear combination of the Deligne-Lusztig characters. In \cite{R}, Reeder consider the multiplicity (\ref{1o}) for Deligne-Lusztig characters on the special orthogonal groups, and he gives a explicit formula.  In other words, if both $\pi$ and $\sigma$ are uniform, then (\ref{1o}) can be calculated by Reeder's formula.

 Our main result is to give a formula to reduce the the multiplicity in above three problems to the uniform case. Although in general, explicit calculation with Reeder's formula is still quite involved, we can give some explicit results for some interesting cases. For example, we can give the multiplicity one for unipotent representations, and have branching laws for these representations.

\subsection{Classification of the irreducible representations of finite orthogonal groups and finite symplectic groups}

Let $G$ be a reductive group defined over $\Fq$, and let $\E(G)=\rm{Irr}(G^F)$ be the set of irreducible representations of $G^F$. Let $P$ be an $F$-stable parabolic subgroup of $G$ with Levi decomposition $P=LV$. We focus on classical groups. Assume that the Levi subgroup $L$ is of the form $\GGL_{n}\times G'$ where $G'$ is a classical group with the same type of $G$. For any irreducible cuspidal representation $\sigma \in \cal{E}(G^{\prime })$, let
\[
\cal{E}(G,\sigma)=\{\pi\in \cal{E}(G)|\langle\pi,I_{\GGL_{n}\times G'}^G(\rho\otimes\sigma)\rangle_{G^F}\ne 0\textrm{ for some }\rho\in \E(\GGL_{n})\}.
\]
It is easily seen that for every irreducible $\pi\in\cal{E}(G)$, there exists exactly one of pair $(G^{\prime },\sigma)$ such that $\pi\in\cal{E}(G,\sigma)$.

Let $G^*$ be the dual group of $G$. We still denote the Frobenius endomorphism of $G^*$ by $F$. Then there is a natural bijection between the set of $G^F$-conjugacy classes of $(T, \theta)$ and the set of $G^{*F}$-conjugacy classes of $(T^*, s)$ where $T^*$ is a $F$-stable maximal torus in $G^*$ and $s \in   T^{*F}$. We will also denote $R_{T,\theta}^G$  by $R_{T^*,s}^G$ if $(T, \theta)$ corresponds to $(T^*, s)$.
For a semisimple element $s \in G^{*F}$, define Lusztig series as follows:
\[
\mathcal{E}(G,s) = \{ \pi \in \mathcal{E}(G)  :  \langle \pi, R_{T^*,s}^G\rangle \ne 0\textrm{ for some }T^*\textrm{ containing }s \}.
\]
And
\[
\mathcal{E}(G)=\coprod_{(s)}\mathcal{E}(G,s)
\]
where $(s)$ runs over the conjugacy classes of semisimple elements. Moreover, there is a bijection
\[
\mathcal{L}_s:\mathcal{E}(G,s)\to \mathcal{E}(C_{G^{*}}(s),1),
\]
extended by linearity to a map between virtual characters satisfying that
\[
\mathcal{L}_s(\varepsilon_G R^G_{T^*,s})=\varepsilon_{C_{G^{*}}(s)} R^{C_{G^{*}}(s)}_{T^*,1}.
\]
In particular, Lusztig correspondence sends cuspidal representations to cuspidal representations and sends uniform representations to uniform representations. We say an irreducible representation $\pi\in\cal{E}(G,s)$ is unipotent (resp. quadratic unipotent) if $s=1$ (resp. $s^2=1$).

Let $G$ be a symplectic group or orthogonal group. We have a modified Lusztig
corespondence with three groups $\dg$, $\ddg$ and $\dddg$ (c.f. \cite{P4} and section \ref{3.2} for details). Our notation is slightly different from that
of \cite{P4}: the group $G^{(2)}(s)$ always associates with eigenvalue 1. Let $\cal{L}'_s$ be the modified Lusztig
corespondence defined in Subsection \ref{mlus}. The modified Lusztig
corespondence is equal to the Lusztig correspondence if $G$ is an orthogonal group. We have
  \[
\cal{L}'_s: \cal{E}(G,s)\to
\left\{
  \begin{aligned}
 &\cal{E}(\dg\times\ddg\times\dddg,1)\times\{\pm\}&\textrm{ if }G\textrm{ is odd orthogonal};\\
  &\cal{E}(\dg\times\ddg\times\dddg,1)&\textrm{ otherwise}.
\end{aligned}
\right.
 \]
where
\begin{itemize}

\item $\dg$ is a product of general linear groups and unitary groups;

\item  If $G=\o_{2l+1}$, then $\ddg$ and $\dddg$ are symplectic groups.

\item  If $G=\o^\pm_{2l}$, then $\ddg$ and $\dddg$ are even orthogonal groups.

\item  If $G=\sp_{2l}$, then $\ddg$ is a symplectic group and $\dddg$ is an even orthogonal group.
\end{itemize}

We now review some results on the classification of the irreducible unipotent representations by Lusztig in \cite{L1,L2,L3}. We follow the notation of \cite{P3}, which is slightly different from that of \cite{L1}.

A symbol is an array of the form
\[
\Lambda
=
\begin{pmatrix}
a_1,a_2,\cdots,a_{m_1}\\
b_1,b_2,\cdots,b_{m_2}
\end{pmatrix}.
\]
We always assume
that $a_i>a_{i+1}$ and $b_i>b_{i+1}$. Let
\[
\begin{aligned}
&\rm{rank}(\Lambda)=\sum_{a_i\in A}a_i+\sum_{b_i\in B}b_i-\left\lfloor\left(\frac{|A|+|B|-1}{2}\right)^2\right\rfloor, \\
&\rm{def}(\Lambda)=|A|-|B|.
\end{aligned}
\]
Note that the definition of $\rm{def}(\Lambda)$ differs from that of \cite[p.133]{L1}. For a symbol $\Lambda=\begin{pmatrix}
A\\
B
\end{pmatrix}$, let $\Lambda^*$ (resp. $\Lambda_*$) denote the first row (resp. second row) of $\Lambda$, and let
$\Lambda^t=\begin{pmatrix}
B\\
A
\end{pmatrix}$. It is clear that $\rm{rank}(\Lambda^t)=\rm{rank}(\Lambda)$ and $\rm{def}(\Lambda^t)=-\rm{def}(\Lambda)$. Then Lusztig gives a bijection between the unipotent representations of these groups to equivalence classes of symbols as follow:
\[
\left\{
\begin{aligned}
&\cal{E}(\sp_{2n},1)\\
&\cal{E}(\o_{2n+1},1)\\
&\cal{E}(\o^+_{2n},1)\\
&\cal{E}(\o^-_{2n},1)
\end{aligned}\right.
\longrightarrow
\left\{
\begin{aligned}
&\cal{S}_n:=\big\{\Lambda|\rm{rank}(\Lambda)=n, \rm{def}(\Lambda)=1\ (\textrm{mod }4)\big\};\\
&\cal{S}_n\times\{\pm\};\\
&\cal{S}^+_n:=\big\{\Lambda|\rm{rank}(\Lambda)=n, \rm{def}(\Lambda)=0\ (\textrm{mod }4)\big\};\\
&\cal{S}^-_n:=\big\{\Lambda|\rm{rank}(\Lambda)=n, \rm{def}(\Lambda)=2\ (\textrm{mod }4)\big\}.
\end{aligned}\right.
\]
If $G$ is an even (resp. odd) orthogonal group, it is known that $\pi_{\Lambda^t} = \rm{sgn}\cdot\pi_{\Lambda}$ (resp. $\pi_{\Lambda,\epsilon}=\sgn\pi_{\Lambda,-\epsilon}$) where $\pi_{\Lambda}$ (resp. $\pi_{\Lambda,\epsilon}$) means the irreducible representation parametrized by $\Lambda$ (resp. $(\Lambda,\epsilon)$) and $\rm{sgn}$ denotes the sign character. Here we distinguish $\pi_{\Lambda,\pm}$ by decreeing that $\pi_{\Lambda,\pm}(-1)=\pm\rm{Id}$.

Let $\pi$ be an irreducible representation of $\sp_{2n}\fq$, $\o^\epsilon_{2n}\fq$ or $\o_{2n+1}\fq$. Suppose that
\[
\cal{L}'_s(\pi)=\p\otimes\pp\otimes\ppp=\rho\otimes\pl\otimes\pll,\textrm{ (resp. }\rho\otimes\pl\otimes\pll\otimes\epsilon\textrm{)}.
\]
where $\cal{L}'_s$ is the modified Lusztig correspondence. Then we denote $\pi$ by $\prll$ (resp. $\pi_{\rho,\Lambda,\Lambda',\epsilon}$). If $\dg$ is trivial, then we denote $\prll$ by $\pi_{-,\Lambda,\Lambda'}$. Similar notation applies for $\ddg$ and $\dddg$. If $\pi=\pi_{\rho,-,-}$, then we denote it briefly by $\pi_{\rho}$. If $\pi=\pi_{-,\Lambda,-}$ (resp. $\pi_{-,\Lambda,-,\epsilon}$), then it is unipotent and $\pi=\pl$ (resp. $\pi_{\Lambda,\epsilon}$).

It is worth pointing out that there is not a canonical choice of modified Lusztig correspondences. If we fix a choice of modified Lusztig correspondences for every Lusztig series, then we fix a parametrization of irreducible representations. For any $\pi\in\cal{E}(G)$, let
\[
\cal{L}'_G(\pi):=\cal{L}'_s(\pi)\textrm{ if }\pi\in\cal{E}(G,s).
\]
From now on, for every $G$, we fix a choice of $\cal{L}'_G$ satisfying some technical conditions in Subsection \ref{4.4} , and parameterize irreducible representations by this $\cal{L}'_G$. We emphasize that our result does not depend the choice of $\cal{L}'_G$.

According to Lusztig's results \cite{L1}, let $\pi_{\sp_{2k(k+1)}}$, $\pi_{\rm{SO}_{2k(k+1)+1}}$ and $\pi_{\rm{SO}^\epsilon_{2k^2}}$ be the unique unipotent cuspidal representations of the corresponding groups. It follows easily that there are two irreducible unipotent cuspidal representations $\pi$ and $\pi'$ of $\o^\pm_{2k^2}\fq$ (resp. $\o_{2k(k+1)+1}\fq$), and $\pi=\sgn\pi'$. Moreover, if $\pl$ (resp. $\pi_{\Lambda,\epsilon}$) is a unipotent cuspidal representation, then we have
\[
k=\left\{
\begin{array}{ll}
\frac{|\rm{def}(\Lambda)|-1}{2},&\textrm{ if }\pl\in\cal{E}(\sp_{2k(k+1)});\\
\frac{|\rm{def}(\Lambda)|-1}{2},&\textrm{ if }\pi_{\Lambda,\epsilon}\in\cal{E}(\o_{2k(k+1)+1});\\
\frac{|\rm{def}(\Lambda)|}{2},&\textrm{ if }\pl\in\cal{E}(\o^\e_{2k^2});\\
\end{array}
\right.
\]
(c.f. \cite[section 3]{P3} for details).

Let $\pi_{\rho,\Lambda,\Lambda',\epsilon}$ (resp. $\prll$) be an irreducible representation of $\sp_{2n}\fq$, $\o^\pm_{2n}\fq$ or $\o_{2n+1}\fq$. Assume that $\Lambda$ and $\Lambda'$ correspond to unipotent cuspidal representations of $\ddg$ and $\dddg$, respectively. Let
\[
k=\left\{
\begin{aligned}
&\frac{|\rm{def}(\Lambda)|-1}{2}&\textrm{ if } \Lambda\in\cal{S}_m;\\
&\frac{\rm{def}(\Lambda)}{2}&\textrm{ if } \Lambda\in\cal{S}^\pm_m.
\end{aligned}\right.
\]
and
\[
h=\left\{
\begin{aligned}
&\frac{|\rm{def}(\Lambda')|-1}{2}&\textrm{ if } \Lambda'\in\cal{S}_{m'};\\
&\frac{\rm{def}(\Lambda')}{2}&\textrm{ if } \Lambda'\in\cal{S}^\pm_{m'}.
\end{aligned}\right.
\]
For abbreviation, we write $\pkh$ (resp. $\pi_{\rho,k,h,\epsilon}$) instead of $\prll$ (resp. $\pi_{\rho,\Lambda,\Lambda',\epsilon}$).
We emphasize that $\pkh$ (resp. $\pi_{\rho,k,h,\epsilon}$) is \emph{not} necessarily cuspidal.

\subsection{The main result}
 As is standard, denote by $\so^\epsilon_n$ and $\o^\epsilon_n$, $ \epsilon=\pm$, the (special) orthogonal groups of an $n$ -dimensional quadratic space with discriminant $\epsilon\ 1  \in \Fq^{\times} /(\Fq^\times)^2.$ For convenience, by abuse of notation we also write $\epsilon=\epsilon\ 1$ for the sign of the corresponding discriminant. Denote by $\epsilon_{-1}$ the square class of $-1$.

Fix a character $\psi$ of $\Fq$. For the dual pair  $(\sp_{2n},\o^{\epsilon}_{2n'})$, we write $\omega^\epsilon_{n,n'}$ for the restriction of $\omega_{\rm{Sp}_{2N}}$ to $\sp_{2n}\fq\times\o^{\epsilon}_{2n'}\fq$. Similar notation applies for $(\sp_{2n},\o^{\epsilon}_{2n'+1})$. When the context of dual pairs is clear, abbreviate by $\Theta^\epsilon_{n,n'}$ the theta lifting from $G_n$ to $G'_{n'}$.

By abuse of notation, for $\pi=\pi_{\rho}\in\cal{E}(\sp_{2n})$ and $\pi'=\pi_{\rho'}\in\cal{E}(\sp_{2m})$, we write
\[
m_\psi(\pi,\pi')=\left\{
\begin{array}{ll}
m_\psi(\pi,\pi'),&\textrm{ if }n\ge m;\\
m_\psi(\pi',\pi),&\textrm{ if }n< m.
\end{array}
\right.
\]
For $n=m$, by Proposition \ref{rho}, it is well defined.
If $n=0$ (resp. $m=0$), then we set
\[
m_\psi(-,\pi')\textrm{ (resp. $m_\psi(\pi,-)$)}=\left\{
\begin{array}{ll}
1,&\textrm{ if }\pi' \textrm{ (resp. $\pi$) } \textrm{ is regular (see subsection \ref{3.2})};\\
0,&\textrm{ otherwise}.
\end{array}
\right.
\]
Similarly, for $\pi\in\cal{E}(\o^\epsilon_{n})$ and $\pi'\in\cal{E}(\o^\e_{m})$, we write
\[
m(\pi,\pi')=\left\{
\begin{array}{ll}
m(\pi,\pi'),&\textrm{ if }n> m;\\
m(\pi',\pi),&\textrm{ if }n< m
\end{array}
\right.
\]
and for $n=0$ (resp. $m=0$), we set
\[
m(-,\pi')\textrm{ (resp. $m(\pi,-)$)}=\left\{
\begin{array}{ll}
1,&\textrm{ if }\pi' \textrm{ (resp. $\pi$) } \textrm{ is regular (see subsection \ref{3.2})};\\
0,&\textrm{ otherwise}.
\end{array}
\right.
\]

For a pair of irreducible representations $(\pi,\pi')$, whether the multiplicity (\ref{ggpb}) and (\ref{ggpfj}) vanishes depends on the behavior of the pair in the see-saw. For instance, let $\pi$ and $\sgn\pi$ be two unipotent cuspidal representations of $\o^+_{5}\fq$. By \cite[Theorem 3.12]{LW1}, there exists a representation $\pi_0\in\{\pi,\sgn\pi\}$ such that $\Theta^+_{2,1}(\pi_0)=\pi_1$ where $\pi_1$ is a cuspidal representation of $\sp_2\fq$. Let $\pi'\in\cal{E}(\o^+_4,s)$ where $s$ has no eigenvalues $\pm1$. Then by Theoren \ref{p1}, the first occurrence index of $\pi'$ is $2$, and $\Theta^+_{2,2}(\pi')=\pi'_1$ where $\pi'_1$ is an irreducible representation of $\sp_4\fq$. Consider the see-saw diagram
\[
\setlength{\unitlength}{0.8cm}
\begin{picture}(20,5)
\thicklines
\put(6.8,4){$\sp_{2}\times \sp_{2}$}
\put(7.2,1){$\sp_{2}$}
\put(12.4,4){$\o^+_{5}$}
\put(11.9,1){$\o^+_{4}\times \o^{+}_1$}
\put(7.7,1.5){\line(0,1){2.1}}
\put(12.8,1.5){\line(0,1){2.1}}
\put(8,1.5){\line(2,1){4.2}}
\put(8,3.7){\line(2,-1){4.2}}
\end{picture}
\]
We have
\[
m(\pi_0,\pi')=\langle\pi_0,\pi'\rangle_{\o^+_{4}\fq}\le \langle\Theta^+_{1,2}(\pi_1),\pi'\rangle_{\o^+_{4}\fq}=\langle\pi_1,\Theta^+_{2,1}(\pi')\otimes\omega_1^+\rangle_{\sp_{2}\fq}=0.
\]
So we need to pick some good pairs of representations such that the situation does not happen. We call these pairs of irreducible representations of orthogonal groups (resp. symplectic groups) strongly relevant (resp. $(\psi,\epsilon_{-1})$-strongly relevant). See subsection \ref{6.3} for the explicitly definition of strongly relevant pair.

Our first main result is the following.
\begin{theorem}\label{main1}
Assume that $q$ is large enough such that the main theorem in \cite{S} holds.

 (i) Let $n\ge m$. Let $\pkh\in\cal{E}(\sp_{2n})$, and let $\pkhp$ be an irreducible representation of and $\sp_{2m}\fq$. Then
\[
m_\psi(\pkh,\pkhp)=\left\{
\begin{array}{ll}
m_\psi(\pw_{\rho} ,\pw_{\rho'})  &\textrm{if }(\pkh,\pkhp)\textrm{ is $(\psi,\ee)$-strongly relevant};\\
0&\textrm{otherwise }\
\end{array}\right.
\]
where $m_\psi(\pw_{\rho} ,\pw_{\rho'})$ does not depend on $\psi$.

(ii) Let $\pi_{\rho,h,k,\epsilon''}$ be an irreducible representation of $\o^\epsilon_{2n+1}\fq$, and let $\pkhp$ be an irreducible representation of and $\o^\e_{2m}\fq$. Then
\[
m(\pi_{\rho,h,k,\epsilon''},\pkhp)=\left\{
\begin{array}{ll}
m_\psi(\pw_{\rho} ,\pw_{\rho'})  &\textrm{if }(\pi_{\rho,h,k,\epsilon''},\pkhp)\textrm{ is strongly relevant};\\
0&\textrm{otherwise}
\end{array}\right.
\]
where $\pw_{\rho}$ and $\pw_{\rho'}$ are representations of symplectic groups, and $m_\psi(\pw_{\rho} ,\pw_{\rho'})$ is the same one in (i).

\end{theorem}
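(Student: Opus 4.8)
The plan is to prove both parts simultaneously by reducing the Gan--Gross--Prasad multiplicity to the basic case~(\ref{1o}) via parabolic induction, and then running the see-saw/theta-correspondence machinery to strip off the $\Lambda,\Lambda'$ data. First I would invoke the reduction formulas~(\ref{2o}) and~(\ref{2sp}), which express $m(\pi,\sigma)$ (resp. $m_\psi(\pi,\sigma)$) as a Hom-multiplicity against a parabolically induced representation $I^{G}_{P}(\tau\otimes\sigma)$ with $\tau$ a suitable cuspidal representation of $\GGL_l$; combined with the fact — asserted earlier — that parabolic induction preserves the multiplicity~(\ref{ggp}), this lets me assume from the outset that $n-m$ is minimal, i.e. we are in the Bessel basic case $\langle\pi,\sigma\rangle_{\o(V_{n-1})}$ or the Fourier--Jacobi basic case $\langle\pi\otimes\omega_{n,\psi},\sigma\rangle_{\sp_{2n}}$.

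Next, I would use the modified Lusztig correspondence $\cal L'_s$ and the factorization $\cal L'_s(\pi)=\p\otimes\pp\otimes\ppp=\rho\otimes\pl\otimes\pll$ to organize the argument around the three groups $\dg,\ddg,\dddg$. The representation $\pkh$ is built by parabolic induction from a cuspidal piece $\pl$ on $\ddg$, a cuspidal piece $\pll$ on $\dddg$, and $\rho$ on $\dg$; since parabolic induction again preserves~(\ref{ggp}), I can peel off the $\dg$-part $\rho$ last and first must analyze the interaction of the unipotent cuspidal representations parametrized by $\Lambda$ (of defect controlled by $k$) and $\Lambda'$ (of defect controlled by $h$). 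This is where the see-saw dual pairs enter: writing $\pl=\poke$ or $\ppk$ (a unipotent cuspidal representation of an orthogonal or symplectic group of the appropriate rank $2k^2$ or $2k(k+1)$), I would use the known first-occurrence behavior of these unipotent cuspidal representations under theta (as in \cite{LW1, LW2}) together with see-saw identities of exactly the shape displayed in the motivating example, to show that $m(\pkh,\pkhp)$ either equals $m(\pw_\rho,\pw_{\rho'})$ — when the relative sizes/defects are compatible, i.e. the pair is strongly relevant — or is forced to vanish, by sandwiching it between two theta-lifts whose Hom-space is zero (because the relevant first occurrence index is too small). The definition of "strongly relevant" in subsection~\ref{6.3} is precisely designed so that these sandwiching arguments do not kill the multiplicity, so the bulk of the work is a careful case analysis on the signs $\epsilon,\epsilon',\epsilon'',\epsilon_{-1}$ and on the inequalities between $k$, $h$, and the ranks, verifying in each surviving case that the see-saw collapses the computation to $m_\psi(\pw_\rho,\pw_{\rho'})$.

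Finally, to see that the answer $m_\psi(\pw_\rho,\pw_{\rho'})$ is independent of $\psi$, I would note that changing $\psi$ to $\psi_a$ ($a\in\Fq^\times$) conjugates the Weil representation by an outer automorphism of $\sp_{2n}$ that acts on Lusztig series $\cal E(\sp_{2n},s)$ compatibly, so the pair $(\pw_\rho,\pw_{\rho'})$ — whose semisimple parameters $s,s'$ have no eigenvalues $\pm1$ by construction of the $\dg$-part — is preserved; this $\psi$-independence is essentially \cite[Proposition \ref{rho}]{} as referenced in the well-definedness remark for $m_\psi(\pi,\pi')$. For part~(ii), the odd-orthogonal case carries the extra sign $\epsilon''$ through the modified Lusztig correspondence, and I would check that $\epsilon''$ is absorbed into the strong-relevance condition (it pairs with the $\{\pm\}$-datum on the $\dddg$-side), so that the formula is literally the same expression $m_\psi(\pw_\rho,\pw_{\rho'})$.

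\textbf{Main obstacle.} The hard part is not the see-saw identities themselves but proving that the vanishing is \emph{exactly} characterized by strong relevance — i.e. that in every strongly relevant case the sandwiching theta-lifts genuinely preserve (do not lose) the multiplicity, which requires sharp control of first occurrence indices for the non-cuspidal representations $\pkh$ (these are induced, not cuspidal, so one needs the behavior of theta on the full representation, not just its cuspidal support) and a correct bookkeeping of the discriminant signs $\epsilon$ across the orthogonal-symplectic dual pairs. I expect this to be the most delicate and longest part of the argument.
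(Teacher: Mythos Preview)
Your overall architecture matches the paper's: reduce to the basic case via Propositions~\ref{so2} and~\ref{7.21}, then run see-saw with first-occurrence data (Propositions~\ref{cus1} and~\ref{cus2}) to strip off the $(k,h)$ and $(k',h')$ data. The vanishing direction is indeed the easy one and follows as you say.

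However, there is a genuine gap in your non-vanishing argument. You write that in the strongly relevant case ``the see-saw collapses the computation to $m_\psi(\pw_\rho,\pw_{\rho'})$,'' but the see-saw identities only give \emph{inequalities}: one has $\pkh\subset\Theta^\epsilon_{n_1,n}(\pi_{\rho_1,k_1,h_1})$, not equality, so a priori the see-saw yields only an upper bound. The paper closes this gap by an \emph{induction on $|k|+|k'|+|h|+|h'|$} (Proposition~\ref{cusnv}). After two see-saw steps one lands on a pair $(\pi_{\rho,k_2,h_2},\pi_{\rho',k_2',h_2'})$ with strictly smaller invariants; the subtle point is that the sign of $h_2$ (and $h_2'$) is not a priori determined, so one must show that the pair obtained is again strongly relevant. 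The paper does this by contradiction: if $(\pi_{\rho,k_2,h_2},\pi_{\rho',k_2',h_2'})$ were not $\ee$-strongly relevant, then by Corollary~\ref{strongly relevant} the pair with $-h_2$ would be, and the induction hypothesis applied to that pair --- fed back through the \emph{same} see-saw steps --- would produce a nonzero multiplicity for $(\pi_{\rho,k,-h},\pkhp)$, contradicting the vanishing result already established. Without this inductive mechanism and the feedback-contradiction step, your ``careful case analysis'' cannot turn the see-saw inequalities into equalities.

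A smaller point: your $\psi$-independence argument via the outer automorphism of $\sp_{2n}$ is different from the paper's. The paper (Proposition~\ref{rho}) instead passes through the orthogonal side via see-saw, where the Weil representation for even dual pairs is literally independent of $\psi$, and checks that the sign $\epsilon_1$ on the odd-orthogonal side is determined by a central-character condition that does not see $\psi$. Your conjugation argument would need to verify that $\pi_\rho^c=\pi_\rho$ when $\ddg$ and $\dddg$ are trivial, which is true (cf.\ Proposition~\ref{q1}) but should be stated.
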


\begin{remark}
Theorem \ref{main1} does not depend on the choice of modified Lusztig correspondences. In fact, the parametrization
of irreducible representations is not involved in the definition of strongly relevant pair (resp. $(\psi,\epsilon_{-1})$-strongly relevant pair). And for two different choices of modified Lusztig correspondences, $\pi_{\rho}$ and $\pi_{\rho'}$ will not change.
\end{remark}

Note that $\pi_\rho$ and $\pi_{\rho'}$ are uniform and the theta lifting of them are very simple (see Theorem \ref{p1} and Theorem \ref{p2}). So we can reduce $m_\psi(\pw_{\rho} ,\pw_{\rho'})$ to the Bessel case by the standard arguments of theta correspondence and see-saw dual pairs, and calculate the Bessel case by Reeder's formula.

By Corollary \ref{strongly relevant} and Corollary \ref{strongly relevant1}, we have following result.
\begin{corollary}\label{1.3}
Assume that $q$ is large enough such that the main theorem in \cite{S} holds.

(i)  Keep the assumptions in Theorem \ref{main1} (i). Assume that $m_\psi(\pw_{\rho} ,\pw_{\rho'})\ne 0$. If $k\notin\{|h'|,|h'|-1\}$ or $k'\notin\{|h|,|h|-1\}$, then $m_\psi(\pkh,\pkhp)=0.$ If $k\in\{|h'|,|h'|-1\}$ and $k'\in\{|h|,|h|-1\}$, then there are $\epsilon_0$, $\epsilon_0'\in\{\pm\}$ such that
\[
m_\psi(\pi_{\rho,k,\epsilon^1h},\pi_{\rho',k',\epsilon^2h'})=\left\{
\begin{array}{ll}
m_\psi(\pw_{\rho} ,\pw_{\rho'})  &\textrm{if }(\epsilon^1,\epsilon^2)=(\epsilon_0,\epsilon_0');\\
0&\textrm{otherwise }\
\end{array}\right.
\]

(ii)  Keep the assumptions in Theorem \ref{main1} (ii). Assume that $m_\psi(\pw_{\rho} ,\pw_{\rho'})\ne 0$. If $k\notin\{|k'|,|k'|-1\}$ or $h\notin\{|h'|,|h'|-1\}$, then $m(\pi_{\rho,h,k,\epsilon''},\pkhp)=0.$ If $k\in\{|k'|,|k'|-1\}$ and $h\in\{|h'|,|h'|-1\}$, then there are $\epsilon_0$, $\epsilon_0'\in\{\pm\}$ such that
\[
m(\pi_{\rho,h,k,\epsilon''},\pi_{\rho',\epsilon^1k',\epsilon^2h'})=\left\{
\begin{array}{ll}
m_\psi(\pw_{\rho} ,\pw_{\rho'})  &\textrm{if }(\epsilon^1,\epsilon^2)=(\epsilon_0,\epsilon_0');\\
0&\textrm{otherwise }\
\end{array}\right.
\]

\end{corollary}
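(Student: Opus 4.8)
The plan is to read the statement off Theorem \ref{main1} combined with the combinatorial description of strongly relevant pairs given in Corollary \ref{strongly relevant} and Corollary \ref{strongly relevant1}. By Theorem \ref{main1}, $m_\psi(\pkh,\pkhp)$ (resp. $m(\pi_{\rho,h,k,\epsilon''},\pkhp)$) equals $m_\psi(\pw_\rho,\pw_{\rho'})$ when the pair of representations is $(\psi,\ee)$-strongly relevant (resp. strongly relevant) and is $0$ otherwise. Since $m_\psi(\pw_\rho,\pw_{\rho'})\ne 0$ is assumed, the whole corollary reduces to deciding, for each quadruple $(k,h,k',h')$ and each choice of sign decorations, whether the corresponding pair is strongly relevant.

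First I would extract the numerical obstruction. Unwinding the definition in subsection \ref{6.3}, strong relevance of a pair forces a compatibility between the first occurrence indices of the unipotent cuspidal constituents $\pl$ and $\pll$ in the Witt towers appearing in the governing see-saw diagrams; tracking these indices through the conservation relation --- this is precisely what Corollary \ref{strongly relevant} does in the symplectic case and Corollary \ref{strongly relevant1} in the mixed orthogonal case --- yields the necessary conditions $k\in\{|h'|,|h'|-1\}$ and $k'\in\{|h|,|h|-1\}$ in part (i), and $k\in\{|k'|,|k'|-1\}$ together with $h\in\{|h'|,|h'|-1\}$ in part (ii). If one of these fails, no choice of signs can make the pair strongly relevant, so Theorem \ref{main1} gives multiplicity $0$, which is the first assertion in each part.

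Next, assume the numerical conditions hold. The crux is that there is then a unique pair $(\epsilon_0,\epsilon_0')\in\{\pm\}^2$ for which $(\pi_{\rho,k,\epsilon^1h},\pi_{\rho',k',\epsilon^2h'})$ --- resp. $(\pi_{\rho,h,k,\epsilon''},\pi_{\rho',\epsilon^1k',\epsilon^2h'})$ --- is strongly relevant. Indeed, the signs of $h$, $k'$, $h'$ encode the types $\pm$ of the even orthogonal factors occurring in the modified Lusztig data of $\pi$ and $\pi'$, and when a first occurrence index is attained on the nose the theta lift used to define strong relevance occurs on exactly one of the two Witt towers, which pins down that type, hence the sign. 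In part (ii) the auxiliary sign $\epsilon''$ on the odd orthogonal side plays no role in the relevant see-saw and is simply carried along, so it does not interact with $(\epsilon^1,\epsilon^2)$. Feeding the unique strongly relevant representative into Theorem \ref{main1} yields $m_\psi(\pw_\rho,\pw_{\rho'})$ when $(\epsilon^1,\epsilon^2)=(\epsilon_0,\epsilon_0')$ and $0$ for the remaining three sign choices, which is the displayed formula.

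The step I expect to be the main obstacle is the uniqueness of $(\epsilon_0,\epsilon_0')$: the definition of strong relevance runs through a chain of see-saw reductions and first-occurrence assertions, and one must check that exactly one sign assignment survives the whole chain, neither zero nor two. This is where one genuinely uses the structure of finite theta correspondence --- in particular that a unipotent cuspidal representation first occurs on a single Witt tower, together with the conservation relation underlying Theorem \ref{p1} and Theorem \ref{p2} --- and it is the only ingredient beyond the formal bookkeeping provided by Theorem \ref{main1}.
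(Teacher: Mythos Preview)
Your proposal is correct and follows essentially the same approach as the paper: the paper's proof is literally the one-line remark ``By Corollary \ref{strongly relevant} and Corollary \ref{strongly relevant1}, we have following result,'' and you have spelled out how those corollaries, together with Theorem \ref{main1}, yield the statement. Your identification of the uniqueness of $(\epsilon_0,\epsilon_0')$ as the only nontrivial point is accurate, and it is exactly what parts (ii) and (iii) of Corollary \ref{strongly relevant} (resp.\ the analogous dichotomy underlying Corollary \ref{strongly relevant1}, via Proposition \ref{cus1} and Proposition \ref{cus2}) supply: flipping the sign of $h'$ toggles the odd-tower first occurrence index of $\pi_{\rho',k',h'}$ while leaving the even-tower index unchanged, so relevance in one direction pins down $\epsilon^2$, and symmetrically the other direction pins down $\epsilon^1$.
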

In remark \ref{sgn}, we know that two different choices of modified Lusztig correspondences are equal up to sgn. So Corollary \ref{1.3} does not depend on the choice of modified Lusztig correspondences.

 In \cite{P3,P4}, Pan determines the theta correspondence for finite symplectic and orthogonal pairs. A complete understanding of theta correspondence should extend our above results to more general representations.
Let
\[
\begin{aligned}
&\mathcal{G}^{\rm{even},+}_{n,m}:=\left\{(\Lambda,\Lambda')|
\Upsilon(\Lambda')_*\preccurlyeq\Upsilon(\Lambda)_*,\Upsilon(\Lambda)^*\preccurlyeq\Upsilon(\Lambda')^*,\rm{def}(\Lambda)>0,\rm{def}(\Lambda')=\rm{def}(\Lambda)-1\right\}; \\
&\mathcal{G}^{\rm{even},-}_{n,m}:=\left\{(\Lambda,\Lambda')|\Upsilon(\Lambda')_*\preccurlyeq\Upsilon(\Lambda)^*,
\Upsilon(\Lambda)_*\preccurlyeq\Upsilon(\Lambda')^*,\rm{def}(\Lambda)>0,\rm{def}(\Lambda')=-\rm{def}(\Lambda)-1\right\}; \\
&\mathcal{G}^{\rm{odd},-}_{n,m}:=\left\{(\Lambda,\Lambda')|\Upsilon(\Lambda')^*\preccurlyeq \Upsilon(\Lambda)^*,
\Upsilon(\Lambda)_*\preccurlyeq\Upsilon(\Lambda')_*,\rm{def}(\Lambda)<0,\rm{def}(\Lambda')=\rm{def}(\Lambda)+1\right\};\\
&\mathcal{G}^{\rm{odd},+}_{n,m}:=\left\{(\Lambda,\Lambda')|\Upsilon(\Lambda')^*\preccurlyeq \Upsilon(\Lambda)_*,
\Upsilon(\Lambda)^*\preccurlyeq\Upsilon(\Lambda')_*,\rm{def}(\Lambda)<0,\rm{def}(\Lambda')=-\rm{def}(\Lambda)+1\right\}
\end{aligned}
\]
be subsets of $\cal{S}_n\times\cal{S}_m^{\pm}$ where $\Upsilon(\Lambda)^*$ and $\Upsilon(\Lambda)_*$ are defined in subsection \ref{sec4.2}. Let
\[
\mathcal{G}=\bigcup_{n,m}\left(\mathcal{G}^{\rm{even},+}_{n,m}\bigcup\mathcal{G}^{\rm{even},-}_{n,m}
\bigcup\mathcal{G}^{\rm{odd},-}_{n,m}\bigcup\mathcal{G}^{\rm{odd},+}_{n,m}\right).
\]

Our second main result is the following.
\begin{theorem}\label{main2}
Assume that $q$ is large enough such that the main theorem in \cite{S} holds.

(i) Let $n\ge m$. Let $\prll\in\cal{E}(\sp_{2n})$ and $\pi_{\rho_1,\Lambda_1,\Lambda_1'}\in\cal{E}(\sp_{2m})$. Then we have
\[
\begin{aligned}
m_\psi(\prll,\pi_{\rho_1,\Lambda_1,\Lambda_1'})
=\left\{
\begin{array}{ll}
m_\psi(\pw_{\rho} ,\pw_{\rho_1}),  &\textrm{if }(\prll,\pi_{\rho_1,\Lambda_1,\Lambda_1'})\textrm{ is $(\psi,\ee)$-strongly relevant, and there}\\
&\textrm {are }
\widetilde{\Lambda_1'}\in\{\Lambda_1',\Lambda_1^{\prime t}\}
\textrm{ and }\widetilde{\Lambda'}\in\{\Lambda',\Lambda^{\prime t}\}\textrm{ such that }(\Lambda,\widetilde{\Lambda_1'})\\
&\textrm{and }(\Lambda_1,\wla')\in \mathcal{G};\\
0,&\textrm{otherwise}.
\end{array}\right.
\end{aligned}
\]
Moreover, if $m=n$, then we have
\[
\prll\otimes\omega_{n,\psi}^{\epsilon_{-1}}=\bigoplus m_\psi(\pw_{\rho} ,\pw_{\rho_1})\pi_{\rho_1,\Lambda_1,\Lambda_1'}
\]
where the sum runs over the irreducible representations as above.

(ii) Let $\pi_{\rho,\Lambda,\Lambda',\epsilon''}\in\cal{E}(\rm{O}^\epsilon_{2n+1})$ and $\pi_{\rho_1,\Lambda_1,\Lambda_1'}\in\cal{E}(\rm{O}^{\epsilon'}_{2m})$.
Then we have
\[
\begin{aligned}
m(\pi_{\rho,\Lambda,\Lambda',\epsilon''},\pi_{\rho_1,\Lambda_1,\Lambda_1'})=\left\{
\begin{array}{ll}
m_\psi(\pw_{\rho} ,\pw_{\rho_1}),  &\textrm{if }(\pi_{\rho,\Lambda,\Lambda',\epsilon''},\pi_{\rho_1,\Lambda_1,\Lambda_1'})\textrm{ is strongly relevant, and there }\\
&\textrm{are } \widetilde{\Lambda_1}\in\{\Lambda_1,\Lambda^{ t}_1\} \textrm{ and } \widetilde{\Lambda_1'}\in\{\Lambda_1',\Lambda_1^{\prime t}\}\textrm{ such that }(\Lambda,\widetilde{\Lambda_1})\\
&\textrm{and }(\Lambda',\widetilde{\Lambda_1'})\in \mathcal{G};\\
0,&\textrm{otherwise}
\end{array}\right.
\end{aligned}
\]
where $m_\psi(\pw_{\rho} ,\pw_{\rho_1})$ is the same one as above.
Moreover, if $m=n$, then we have
\[
\pi_{\rho,\Lambda,\Lambda',\epsilon''}|_{\rm{O}^{\epsilon'}_{2m}}=\bigoplus m_\psi(\pw_{\rho} ,\pw_{\rho_1}) \pi_{\rho_1,\Lambda_1,\Lambda_1'}
\]
where the sum runs over the irreducible representations as above.
\end{theorem}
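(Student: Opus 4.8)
\emph{Overall plan.} I would deduce Theorem~\ref{main2} from Theorem~\ref{main1} together with Pan's explicit determination of the finite theta correspondence in \cite{P3,P4}. Theorem~\ref{main1} already settles the representations $\pkh$, that is, those whose symbols $\Lambda,\Lambda'$ are \emph{cuspidal}, reducing the multiplicity to the uniform number $m_\psi(\pi_\rho,\pi_{\rho_1})$; what remains is to pass from cuspidal symbols to arbitrary $\Lambda,\Lambda'$ and, along the way, to pin down the precise nonvanishing criterion, which will turn out to be membership in $\mathcal G$ modulo the $\Lambda\leftrightarrow\Lambda^t$ ambiguity. I would argue by induction on $\rm{rank}(\Lambda)+\rm{rank}(\Lambda')+\rm{rank}(\Lambda_1)+\rm{rank}(\Lambda_1')$, the base case being the situation where all four symbols are cuspidal, which is precisely Theorem~\ref{main1}.

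\emph{Reduction to the basic case and peeling off the $\GGL$-part.} First I would invoke the two reductions recalled in the introduction: by (\ref{2sp}) (resp.\ (\ref{2o})) together with the fact that parabolic induction preserves the multiplicity (\ref{ggp}), the computation of $m_\psi(\prll,\pi_{\rho_1,\Lambda_1,\Lambda_1'})$ (resp.\ $m(\pi_{\rho,\Lambda,\Lambda',\epsilon''},\pi_{\rho_1,\Lambda_1,\Lambda_1'})$) is brought into the basic case and set up inside the standard see-saw attached to the Fourier--Jacobi (resp.\ Bessel) model. Next, if $\Lambda$ is not cuspidal I would write $\pi_\Lambda$ as a constituent of a Harish-Chandra induction from a Levi of the form $\GGL_a\times G''$, where the classical factor carries $\pi_{\Lambda^\flat}$ and $\Lambda^\flat$ is obtained from $\Lambda$ by removing a $\GGL$-hook; using the compatibility of Harish-Chandra induction with the Lusztig correspondence, $\prll$ is then a constituent of a parabolic induction over a larger Levi whose classical part carries $\pi_{\rho,\Lambda^\flat,\Lambda'}$. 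Running this parabolic induction through the see-saw and again using preservation of the multiplicity under parabolic induction, the computation reduces to the same problem with $\Lambda$ replaced by $\Lambda^\flat$, and symmetrically in $\Lambda',\Lambda_1,\Lambda_1'$. By the inductive hypothesis the answer is then $m_\psi(\pi_\rho,\pi_{\rho_1})$ times an indicator, and only the indicator is left to identify.

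\emph{Identifying the combinatorial condition.} This is the core step and where Pan's results are indispensable. By \cite{P3,P4}, the first occurrence index of a unipotent representation $\pi_\Lambda$ in a Witt tower, and the symbol of its first theta lift, are governed by the operator $\Upsilon$ and the interlacing relations $\preccurlyeq$ entering the definition of $\mathcal G^{\rm{even},\pm}_{n,m}$ and $\mathcal G^{\rm{odd},\pm}_{n,m}$; in particular theta lifting shifts the defect by $\pm1$, which matches the defect equations built into $\mathcal G$. Tracing the see-saw identity of the previous step with these formulas shows that the theta lift of $\pi_{\rho_1,\Lambda_1,\Lambda_1'}$ must meet the orthogonal group occurring in the see-saw at the correct index, and this forces $(\Lambda,\widetilde{\Lambda_1'})\in\mathcal G$ for a suitable $\widetilde{\Lambda_1'}\in\{\Lambda_1',\Lambda_1^{\prime t}\}$; running the see-saw the other way forces $(\Lambda_1,\widetilde{\Lambda'})\in\mathcal G$ for a suitable $\widetilde{\Lambda'}\in\{\Lambda',\Lambda^{\prime t}\}$. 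The $\Lambda'\leftrightarrow\Lambda^{\prime t}$ ambiguity reflects exactly the ambiguity $\pi\leftrightarrow\sgn\pi$ between the two unipotent cuspidal representations sitting in the $(-1)$-eigenspace, and it is harmless since, as in the remark following Theorem~\ref{main1}, neither $m_\psi(\pi_\rho,\pi_{\rho_1})$ nor the strong relevance condition depends on it. Conversely, once both cross-pairing conditions hold, the hypothesis of $(\psi,\ee)$-strong relevance (resp.\ strong relevance) is precisely what prevents any premature vanishing along the see-saw tower --- compare the $\o^+_5$ example in the introduction --- so the multiplicity attains the value $m_\psi(\pi_\rho,\pi_{\rho_1})$ predicted by Theorem~\ref{main1}. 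Part~(ii) is handled in the same way with $\o^\epsilon_{2n+1}\supset\o^{\epsilon}_{2n}$ in place of $\sp_{2n}$ and a Bessel vector in place of the Weil representation; now both $\widetilde{\Lambda_1}\in\{\Lambda_1,\Lambda_1^t\}$ and $\widetilde{\Lambda_1'}\in\{\Lambda_1',\Lambda_1^{\prime t}\}$ enter, because the restricted group is even orthogonal on each of the $(\pm1)$-eigenspaces.

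\emph{The branching law for $m=n$, and the main obstacle.} For $m=n$ the object $\prll\otimes\omega_{n,\psi}^{\ee}$ (resp.\ $\pi_{\rho,\Lambda,\Lambda',\epsilon''}|_{\o^{\epsilon'}_{2n}}$) is an honest representation of $\sp_{2n}\fq$ (resp.\ $\o^{\epsilon'}_{2n}\fq$), hence equals $\bigoplus_{\sigma\in\cal{E}}\langle\,\cdot\,,\sigma\rangle\,\sigma$; feeding the $m=n$ case of part~(i) (resp.\ part~(ii)) into each term $\sigma=\pi_{\rho_1,\Lambda_1,\Lambda_1'}$ yields the asserted decomposition, the well-definedness and symmetry of the $m=n$ multiplicity being Proposition~\ref{rho}. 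The main obstacle throughout is the third step: translating Pan's combinatorics of the finite theta correspondence into the precise cross-pairing in $\mathcal G$, and checking that strong relevance is exactly the hypothesis that kills every intermediate vanishing in the see-saw tower, all while keeping track simultaneously of the two Witt towers, the discriminant signs $\epsilon,\epsilon',\ee$, and the $\Lambda\leftrightarrow\Lambda^t$ symmetry. The remaining ingredients are either quoted (Theorem~\ref{main1}, \cite{P3,P4}) or routine parabolic-induction bookkeeping.
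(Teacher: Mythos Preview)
Your reduction step has a genuine gap. You propose to peel a $\GGL$-hook off $\Lambda$ by writing $\pi_\Lambda$ as a constituent of an induction from $\GGL_a\times G''$ and then invoking ``preservation of the multiplicity under parabolic induction''. But the preservation results you are appealing to (equations (\ref{2sp}), (\ref{2o}), proved in the paper as Propositions~\ref{so2} and~\ref{7.21}) carry the hypothesis that the $\GGL$-part $\tau\in\cal{E}(\GGL_\ell,s_0)$ has $s_0$ with \emph{no common eigenvalue} with $s$ or $s'$. The symbol $\Lambda$ encodes the eigenvalue-$1$ part of $s$; any $\GGL$-hook you strip from $\Lambda$ corresponds to a unipotent constituent on $\GGL_a$, i.e.\ $s_0$ has eigenvalue $1$, which is certainly shared with $s$. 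So the disjointness hypothesis fails and the preservation statement simply does not apply. The same obstruction blocks peeling hooks from $\Lambda'$, $\Lambda_1$, $\Lambda_1'$ (eigenvalue $\pm 1$). A secondary issue: even were preservation available, ``$\prll$ is a constituent of the induced'' gives only an inequality of multiplicities; you would still have to control the other constituents.

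The paper avoids this by never using Harish-Chandra induction to shrink the symbols. Instead the reduction is by \emph{theta lifting} and see-saw. Proposition~\ref{f1} computes the first occurrence index of $\pi_\Lambda$ and shows that the first lift removes the largest part $\lambda_1$ (or $\mu_1$) from the bi-partition $\Upsilon(\Lambda)$; running the see-saw twice (once to an orthogonal group, once back) strips one part from each row. The back-lift $\Theta(\Theta(\pi_\Lambda))$ is not irreducible, but the extra constituents $\pi_{\Lambda''}$ all satisfy $(\Upsilon(\Lambda'')^*)_1>\lambda_1$; these are then killed by the necessary-condition direction (Proposition~\ref{van1}), since such a $\Lambda''$ cannot pair with $\widetilde{\Lambda_1'}$ inside $\mathcal{G}$. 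The induction parameter is $\sum(|\Upsilon(\cdot)^*|+|\Upsilon(\cdot)_*|)$ over all four symbols, with base case Theorem~\ref{main1}. Thus the theta step, not a parabolic step, is what drives the induction, and Proposition~\ref{f1} together with the interplay between Propositions~\ref{van1} and~\ref{fnv} is the missing engine in your sketch.
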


\begin{remark}
Theorem \ref{main2} does not depend on the choice of modified Lusztig correspondences. In fact, for (i), we already know that the condition of $(\psi,\ee)$-strongly relevant does not depend on modified Lusztig correspondences. Let $\sigma_{\rho,\Lambda,\Lambda'}$ be the irreducible representation of $\sp_{2n}\fq$ by a different choice of modified Lusztig correspondences, and assume that $\sigma_{\rho,\Lambda,\Lambda'}\ne \pi_{\rho,\Lambda,\Lambda'}$. In remark \ref{sgn}, we know that $\sigma_{\rho,\Lambda,\Lambda'}= \pi_{\rho,\Lambda,\Lambda''}$ with $\Lambda''\in\{\Lambda',\Lambda^{\prime t}\}$, which implies that the conditions does not depend on modified Lusztig correspondences. For orthogonal groups, we have similar arguments.
\end{remark}

\begin{corollary}\label{1.6}
Assume that $q$ is large enough such that the main theorem in \cite{S} holds.

(i)  Keep the assumptions in Theorem \ref{main2} (i). Let $\{\pi^i\}=\{\pi_{\rho,\Lambda,\Lambda'},\pi_{\rho,\Lambda,\Lambda^{\prime t}}\}$, and let $\{\pi^i_1\}=\{\pi_{\rho_1,\Lambda_1,\Lambda_1'},\pi_{\rho_1,\Lambda_1,\Lambda^{\prime t}_1}\}$ with $i=1,2$.
Assume that $m_\psi(\pw_{\rho} ,\pw_{\rho'})\ne 0$.  If there are
$\widetilde{\Lambda_1'}\in\{\Lambda_1',\Lambda_1^{\prime t}\}$ and $\widetilde{\Lambda'}\in\{\Lambda',\Lambda^{\prime t}\}$ such that $(\Lambda,\widetilde{\Lambda_1'})$ and $(\Lambda_1,\wla')\in \mathcal{G}$, then there are $i_0$, $j_0$ such that
\[
m_\psi(\pi^i,\pi_1^j)=\left\{
\begin{array}{ll}
m_\psi(\pw_{\rho} ,\pw_{\rho'})  &\textrm{if }(i,j)=(i_0,j_0);\\
0&\textrm{otherwise. }\
\end{array}\right.
\]
If not, then $m_\psi(\pi^i,\pi^j_1)=0$ for every $i$ and $j$.

(ii)  Keep the assumptions in Theorem \ref{main2} (ii).  Let $\{\pi^i\}=\{\pi_{\rho_1,\Lambda_1,\Lambda'_1},\pi_{\rho_1,\Lambda_1^t,\Lambda'_1},\pi_{\rho_1,\Lambda_1,\Lambda^{\prime t}_1},\pi_{\rho_1,\Lambda_1^t,\Lambda^{\prime t}_1}\}$.
Assume that $m_\psi(\pw_{\rho} ,\pw_{\rho'})\ne 0$.  If there are
$\widetilde{\Lambda_1}\in\{\Lambda_1,\Lambda^{ t}_1\}$ and $\widetilde{\Lambda_1'}\in\{\Lambda_1',\Lambda_1^{\prime t}\}$ such that $(\Lambda,\widetilde{\Lambda_1})$ and $(\Lambda',\widetilde{\Lambda_1'})\in \mathcal{G}$, then there is $i_0$ such that
\[
m_\psi(\pi_{\rho,\Lambda,\Lambda',\epsilon''},\pi_1^i)=\left\{
\begin{array}{ll}
m_\psi(\pw_{\rho} ,\pw_{\rho'})  &\textrm{if }i=i_0;\\
0&\textrm{otherwise. }\
\end{array}\right.
\]
If not, then $m_\psi(\pi_{\rho,\Lambda,\Lambda',\epsilon''},\pi_1^i)=0$ for every $i$.

\end{corollary}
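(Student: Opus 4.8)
The statement to prove is Corollary~\ref{1.6}, which upgrades Theorem~\ref{main2} from a single pair of symbols to the full orbit under the $t$-involution in each row.

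\medskip

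The plan is to deduce Corollary~\ref{1.6} formally from Theorem~\ref{main2} together with the two structural facts about the $t$-involution recalled in the excerpt: for even orthogonal groups $\pi_{\Lambda^t}=\mathrm{sgn}\cdot\pi_{\Lambda}$, and the analogous statement for symplectic and odd orthogonal groups via the modified Lusztig correspondence (Remark~\ref{sgn}). First I would observe that Theorem~\ref{main2} already asserts that $m_\psi(\pi^i,\pi^j_1)$ (resp.\ $m(\pi_{\rho,\Lambda,\Lambda',\epsilon''},\pi^i_1)$) equals $m_\psi(\pw_\rho,\pw_{\rho'})$ precisely when the corresponding pair is strongly relevant \emph{and} some $t$-twist of the symbols lands in $\mathcal{G}$, and is $0$ otherwise. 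The key point is that the strong-relevance condition is insensitive to replacing a symbol by its $t$-transpose (this is built into the definition in Subsection~\ref{6.3}, as noted in the Remark following Theorem~\ref{main1}), so the \emph{only} way the multiplicity can be nonzero for some member of the orbit $\{\pi^i_1\}$ is if the $\mathcal{G}$-condition is satisfiable, i.e.\ if there exist $\wla_1'\in\{\Lambda_1',\Lambda_1^{\prime t}\}$ and $\wla'\in\{\Lambda',\Lambda^{\prime t}\}$ with $(\Lambda,\wla_1')$ and $(\Lambda_1,\wla')\in\mathcal{G}$. This proves the ``if not'' clause in each part immediately.

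\medskip

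For the positive direction, suppose the $\mathcal{G}$-condition is satisfiable. I would then argue that it is satisfied by \emph{exactly one} member of each relevant orbit. Inspecting the definition of $\mathcal{G}=\bigcup(\mathcal{G}^{\mathrm{even},\pm}_{n,m}\cup\mathcal{G}^{\mathrm{odd},\pm}_{n,m})$: each piece imposes a sign condition on $\mathrm{def}(\Lambda')$ in terms of $\mathrm{def}(\Lambda)$ (namely $\mathrm{def}(\Lambda')=\pm\mathrm{def}(\Lambda)\mp 1$), and since $\mathrm{def}(\Lambda^t)=-\mathrm{def}(\Lambda)$, at most one of $\Lambda'$, $\Lambda^{\prime t}$ can satisfy a given defect relation; the interleaving conditions $\Upsilon(\cdot)^*\preccurlyeq\Upsilon(\cdot)^*$ etc.\ then pin down which branch $\mathcal{G}^{\bullet,\bullet}$ applies. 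Thus for part~(i) there is a unique pair $(i_0,j_0)$ and for part~(ii) a unique $i_0$ for which Theorem~\ref{main2}'s positive case triggers, and for all other indices the $\mathcal{G}$-condition fails, forcing the multiplicity to $0$. Combining with $m_\psi(\pw_\rho,\pw_{\rho'})\ne 0$ gives exactly the displayed dichotomy. The reindexing $\rho'\leftrightarrow\rho_1$ is harmless since $\pi_\rho$ does not depend on the choice of modified Lusztig correspondence.

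\medskip

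The main obstacle is the uniqueness claim: showing that at most one $t$-twist in each orbit satisfies the $\mathcal{G}$-condition. The defect-sign argument handles one row easily, but for part~(i) one must simultaneously control the twist of $\Lambda_1'$ (paired with $\Lambda$) \emph{and} the twist of $\Lambda'$ (paired with $\Lambda_1$), and these two choices interact through which of the four pieces of $\mathcal{G}$ is in force — one should check these two pieces must be ``compatible'' (e.g.\ both of even type or both of odd type, matching the parities of $\mathrm{def}(\Lambda)$ and $\mathrm{def}(\Lambda_1)$), which is where a short case analysis on the signs of the defects is needed. A secondary subtlety for part~(ii) is that the orbit has four elements $\{\pi_{\rho_1,\Lambda_1^{(t)},\Lambda_1^{\prime(t)}}\}$ rather than two, so one must verify the $\Lambda_1$-twist and the $\Lambda_1'$-twist are each independently determined; this again reduces to the defect relations for $(\Lambda,\wla_1)\in\mathcal{G}$ and $(\Lambda',\wla_1')\in\mathcal{G}$ separately, since those two membership conditions involve disjoint pairs of symbols.
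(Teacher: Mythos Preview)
Your argument has the two conditions in Theorem~\ref{main2} playing exactly the wrong roles.

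You assert that ``the strong-relevance condition is insensitive to replacing a symbol by its $t$-transpose.'' This is false. By Proposition~\ref{q1}(i), replacing $\Lambda_1'$ by $\Lambda_1^{\prime t}$ in $\pi_{\rho_1,\Lambda_1,\Lambda_1'}$ changes the cuspidal support from $\pi_{\rho_1^*,k_1,h_1}$ to $\pi_{\rho_1^*,k_1,-h_1}$, and Corollary~\ref{strongly relevant}(ii) says precisely that if $(\pi_{\rho^*,k,h},\pi_{\rho_1^*,k_1,h_1})$ is $\epsilon_0$-relevant then $(\pi_{\rho^*,k,h},\pi_{\rho_1^*,k_1,-h_1})$ is not. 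The remark after Theorem~\ref{main1} you cite says only that strong relevance is independent of the \emph{choice of modified Lusztig correspondence}; it says nothing about invariance under the $t$-involution on a fixed representation. Conversely, the $\mathcal{G}$-condition in Theorem~\ref{main2} is \emph{manifestly} orbit-invariant: it reads ``there exist $\widetilde{\Lambda_1'}\in\{\Lambda_1',\Lambda_1^{\prime t}\}$ and $\widetilde{\Lambda'}\in\{\Lambda',\Lambda^{\prime t}\}$ such that \dots,'' and these two-element sets are unchanged when you swap $\Lambda_1'\leftrightarrow\Lambda_1^{\prime t}$ or $\Lambda'\leftrightarrow\Lambda^{\prime t}$. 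Your defect-sign case analysis, while internally correct, is therefore answering an irrelevant question.

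The argument the paper has in mind (parallel to how Corollary~\ref{1.3} is deduced from Corollary~\ref{strongly relevant}) runs the other way. Since the $\mathcal{G}$-condition is orbit-invariant, the ``if not'' clause is immediate from Theorem~\ref{main2}. When it holds, the defect relation $|\mathrm{def}(\widetilde{\Lambda_1'})|=|\mathrm{def}(\Lambda)|\pm 1$ together with Proposition~\ref{q1}(iv) forces $k\in\{|h_1|,|h_1|-1\}$ and $k_1\in\{|h|,|h|-1\}$, so Corollary~\ref{strongly relevant}(ii),(iii) applies. By Proposition~\ref{cus1}(i) the first occurrence index in ${\bf O}^\epsilon_{\rm even}$ is the same for $\pi_{\rho^*,k,h}$ and $\pi_{\rho^*,k,-h}$, so $\epsilon_{-1}$-relevance of $(\pi^i,\pi_1^j)$ depends only on $j$ and singles out a unique $j_0$; symmetrically, $+$-relevance of $(\pi_1^j,\pi^i)$ depends only on $i$ and singles out a unique $i_0$. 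Hence exactly one pair $(\pi^{i_0},\pi_1^{j_0})$ is $(\psi,\epsilon_{-1})$-strongly relevant, and Theorem~\ref{main2} gives the dichotomy. Part~(ii) is the same via Corollary~\ref{strongly relevant1} and Proposition~\ref{cus2}, with the two independent $t$-twists of $\Lambda_1$ and $\Lambda_1'$ controlled respectively by the relevance of $(\pi_{\rho,\Lambda,\Lambda',\epsilon''},\pi_1^i)$ and of $(\chi\otimes\pi_{\rho,\Lambda,\Lambda',\epsilon''},\chi\otimes\pi_1^i)$.
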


We obtain the following immediate consequence by Theorem \ref{main2}, Proposition \ref{regular} and Theorem 1.1 in \cite{LW3}.
\begin{corollary}[multiplicity one for unipotent representations]\label{main3}
Assume that $q$ is large enough such that the main theorem in \cite{S} holds. Let $n\ge m$.

(i) Let $\pl\in\cal{E}(\sp_{2n},1)$. For an irreducible representation $\prllc$ of $\sp_{2m}\fq$, we have
\[
m_\psi(\pl,\prllc)=\left\{
\begin{array}{ll}
1, &\textrm{if }(\pl,\prllc)\textrm{ is $(\psi,\ee)$-strongly relevant, and there is } \widetilde{\Lambda_1'}\in\{\Lambda_1',\Lambda_1^{\prime t}\}\\
&\textrm{such that }(\Lambda,\widetilde{\Lambda_1'})\in \mathcal{G} ,\textrm{ and } \pi_{\Lambda_1} \textrm{ and $\rho$ are regular};\\
0, & \textrm{otherwise.}
\end{array}\right.
\]

(ii) Let $\pi_{\Lambda}\in\cal{E}(\sp_{2n},\sigma)$, and let $\pi_{-,-,\Lambda'}\in\cal{E}(\sp_{2m},\sigma')$ where $\sigma$ is an irreducible unipotent cuspidal representation of $\sp_{2k(k+1)}\fq$ and $\sigma'$ is an irreducible $\theta$-cuspidal representation of $\sp_{2k^{\prime 2}}\fq$. Then
\[
m_\psi(\pl,\pi_{-,-,\Lambda'})=\left\{
\begin{array}{ll}
1, &\textrm{if either $\CD^\rm{FJ}_{k, \psi}(\sigma)=\sigma'$ or $\CD^\rm{FJ}_{k', \psi}(\sigma')=\sigma$ and there is } \widetilde{\Lambda'}\in\{\Lambda',\Lambda^{\prime t}\}\\
&\textrm{such that }(\Lambda,\widetilde{\Lambda'})\in \mathcal{G};\\
0, & \textrm{otherwise.}
\end{array}\right.
\]
where $\CD^\rm{FJ}_{\ell, \psi}$ is defined as in \cite[(1.9)]{LW3} and $\theta$-representations are defined in section \ref{sec4}.

(iii) Let $\pi_{\Lambda,\epsilon''}$ be an irreducible unipotent representation of $\o^\epsilon_{2n+1}\fq$. For an irreducible representation $\prllc$ of $\o^\e_{2m}\fq$, we have
\[
m(\pi_{\Lambda,\epsilon''},\prllc)=\left\{
\begin{array}{ll}
1, &\textrm{ if }(\pi_{\Lambda,\epsilon''},\prllc)\textrm{ is strongly relevant, and there is } \widetilde{\Lambda_1}\in\{\Lambda_1,\Lambda_1^{t}\}\textrm{ such that }\\
&(\Lambda,\widetilde{\Lambda_1})\in \mathcal{G},\textrm{ and } \pi_{\Lambda_1'}\textrm{ and $\rho$ are regular;}\\
0, & \textrm{otherwise.}
\end{array}\right.
\]

(iv) Let $\pi_{\Lambda,\epsilon''}\in\cal{E}(\o^\epsilon_{2n+1},\sigma)$, and let $\pi_{-,\Lambda',-}\in\cal{E}(\o^{\epsilon'}_{2m},\sigma')$ where $\sigma$ is an irreducible unipotent cuspidal representation of $\o^\epsilon_{2k(k+1)}\fq$ and $\sigma'$ is an irreducible unipotent cuspidal representation of $\o^{\epsilon'}_{2k^{\prime 2}}\fq$. Then
\[
m_\psi(\pi_{\Lambda,\epsilon''},\pi_{-,\Lambda',-})=\left\{
\begin{array}{ll}
1, &\textrm{if either $\CD^\rm{B}_{k, v_0}(\sigma)=\sigma'$ or $\CD^\rm{B}_{k'-1, v_0}(\sigma')=\sigma$ and there is } \widetilde{\Lambda'}\in\{\Lambda',\Lambda^{\prime t}\}\\
&\textrm{such that }(\Lambda,\widetilde{\Lambda'})\in \mathcal{G};\\
0, & \textrm{otherwise.}
\end{array}\right.
\]
where $\CD^\rm{B}_{\ell, v_0}$ is defined as in \cite[(1.5)]{LW3}.
\end{corollary}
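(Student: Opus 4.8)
The plan is to derive Corollary~\ref{main3} as a more or less direct specialization of Theorem~\ref{main2}, the only additional inputs being Proposition~\ref{regular} (which identifies when a uniform representation $\pi_\rho$ has $m_\psi(\pi_\rho,\pi_{\rho'})=1$ in terms of regularity of $\rho,\rho'$) and Theorem~1.1 of \cite{LW3} (which computes the descent/theta behaviour of unipotent cuspidal representations, i.e. the operators $\CD^{\rm{FJ}}_{\ell,\psi}$ and $\CD^{\rm{B}}_{\ell,v_0}$). So the first step is, in each of the four cases, to unwind what the parameter triples $(\rho,\Lambda,\Lambda')$ become under the cuspidal-support hypotheses. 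For (i) and (iii) the representation being restricted is \emph{unipotent}, so $s=1$, the group $\dg=G^{(1)}(s)$ is trivial, hence $\rho=-$; then $\pi_\rho$ on the $\GGL$-side is the trivial representation of the trivial group, which is visibly regular, so the clause ``$\rho$ is regular'' in the statement is automatic and $m_\psi(\pi_\rho,\pi_{\rho'})=m_\psi(-,\pi_{\rho'})$, which by the convention fixed just before Theorem~\ref{main1} equals $1$ iff $\pi_{\rho'}$ (i.e. the $\GGL$-part $\pi_{\rho_1}$ of the smaller representation) is regular. Plugging this into Theorem~\ref{main2}(i) and (ii) respectively collapses the ``$m_\psi(\pi_\rho,\pi_{\rho'})$'' in the top line to ``$1$'' and appends exactly the stated regularity condition on $\rho$; meanwhile the unipotent hypothesis forces $\Lambda=$ the symbol attached to $\pl$ and $\Lambda'=-$ on the orthogonal/symplectic side (for (i): $\Lambda'$ is empty because there is no $G^{(3)}$-cuspidal datum), so the pair condition ``$(\Lambda,\wla_1')\in\mathcal G$'' is the only surviving constraint, matching the displayed formula; similarly for (iii) with $\Lambda'\leftrightarrow\Lambda$ swapped because we are now in the odd-orthogonal case.

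The second step handles (ii) and (iv), where instead the larger representation is unipotent but now with nontrivial $G^{(2)}$- or $G^{(3)}$-cuspidal support, namely $\sigma$ a unipotent cuspidal representation of $\sp_{2k(k+1)}$ (resp. $\o^\epsilon_{2k(k+1)}$) on one side and $\sigma'$ a $\theta$-cuspidal (resp. unipotent cuspidal) representation of $\sp_{2k'^2}$ (resp. $\o^{\epsilon'}_{2k'^2}$) on the other. Again $\rho=\rho_1=-$, so $m_\psi(\pi_\rho,\pi_{\rho_1})=1$ provided we are in the nonvanishing range, and the content is to check that ``$(\prll,\pi_{\rho_1,\Lambda_1,\Lambda_1')}$ is strongly relevant, and there exist $\wla_1,\wla_1'$ with $(\Lambda,\wla_1),(\Lambda',\wla_1')\in\mathcal G$'' translates, \emph{under the cuspidal hypotheses}, into precisely the descent condition ``$\CD^{\rm{FJ}}_{k,\psi}(\sigma)=\sigma'$ or $\CD^{\rm{FJ}}_{k',\psi}(\sigma')=\sigma$'' (resp. the Bessel analogue). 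This is where Theorem~1.1 of \cite{LW3} enters: it identifies the descent of a unipotent cuspidal $\sigma$ with another unipotent cuspidal representation, and one must match its combinatorial output (a relation between the defects $k,k'$ and the signs $\epsilon,\epsilon'$) against the definition of ``strongly relevant'' and against membership of the relevant pair of \emph{cuspidal} symbols in the family $\mathcal G$. Since a unipotent cuspidal symbol is determined up to transpose by its defect, the sets $\{\Lambda',\Lambda'^t\}$ etc. are two-element and the condition ``there is $\wla'$ with $(\Lambda,\wla')\in\mathcal G$'' becomes a single inequality on defects; one checks this inequality is equivalent to the first occurrence statement underlying the descent identity.

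The third step is bookkeeping: verify that the regularity conditions stated (``$\pi_{\Lambda_1}$ and $\rho$ are regular'' in (i), ``$\pi_{\Lambda_1'}$ and $\rho$ are regular'' in (iii)) are exactly the conditions ``$\pi_{\rho'}$ regular'' coming out of the $m_\psi(-,\pi_{\rho'})$ convention together with the observation that on the smaller group the roles of the two Deligne--Lusztig data $\Lambda_1,\Lambda_1'$ get interchanged between the symplectic and odd-orthogonal setups (this is why it is $\pi_{\Lambda_1}$ in (i) but $\pi_{\Lambda_1'}$ in (iii)); here I would invoke Proposition~\ref{regular} to see that ``$\pi_{\rho'}$ is regular'' is equivalent to the regularity of the appropriate unipotent factor. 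Finally, one notes the moreover-clauses (the actual branching decompositions when $m=n$) are inherited verbatim from the $m=n$ statements in Theorem~\ref{main2}, specialized the same way, so nothing new is needed there.

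I expect the main obstacle to be Step~2: the translation of ``strongly relevant plus $(\Lambda,\wla_1),(\Lambda',\wla_1')\in\mathcal G$'' into the clean descent condition $\CD^{\rm{FJ}}_{k,\psi}(\sigma)=\sigma'$ or $\CD^{\rm{FJ}}_{k',\psi}(\sigma')=\sigma$. This requires knowing explicitly the defect of the unipotent cuspidal symbol attached to $\sigma$ (given by the $k=\frac{|\mathrm{def}(\Lambda)|-1}{2}$ / $\frac{|\mathrm{def}(\Lambda)|}{2}$ formulas recalled in the excerpt), tracking how $\Upsilon$ acts on such a two-row symbol, and matching the resulting defect inequality with the first-occurrence index of $\sigma$ in the relevant dual pair as computed in \cite{LW3}; the sign subtleties ($\epsilon,\epsilon',\epsilon'',\epsilon_{-1}$ and the $\sgn$-twist relating the two unipotent cuspidal representations on an orthogonal group) are the places where an off-by-one or a sign error is easiest to make, so that is where I would be most careful.
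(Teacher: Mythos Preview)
Your approach is essentially the same as the paper's: the paper states this corollary as an ``immediate consequence by Theorem~\ref{main2}, Proposition~\ref{regular} and Theorem~1.1 in \cite{LW3}'', and your plan invokes exactly these three inputs in the right way.

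One small imprecision worth flagging: in Step~1 you say that under the unipotent hypothesis on the large representation the condition $(\Lambda,\widetilde{\Lambda_1'})\in\mathcal G$ is ``the only surviving constraint''. That is not quite right. When $\pi_\Lambda=\pi_{-,\Lambda,-}$, the second pair condition in Theorem~\ref{main2}(i) becomes $(\Lambda_1,\widetilde{\Lambda'})\in\mathcal G$ with $\Lambda'=-$ the empty symbol; this does \emph{not} disappear but rather translates into the condition ``$\pi_{\Lambda_1}$ is regular'' appearing in the corollary (the empty symbol has defect $0$, and the resulting defect/partition constraints on $\Lambda_1$ pin down exactly the regular unipotent characters). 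You seem to sense this in Step~3 but attribute the $\pi_{\Lambda_1}$-regularity to the $m_\psi(-,\pi_{\rho'})$ convention and Proposition~\ref{regular}; in fact that convention accounts only for the ``$\rho$ regular'' clause (via the equivalence $\pi_\rho$ regular $\Leftrightarrow$ $\rho$ regular through the Lusztig correspondence), while the ``$\pi_{\Lambda_1}$ regular'' clause comes from the degenerate $\mathcal G$-condition. Sorting this attribution out is pure bookkeeping and does not affect the validity of your overall strategy.
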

\begin{corollary}\label{main4}
Assume that $q$ is large enough such that the main theorem in \cite{S} holds.

(i) Let $n\ge m$. Let $\pl$ be a unipotent representation of $\sp_{2n}\fq$. Then we have
\[
\pl\otimes\omega_{n,\psi}^+=\bigoplus\prllc
\]
where $\prllc$ runs over $\cal{E}(\sp_{2n})$ such that $(\pl,\prllc)$  is $(\psi,\ee)$-strongly relevant, and there is $\widetilde{\Lambda_1'}\in\{\Lambda_1',\Lambda_1^{\prime t}\}$ such that $(\Lambda,\widetilde{\Lambda_1'})\in \mathcal{G}$, and $\pi_{\Lambda_1}$ and $\rho$ are regular.

(ii) Let $\pi_{\Lambda,\epsilon''}$ be a unipotent representation of $\o^\epsilon_{2n+1}\fq$, Then we have
\[
\pi_{\Lambda,\epsilon''}|_{\o^\e_{2n}\fq}=\bigoplus\prllc
\]
where $\prllc$ runs over $\cal{E}(\o^\e_{2n})$ such that $(\pl,\prllc)$  is strongly relevant, and there is $\widetilde{\Lambda_1}\in\{\Lambda_1,\Lambda_1^{t}\}$ such that $(\Lambda,\widetilde{\Lambda_1})\in \mathcal{G}$, and $\pi_{\Lambda_1'}$ and $\rho$ and are regular.
\end{corollary}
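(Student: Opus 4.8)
The plan is to deduce this corollary with essentially no new argument, by specializing the equal-rank (``Moreover'') parts of Theorem \ref{main2} to a \emph{unipotent} first argument and then reading off the multiplicities from Corollary \ref{main3}. All of the substantive input --- the parabolic-induction reduction, the theta-correspondence and see-saw computations, Reeder's formula, and the regularity statements of Proposition \ref{regular} and \cite[Theorem 1.1]{LW3} --- already lives in those results; what is left is bookkeeping.

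For part (i) I would apply Theorem \ref{main2}(i) with $\prll$ taken to be the unipotent representation $\pl\in\cal{E}(\sp_{2n},1)$ and $m=n$. For such a $\pl$ the first component $\rho$ is the trivial representation of the trivial group $\dg$ and the third symbol $\Lambda'$ attached to $\dddg$ is empty, so the ``Moreover'' identity becomes
\[
\pl\otimes\omega_{n,\psi}^{\ee}=\bigoplus m_\psi(\pw_{\rho},\pw_{\rho_1})\,\pi_{\rho_1,\Lambda_1,\Lambda_1'},
\]
the sum running over those $\pi_{\rho_1,\Lambda_1,\Lambda_1'}\in\cal{E}(\sp_{2n})$ for which $(\pl,\pi_{\rho_1,\Lambda_1,\Lambda_1'})$ is $(\psi,\ee)$-strongly relevant and for which $(\Lambda,\widetilde{\Lambda_1'})$ and $(\Lambda_1,\widetilde{\Lambda'})$ lie in $\mathcal{G}$ for suitable transposes. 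Since $\rho$ is trivial, $m_\psi(\pw_{\rho},\pw_{\rho_1})=m_\psi(-,\pw_{\rho_1})$, which by the convention fixed just before Theorem \ref{main1} equals $1$ when $\pw_{\rho_1}$ is regular and $0$ otherwise; in particular the decomposition is already multiplicity-free. Because $\Lambda'$ is empty, the transpose $\widetilde{\Lambda'}$ must also be empty, and by Proposition \ref{regular} the condition $(\Lambda_1,\widetilde{\Lambda'})\in\mathcal{G}$ then collapses to the statement that $\pi_{\Lambda_1}$ is regular; this is precisely the form in which Corollary \ref{main3}(i) is packaged. Applying Corollary \ref{main3}(i) to each pair $(\pl,\pi_{\rho_1,\Lambda_1,\Lambda_1'})$ therefore identifies the index set above with the set of $\prllc\in\cal{E}(\sp_{2n})$ such that $(\pl,\prllc)$ is $(\psi,\ee)$-strongly relevant, $(\Lambda,\widetilde{\Lambda_1'})\in\mathcal{G}$ for some $\widetilde{\Lambda_1'}\in\{\Lambda_1',\Lambda_1^{\prime t}\}$, and $\pi_{\Lambda_1}$ and $\rho$ are regular (after the harmless relabelling $\rho_1\mapsto\rho$ of the corollary) --- which is the asserted answer. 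Finally I would note that $\omega_{n,\psi}^{+}$ may be written in place of $\omega_{n,\psi}^{\ee}$: by Theorem \ref{main1} the multiplicities $m_\psi$ do not depend on the additive character, and passing from the superscript $\ee$ to $+$ only changes $\psi$ to $\psi_a$ for a non-square $a$, so the two tensor products have the same decomposition.

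Part (ii) is entirely parallel: one starts from Theorem \ref{main2}(ii) with $\pi_{-,\Lambda,-,\epsilon''}$ and $m=n$, getting $\pi_{\Lambda,\epsilon''}|_{\o^\e_{2n}}=\bigoplus m_\psi(\pw_{\rho},\pw_{\rho_1})\,\pi_{\rho_1,\Lambda_1,\Lambda_1'}$; here the empty third symbol of $\pi_{\Lambda,\epsilon''}$ makes the internal condition $(\Lambda',\widetilde{\Lambda_1'})\in\mathcal{G}$ collapse (via Proposition \ref{regular}) to regularity of $\pi_{\Lambda_1'}$, and Corollary \ref{main3}(iii) then yields exactly the index set of Corollary \ref{main4}(ii).

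The one step that deserves genuine care is the collapse just used: for a unipotent $\pl$ one must check that membership in $\mathcal{G}$ of the degenerate pair $(\Lambda_1,\emptyset)$ (resp. $(\Lambda_1',\emptyset)$) is equivalent to regularity of $\pi_{\Lambda_1}$ (resp. $\pi_{\Lambda_1'}$), with ``regular'' taken in the Deligne--Lusztig sense of Subsection \ref{3.2} and matched consistently across the symplectic, odd-orthogonal and even-orthogonal parametrizations. But this equivalence is exactly what Proposition \ref{regular} provides and what Corollary \ref{main3} has already absorbed, so once those are in hand the present corollary is a one-line consequence; I do not anticipate any other difficulty.
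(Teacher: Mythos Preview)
Your overall strategy is correct and matches the paper's (implicit) derivation: the corollary is an immediate specialization of the ``Moreover'' clause of Theorem~\ref{main2} to a unipotent first argument, with the multiplicities read off from Corollary~\ref{main3}. In fact the cleanest route is even shorter than what you wrote: for $m=n$ one has by definition $m_\psi(\pl,\sigma)=\langle\pl\otimes\omega_n^{\ee},\sigma\rangle$, so $\pl\otimes\omega_n^{\ee}=\bigoplus_\sigma m_\psi(\pl,\sigma)\,\sigma$, and Corollary~\ref{main3}(i) tells you exactly which $\sigma$ contribute and that they contribute with multiplicity one. You do not need to pass through Theorem~\ref{main2} separately.

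Two points deserve correction. First, your repeated appeal to Proposition~\ref{regular} for the ``collapse'' $(\Lambda_1,\emptyset)\in\mathcal{G}\Leftrightarrow\pi_{\Lambda_1}$ regular is a misattribution: Proposition~\ref{regular} computes $\langle\tau_1,\tau_2\rangle_{\so_{n-1}}$ for representations whose semisimple parameters avoid $\pm1$; it says nothing about symbols or the set $\mathcal{G}$. The passage from the two $\mathcal{G}$-conditions in Theorem~\ref{main2}(i) to the single $\mathcal{G}$-condition plus regularity in Corollary~\ref{main3}(i) is part of the proof of Corollary~\ref{main3} itself (which invokes Proposition~\ref{regular} and \cite[Theorem~1.1]{LW3} in a different way). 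Since you cite Corollary~\ref{main3} anyway, simply drop the attempted re-derivation.

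Second, your argument that $\omega_{n,\psi}^{+}$ may replace $\omega_{n,\psi}^{\ee}$ has a gap. When $\ee=-$ one has $\omega_{n,\psi}^{+}=\omega_{n,\psi'}^{\ee}$ for $\psi'$ in the other square class, and by (\ref{omega}) changing $\psi$ to $\psi'$ swaps the two odd orthogonal Witt towers; tracing Definition~\ref{strongly relevant0} this turns ``$(\psi',\ee)$-strongly relevant'' into ``$(\psi,-\ee)$-strongly relevant'', \emph{not} ``$(\psi,\ee)$-strongly relevant''. So the index set you obtain for $\pl\otimes\omega_{n,\psi}^{+}$ is described by $(\psi,+)$-strong relevance, which only coincides with $(\psi,\ee)$-strong relevance when $\ee=+$. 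The superscript in the statement should be $\ee$ (as in Theorem~\ref{main2}(i)); treat the ``$+$'' as a typo rather than something to be justified.
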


The conditions of our main result look very complicate. The following will explain the meaning of these conditions.

\begin{corollary}\label{main5}
Assume that $q$ is large enough such that the main theorem in \cite{S} holds. The multiplicity in Gan-Gross-Prasad problem for unipotent representations can be visualized as a diagram:
\[
\setlength{\unitlength}{0.8cm}
\begin{picture}(20,5)
\thicklines
\put(3,4){$\pi\in\cal{E}(\sp_{2n},1)$}
\put(8.4,4.2){Theta lifting}
\put(2.5,1){$\pi^\star\in\cal{E}(\so_{2n+1},1)$}
\put(13.1,4){$\bigoplus\pi',\  \pi'\in \cal{E}(\o^\epsilon_{2m},1)$}
\put(13,1){$\bigoplus\pi^{\prime \star},\  \pi^{\prime \star}\in \cal{E}(\so^\epsilon_{2m},1)$}
\put(7.3,1.3){unipotent part of GGP}
\put(2.5,2.5){$\cal{L}_1\circ\cal{D}$}
\put(14,2.5){$\cal{R}\circ\cal{D}$}
\put(4.3,3.6){\vector(0,-1){2.1}}
\put(13.6,3.6){\vector(0,-1){2.1}}
\put(6.5,1.1){\vector(2,0){6.2}}
\put(6.5,4){\vector(2,0){6.2}}
\end{picture}
\]
where $\cal{L}_1$ is the Lusztig correspondence and $\cal{R}$ is the restriction form $\o^\epsilon_{2m}$ to $\so^\epsilon_{2m}$, and $\cal{D}$ is the Alvis-Curtis duality which sends $\pi_{\Lambda}$ to $\pi_{\Lambda'}$ with $\rm{def}(\Lambda)=\rm{def}(\Lambda')$,  $\Upsilon(\Lambda')^*={}^t(\Upsilon(\Lambda)_*)$ and $\Upsilon(\Lambda')_*={}^t(\Upsilon(\Lambda)^*)$.
\end{corollary}

 This paper is organized as follows. In Section \ref{sec2}, we recall the notation of Harish-Chandra series. In Section \ref{sec3}, we recall the theory of Deligne-Lusztig characters and Lusztig correspondence. In particular, we focus on the modified Lusztig
corespondence for finite symplectic groups and finite orthogonal groups. Then we provide some results for regular characters which are used in this paper. In Section \ref{sec4}, we recall the classification of quadratic unipotent representations of symplectic groups and orthogonal groups. In Section \ref{sec6}, we recall the result by Pan in \cite{P4} on the Howe correspondence for finite symplectic groups and finite orthogonal groups. Then we discuss the relations between the symbols of representations in the Howe correspondence which play the important roles in the proof of our main results. In Section \ref{sec7}, we recall the and first occurrence index of cuspidal representations of finite orthogonal groups and symplectic groups, and give the definitions of relevant and strongly relevant. In Section \ref{sec8}, we prove the Theorem \ref{main1}. In Section \ref{sec9}, we prove the the Theorem \ref{main2}.

{\bf Acknowledgement.} The author would like to thank Dongwen Liu for many helpful and enlightening discussions on this topic.
\section{Harish-Chandra series}\label{sec2}
Let $G$ be a reductive group defined over $\Fq$, $F$ be the corresponding Frobenius endomorphism, and let $\E(G)=\rm{Irr}(G^F)$ be the set of irreducible representations of $G^F$.
A parabolic subgroup $P$ of $G$ is the normalizer in $G$ of a parabolic subgroup $P^\circ$ of the connected component $G^\circ$ of $G$.
A Levi subgroup $L$ of $P$ is the normalizer in $G$ of the a Levi subgroup $L^\circ$ of $P^\circ$. Then we have a Levi decomposition $P=LV$. If $P$ is $F$-stable, then we have $P^F = L^FV^F.$ Let $\delta$ be a representation of the group $L^F$. We can lift $\delta$ to a character of $P^F$ by making it trivial on $V^F$. We have the parabolic induction
\begin{equation}\label{paraind}
I_L^G(\delta):=I_P^G(\delta)=\mathrm{Ind}_{P^F}^{G^F}\delta.
\end{equation}
It is well-known that the induction in stages holds (see e.g. \cite[Proposition 4.7]{DM}), namely if $Q \subset P$ are two parabolic subgroups of $G$ and $M \subset L$
are the corresponding Levi subgroups, then
\[
I^G_L\circ I^L_M=I^G_M.
\]
We say that a pair $(L, \delta)$ is cuspidal if $\delta$ is cuspidal.

\begin{theorem} For $\pi \in \E(G)$, there is  a unique cuspidal pair $(L, \delta)$ up to $G^F$-conjugacy such that $\langle\pi,I_{L}^G(\delta)\rangle_{G^F}\ne 0$
\end{theorem}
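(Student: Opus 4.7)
The plan is to establish existence by descending via Jacquet functors until one reaches a cuspidal layer, and to establish uniqueness via the Mackey formula combined with the vanishing property that characterizes cuspidal representations.

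For existence, I would argue by induction on the semisimple rank of $G$. If $\pi$ is itself cuspidal, take $(L,\delta)=(G,\pi)$. Otherwise, by the definition of cuspidality, there is a proper $F$-stable parabolic $P=LV$ of $G$ and an irreducible $\tau\in\E(L)$ such that $\langle\pi,I_L^G(\tau)\rangle_{G^F}\neq 0$; this is equivalent, via Frobenius reciprocity, to $\tau$ being a component of the Jacquet module of $\pi$ along $P$. Since $\dim L<\dim G$, the inductive hypothesis furnishes a cuspidal pair $(L_0,\delta)$ with $L_0\subseteq L$ and $\langle\tau,I_{L_0}^L(\delta)\rangle_{L^F}\neq 0$. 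Induction in stages, already recorded in the excerpt as $I_L^G\circ I_{L_0}^L=I_{L_0}^G$, then gives $\langle\pi,I_{L_0}^G(\delta)\rangle_{G^F}\neq 0$, so $(L_0,\delta)$ is the desired cuspidal pair.

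For uniqueness, suppose $(L,\delta)$ and $(L',\delta')$ are two cuspidal pairs with $\pi$ appearing in each induced representation. Then the Hom space
\[
\mathrm{Hom}_{G^F}\bigl(I_L^G(\delta),I_{L'}^G(\delta')\bigr)
\]
is nonzero, because $\pi$ is a common constituent. I would then apply the Mackey formula for Harish-Chandra induction, decomposing this Hom space as a direct sum, indexed by double cosets $w\in L^F\backslash\{x\in G^F:{}^xL'\text{ transverse to }L\}/L'^F$, of Hom spaces between the two iterated Jacquet restrictions $*R_{L\cap{}^wL'}^{L}(\delta)$ and $*R_{L\cap{}^wL'}^{{}^wL'}({}^w\delta')$ on the intersection Levi $L\cap{}^wL'$. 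Cuspidality of $\delta$ forces $*R_{L\cap{}^wL'}^{L}(\delta)=0$ unless $L\cap{}^wL'=L$, i.e.\ $L\subseteq{}^wL'$; symmetrically cuspidality of $\delta'$ forces $L\cap{}^wL'={}^wL'$. Hence the only surviving terms have $L={}^wL'$, so $L$ and $L'$ are $G^F$-conjugate; on such a term the Hom space reduces to $\mathrm{Hom}_{L^F}(\delta,{}^w\delta')$, and irreducibility gives $\delta\cong{}^w\delta'$. This yields uniqueness of $(L,\delta)$ up to $G^F$-conjugacy.

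The main obstacle, and really the only nontrivial input, is the Mackey formula in a form valid for the possibly disconnected groups $G$ considered here (since orthogonal groups appear): one must use that parabolic and Levi subgroups have been defined as normalizers of those of $G^\circ$, so that the connected Mackey formula of Deligne--Lusztig transfers without change. The cuspidal vanishing argument itself is then completely formal. I would cite the Mackey formula from a standard reference (e.g.\ Digne--Michel, Proposition~4.7 and the subsequent discussion, or Howlett--Lehrer) rather than re-derive it.
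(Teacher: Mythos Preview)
The paper does not actually prove this theorem: it is stated in Section~\ref{sec2} as a known background fact with no argument given, serving only to set up the notion of Harish-Chandra series. Your proposal supplies the standard textbook proof (existence by descent along Jacquet functors, uniqueness via the Mackey formula and the vanishing of Jacquet restrictions of cuspidal representations), which is correct and is precisely what one finds in the references the paper cites, e.g.\ Digne--Michel or Carter. One small remark: in the uniqueness step, the implication ``$\pi$ is a common constituent $\Rightarrow$ $\mathrm{Hom}_{G^F}(I_L^G(\delta),I_{L'}^G(\delta'))\neq 0$'' is valid because representations of finite groups are semisimple, so a common constituent is a common direct summand; you are implicitly using this, and it is worth making explicit. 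Your caution about the disconnected case is well placed: since the paper works with full orthogonal groups and defines parabolics and Levis as normalizers of those in $G^\circ$, the Mackey decomposition still goes through, but a careful treatment would note that the relevant double-coset set and the transitivity of Harish-Chandra induction transfer from $G^\circ$ to $G$ essentially formally.
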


Thus we get a partition of $\E(G_n)$ into series parametrized by $G_n^F$-conjugacy classes of cuspidal pairs $(L, \delta)$. The Harish-Chandra series of $(L, \delta)$ is the set of irreducible representation of $G_n^F$ appearing in $I_{L}^G(\delta)$.
We focus on classical groups, and let $L$ be an $F$-stable standard Levi subgroup of $G_n:=\sp_{2n}$, $\o^\pm_{2n}$ or $\o_{2n+1}$. Then $L^F$ has a standard form
\[
L^F=\GGL_{n_1}\fq\times \GGL_{n_2}\fq\times\cdots\times \GGL_{n_r}\fq\times G_m^F
\]
where $G_m=\sp_{2m}$, $\o^\pm_{2m}$ or $\o_{2m+1}$, and $n_1+\cdots+n_r +m=n$. For a cuspidal pair $(L,\delta)$, one has
\[
\delta=\rho_1\otimes\cdots\otimes\rho_r\otimes\sigma
\]
where $\rho_i$ and $\sigma$ are cuspidal representations of $\GGL_{n_i}\fq$ and $G_m^F$, respectively.

By induction in stages, for any irreducible component $\pi$ of $I_L^G(\delta)$, there exists $\rho\in \E(\GGL_{n-m})$ such that
$\pi \subset I_{\GGL_{n-m}\times G_m}^{G_n}(\rho\otimes\sigma).
$
Let
\[
\cal{E}(G_n,\sigma)=\{\pi\in \cal{E}(G_n)|\langle\pi,I_{\GGL_{n-m}\times G_m}^G(\rho\otimes\sigma)\rangle_{G^F}\ne 0\textrm{ for some }\rho\in \E(\GGL_{n-m})\}.
\]
Then we have  a disjoint union
\[
\E(G_n)=\bigcup_{\sigma}\cal{E}(G_n,\sigma),
\]
where $\sigma$ runs over all irreducible cuspidal representations of $G^{F}_m$, $m=0,1,\cdots,n$.

\section{Deligne-Lusztig characters and Lusztig correspondence} \label{sec3}
Let $G$ be a connected reductive algebraic
group over $\mathbb{F}_q$. In \cite{DL}, P. Deligne and G. Lusztig defined a virtual character $R^{G}_{T,\theta}$ of $G^F$, associated to an $F$-stable maximal torus $T$ of $G$ and a character $\theta$ of $T^F$. We review some standard facts on these characters and Lusztig correspondence (cf. \cite[Chapter 7, 12]{C}), which will be used in this paper. In the last part of this section, we compute the the multiplicity (\ref{1o}) for regular characters by Reeder's formula in \cite{R}.

\subsection{Centralizer of a semisimple element }

 Let $H$ be a symplectic group or orthogonal group. Let $s$ be a semisimple element in the connected component of $H$. Let $C_{H(\overline{\bb{F}}_q)}(s)$ be the centralizer in $H(\overline{\bb{F}}_q)$ of a semisimple element $s \in H^0(\overline{\bb{F}}_q)$. In \cite[subsection 1.B]{AMR}, A.-M. Aubert, J. Michel and R. Rouquier described $C_{H(\overline{\bb{F}}_q)}(s)$ as follows. Let $T(\overline{\bb{F}}_q) \cong \overline{\bb{F}}^\times_q\times\cdots\times\overline{\bb{F}}_q^\times$ be a rational maximal torus of $H(\overline{\bb{F}}_q)$, and let $s=(\lambda_1,\cdots,\lambda_l)\in T^F$. Let $\nu_{\lambda}(s):=\#\{i|\lambda_i=\lambda\}$, and let $\langle\lambda\rangle$ denote the set of all roots in $\overline{\bb{F}}_q$ of the irreducible polynomial of $\lambda$ over $\overline{\bb{F}}_q$. The group $C_{H(\overline{\bb{F}}_q)}(s)$ has a natural decomposition with the eigenvalues of $s$:
\[
C_{H(\overline{\bb{F}}_q)}(s)=\prod_{\langle\lambda\rangle\subset\{\lambda_1,\cdots,\lambda_l\}}H_{[\lambda]}(s)(\overline{\bb{F}}_q)
\]
where $H_{[\lambda]}(s)(\overline{\bb{F}}_q)$ is a reductive quasi-simple group of rank equal to $|\langle\lambda\rangle|\nu_{\lambda}(s)$.

\subsection{Modified Lusztig correspondence for symplectic groups and orthogonal groups}\label{mlus}
Let $G^*$ be the dual group of $G$. We still denote the Frobenius endomorphism of $G^*$ by $F$. Then there is a natural bijection between the set of $G^F$-conjugacy classes of $(T, \theta)$ and the set of $G^{*F}$-conjugacy classes of $(T^*, s)$ where $T^*$ is a $F$-stable maximal torus in $G^*$ and $s \in   T^{*F}$. We will also denote $R_{T,\theta}^G$  by $R_{T^*,s}^G$ if $(T, \theta)$ corresponds to $(T^*, s)$.
For a semisimple element $s \in G^{*F}$, define
\[
\mathcal{E}(G,s) = \{ \chi \in \mathcal{E}(G)  :  \langle \chi, R_{T^*,s}^G\rangle \ne 0\textrm{ for some }T^*\textrm{ containing }s \}.
\]
The set $\mathcal{E}(G,s)$ is called  the Lusztig series. We can thus define a partition of $\mathcal{E}(G)$ by Lusztig series
i.e.,
\[
\mathcal{E}(G)=\coprod_{(s)}\mathcal{E}(G,s).
\]

\begin{proposition}[Lusztig]\label{Lus}
There is a bijection
\[
\mathcal{L}_s:\mathcal{E}(G,s)\to \mathcal{E}(C_{G^{*}}(s),1),
\]
extended by linearity to a map between virtual characters satisfying that
\[
\mathcal{L}_s(\varepsilon_G R^G_{T^*,s})=\varepsilon_{C_{G^{*}}(s)} R^{C_{G^{*F}}(s)}_{T^*,1}.
\]
Moreover, we have
\[
\rm{dim}(\pi)=\frac{|G^F|_{p'}}{|C_{G^{*F}}(s)|_{p'}}\rm{dim}(\cal{L}_s(\pi))
\]
where $|G|_{p'}$ denotes greatest factor of $|G|$ not divided by $p$, and  $\varepsilon_G:= (-1)^r$ where $r$ is the $\Fq$-rank of $G$.
In particular, Lusztig correspondence send cuspidal representation to cuspidal representation.
\end{proposition}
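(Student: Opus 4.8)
The statement to prove is Proposition~\ref{Lus}, which is Lusztig's fundamental result on the Jordan decomposition of characters. Since this is a classical theorem of Lusztig, the ``proof'' in a paper like this is really a matter of citing the right references and, at most, explaining how the statement as formulated here is assembled from the literature. Here is the plan I would follow.

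\textbf{Overall strategy.} The plan is to cite Lusztig's original work together with the standard textbook treatment, and to indicate the three ingredients that make up the statement: (a) existence of the bijection $\mathcal{L}_s$; (b) its compatibility with Deligne--Lusztig induction up to the sign $\varepsilon_G$; and (c) the dimension formula. First I would reduce to the case where the ambient group has connected center. Indeed, one embeds $G$ into a group $\wt G$ with connected center and compatible Frobenius (a $z$-extension or a regular embedding), so that $C_{\wt G^*}(s)$ is connected for every semisimple $s$; this is exactly the setting in which Lusztig constructs the Jordan decomposition in \cite[Main Theorem 4.23]{L1} (for $\GGL_n$ and classical groups the relevant statements are in \cite{L2,L3} as cited in the introduction). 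One then descends the bijection from $\wt G^F$ to $G^F$ by restriction of characters, using Clifford theory for the abelian quotient $\wt G^F/G^F$; this is carried out in detail in \cite[\S8.4--8.5, \S11--12]{C} and also in \cite{DM}. For classical groups one may alternatively invoke the explicit construction in \cite{AMR} already cited in Subsection~\ref{mlus}.

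\textbf{Key steps in order.} \emph{Step 1:} State that by Lusztig's classification \cite{L1,L2,L3}, for each semisimple $s\in G^{*F}$ there is a bijection $\mathcal{E}(G^F,s)\to\mathcal{E}(C_{G^{*F}}(s),1)$, natural in the sense that it matches up the decompositions of both sides into families; this is the content of the Jordan decomposition. \emph{Step 2:} Recall that under this bijection the uniform projections correspond: the virtual character $\varepsilon_G R^G_{T^*,s}$ is sent to $\varepsilon_{C_{G^{*F}}(s)}R^{C_{G^{*F}}(s)}_{T^*,1}$. The cleanest way to see this is that both sides are characterized by their inner products with the full set of Deligne--Lusztig characters, and the character formula of \cite{DL} together with the compatibility of $R_{T,\theta}$ with the duality $G\leftrightarrow G^*$ gives the matching of the associated class functions; see \cite[Proposition 12.2, Theorem 12.7]{C}. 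The sign $\varepsilon_G=(-1)^r$ with $r$ the $\Fq$-rank enters because $R^G_{T,\theta}$ already carries a sign $(-1)^{\sigma(T)}$ and one normalizes so that the matching is sign-coherent. \emph{Step 3:} Derive the dimension formula. Evaluating the degree of a Deligne--Lusztig character gives $\dim R^G_{T,\theta}=\pm |G^F|_{p'}/|T^F|$, and since $C_{G^*}(s)$ and $G$ share the relevant $p$-part bookkeeping, taking the appropriate linear combination of the identities in Step~2 and comparing degrees at $1$ yields $\dim\pi = (|G|_{p'}/|C_{G^*}(s)|_{p'})\,\dim\mathcal{L}_s(\pi)$; this is \cite[Remark 13.24]{DM} or \cite[\S12]{C}. \emph{Step 4:} For the final sentence, note that a cuspidal representation is one that does not appear in any proper Harish-Chandra induction, equivalently (by \cite[Proposition 12.?]{C}) one whose Alvis--Curtis dual, or whose behaviour under Lusztig restriction to proper Levi subgroups, is trivial; since $\mathcal{L}_s$ is compatible with Lusztig (twisted) induction from $F$-stable Levi subgroups --- $\mathcal{L}_s$ intertwines $R^G_L$ with $R^{C_{G^*}(s)}_{C_{L^*}(s)}$ --- it preserves cuspidality in both directions.

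\textbf{Expected main obstacle.} The genuinely delicate point is Step~1 in the \emph{disconnected-center} case, i.e.\ making the bijection canonical (or canonical up to the ambiguities noted later in the paper) when $C_{G^{*F}}(s)$ is disconnected. Lusztig's Jordan decomposition is only known to be unique after fixing extra data, and for symplectic and even orthogonal groups the centralizer is frequently disconnected; this is precisely why the paper later introduces the \emph{modified} Lusztig correspondence $\mathcal{L}'_s$ and emphasizes that results do not depend on the choice. For the present proposition, however, I only need existence of \emph{a} bijection with the stated equivariance, so the obstacle is mitigated: I would remark that any choice of Jordan decomposition in the sense of \cite{L1} works, that the dimension formula and cuspidality-preservation hold for every such choice, and defer the discussion of choices to Subsection~\ref{mlus} and Subsection~\ref{4.4}. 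The rest is bookkeeping with signs $\varepsilon_G$ and with the $p'$-parts of group orders, which is routine given \cite{C,DM}.
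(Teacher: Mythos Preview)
Your proposal is correct in spirit, but you are doing considerably more work than the paper itself does. In the paper, Proposition~\ref{Lus} is stated with the attribution ``[Lusztig]'' and given \emph{no} proof whatsoever; it is simply recorded as a known result and the exposition immediately proceeds to the description of the modified Lusztig correspondence in Subsection~\ref{mlus}. There is no sketch, no citation beyond the attribution in the proposition header, and no discussion of the connected-center reduction or of how cuspidality is preserved.

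So your plan --- citing \cite{L1,L2,L3}, \cite{C}, \cite{DM}, explaining the regular-embedding reduction, deriving the dimension formula from degrees of Deligne--Lusztig characters, and arguing cuspidality via compatibility with Lusztig induction --- is a perfectly reasonable and essentially correct way to \emph{justify} the proposition, and it is the standard route. It is simply more than the paper offers: the paper treats this as background to be quoted, not proved. If anything, your Step~4 reference is slightly imprecise (you write ``Proposition 12.?''), and one should be a bit careful that the cuspidality-preservation claim, while true for the classical groups under consideration here, is a nontrivial consequence of the compatibility of the Jordan decomposition with Harish-Chandra induction; but these are minor points of presentation, not errors of strategy.
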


Note that the correspondence $\cal{L}_s$ is usually not uniquely determined. We now give the explicit results of (modified) Lusztig correspondence for symplectic groups and orthogonal groups (c.f.  \cite[section 6, 7]{P4} for details). Our notation is slightly different from that of \cite{P4}: the group $\ddg$ always associates with eigenvalue $1$.

 (1) Suppose that $G$ is a symplectic group. Then $G^*$ is a special odd orthogonal group. We define
 \begin{itemize}

\item $\dg=\prod_{\langle\lambda\rangle\subset\{\lambda_1,\cdots,\lambda_l\},\lambda\ne\pm1}G^*_{[\lambda]}(s)^F$;

\item  $\ddg=(G^*_{[1]}(s))^{* F}$, the dual group of $G^*_{[1]}(s)^F$;

\item  $\dddg=G^*_{[-1]}(s)^F$.
\end{itemize}
Then we have
\[
C_{G^{*F}}(s)\cong\dg\times\ddgg\times\dddg,
\]
and the \emph {modified} Lusztig correspondence:
 \[
 \cal{L}_s': \cal{E}(G,s)\to\cal{E}(\dg\times\ddg\times\dddg,1)
 \]
 where $\dg$ is a product of finite general linear groups and finite unitary groups, $\ddg$ is a finite symplectic group of rank equal to $\nu_{1}(s)$ and $\dddg$ is a finite even orthogonal group of rank equal to $\nu_{-1}(s)$.  So we can write $\cal{L}'_s(\pi)=\p\otimes\pp\otimes\ppp$. Let $\{\pi_i\}$ denote the image of $\cal{L}_s^{\prime -1}$ of the set
\begin{equation}\label{imega1}
\{\p\otimes\pp\otimes\ppp,\p\otimes\pp\otimes(\rm{sgn}\cdot\ppp)\}.
\end{equation}

(2) Assume that $G$ is an odd orthogonal group. Then $G^*$ is the product of a symplectic group and $\{\pm1\}$. We define
 \begin{itemize}

\item $\dg=\prod_{\langle\lambda\rangle\subset\{\lambda_1,\cdots,\lambda_l\},\lambda\ne\pm1}G^*_{[\lambda]}(s)^F$;

\item  $\ddg=G^*_{[1]}(s)^F$;

\item  $\dddg=G^*_{[-1]}(s)^F$.
\end{itemize}
Now
\[
C_{G^{*F}}(s)\cong\dg\times\ddg\times\dddg\times\{\pm1\},
\]
and the Lusztig correspondence:
 \[
 \cal{L}_s: \cal{E}(G,s)\to
\cal{E}(\dg\times\ddg\times\dddg,1)\times\{\pm\}
 \]
where $\ddg=\sp_{2\nu_{1}(s)}\fq$ and $\dddg=\sp_{2\nu_{-1}(s)}\fq$. Here, by abuse of notation, we denote characters of $\{\pm1\}$ by $\{\pm\}$ instead of $\{1,\rm{sgn}\}$.

(3) Assume that $G$ is an even orthogonal group. Suppose that $G^F\cong \o^{\epsilon_0}_{2n}\fq$. We define
 \begin{itemize}

\item $\dg=\prod_{\langle\lambda\rangle\subset\{\lambda_1,\cdots,\lambda_l\},\lambda\ne\pm1}G^*_{[\lambda]}(s)^F$;

\item
$\ddg=G^*_{[1]}(s)^F;$

\item  $\dddg=G^*_{[-1]}(s)^F$.
\end{itemize}
 Now
\[
C_{G^{*F}}(s)\cong\dg\times\ddg\times\dddg,
\]
where $\ddg\cong\o^\epsilon_{2\nu_{1}(s)}\fq$ and $\dddg\cong\o^\e_{2\nu_{-1}(s)}\fq$ such that $\epsilon\cdot\epsilon'=\ee\cdot\epsilon_0$. Let $\{\pi_i\}$ denote the image of $\cal{L}_s^{-1}$ of the set
\begin{equation}\label{imega2}
\{\p\otimes\pp\otimes\ppp,\p\otimes\pp\otimes(\rm{sgn}\otimes\ppp),\p\otimes(\rm{sgn}\otimes\pp)\otimes\ppp,\p\otimes(\rm{sgn}\otimes\pp)\otimes(\rm{sgn}\otimes\ppp)\}.
\end{equation}

By abuse of notation, we write $ \cal{L}_s'= \cal{L}_s$ if $G$ is an orthogonal group, and call it modified Lusztig correspondence.

\subsection{Regular characters}\label{3.2}
Let $T$ be an $F$-stable maximal torus of $G$ and $W_G(T)$ be the weyl group.
An $F$-stable maximal torus $T$ is said to be minisotropic if $T$ is not contained in any $F$-stable proper parabolic subgroup of $G$. Then a representation $\pi$ of $G^F$ is cuspidal if and only if
\[
\langle \pi,R^G_{T,\theta}\rangle_{G^F}=0
\]
whenever $T$ is not minisotropic, for any character $\theta$ of $T^F$ (see \cite[Theorem 6.25]{S1}). Note that if $G^F=\GGL_n(\Fq)$, then $T$ is said to be minisotropic when $T^F\cong \GGL_1(\mathbb{F}_{q^n})$.

Assume that $\theta\in \cal{E}(T)$, $\theta'\in \cal{E}(T')$ where $T$, $T'$ are $F$-stable maximal tori. The pairs $(T,\theta)$, $(T',\theta')$ are said to be geometrically conjugate if for some $n\ge 1$, there exists $x \in G^{F^n}$ such that
\[
^xT^{F^n} = T ^{\prime F^n}\ \mathrm{and}\ \ ^x(\theta \circ  N^T_n) = \theta'\circ N^{T'}_n
 \]
 where $N_n^T: T^{F^n}\to T^F$ is the norm map. By \cite[p. 378]{C}, for any geometrically conjugate class $\kappa$, there is a unique regular character $\pi^{reg}_\kappa$ appearing in $R^G_{T,\theta}$ for some $(T,\theta)\in \kappa$; and any regular character appears in exactly one geometric conjugacy class. Moreover
\begin{equation}\label{reg}
\pi^{reg}_\kappa=\sum_{(T,\theta)\in\kappa \ \rm{mod} \ G^F}\frac{\varepsilon_G \varepsilon_T R^G_{T,\theta}}{\langle R^G_{T,\theta},R^G_{T,\theta}\rangle _{G^F}}.
\end{equation}
The above equation implies that $\pi_\kappa^{reg}$ appears in $R^G_{T,\theta}$ for every pair $(T,\theta)\in \kappa$. Thus $\pi_\kappa^{reg}$ is cuspidal if and only if $T$ is minisotropic and $\theta$ is regular for every pair $(T,\theta)\in \kappa$.  Here $\theta$ regular means that
\[
^x\theta=\theta, \ x\in W_G(T)^F \ \mathrm{if \ and \ only\ if\ } x=1.
\]
In particular, if $\tau$ is an irreducible cuspidal representation of $\GGL_n(\Fq)$, then there is a pair $(T,\theta)$ with $T$ an $F$-stable minisotropic maximal torus and $\theta$ regular such that
$
\tau=\pm R_{T,\theta}^G.
$

\begin{proposition}\label{regular}
Let $s$ and $s'$ be two semisimple elements of $\so^\epsilon_{n}\fq^*$ and $\so^\e_{n-1}\fq^*$, respectively. Assume that $s$ and $s'$ have no common eigenvalues and $\pm1$ are not eigenvalues of $s$ and $s'$. Let $\tau_1\in\cal{E}(\so^\epsilon_n,s)$ and, $\tau_2\in\cal{E}(\so^\e_{n-1},s')$. Then
\[
\langle\tau_1,\tau_2\rangle_{\so^\e_{n-1}\fq}=\left\{
\begin{array}{ll}
1,&\textrm{ if both }\tau_1\textrm{ and }\tau_2\textrm{ are regular;}\\
0,&\textrm{ otherwise.}
\end{array}
\right.
\]
\end{proposition}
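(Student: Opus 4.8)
The plan is to reduce the computation of $\langle \tau_1,\tau_2\rangle_{\so^\e_{n-1}\fq}$ to Reeder's formula, which computes exactly this multiplicity for Deligne--Lusztig characters of special orthogonal groups. The starting observation is that since $s$ and $s'$ have no eigenvalues $\pm 1$, their centralizers $C_{\so^\epsilon_n{}^{*F}}(s)$ and $C_{\so^\e_{n-1}{}^{*F}}(s')$ are products of general linear and unitary groups only (the $G^{(2)}$ and $G^{(3)}$ factors in the modified Lusztig correspondence are trivial). Consequently the Lusztig correspondence $\cal{L}_s$ identifies $\cal{E}(\so^\epsilon_n,s)$ with the unipotent representations of a group which is a product of $\GGL$'s and $\UU$'s. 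For such groups every unipotent representation is uniform, hence $\tau_1$ and $\tau_2$ are both uniform — they are linear combinations of Deligne--Lusztig characters $R^G_{T^*,s}$ (resp.\ $R^G_{T'^*,s'}$). This is the key structural input that makes Reeder's formula applicable; I would phrase it using the fact (recalled in the excerpt, after Proposition \ref{Lus}) that $\cal{L}_s$ sends uniform representations to uniform representations, combined with the classification of unipotent representations of type $A$ (and $^2A$) groups.

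Next I would expand $\tau_1 = \sum_i c_i R^{\so^\epsilon_n}_{T_i,\theta_i}$ and $\tau_2 = \sum_j d_j R^{\so^\e_{n-1}}_{T'_j,\theta'_j}$ using the inversion formula for the Lusztig correspondence together with \eqref{reg}; in fact, since the relevant $\tau$'s correspond to unipotent representations of products of $\GGL$ and $\UU$, one can take the $R^G_{T,\theta}$ appearing to all lie in the geometric conjugacy class attached to $s$ (resp.\ $s'$). Then $\langle\tau_1,\tau_2\rangle_{\so^\e_{n-1}\fq}$ becomes a finite linear combination of terms $\langle R^{\so^\epsilon_n}_{T_i,\theta_i}\big|_{\so^\e_{n-1}}, R^{\so^\e_{n-1}}_{T'_j,\theta'_j}\rangle$, each of which is given explicitly by Reeder's formula in \cite{R}. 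Because $s$ and $s'$ share no eigenvalue and avoid $\pm 1$, the combinatorial data entering Reeder's formula (the matching of tori, the intertwining of characters) simplifies drastically: the relevant Weyl group double cosets that contribute are controlled entirely by the type-$A$ parts, and the pairing of $R_{T,\theta}$ with $R_{T',\theta'}$ reduces to a sign times an indicator of whether the tori and characters are compatible. I expect that after this substitution the sum telescopes to $1$ precisely when both $\tau_1$ and $\tau_2$ are regular characters (equivalently, when the corresponding unipotent representations on the $\GGL/\UU$ side are the trivial-type, i.e.\ regular, ones and the underlying tori are minisotropic with regular $\theta$), and to $0$ otherwise — matching the stated dichotomy.

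The main obstacle will be the bookkeeping in Reeder's formula: one must verify that, under the no-common-eigenvalue and no-$\pm 1$ hypotheses, the various contributions across the sum over $(T,\theta)$-pairs cancel in pairs except for a single surviving term, and that the surviving term is nonzero exactly in the ``both regular'' case. This requires a careful analysis of how $F$-stable maximal tori of $\so^\epsilon_n$ restrict to $\so^\e_{n-1}$ and how the norm/restriction of characters behaves — essentially a rank-one reduction argument along the embedding $\so^\e_{n-1}\subset\so^\epsilon_n$, using that the extra eigenvalue data forces a rigid matching. A secondary technical point is independence from the choice of Lusztig correspondence $\cal{L}_s$: since different choices differ only by twists that do not affect uniformity or the geometric conjugacy class, and since regularity of $\tau_1,\tau_2$ is intrinsic (it can be read off from whether a regular character appears, via \eqref{reg}), the final answer is well defined. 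I would isolate the combinatorial cancellation as a lemma and relegate the explicit Reeder-formula manipulation to it, so that the proof of the proposition itself stays short.
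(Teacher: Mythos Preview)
Your overall strategy is the same as the paper's --- observe that $\tau_1,\tau_2$ are uniform because the centralizers are products of general linear and unitary groups, expand in Deligne--Lusztig characters, and apply Reeder's formula termwise. However, you have misjudged what Reeder's formula actually gives under the hypotheses, and this causes you to overcomplicate the endgame. You write that the pairing $\langle R^{\so^\epsilon_n}_{T,s}, R^{\so^\e_{n-1}}_{S,s'}\rangle$ ``reduces to a sign times an indicator of whether the tori and characters are compatible'', and then propose a combinatorial cancellation lemma analyzing how tori restrict along $\so^\e_{n-1}\subset\so^\epsilon_n$. In fact there is no indicator at all: under the assumption that $s,s'$ have no common eigenvalues, Reeder's formula \cite[(9.1)]{R} collapses to the pure sign
\[
\langle R^{\so^\epsilon_n}_{T,s}, R^{\so^\e_{n-1}}_{S,s'}\rangle_{\so^\e_{n-1}\fq}=\varepsilon_{\so^\epsilon_n}\,\varepsilon_T\,\varepsilon_{\so^\e_{n-1}}\,\varepsilon_S,
\]
independently of any compatibility between $(T,s)$ and $(S,s')$. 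This is the heart of the matter, and it is exactly what the no-common-eigenvalue hypothesis buys.

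Once you have this, no cancellation lemma is needed. Writing $\tau_1=\sum_{(T,s)\in\kappa} C_T\, R^{\so^\epsilon_n}_{T,s}$, the pairing $\langle\tau_1, R^{\so^\e_{n-1}}_{S,s'}\rangle$ becomes $\varepsilon_{\so^\e_{n-1}}\varepsilon_S\sum_{(T,s)}\varepsilon_{\so^\epsilon_n}\varepsilon_T C_T$. The paper's key observation, which your proposal does not reach, is that by orthogonality of Deligne--Lusztig characters this last sum is exactly $\langle \pi^{\mathrm{reg}}_\kappa,\tau_1\rangle_{\so^\epsilon_n\fq}$, where $\pi^{\mathrm{reg}}_\kappa$ is the regular character of the geometric class $\kappa$ as in \eqref{reg}. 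Hence the pairing is $\varepsilon_{\so^\e_{n-1}}\varepsilon_S$ or $0$ according to whether $\tau_1$ is regular; repeating the argument on the $\tau_2$ side finishes the proof in a few lines. So your ``main obstacle'' dissolves entirely once Reeder's formula is stated in its correct simplified form, and the proof is much shorter than your outline suggests.
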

\begin{proof}
By \cite[(9.1)]{R}, for any $F$-stable maximal torus $s\in T\subset\so^\epsilon_n$ and $s'\in S\subset \so^\e_{n-1}$, we have
\begin{equation}\label{reg1}
\langle R^{\so^\epsilon_n}_{T,s},R^{\so^\e_{n-1}}_{S,s'}\rangle_{\so^\e_{n-1}\fq}=\varepsilon_{\so^\epsilon_n} \varepsilon_T \varepsilon_{\so^\e_{n-1}} \varepsilon_S.
\end{equation}

Since $\pm1$ are not eigenvalues of $s$ and $s'$, both $C_{(\so^\epsilon_n)^{*F}}(s)$ and $C_{(\so^\e_{n-1})^{*F}}(s')$ are a product of general linear groups and unitary groups, which implies that $\tau_1$ and $\tau_2$ are uniform, i.e. $\tau_1$ and $\tau_2$ are
linear combination of the Deligne-Lusztig characters.
Suppose that
\[
\tau_1=\sum_{(T,s)\in\kappa \ \rm{mod} \ \so^\epsilon_{n}\fq}C_{T} R^{\so^\epsilon_n}_{T,s}
\]
and
\[
\tau_2=\sum_{(S,s')\in\kappa' \ \rm{mod} \ \so^\e_{n-1}\fq}C_{S} R^{\so^\e_{n-1}}_{S,s'}
\]
where $\kappa$ and $\kappa'$ are geometrically conjugate classes, and $C_{T}$ and $C_{S}\in\bb{Z}$. Then by (\ref{reg1}), we have
\[
\begin{aligned}
\langle \tau_1,R^{\so^\e_{n-1}}_{S,s'}\rangle_{\so^\e_{n-1}\fq}
=&\sum_{(T,s)\in\kappa \ \rm{mod} \ \so^\epsilon_{n}\fq}C_{T} \langle R^{\so^\epsilon_n}_{T,s},R^{\so^\e_{n-1}}_{S,s'}\rangle_{\so^\e_{n-1}\fq}\\
=&\sum_{(T,s)\in\kappa \ \rm{mod} \ \so^\epsilon_{n}\fq}\varepsilon_{\so^\epsilon_n} \varepsilon_T \varepsilon_{\so^\e_{n-1}} \varepsilon_S C_{T}\\
=&\varepsilon_{\so^\e_{n-1}} \varepsilon_S\sum_{(T,s)\in\kappa \ \rm{mod} \ \so^\epsilon_{n}\fq}\varepsilon_{\so^\epsilon_n} \varepsilon_T C_{T}\\
=&\varepsilon_{\so^\e_{n-1}} \varepsilon_S\sum_{(T,s)\in\kappa \ \rm{mod} \ \so^\epsilon_{n}\fq}\frac{\varepsilon_{\so^\epsilon_n} \varepsilon_T C_T \langle R^{\so^\epsilon_n}_{T,s},R^{\so^\epsilon_n}_{T,s}\rangle _{\so^\epsilon_n\fq}}{\langle R^{\so^\epsilon_n}_{T,s},R^{\so^\epsilon_n}_{T,s}\rangle _{\so^\epsilon_n\fq}}\\
\end{aligned}
\]
By \cite[Theorem 7.3.4]{C}, for two pairs $(T,s)$ and $(T',s)\in \kappa$, if $(T,s)\ne (T',s)\ \rm{mod} \ \so^\epsilon_{n}\fq$, then we have
\[
\langle R^{\so^\epsilon_n}_{T,s},R^{\so^\epsilon_n}_{T',s}\rangle _{\so^\epsilon_n\fq}=0.
\]
So by (\ref{reg}),
\[
\begin{aligned}
&\varepsilon_{\so^\e_{n-1}} \varepsilon_S\sum_{(T,s)\in\kappa \ \rm{mod} \ \so^\epsilon_{n}\fq}\frac{\varepsilon_{\so^\epsilon_n} \varepsilon_T C_T \langle R^{\so^\epsilon_n}_{T,s},R^{\so^\epsilon_n}_{T,s}\rangle _{\so^\epsilon_n\fq}}{\langle R^{\so^\epsilon_n}_{T,s},R^{\so^\epsilon_n}_{T,s}\rangle _{\so^\epsilon_n\fq}}\\
=&\varepsilon_{\so^\e_{n-1}} \varepsilon_S
\left\langle \sum_{(T,s)\in\kappa \ \rm{mod} \ \so^\epsilon_{n}\fq}
\frac{\varepsilon_{\so^\epsilon_n} \varepsilon_TR^{\so^\epsilon_n}_{T,s}}{\langle R^{\so^\epsilon_n}_{T,s},R^{\so^\epsilon_n}_{T,s}\rangle _{\so^\epsilon_n\fq}}
,\sum_{(T',s)\in\kappa \ \rm{mod} \ \so^\epsilon_{n}\fq}
C_{T'}  R^{\so^\epsilon_n}_{T',s}
\right\rangle _{\so^\epsilon_n\fq}\\
=&\varepsilon_{\so^\e_{n-1}} \varepsilon_S\langle  \pi^{reg}_\kappa,\tau_1  \rangle_{\so^\epsilon_n\fq}\\
=&\left\{
\begin{array}{ll}
\varepsilon_{\so^\e_{n-1}} \varepsilon_S,\textrm{ if } \tau_1  =\pi^{reg}_\kappa;\\
0, \textrm{ otherwise,}
\end{array}
\right.
\end{aligned}
\]
which implies that
$
\begin{aligned}
\langle \tau_1,\tau_2\rangle_{\so^\e_{n-1}\fq}=0,
\end{aligned}
$
if $\tau_1  \ne\pi^{reg}_\kappa$.

Suppose $\tau_1  =\pi^{reg}_\kappa$. With same argument, we have
\[
\begin{aligned}
\langle \tau_1,\tau_2\rangle_{\so^\e_{n-1}\fq}
=&\sum_{(S,s')\in\kappa' \ \rm{mod} \ \so^\e_{n-1}\fq}C_{S} \langle\tau_1,R^{\so^\e_{n-1}}_{S,s'}\rangle_{\so^\e_{n-1}\fq}\\
=&\sum_{(S,s')\in\kappa' \ \rm{mod} \ \so^\e_{n-1}\fq} \varepsilon_{\so^\e_{n-1}} \varepsilon_S C_S\\
=&\langle \pi^{reg}_{\kappa'},\tau_2\rangle_{\so^\e_{n-1}\fq}\\
=&\left\{
\begin{array}{ll}
1,\textrm{ if } \tau_2  =\pi^{reg}_{\kappa'};\\
0, \textrm{ otherwise.}
\end{array}
\right.
\end{aligned}
\]
\end{proof}
\section{Classification of quadratic unipotent representations}\label{sec4}
In the this section, we first review some results on the classification of the irreducible unipotent representations of symplectic groups and orthogonal groups by Lusztig in \cite{L1, L2, L3}. Then we give a parametrization of irreducible representations.

\subsection{Symbols}
We follow the notation of \cite{P3}. The notation is slightly different from that of \cite{L1}.

A symbol is an array of the form
\[
\Lambda=
\begin{pmatrix}
A\\
B
\end{pmatrix}
=
\begin{pmatrix}
a_1,a_2,\cdots,a_{m_1}\\
b_1,b_2,\cdots,b_{m_2}
\end{pmatrix}
\]
of two finite subsets $A$, $B$ (possibly empty) with $a_i, b_i\ge0$, $a_i>a_{i+1}$ and $b_i>b_{i+1}$.

The rank and defect of a symbol $\Lambda$ are defined by
\[
\begin{aligned}
&\rm{rank}(\Lambda)=\sum_{a_i\in A}a_i+\sum_{b_i\in B}b_i-\left\lfloor\left(\frac{|A|+|B|-1}{2}\right)^2\right\rfloor, \\
&\rm{def}(\Lambda)=|A|-|B|
\end{aligned}
\]
where $|X|$ denotes the cardinality of a finite set $X$. Note that the definition of $\rm{def}(\Lambda)$ differs from that of \cite{L1} p.133.

For a symbol $\Lambda=\begin{pmatrix}
A\\
B
\end{pmatrix}$, let $\Lambda^*$ (resp. $\Lambda_*$) denote the first row (resp. second row) of $\Lambda$, i.e. $\Lambda^*=A$ and $\Lambda_*=B$. For a symbol $\Lambda=\begin{pmatrix}
A\\
B
\end{pmatrix}$, let
$\Lambda^t=\begin{pmatrix}
B\\
A
\end{pmatrix}$.

Define an equivalence relation generated by the rule
\[
\begin{pmatrix}
a_1,a_2,\cdots,a_{m_1}\\
b_1,b_2,\cdots,b_{m_2}
\end{pmatrix}
\sim
\begin{pmatrix}
a_1+1,a_2+1,\cdots,a_{m_1}+1,0\\
b_1+1,b_2+1,\cdots,b_{m_2}+1,0
\end{pmatrix}.
\]
Note that the defect and rank are functions on the set of equivalence classes of symbols.

\subsection{Bi-partitions}\label{sec4.2}
Let $\lambda=[\lambda_1,\lambda_2,\cdots,\lambda_k]$ be a partition. We always assume that $\lambda_i\ge \lambda_{i+1}$. We denote by ${}^t\lambda$ the transpose of $\lambda$. For two partitions $\lambda=[\lambda_1,\lambda_2,\cdots,\lambda_k]$ and $\mu=[\mu_1,\mu_2,\cdots,\mu_l]$, we denote
\[
\lambda\preccurlyeq\mu\quad\textrm{if }\mu_i-1\le \lambda_i \le\mu_i\textrm{ for each }i.
\]

Let $\mathcal{P}_2(n)=\left\{\begin{bmatrix}
\lambda\\
\mu
\end{bmatrix}\right\}$
denote the set of bi-partitions of $n$ where $\lambda$, $\mu$ are partitions
and $|\lambda| + |\mu| = n$. To each symbol we can associate a bi-partition as follows:
\[
\Upsilon: \Lambda=\begin{pmatrix}
a_1,a_2,\dots,a_{m_1}\\
b_1,b_2,\dots,b_{m_2}
\end{pmatrix}
\mapsto
\begin{bmatrix}
a_1-(m_1-1),a_2-(m_1-2),\dots,a_{m_1-1}-1,a_{m_1}\\
b_1-(m_2-1),b_2-(m_2-1),\dots,b_{m_2-1}-1,b_{m_2}
\end{bmatrix}=\begin{bmatrix}
\lambda\\
\mu
\end{bmatrix}.
\]
We write
$\Upsilon(\Lambda)^*=\lambda$ and $\Upsilon(\Lambda)_*=\mu.$
Then we have a bijection:
\[
\Upsilon:\mathcal{S}_{n,\beta}\to \left\{
\begin{array}{ll}
\mathcal{P}_2(n-(\frac{\beta+1}{2})(\frac{\beta-1}{2})), &  \textrm{if }\ \beta\textrm{ is odd},\\
\mathcal{P}_2(n-(\frac{\beta}{2})^2), & \textrm{if }\ \beta\textrm{ is even}.
\end{array}\right.
\]
where $\mathcal{S}_{n,\beta}$ denotes the set of symbols of rank $n$ and defect $\beta$.

\subsection{Classification of unipotent representations}

Now we recall the correspondence on irreducible unipotent representations of symplectic groups and orthogonal groups. If $\pi\in\cal{E}(G,I)$, we say that $\pi$ is a unipotent representation. Lusztig gives a bijection between the unipotent representations of these groups to equivalence classes of symbols as follow:
\[
\left\{
\begin{aligned}
&\cal{E}(\sp_{2n},1)\\
&\cal{E}(\o_{2n+1},1)\\
&\cal{E}(\o^+_{2n},1)\\
&\cal{E}(\o^-_{2n},1)
\end{aligned}\right.
\longrightarrow
\left\{
\begin{aligned}
&\cal{S}_n:=\big\{\Lambda|\rm{rank}(\Lambda)=n, \rm{def}(\Lambda)=1\ (\textrm{mod }4)\big\};\\
&\cal{S}_n\times\{\pm\};\\
&\cal{S}^+_n:=\big\{\Lambda|\rm{rank}(\Lambda)=n, \rm{def}(\Lambda)=0\ (\textrm{mod }4)\big\};\\
&\cal{S}^-_n:=\big\{\Lambda|\rm{rank}(\Lambda)=n, \rm{def}(\Lambda)=2\ (\textrm{mod }4)\big\};
\end{aligned}\right.
\]
If $G$ is an even (resp. odd) orthogonal group, it is known that $\pi_{\Lambda^t} = \rm{sgn}\cdot\pi_{\Lambda}$ (resp. $\pi_{\Lambda,\epsilon}=\sgn\pi_{\Lambda,-\epsilon}$) where $\pi_{\Lambda}$ (resp. $\pi_{\Lambda,\epsilon}$) means the irreducible representation parametrized by $\Lambda$ (resp. $(\Lambda,\epsilon)$) and $\rm{sgn}$ denotes the sign character. Here we distinguish $\pi_{\Lambda,\pm}$ by decreeing that $\pi_{\Lambda,\pm}(-1)=\pm\rm{Id}$.

\subsection{Classification of quadratic unipotent representations}\label{4.4}
\begin{definition}

(i) If $G$ is orthogonal group and $\pi\in\cal{E}(G,-I)$, we say that $\pi$ is a $\theta$-epresentation. For $G^F =\sp_{2n}\fq$ we have $G^{*F} = \so_{2n+1}\fq$. Let $s = (-I, 1)$ with $I$ being the identity in $\so_{2n}^\epsilon\fq \in \so_{2n+1}\fq$. We say that $\pi$ is a $\theta$-epresentation if $\pi\in\cal{E}(G,s)$.

(ii) If $\pi\in\cal{E}(G,s)$ where $s$ satisfies $s^2 =I$, we say that $\pi$ is a quadratic unipotent representation. Let
\[
\rm{Quad}(G):=\{\pi\in\cal{E}(G)|\pi\textrm{ is quadratic unipotent}\}.
\]
\end{definition}
By the work of Lusztig \cite{L1} and Waldspurger \cite{W1}, we have a parametrization of the quadratic unipotent representations by a pair of symbols which generalizes that of the unipotent representations given above. We will give a parametrization of quadratic unipotent representations via (modified) Lusztig correspondence, which is slightly different from that in \cite{W1}. We think this definition here will be more convenient to use the results in \cite{P3, P4}.

By the (modified) Lusztig correspondence, there is a bijection between $\rm{Quad}(G)$ and $\bigcup_{s}\cal{E}(C_{G^{*}}(s),1)$ where $s$ satisfies $s^2=1$.
More explicitly, we have bijection between the quadratic unipotent representations of these groups to equivalence classes of symbols as follow:
\[
\left\{
\begin{aligned}
\rm{Quad}(\sp_{2n})\longrightarrow&\bigcup_{n_1+n_2=n}\cal{E}(\sp_{2n_1+1},1)\times\cal{E}(\o^\pm_{2n_2},1);\\
\rm{Quad}(\o_{2n+1})\longrightarrow&\bigcup_{n_1+n_2=n}\cal{E}(\sp_{2n_1},1)\times\cal{E}(\sp_{2n_2},1)\times\{\pm\};\\
\rm{Quad}(\o_{2n})\longrightarrow&\bigcup_{n_1+n_2=n}\cal{E}(\o^\pm_{2n_1},1)\times\cal{E}(\o^\pm_{2n_2},1);\\
\end{aligned}\right.
\]
where
\[
\rm{Quad}(\o_{2n})=\rm{Quad}(\o^+_{2n})\bigcup\rm{Quad}(\o^-_{2n}).
\]
Based on above bijection and Lusztig's classification of unipotent representations, we obtained in {\it loc. cit.} the following classification of quadratic unipotent representations:
\[
\left\{
\begin{aligned}
\rm{Quad}(\sp_{2n})\longrightarrow&\bigcup_{n_1+n_2=n}\cal{S}_{n_1}\times\cal{S}^\pm_{n_2};\\
\rm{Quad}(\o_{2n+1})\longrightarrow&\bigcup_{n_1+n_2=n}\cal{S}_{n_1}\times\cal{S}_{n_2}\times\{\pm\};\\
\rm{Quad}(\o_{2n})\longrightarrow&\bigcup_{n_1+n_2=n}\cal{S}^\pm_{n_1}\times\cal{S}^\pm_{n_2}.\\
\end{aligned}\right.
\]

Recall that the (modified) Lusztig correspondence is not uniquely determined. The parametrization of quadratic unipotent representations depends on the choice of the (modified) Lusztig correspondence.
Let
\[
\cal{L}_G': \cal{E}(G)\to\left\{
\begin{array}{ll}
\cal{E}(\dg,1)\otimes\cal{E}(\ddg,1)\otimes\cal{E}(\dddg,1)\otimes\{\pm\},&\textrm{if $G$ is an orthogonal group;}\\
\cal{E}(\dg,1)\otimes\cal{E}(\ddg,1)\otimes\cal{E}(\dddg,1),&\textrm{otherwise}
\end{array}\right.
\]
such that for $\pi\in\cal{E}(G,s)$, we have
\[
\cal{L}_G'(\pi)=
\cal{L}_s'(\pi).
\]
We call $\cal{L}_G'$ the modified Lusztig correspondence for $G$. For a fixed $\cal{L}_G'$, let $\pi_{\Lambda,\Lambda'}$ (resp. $\pi_{\Lambda,\Lambda',\epsilon}$) denote the irreducible quadratic unipotent representation
parametrized by the pair of symbols $(\Lambda,\Lambda')$ (resp. $(\Lambda,\Lambda',\epsilon)$) via $\cal{L}_G'$.

Note that $\pi_{\Lambda,-} $ (resp. $\pi_{\Lambda,-,\epsilon} $) is a unipotent representation of symplectic group or even orthogonal group (resp. odd orthogonal group) and $\pi_{\Lambda,-}=\pl $ (resp. $\pi_{\Lambda,-,\epsilon} =\pi_{\Lambda,\epsilon}$ ) where we write blank by $-$. On the other hand, $\pi_{-,\Lambda} $ (resp. $\pi_{-,\Lambda,\epsilon} $) is a $\theta$-epresentation of symplectic group or even orthogonal group (resp. odd orthogonal group). And we have
\[
\cal{L}'_s:\pi_{\Lambda,\Lambda'} \textrm{ (resp. }\pi_{\Lambda,\Lambda',\epsilon}\textrm{)}\to \pl\otimes\pll\textrm{ (resp. }\pl\otimes\pll\otimes\epsilon\textrm{)}.
\]

The following information may be read off of \cite[section 4]{W1} and \cite[section 6.1]{P3}.
\begin{proposition}\label{q1}
Let $G_n$ be $\sp_{2n}$, $\o^\pm_{2n}$ or $\o_{2n+1}$. For every $G_n$, there exists a modified Lusztig correspondence $\cal{L}'_{G_n}$ such that the following hold. Let $\pi_{\Lambda,\Lambda'} $ (resp. $\pi_{\Lambda,\Lambda',\epsilon} $) be a cuspidal quadratic unipotent representation of $G_n^F$.

(i) Let $G_n=\sp_{2n}$ and $G_m=\sp_{2m}$ with $m>n$. Let $\pi_{\Lambda,\Lambda'} $ be a cuspidal quadratic unipotent representation of $G_n^F$, and let $\pi_{\Lambda_1,\Lambda_1'}\in \rm{Quad}(G_m)$. If $\pi_{\Lambda_1,\Lambda_1'}\in \cal{E}(G_m,\pi_{\Lambda,\Lambda'} )$, then
\begin{itemize}

\item $\pi_{\Lambda_1,\Lambda_1^{\prime t}}\in \cal{E}(G_m,\pi_{\Lambda,\Lambda^{\prime t}} )$;

\item $\pi_{\Lambda_1,\Lambda_1'}^c:=\pi_{\Lambda_1,\Lambda_1'}(hgh^{-1})=\pi_{\Lambda_1,\Lambda_1^{\prime t}}$ where $g\in G^F_n$ and $h\in \rm{CSp}^\pm_{2n}\fq$ with $\zeta\circ\lambda(h)=-1$. (Here $\pi_{\Lambda_1,\Lambda_1'}^c$, $\zeta$ and $\lambda$ are defined in \cite{W1}.)

\item $\pi_{\Lambda,\Lambda'}(-I)=\pi_{\Lambda,\Lambda^{\prime t}}(-I)$.
\end{itemize}

(ii)  Let $G_n=\o^\epsilon_{2n}$ and $G_m=\o^\epsilon_{2m}$ with $m>n$. Let $\pi_{\Lambda,\Lambda'} $ be a cuspidal quadratic unipotent representation of $G_n^F$, and let $\pi_{\Lambda_1,\Lambda_1'}\in \rm{Quad}(G_m)$. If $\pi_{\Lambda_1,\Lambda_1'}\in \cal{E}(G_m,\pi_{\Lambda,\Lambda'} )$, then
\begin{itemize}

\item $\chi\otimes\pi_{\Lambda_1,\Lambda_1'}=\pi_{\Lambda_1',\Lambda_1}\in \cal{E}(G_m,\pi_{\Lambda',\Lambda} )$, where $\chi$ is the character $\rm{sp}$ defined in \cite[p10]{W1};

\item $\pi_{\Lambda_1,\Lambda_1^{\prime t}}\in \cal{E}(G_m,\pi_{\Lambda,\Lambda^{\prime t}} )$;

\item $\pi_{\Lambda_1^t,\Lambda_1'}\in \cal{E}(G_m,\pi_{\Lambda^t,\Lambda'} )$;

\item $\rm{sgn}\otimes\pi_{\Lambda_1,\Lambda_1'}=\pi_{\Lambda_1^t,\Lambda_1^{\prime t}}\in \cal{E}(G_m,\pi_{\Lambda^t,\Lambda^{\prime t}} )$

\item $\pi_{\Lambda_1,\Lambda_1'}^c:=\pi_{\Lambda_1,\Lambda_1'}(hgh^{-1})=\pi_{\Lambda_1,\Lambda_1^{\prime t}}$ where $g\in G^F_n$ and $h\in \rm{CO}^\pm_{2n}\fq$ with $\zeta\circ\lambda(h)=-1$ (Here $\pi_{\Lambda_1,\Lambda_1'}^c$, $\zeta$ and $\lambda$ are defined in \cite{W1}).
\end{itemize}

(iii) Let $G_n=\o_{2n+1}$ and $G_m=\o_{2m+1}$ with $m>n$. Let $\pi_{\Lambda,\Lambda'} $ be a cuspidal quadratic unipotent representation of $G_n^F$, and let $\pi_{\Lambda_1,\Lambda_1'}\in \rm{Quad}(G_m)$. If $\pi_{\Lambda_1,\Lambda_1',\epsilon'}\in \cal{E}(H,\pi_{\Lambda,\Lambda',\epsilon} )$, then
\begin{itemize}
\item $\e=\epsilon$.

\item $\chi\otimes\pi_{\Lambda_1,\Lambda_1',\epsilon}=\pi_{\Lambda_1',\Lambda_1,\epsilon}\in \cal{E}(G_m,\pi_{\Lambda',\Lambda,\epsilon} )$, where $\chi$ is the character $\rm{sp}$ defined in \cite[p10]{W1};

\item $\pi_{\Lambda_1,\Lambda_1^{\prime t},\epsilon}\in \cal{E}(G_m,\pi_{\Lambda,\Lambda^{\prime t},\epsilon} )$;

\item $\rm{sgn}\otimes\pi_{\Lambda_1,\Lambda_1',\epsilon}=\otimes\pi_{\Lambda_1,\Lambda_1',-\epsilon}\in \cal{E}(G_m,\pi_{\Lambda,\Lambda',-\epsilon} )$.
\end{itemize}

(iv) In each case, the defects of $(\Lambda,\Lambda')$ are preserved by parabolic induction. In other words, $\rm{def}(\Lambda_1)=\rm{def}(\Lambda)$ and $\rm{def}(\Lambda_1')=\rm{def}(\Lambda')$.
\end{proposition}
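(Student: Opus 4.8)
The plan is to derive all the claimed identities from a single input: an explicit description, due to Waldspurger \cite{W1}, of the parametrizing data (symbols, together with the auxiliary invariants $\zeta$, $\lambda$ and the characters $\mathrm{sp}$, $\mathrm{sgn}$) for cuspidal quadratic unipotent representations and of how this data transforms under parabolic induction. The key point is that a cuspidal quadratic unipotent representation $\pi_{\Lambda,\Lambda'}$ of $G_n$ sits in a unique Lusztig series $\mathcal{E}(G_n,s)$ with $s^2=1$, and via the (modified) Lusztig correspondence corresponds to a pair of unipotent cuspidal representations $(\pi_\Lambda,\pi_{\Lambda'})$ of $\ddg\times\dddg$; all the operations appearing in the statement (transpose of a symbol, tensoring with $\mathrm{sgn}$ or with the character $\mathrm{sp}$, conjugation by an element of the similitude group $\mathrm{CSp}^\pm$ or $\mathrm{CO}^\pm$) are operations on this data whose effect is already recorded in \cite{W1}. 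So the first step is to choose, for each $G_n$, the modified Lusztig correspondence $\cal{L}'_{G_n}$ so that it is compatible with Waldspurger's normalization; one then simply reads off how each operation acts on $(\Lambda,\Lambda',\epsilon)$.

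Concretely, I would proceed case by case. For (i), the symplectic case: the dual group is odd special orthogonal, $\ddg$ is a symplectic factor (eigenvalue $1$) and $\dddg$ an even orthogonal factor (eigenvalue $-1$); since an even orthogonal unipotent representation satisfies $\pi_{\Lambda^{\prime t}}=\mathrm{sgn}\cdot\pi_{\Lambda'}$ and the choice of extension from $\mathrm{SO}$ to $\mathrm{O}$ on the $-1$-part is exactly the ambiguity in $\cal{L}'_s$, the two representations $\pi_{\Lambda,\Lambda'}$ and $\pi_{\Lambda,\Lambda^{\prime t}}$ form an $\o/\so$-pair; the conjugation $\pi\mapsto\pi^c$ by $h$ with $\zeta\circ\lambda(h)=-1$ is precisely the outer automorphism swapping these two, which gives the second bullet, and its compatibility with parabolic induction (Harish-Chandra induction commutes with this automorphism because $h$ normalizes the relevant parabolic) gives the first bullet; evaluating at $-I$ — a central element lying in the $\so$-part — shows $\pi_{\Lambda,\Lambda'}(-I)=\pi_{\Lambda,\Lambda^{\prime t}}(-I)$ since $-I$ acts by a scalar that does not see the $\o/\so$-ambiguity. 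For (ii) and (iii), the orthogonal cases, the same mechanism applies but now there are two independent symbols and two independent $\mathrm{sgn}$-type operations: $\mathrm{sgn}\otimes(-)$ transposes \emph{both} symbols (this is Lusztig's $\pi_{\Lambda^t}=\mathrm{sgn}\cdot\pi_\Lambda$ applied on each factor, using that $-I$ is central and $\mathrm{sgn}$ is the determinant character), the character $\mathrm{sp}$ (Waldspurger's $\chi$) swaps the roles of the two symbols $\Lambda\leftrightarrow\Lambda'$ (it is the character distinguishing the two eigenspaces $\pm1$), and the similitude conjugation again toggles the $\o/\so$-extension on one factor, i.e.\ $\Lambda'\mapsto\Lambda^{\prime t}$. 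Statement (iii) additionally records that the sign $\epsilon$ is preserved by the geometric piece but flipped by $\mathrm{sgn}$, which is immediate since $\epsilon$ tracks the action of $-1$ and $\mathrm{sgn}(-1)=-1$; the equality $\e=\epsilon$ comes from the fact that the $\{\pm\}$-label is a component-group invariant that is constant along a Harish-Chandra series.

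Finally, part (iv) is the assertion that Harish-Chandra induction preserves the defects of the two symbols. This follows because parabolic induction with a general linear factor corresponds, under the Lusztig correspondence, to Harish-Chandra induction on the $\ddg\times\dddg$ part with a general linear factor on the appropriate unitary/linear component of $\dg$, which does not touch the symplectic/orthogonal factors at all; hence the symbols $\Lambda$, $\Lambda'$ of the inducing cuspidal datum and of any constituent differ only by the operation of adjoining entries coming from the $\mathrm{GL}$-block, and that operation is known (Lusztig, \cite{L1}) to preserve the defect. Equivalently, one uses that the defect of a symbol is (up to sign) the defect of the corresponding $\o/\sp$-type factor, which is an invariant of the group $\ddg$ or $\dddg$, and those groups do not change along the Harish-Chandra series $\cal{E}(G_m,\pi_{\Lambda,\Lambda'})$. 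The main obstacle I anticipate is bookkeeping: matching the normalization of the modified Lusztig correspondence $\cal{L}'_{G_n}$ (whose ambiguity is exactly a product of $\mathrm{sgn}$-twists on the $\pm1$-parts, as in \eqref{imega1} and \eqref{imega2}) with Waldspurger's parametrization so that the operations $\Lambda'\mapsto\Lambda^{\prime t}$, $\chi\otimes(-)$, $(-)^c$ come out with the signs as stated, rather than up to an overall swap; this is a finite but delicate compatibility check, and it is the reason the proposition is phrased as ``there exists a modified Lusztig correspondence such that\ldots''.
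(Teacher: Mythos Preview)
Your proposal is correct and follows essentially the same approach as the paper: the paper does not give a self-contained proof but simply states that ``the following information may be read off of \cite[section 4]{W1} and \cite[section 11]{P3},'' and your proposal is precisely a detailed explanation of how to carry out that reading-off from Waldspurger's parametrization (together with Lusztig's preservation-of-defect facts, which is what \cite[section 11]{P3} supplies). Your identification of the main obstacle --- matching normalizations so that the existence claim ``there exists $\cal{L}'_{G_n}$ such that\ldots'' comes out correctly --- is exactly the point, and is why the paper phrases this as a citation rather than a proof.
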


More generally, we have a parametrization of irreducible representations via the above choice of modified Lusztig correspondences as follows.
Let $\pi$ be an irreducible representation of $\sp_{2n}\fq$, $\o^\epsilon_{2n}\fq$ or $\o^\epsilon_{2n+1}\fq$. Suppose that
\[
\cal{L}'_G(\pi)=\p\otimes\pp\otimes\ppp=\rho\otimes\pl\otimes\pll,\textrm{ (resp. }\rho\otimes\pl\otimes\pll\otimes\e\textrm{)}
\]
where $\cal{L}'_G$ is the modified Lusztig correspondence. Then we denote $\pi$ by $\prll$ (resp. $\pi_{\rho,\Lambda,\Lambda',\epsilon}$). If $\dg$ is trivial, then $\prll=\pi_{-,\Lambda,\Lambda'}=\pi_{\Lambda,\Lambda'}$.

It is easily seen that there exists a modified Lusztig correspondence $\cal{L}'_{G_n}$ satisfying similar conditions in Proposition \ref{q1}. To be more explicitly, let $\prll\in\cal{E}(G_n)$ be an irreducible representation, and let $\pi_{\rho_1\Lambda_1,\Lambda_1'}\in\cal{E}(G_m,\prll)$. We can substitute $\prll, \pi_{\rho,\Lambda,\Lambda^{\prime t}}$ for $\pi_{\Lambda,\Lambda'}, \pi_{\Lambda,\Lambda^{\prime t}}$ in Proposition \ref{q1} and similar argument applies for other representations by the obvious way.

From now on, we fix a choice of modified Lusztig correspondences  $\cal{L}'_{G_n}$ satisfying the conditions in our discussion above. Thus we fix a parametrization for irreducible representations, and in particular, for quadratic unipotent representations. In what follows, we denote by $\pi_{\rho,\Lambda,\Lambda'}$ (resp. $\pi_{\rho,\Lambda,\Lambda',\epsilon}$) the irreducible representation corresponding $(\rho,\Lambda,\Lambda')$ (resp. $(\rho,\Lambda,\Lambda',\epsilon)$) in this parametrization, and denote it briefly by $\pi_{\Lambda,\Lambda'}$ (resp. $\pi_{\Lambda,\Lambda',\epsilon}$) for quadratic unipotent representations.

\section{Howe correspondence of unipotent representations for finite symplectic groups and
finite orthogonal groups} \label{sec6}
In this section we review the Howe correspondence of irreducible representations for finite symplectic groups and
finite orthogonal groups. We first recall the Howe correspondence for symplectic groups and even orthogonal groups. Then we deduce the $(\sp_{2n},\o_{2n'+1})$ case from the $(\sp_{2n},\o^\epsilon_{2n'})$ case by the modified Lusztig correspondence.

\subsection{Notations}
Let $\omega_{\rm{Sp}_{2N}}$ be the Weil representation or its character (cf. \cite{Ger}) of the finite symplectic group $\rm{Sp}_{2N}(\Fq)$, which depends on the choice of a nontrivial additive character $\psi$ of $\Fq$.
For the dual pair  $(\sp_{2n},\o^{\epsilon}_{2n'})$ where $\epsilon=\pm$, we write $\omega^\epsilon_{n,n'}$ for the restriction of $\omega_{\rm{Sp}_{2N}}$ to $\sp_{2n}\fq\times\o^{\epsilon}_{2n'}\fq$. Similar notation applies for $(\sp_{2n},\o^{\epsilon}_{2n'+1})$. When the context of dual pairs is clear, abbreviate by $\Theta^\epsilon_{n,n'}$ the theta lifting from $G_n$ to $G'_{n'}$.
For an irreducible representation $\pi$ of $G_{n}$, the smallest integer $n^\epsilon(\pi)$ such that $\pi$ occurs in $\omega^\epsilon_{n,n^\epsilon(\pi)}$ is called the {\it first occurrence index} of $\pi$ in the Witt tower $\left\{G'_{n}\right\}$.

Recall the convention that $\o^+_{2n}$ (resp. $\o^-_{2n}$) denotes the isometry group of the split (resp. nonsplit) form of dimension $2n$. For odd orthogonal groups, one has $\o^+_{2n+1}\cong\o^-_{2n+1}$  as abstract groups; however they act on two quadratic spaces with different discriminants. We write ${\bf Sp}$, ${\bf O}^\pm_{\rm{even}}$ and ${\bf O}^\pm_\rm{odd}$ for the Witt tower $\{\sp_{2n}\}_{n\ge0}$, $\{\o^\pm_{2n}\}_{n\ge0}$ and $\{\o^\pm_{2n+1}\}_{n\ge0}$.

\subsection{Pan's result}
  In \cite{AMR}  conjecture 3.11, Aubert, Michel and Rouquier give a explicit description of the theta correspondence of unipotent representations for a dual pair $(\sp_{2n},\o^\epsilon_{2n'})$. In \cite{P3}, Pan proves their conjecture.

 Let
\[
\begin{aligned}
&\mathcal{B}^+_{n,n'}:=\left\{(\Lambda,\Lambda')|{}^t(\Upsilon(\Lambda')_*)\preccurlyeq{}^t(\Upsilon(\Lambda)^*),
{}^t(\Upsilon(\Lambda)_*)\preccurlyeq{}^t(\Upsilon(\Lambda')^*),\rm{def}(\Lambda')=-\rm{def}(\Lambda)+1\right\}; \\
&\mathcal{B}^-_{n,n'}:=\left\{(\Lambda,\Lambda')|{}^t(\Upsilon(\Lambda')^*)\preccurlyeq {}^t(\Upsilon(\Lambda)_*),
{}^t(\Upsilon(\Lambda)^*)\preccurlyeq{}^t(\Upsilon(\Lambda')_*),\rm{def}(\Lambda')=-\rm{def}(\Lambda)-1\right\}
\end{aligned}
\]
be two subsets of $\cal{S}_n\times\cal{S}_{n'}^{+}$ and $\cal{S}_n\times\cal{S}_{n'}^{-}$, respectively.
 \begin{theorem}\label{p}
 Let $\pl\in\cal{E}(\sp_{2n},1)$ and $\pll\in \cal{E}(\o^\epsilon_{2n'},1)$. Then $\pl\otimes\pll$ occurs in $\omega_{n,n'}^\epsilon$ if and only if $(\Lambda,\Lambda')\in \mathcal{B}^\epsilon_{n,n'}$.
 \end{theorem}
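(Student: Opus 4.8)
\emph{Proof strategy.} The plan is to compute the unipotent part of the Weil character $\omega^\epsilon_{n,n'}$ and match it against Lusztig's classification of unipotent representations by symbols. The two main inputs are the explicit Weil character formula of \cite{Ger}, which gives a handle on $\omega^\epsilon_{n,n'}$ in terms of Deligne--Lusztig characters, and the compatibility of the finite Howe correspondence with Harish--Chandra induction (in the spirit of \cite{AMR}), which lets one reduce to the essentially unique unipotent cuspidal representations and then climb back up the Harish--Chandra series.

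First I would establish the ``uniform'' version. For $F$-stable maximal tori $T_1\subset\sp_{2n}$ and $T_2\subset\o^\epsilon_{2n'}$, i.e.\ a pair of conjugacy classes in the relevant Weyl groups and hence a pair of partitions, one evaluates $\langle\omega^\epsilon_{n,n'},R^{\sp_{2n}}_{T_1,1}\otimes R^{\o^\epsilon_{2n'}}_{T_2,1}\rangle$ from the character of \cite{Ger}: it is a product of quadratic Gauss sums over the cyclic factors of $T_1$ and $T_2$, nonzero exactly when the partitions attached to $T_1$ and $T_2$ are compatible, with the signs of the Gauss sums encoding $\epsilon$. Summing up yields a closed expression for the uniform projection of $\omega^\epsilon_{n,n'}$, hence for the uniform-projection part of every multiplicity $\langle\omega^\epsilon_{n,n'},\pl\otimes\pll\rangle$.

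The delicate point is passing from this uniform data to honest multiplicities, since unipotent representations of $\sp_{2n}$ and $\o^\epsilon_{2n'}$ are not uniform. Here I would invoke Lusztig's partition of $\cal{E}(G,1)$ into families together with his non-abelian Fourier transform, which expresses each $\pl$ through uniform projections of Deligne--Lusztig characters explicitly on symbols. To keep the combinatorics manageable I would first reduce to the cuspidal case: using compatibility of the Howe correspondence with Harish--Chandra induction, if $\pl$ lies in the Harish--Chandra series over the unipotent cuspidal representation of $\sp_{2k(k+1)}$ and $\pll$ over that of $\o^{\epsilon}_{2k'^2}$, then occurrence of $\pl\otimes\pll$ in $\omega^\epsilon_{n,n'}$ forces, via a see-saw and the conservation relation for first occurrence indices, that the two cuspidal pieces correspond under theta; this pins down $\rm{def}(\Lambda')=-\rm{def}(\Lambda)\pm1$. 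What remains is the $\GGL$-part of the two Harish--Chandra series, where theta correspondence reduces to the type-$A$ case; translating this matching through the map $\Upsilon$ of Subsection \ref{sec4.2} turns it into the interlacing conditions defining $\mathcal{B}^{\pm}_{n,n'}$, namely ${}^t(\Upsilon(\Lambda')_*)\preccurlyeq{}^t(\Upsilon(\Lambda)^*)$ and ${}^t(\Upsilon(\Lambda)_*)\preccurlyeq{}^t(\Upsilon(\Lambda')^*)$ when $\rm{def}(\Lambda')=-\rm{def}(\Lambda)+1$, and the analogous pair when $\rm{def}(\Lambda')=-\rm{def}(\Lambda)-1$. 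Multiplicity one throughout follows from the multiplicity-one property of the finite theta correspondence for unipotent characters.

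The main obstacle I expect is precisely this second step: controlling the non-uniform contribution so that the clean uniform-level answer survives at the level of irreducible characters. This needs the exact shape of the uniform projection of each $\pl$ (a half-integral combination of $R_{T,1}$ over symbols sharing a common underlying multiset) and a nontrivial cancellation between Lusztig's Fourier-transform matrices on the two sides of the dual pair. A secondary difficulty is the bookkeeping of the two Witt towers ${\bf O}^\pm_{\rm{even}}$ and the $\epsilon$-signs: one must verify that the conservation relation for $\pl$ is consistent with the disjoint union of the two defect cases of $\mathcal{B}^\epsilon_{n,n'}$. Finally, the cuspidal base case itself requires an independent determination of theta lifts of unipotent cuspidal representations, carried out by degree comparison and see-saw arguments.
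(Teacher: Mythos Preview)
The paper does not prove this theorem at all: it is stated in Section~\ref{sec6} as a result of Pan \cite{P3}, who established the conjecture of Aubert--Michel--Rouquier \cite[Conjecture~3.11]{AMR}. So there is no ``paper's own proof'' to compare your proposal against; the theorem is imported as a black box.

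That said, your sketch is broadly in the spirit of how the result is actually proved in \cite{P3} and the earlier partial results in \cite{AMR}: one does compute the uniform projection of $\omega^\epsilon_{n,n'}$ via explicit character formulas, and one does use compatibility with Harish--Chandra induction to reduce the defect condition to the cuspidal base case (handled in \cite{AM}). Where your outline is thinnest is exactly where you flag it: the passage from uniform multiplicities to honest multiplicities. In \cite{P3} this is the bulk of the work and is not accomplished by a direct Fourier-transform cancellation as you suggest; rather, Pan introduces a notion of ``cells'' of symbols and proves by a delicate induction on rank that the correspondence is one-to-one on each cell, using a mixture of first-occurrence arguments, degree estimates, and the structure of the Harish--Chandra series. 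Your proposed route through Lusztig's non-abelian Fourier transform would require controlling signs and multiplicities in products of Fourier-transform matrices for two different families simultaneously, and I do not see how to make that cancellation go through without essentially reproducing Pan's cell-by-cell analysis.
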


 Recall that
  \[
\cal{L}_s': \cal{E}(G,s)\to
\left\{
  \begin{aligned}
 &\cal{E}(\dg,1)\times\cal{E}(\ddg,1)\times\cal{E}(\dddg,1)\times\{\pm\}&\textrm{ if }G\textrm{ is odd orthogonal};\\
  &\cal{E}(\dg,1)\times\cal{E}(\ddg,1)\times\cal{E}(\dddg,1)&\textrm{ otherwise}.
\end{aligned}
\right.
 \]

\begin{theorem}[Pan]\label{p1}
 Let $(G, G') = (\sp_{2n},\o^\epsilon_{2n'})$, and let $\pi\in\cal{E}(G,s)$ and $\pi'\in \cal{E}(G',s')$ for
some semisimple elements $s \in G^*$ and $s'\in (G^{\prime *})^0$. Write $\cal{L}'_s(\pi)=\p\otimes\pp\otimes\ppp$ and
and $\cal{L}_{s'}(\pi')=\pi^{\prime (1)}\otimes\pi^{\prime (2)}\otimes\pi^{\prime (3)}$, and let $\{\pi_i'\}$ be defined in (\ref{imega2}). Suppose that $q$ is large enough so that the main result in \cite{S} holds. Then $\pi \otimes \pi_i '$ (for some $i$) occurs in $\omega_{n,n'}^\epsilon$ if and only if the following conditions hold:
\begin{itemize}

\item $\dg\cong G^{\prime (1)}(s')$, $\p\cong \pi^{\prime (1)}$;

\item either $\pp\otimes\pi^{\prime (2)}$ or $\pp\otimes(\sgn\pi^{\prime (2)})$ occurs in $\omega_{\ddg,\ddf}$;

\item $\dddg\cong\dddf$, $\pp$ is equal to $ \pi^{\prime (3)}$ or $ \sgn\pi^{\prime (3)}$.
\end{itemize}
That is, the following diagram:
\[
\setlength{\unitlength}{0.8cm}
\begin{picture}(20,5)
\thicklines
\put(5.8,4){$\pi$}
\put(5.2,2.6){$\cal{L}'_s$}
\put(13.8,2.6){$\cal{L}_{s'}'$}
\put(9.4,4.2){$\Theta$}
\put(4.3,1){$\p\otimes\pp\otimes\ppp$}
\put(13.4,4){$\pi_i'$}
\put(11.8,1){$\pi^{\prime (1)}\otimes\pi^{\prime (2)}\otimes\pi^{\prime (3)}$}
\put(8.5,1.3){$\rm{id}\otimes\Theta\otimes\rm{id}$}
\put(6,3.6){\vector(0,-1){2.1}}
\put(13.5,3.6){\vector(0,-1){2.1}}
\put(8.2,1.1){\vector(2,0){3}}
\put(8.2,4){\vector(2,0){3}}
\end{picture}
\]
commutes up to a twist of the sgn character.
\end{theorem}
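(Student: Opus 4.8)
The plan is to reduce the statement for the dual pair $(\sp_{2n},\o^\epsilon_{2n'})$ with general semisimple parameters $s$, $s'$ to the purely unipotent case handled in Theorem \ref{p}, by exploiting the compatibility of theta correspondence with Lusztig correspondence and with the eigenvalue decomposition of centralizers. First I would recall the basic mechanism, due to Pan (and already used in \cite{AMR}): the theta lifting $\Theta^\epsilon_{n,n'}$ preserves Lusztig series in the sense that if $\pi \in \cal{E}(G,s)$ and $\pi\otimes\pi'$ occurs in $\omega^\epsilon_{n,n'}$, then $\pi'$ lies in a Lusztig series $\cal{E}(G',s')$ where the semisimple parameters $s$ and $s'$ are "matched" away from the eigenvalues $\pm1$; concretely, the non-$\pm1$ part of the centralizer $C_{G^{*F}}(s)$ must be isomorphic to the non-$\pm1$ part of $C_{(G'^*)^F}(s')$, and the corresponding components $\pi^{(1)}$, $\pi'^{(1)}$ of the (modified) Lusztig correspondents must agree. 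This is exactly the first bullet $\dg\cong\df$, $\p\cong\pi'^{(1)}$. I would cite this as the known structural input (from \cite{P3,P4}), phrasing the argument as a comparison of the uniform projections of $\Theta(\pi)$ against Deligne–Lusztig characters $R^{G'}_{T^*,s'}$.

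Next I would isolate the $\pm1$-eigenvalue blocks. Under $\cal{L}'_s$ and $\cal{L}_{s'}$, the representation $\pi$ is sent to $\p\otimes\pp\otimes\ppp$ with $\pp$ a unipotent representation of the symplectic group $\ddg = \sp_{2\nu_1(s)}$ and $\ppp$ a unipotent representation of the even orthogonal group $\dddg = \o^{\pm}_{2\nu_{-1}(s)}$, and similarly $\pi'$ decomposes into unipotent pieces on a symplectic and an even orthogonal factor. The key point is that the see-saw / Witt-tower formalism for the full pair $(\sp_{2n},\o^\epsilon_{2n'})$ restricts, block by block, to the unipotent theta correspondences for the smaller pairs $(\ddg, \ddf)$ of type $(\sp,\o_{\rm even})$ and $(\dddg,\dddf)$ of type $(\sp, \o_{\rm even})$ — one block coming from the eigenvalue-$1$ part, the other from the eigenvalue-$(-1)$ part. (The asymmetry in the last two bullets — "occurs in $\omega_{\ddg,\ddf}$" versus "$\pp$ equal to $\pi'^{(3)}$ or $\sgn\pi'^{(3)}$" — reflects that one of these two blocks is itself a nontrivial theta correspondence while the other is forced to be an identity-type matching, because of the way the dimensions $n$, $n'$ and the defects are bookkept; I would track this carefully via $\rm{rank}$ and $\rm{def}$.) Once the problem is reduced to these unipotent blocks, Theorem \ref{p} (equivalently the sets $\mathcal{B}^\pm$) gives the necessary and sufficient combinatorial condition on symbols, and assembling the blocks yields the stated three conditions. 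The sgn-ambiguities and the set $\{\pi_i'\}$ defined in (\ref{imega2}) enter precisely because the modified Lusztig correspondence on even orthogonal factors is only well-defined up to the $\rm{sgn}$-twists listed in (\ref{imega2}); hence the diagram commutes only "up to a twist of the sgn character", and one phrases the final equivalence in terms of the orbit $\{\pi_i'\}$ rather than a single $\pi'$.

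The main obstacle, I expect, is the precise bookkeeping of the eigenvalue blocks and the $\rm{sgn}$-twists: one must verify that the isomorphism $C_{G^{*F}}(s) \cong \dg\times\ddgg\times\dddg$ (respectively for $G'$) is compatible with the Weil-representation restriction in the strong sense that the character $\omega^\epsilon_{n,n'}$, pulled back through $\cal{L}'_s\otimes\cal{L}_{s'}$, genuinely decomposes as (identity on the $\dg$-block) $\otimes$ (unipotent theta on one $\pm1$-block) $\otimes$ (unipotent theta on the other $\pm1$-block), up to a controlled global sign character. This is essentially the content of Pan's analysis in \cite{P4}, so my plan is to quote the relevant structural results there and spend the effort only on checking that the normalization conventions of this paper (in particular "$G^{(2)}(s)$ always associates with eigenvalue $1$", and the distinction $\pi_{\Lambda,\pm}(-1)=\pm\rm{Id}$ for odd orthogonal factors) match, so that the diagram can be asserted to commute up to sgn with the $\{\pi_i'\}$ as stated. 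A secondary technical point is the hypothesis that $q$ is large enough for \cite{S}: this is needed so that the uniform projection argument in the first step actually pins down $\pi'$ inside its Lusztig series (via separation of regular characters), and I would invoke it exactly there.
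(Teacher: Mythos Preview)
The paper does not prove Theorem \ref{p1}: it is stated as a result of Pan, cited from \cite{P4}, and used as a black box throughout (see also Remark \ref{sgn}). So there is no ``paper's own proof'' to compare your proposal against.

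That said, your outline is essentially a faithful sketch of Pan's argument in \cite{P3,P4}: first use the compatibility of theta lifting with Lusztig series (the \cite{AMR}-type input) to match the non-$\pm 1$ blocks and obtain $\dg\cong\df$, $\pi^{(1)}\cong\pi^{\prime(1)}$; then reduce to the unipotent theta correspondence on the $\pm 1$-eigenvalue blocks, where Theorem \ref{p} applies; and finally absorb the ambiguity of the modified Lusztig correspondence on even orthogonal factors into the $\rm{sgn}$-orbit $\{\pi_i'\}$ of (\ref{imega2}). One clarification: in your description you say both $\pm1$-blocks give a pair ``of type $(\sp,\o_{\rm even})$''. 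In fact, as the bullets in the statement indicate, the eigenvalue-$1$ block yields an \emph{identity} matching $\ddg\cong\dddf$ with $\pp\in\{\pi^{\prime(3)},\sgn\pi^{\prime(3)}\}$ (no theta there), while it is the eigenvalue-$(-1)$ block that carries the genuine unipotent theta correspondence $\ppp\otimes\pi^{\prime(2)}$ (or its sgn-twist) in $\omega_{\dddg,\ddf}$. This is the asymmetry you allude to; just be sure your bookkeeping reflects it correctly. Otherwise your plan matches what Pan does, and for the purposes of this paper you may simply cite \cite{P4} as the authors do.
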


\begin{theorem}[Pan]\label{p2}
Let $(G, G') = (\sp_{2n},\o^\epsilon_{2n'+1})$, and let $\pi \in  \cal{E}(G,s)$ and $\pi'\in \cal{E}(G',s')$ for some semisimple elements $s \in G^*$ and $s'\in (G^{\prime *})^0$. Write $\cal{L}'_s(\pi)=\p\otimes\pp\otimes\ppp$ and
and $\cal{L}_{s'}'(\pi')=\pi^{\prime (1)}\otimes\pi^{\prime (2)}\otimes\pi^{\prime (3)}\otimes\e$, and let $\{\pi_i\}$ be defined as in (\ref{imega1}). Then $\pi_i \otimes \pi '$ occurs in $\omega^\epsilon_{n,n'}$ for some $i$ if and only if the following conditions hold:
\begin{itemize}

\item $\dg\cong\df$, $\p\cong \pi^{\prime (1)}$;

\item $\ddg\cong\dddf$, $\pp\cong\pi^{\prime (3)}$;

\item either $\ppp\otimes\pi^{\prime (2)}$ or $(\sgn \ppp)\otimes\pi^{\prime (2)}$ occurs in $\omega_{\dddg,\ddf}$.
\end{itemize}
That is, the following diagram:
\[
\setlength{\unitlength}{0.8cm}
\begin{picture}(20,5)
\thicklines
\put(5.8,4){$\pi_i$}
\put(5.2,2.6){$\cal{L}'_s$}
\put(13.8,2.6){$\iota\circ\cal{L}_{s'}'$}
\put(9.4,4.2){$\Theta$}
\put(4.3,1){$\p\otimes\pp\otimes\ppp$}
\put(13.4,4){$\pi'$}
\put(11.8,1){$\pi^{\prime (1)}\otimes\pi^{\prime (3)}\otimes\pi^{\prime (2)}$}
\put(8.5,1.3){$\rm{id}\otimes\rm{id}\otimes\Theta$}
\put(6,3.6){\vector(0,-1){2.1}}
\put(13.5,3.6){\vector(0,-1){2.1}}
\put(8.2,1.1){\vector(2,0){3}}
\put(8.2,4){\vector(2,0){3}}
\end{picture}
\]
commutes up to a twist of the sgn character where $\iota(\pi^{\prime (1)}\otimes\pi^{\prime (2)}\otimes\pi^{\prime (3)}\otimes\epsilon')=\pi^{\prime (1)}\otimes\pi^{\prime (3)}\otimes\pi^{\prime (2)}$.

\end{theorem}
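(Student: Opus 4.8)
The plan is to deduce the $(\sp_{2n}, \o^\epsilon_{2n'+1})$ correspondence from the already-established $(\sp_{2n}, \o^\epsilon_{2n'})$ correspondence of Theorem \ref{p1} by exploiting the relationship between an odd orthogonal group and its even counterparts at the level of dual groups and Lusztig series. The key structural fact is that the dual group of $\o^\epsilon_{2n'+1}$ is a product of a symplectic group with $\{\pm 1\}$, so the modified Lusztig correspondence for $G' = \o^\epsilon_{2n'+1}$ has precisely the same three reductive constituents $\df, \ddf, \dddf$ as appear for even orthogonal groups — the only change being a bookkeeping factor $\{\pm\}$ and, crucially, a \emph{swap} of the roles of the eigenvalue-$1$ part and the eigenvalue-$(-1)$ part relative to the symplectic side. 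This is exactly what the map $\iota$ in the statement records. So the first step is to make precise, using the explicit descriptions in Subsection \ref{mlus}, that chasing $\pi'$ through $\iota \circ \cal{L}_{s'}'$ lands one in the same combinatorial situation as for an even orthogonal partner, up to the sign ambiguities already present in $\{\pi_i\}$ and $\{\pi_i'\}$.

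Next I would invoke the standard see-saw / compatibility argument relating theta lifting to the tower structure. Concretely, one realizes $\o^\epsilon_{2n'+1}$ inside the Witt tower ${\bf O}^\pm_\rm{odd}$ and compares the Weil representation $\omega^\epsilon_{n,n'}$ for the pair $(\sp_{2n}, \o^\epsilon_{2n'+1})$ with the Weil representations for the pairs $(\sp_{2n}, \o^{\epsilon'}_{2n'})$ and $(\sp_{2n}, \o^{\epsilon''}_{2n'+2})$ that bracket it. The decomposition $C_{G^{*F}}(s) \cong \dg \times \ddgg \times \dddg$ lets one factor the condition ``$\pi_i \otimes \pi'$ occurs in $\omega^\epsilon_{n,n'}$'' into three independent conditions on the three constituents: the general-linear/unitary part $\dg \cong \df$ with $\p \cong \pi^{\prime(1)}$ is forced by matching Lusztig series (the non-$\pm 1$ eigenvalue data is transported rigidly by theta, cf. the analogous clause in Theorem \ref{p1}); the constituent indexed by eigenvalue $1$ on the symplectic side, $\ddg$, must match the constituent indexed by eigenvalue $-1$ on the odd-orthogonal side, $\dddf$ — this is the source of the $\iota$-swap — giving $\ddg \cong \dddf$ and $\pp \cong \pi^{\prime(3)}$; and the remaining pieces $\dddg$ (eigenvalue $-1$, symplectic side, an even orthogonal group) and $\ddf$ (eigenvalue $1$, odd-orthogonal side, a symplectic group) form themselves a genuine $(\sp, \o^\pm_{\rm{even}})$ dual pair, so Theorem \ref{p} applies and yields the condition that $\ppp \otimes \pi^{\prime(2)}$ or $(\sgn\,\ppp) \otimes \pi^{\prime(2)}$ occurs in $\omega_{\dddg, \ddf}$.

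Finally I would assemble these three conditions and verify they are not only necessary but sufficient, which amounts to checking that the multiplicities multiply correctly across the tensor factors — this follows because the Weil representation of $\sp_{2N}$ restricted to a product dual pair decomposes as an external tensor product compatibly with the centralizer decomposition, a fact Pan establishes in \cite{P3, P4} — and then reading off the commuting-diagram formulation, tracking the sgn-twist ambiguity through each of the three steps (each constituent of an orthogonal group is only determined up to tensoring by $\rm{sgn}$, hence the ``$\pp$ equal to $\pi^{\prime(3)}$ or $\sgn\,\pi^{\prime(3)}$''-type flexibility gets absorbed into the definitions of $\{\pi_i\}$ and the conditions). The main obstacle I anticipate is the careful bookkeeping of the eigenvalue-$(-1)$ constituent: one must check that the modified Lusztig correspondence of this paper (which, unlike Pan's original, pins $\ddg$ to eigenvalue $1$) interacts with the odd-orthogonal dual group — where the natural labeling of symplectic factors by $\pm 1$ eigenvalues is shifted by the $\{\pm 1\}$-component of $s$ — in exactly the way encoded by $\iota$, and that no spurious sign or discriminant discrepancy ($\epsilon$ versus $\ee \cdot \epsilon_0$, as in case (3) of Subsection \ref{mlus}) is introduced when passing between the odd tower and the even tower. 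Everything else is a routine transcription of the even-orthogonal case.
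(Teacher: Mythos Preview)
The paper does not give a proof of Theorem \ref{p2}: it is stated as a result of Pan and cited from \cite{P4}, with no argument supplied. The sentence at the head of Section \ref{sec6} (``Then we deduce the $(\sp_{2n},\o_{2n'+1})$ case from the $(\sp_{2n},\o^\epsilon_{2n'})$ case by the modified Lusztig correspondence'') is a description of how the result sits relative to Theorem \ref{p1}, not a proof carried out in this paper. So there is no ``paper's own proof'' to compare your proposal against.

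That said, a few comments on your sketch. Your structural observations are correct and are exactly the content of the theorem: for $G=\sp_{2n}$ one has $\ddg$ symplectic and $\dddg$ even orthogonal, while for $G'=\o^\epsilon_{2n'+1}$ both $\ddf$ and $\dddf$ are symplectic, so the only way to match types is $\ddg\cong\dddf$ (identity on unipotent representations) and $(\dddg,\ddf)$ forming a genuine $(\o^\pm_{\rm even},\sp)$ pair to which Theorem \ref{p} applies --- this is precisely the $\iota$-swap. Where your outline is less convincing is the proposed mechanism: you suggest ``bracketing'' $\o^\epsilon_{2n'+1}$ between $\o^{\epsilon'}_{2n'}$ and $\o^{\epsilon''}_{2n'+2}$ via see-saw, but there is no see-saw diagram that relates the Weil representations for odd and even orthogonal towers in a way that would let you transport Theorem \ref{p1} across directly. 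Pan's actual route in \cite{P4} (and already in \cite{P2} for the unipotent case) is rather to decompose the Weil representation $\omega^\epsilon_{n,n'}$ itself as a sum of products of Deligne--Lusztig characters following Srinivasan \cite{S}, and then read off the compatibility with the modified Lusztig correspondence factor by factor; the reduction to the even case happens at the level of the smaller dual pair $(\dddg,\ddf)$, not by embedding the big odd orthogonal group into an even one. Your final paragraph about multiplying multiplicities across tensor factors is closer to what is actually needed.
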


\begin{remark}\label{sgn}
In \cite{P4}, above two Theorems hold for any modified Lusztig correspondences, which implies that any two different choices of modified Lusztig correspondences are equal up to sgn.
\end{remark}

Therefore, the description of general Howe correspondence for dual pair of a symplectic group and an orthogonal group is now completely characterized up to sgn. We remark that if $\pi\otimes\pi'$ occurs in $\omega^\epsilon_{n,n'}$, then $\pi(-I)=\pi'(-I)$ (see \cite[Proposition 3.1 (i)]{LW1}). So, keep the assumptions in Theorem \ref{p2}, the sgn of $\pi'$ is uniquely determined.

\subsection{Theta lifting and parabolic induction}

We now shows that theta lifting and parabolic induction are compatible.

\begin{lemma}\label{5.5}
(i) Let $\pi$ be an irreducible representation of $\sp_{2m}\fq$. Let $m>n$, and let $\sigma$ be an irreducible cuspidal representation of $\sp_{2n}\fq$. Let $\{\pi_i\}$ be defined as in (\ref{imega1}). Then there is at most one of $\pi_i$ appearing in $\cal{E}(\sp_{2m},\sigma)$.

(ii) Let $\pi$ be an irreducible representation of $\o^\epsilon_{2m}\fq$. Let $m>n$, and let $\sigma$ be an irreducible cuspidal representation of $\o^\epsilon_{2n}\fq$. Let $\{\pi_i\}$ be defined as in (\ref{imega2}). Then there is at most one of $\pi_i$ appearing in $\cal{E}(\o^\epsilon_{2m},\sigma)$.

(iii) Let $\pi$ be an irreducible representation of $\o^\epsilon_{2m+1}\fq$. Let $m>n$, and let $\sigma$ be an irreducible cuspidal representation of $\o^\epsilon_{2n+1}\fq$. Assume that $\pi\in \cal{E}(\o^\epsilon_{2m+1},\sigma)$. Then $\sgn\pi\notin \cal{E}(\o^\epsilon_{2m+1},\sigma)$.
\end{lemma}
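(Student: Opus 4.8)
The plan is to treat all three parts uniformly via the structure of Harish-Chandra series combined with the behavior of the sign character under parabolic induction and under the modified Lusztig correspondence. The key point is that in each case, tensoring with $\rm{sgn}$ interchanges $\cal{E}(G_n,\sigma)$ with $\cal{E}(G_n,\rm{sgn}\cdot\sigma)$ (or with $\cal{E}(G_n,\sigma')$ for the appropriate twist of the cuspidal datum), because $\rm{sgn}$ is a linear character of $G_n^F$ trivial on no proper standard parabolic's unipotent radical, so $\rm{sgn}\otimes I^{G}_{\GGL_{n-m}\times G_m}(\rho\otimes\sigma) \cong I^{G}_{\GGL_{n-m}\times G_m}(\rho\otimes(\rm{sgn}\cdot\sigma))$. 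Hence the question reduces to understanding when $\rm{sgn}\cdot\sigma$ lies in the same Harish-Chandra series as $\sigma$, i.e. whether $\rm{sgn}\cdot\sigma$ is $G_n^F$-conjugate (as a cuspidal datum) to $\sigma$, and then using the parametrization recalled in Section \ref{sec4}.

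For part (i), $G_n=\sp_{2n}$: here $\rm{sgn}$ is the unique order-two linear character, and I would argue that $\rm{sgn}\cdot\sigma$ is \emph{not} conjugate to $\sigma$ as a cuspidal representation of $\sp_{2m}$, using that the cuspidal $\sigma$ lies in a Lusztig series $\cal{E}(\sp_{2m},t)$ and the sign twist moves the relevant eigenvalue data; more precisely $\rm{sgn}\cdot\sigma$ and $\sigma$ correspond under $\cal{L}'$ to pairs of symbols $(\Lambda,\Lambda')$ and (by Remark \ref{sgn} and the discussion of $\pi_{\Lambda^t}=\rm{sgn}\cdot\pi_{\Lambda}$ on the orthogonal factors) to a genuinely different pair, so the two lie in distinct series and $\{\pi_i\}$ — which is the fiber of $\cal{L}_s^{\prime-1}$ over the set in (\ref{imega1}), differing by $\rm{sgn}$ on the $\dddg$-factor — cannot meet $\cal{E}(\sp_{2n},\sigma)$ in more than one element. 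The same mechanism handles part (ii) with the set (\ref{imega2}): the four elements of $\{\pi_i\}$ differ by $\rm{sgn}$ on the $\ddg$- and/or $\dddg$-factor, which under the sign-twist/transpose dictionary corresponds to tensoring $\pi$ by $\rm{sgn}$ or by the character $\rm{sp}$ (or their product); each nontrivial such twist alters the cuspidal support $\sigma$ of $\o^\epsilon_{2m}$ to an inequivalent one, so again at most one $\pi_i$ survives in $\cal{E}(\o^\epsilon_{2n},\sigma)$. For part (iii), $G_n=\o^\epsilon_{2n+1}$: here one uses that $\pi_{\Lambda,\epsilon}=\rm{sgn}\cdot\pi_{\Lambda,-\epsilon}$, so $\rm{sgn}$ flips the sign $\epsilon$ of the cuspidal datum $\sigma$ on $\o^\epsilon_{2m+1}$; since $\sigma$ and $\rm{sgn}\cdot\sigma$ then have opposite values on $-I$ they are inequivalent, whence $\rm{sgn}\cdot\pi$ lies in $\cal{E}(\o^\epsilon_{2n+1},\rm{sgn}\cdot\sigma)\ne\cal{E}(\o^\epsilon_{2n+1},\sigma)$.

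Concretely I would carry out the steps in this order: first record the compatibility $\rm{sgn}\otimes I^G_L(\delta)\cong I^G_L(\delta\otimes\rm{sgn}|_{L^F})$ and note $\rm{sgn}|_{\GGL_{n-m}\times G_m}=1\boxtimes\rm{sgn}$ (the sign character of $\GGL$ restricted from $\sp$ or $\o$ is trivial, since $\GGL$ sits in a Levi whose sign pulls back trivially — this needs a small check); second, translate the operators $\ppp\mapsto\rm{sgn}\cdot\ppp$, $\pp\mapsto\rm{sgn}\cdot\pp$ appearing in (\ref{imega1})–(\ref{imega2}) into global twists of $\pi$ by $\rm{sgn}$ and by $\rm{sp}$, using Proposition \ref{q1} and the identities $\pi_{\Lambda^t}=\rm{sgn}\cdot\pi_\Lambda$; third, show each such twist changes the cuspidal datum $\sigma$ to an inequivalent cuspidal representation of the smaller group (for symplectic and odd orthogonal this is an invariant like the value on $-I$ or the defect of the associated symbol; for even orthogonal one also uses the $\rm{sp}$-character and the transpose on symbols); fourth, conclude by the uniqueness of the Harish-Chandra series containing a given irreducible. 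The main obstacle I anticipate is the third step in the even orthogonal case (ii): one must verify that none of the three nontrivial twists among $\{1,\rm{sgn},\chi_{\rm{sp}},\rm{sgn}\cdot\chi_{\rm{sp}}\}$ fixes the cuspidal support, which amounts to checking that the cuspidal $\sigma$ of $\o^\epsilon_{2m}$ is not isomorphic to any of its three twists — equivalently, using Proposition \ref{q1}(ii), that its parametrizing symbol(s) are not fixed by transpose on either row; for a cuspidal quadratic-unipotent datum this follows from the explicit shape of cuspidal symbols (their defect is nonzero, so $\Lambda\ne\Lambda^t$), but the reduction of a general cuspidal $\sigma$ to this symbol bookkeeping is where the care is needed.
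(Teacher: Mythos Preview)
Your approach has a genuine gap in part (i), and a related one in part (ii). The finite symplectic group $\sp_{2n}(\Fq)$ is perfect (for the large $q$ assumed throughout), so it has \emph{no} nontrivial linear character; in particular there is no ``$\rm{sgn}$'' on $\sp_{2n}$ to tensor with. Consequently the operation $\ppp\mapsto\rm{sgn}\cdot\ppp$ on the $\dddg$-factor in (\ref{imega1}) cannot be realized as a global character twist of $\pi$, and your step~2 fails for (i). The actual relationship between the two members of $\{\pi_i\}$, as recorded in Proposition~\ref{q1}(i), is conjugation by an element $h\in\rm{CSp}_{2n}^\pm(\Fq)$ with $\zeta\circ\lambda(h)=-1$: one has $\pi_{\rho,\Lambda,\Lambda'}^c=\pi_{\rho,\Lambda,\Lambda^{\prime t}}$. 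This outer conjugation does carry $\cal{E}(\sp_{2n},\sigma)$ to $\cal{E}(\sp_{2n},\sigma^c)$, and Proposition~\ref{q1}(i) says $\sigma^c\ne\sigma$ exactly when $\Lambda'\ne\Lambda^{\prime t}$ on the cuspidal level (when $\Lambda'=\Lambda^{\prime t}$ the set $\{\pi_i\}$ is a singleton anyway). That is the argument the paper has in mind when it writes ``immediate from Proposition~\ref{q1}''.

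The same issue recurs in (ii): by Proposition~\ref{q1}(ii), the global characters $\rm{sgn}$ and $\chi$ of $\o^\epsilon_{2n}$ act on parameters by $(\Lambda,\Lambda')\mapsto(\Lambda^t,\Lambda^{\prime t})$ and $(\Lambda,\Lambda')\mapsto(\Lambda',\Lambda)$ respectively, so they (and their product) do \emph{not} produce the single-factor transposes $(\Lambda^t,\Lambda')$ and $(\Lambda,\Lambda^{\prime t})$ needed to reach all four elements of (\ref{imega2}). Again it is the $^c$-conjugation that supplies the missing moves, and Proposition~\ref{q1}(ii) is precisely the statement that all four resulting cuspidal supports are distinct. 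Your treatment of (iii) is fine and matches the paper. In short, replace ``twist by a linear character'' with ``apply the operations of Proposition~\ref{q1} (character twists \emph{and} $^c$-conjugation)''; once you do, the remainder of your outline becomes exactly the one-line proof in the paper.
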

\begin{proof}
It follows immediately from Proposition \ref{q1}.
\end{proof}

In \cite{LW3}, we know the Howe correspondence of representations in the Harish-Chandra series $\cal{E}(G,\sigma)$ for a cuspidal representation $\sigma$.

\begin{proposition} [Proposition 5.8 in \cite{LW3}]\label{o3}
Let  $(G_m, G'_{m'})$ be a dual pair  in the Witt tower  $(\bf{Sp},\bf{O}^\epsilon_{\rm{even}})$ or $(\bf{Sp},\bf{O}^\epsilon_{\rm{odd}})$. Assume that $\pi\in \cal{E}(G_m,\sigma)$, where $\sigma$ is an irreducible cuspidal representation of $G_n^F$, $n\leq m$, $n\equiv m \ \rm{mod} \ 2$. Let $n' = n^\epsilon(\sigma)$ be its first occurrence index, so that $\sigma' := \Theta^\epsilon_{n,n'}(\sigma)$ is an irreducible cuspidal representation of $G_{n'}^{\prime F}$. Then the following hold.

(i) The irreducible constituents of $\Theta^\epsilon_{m,m'}(\pi)$ belong to   $\cal{E}(G_{m'}^{\prime },\sigma')$,

(ii) If $m'-m\ge {n'-n}$, then $\Theta^\epsilon_{m,m'}(\pi)\ne 0$.
\end{proposition}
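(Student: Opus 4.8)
The plan is to derive both parts from the compatibility of the finite theta correspondence with Harish-Chandra induction, reducing the general case to the cuspidal case which can be read off Pan's description in Theorems \ref{p1} and \ref{p2}; I will write $(G_m,G'_{m'})$ for the dual pair and, since its two members play symmetric roles, take $G_m=\sp_{2m}$. The first step is to record the finite-field analogue of Kudla's filtration: for $0\le k\le\min(m,m')$ and representations $\rho$ of $\GGL_k\fq$ and $\tau$ of $G_{m-k}^F$,
\[
\Theta^\epsilon_{m,m'}\big(I^{G_m}_{\GGL_k\times G_{m-k}}(\rho\otimes\tau)\big)=I^{G'_{m'}}_{\GGL_k\times G'_{m'-k}}\big(\rho\otimes\Theta^\epsilon_{m-k,m'-k}(\tau)\big)
\]
up to replacing $\rho$ by a dual or a twist, which is irrelevant for Harish-Chandra series; this comes from the mixed Schr\"odinger model of $\omega_{\rm{Sp}_{2N}}$, or may be quoted. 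Combined with Theorems \ref{p1} and \ref{p2} and the constancy of defects along a Harish-Chandra series (Proposition \ref{q1}(iv)), I would also establish that all irreducible constituents of a fixed $I^{G_m}_{\GGL_k\times G_{m-k}}(\rho\otimes\tau)$ share one and the same first occurrence index in the opposite Witt tower.

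For (i): choose $\rho\in\cal{E}(\GGL_{m-n})$ with $\pi\hookrightarrow I^{G_m}_{\GGL_{m-n}\times G_n}(\rho\otimes\sigma)$; the displayed identity with $k=m-n$ puts every constituent of $\Theta^\epsilon_{m,m'}(\pi)$ inside $I^{G'_{m'}}_{\GGL_{m-n}\times G'_{m'-(m-n)}}(\rho\otimes\Theta^\epsilon_{n,m'-(m-n)}(\sigma))$. If $m'-(m-n)<n'$ this vanishes; if $m'-(m-n)\ge n'$, then the cuspidal case gives that $\Theta^\epsilon_{n,m'-(m-n)}(\sigma)$ has all constituents in $\cal{E}(G'_{m'-(m-n)},\sigma')$, and transitivity of Harish-Chandra induction finishes. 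The cuspidal case $m=n$ I would read off Pan's classification directly: the Harish-Chandra series of a representation is determined by the $\GGL$-part of its modified Lusztig datum and the cuspidality of the remaining factors, all controlled under theta by Theorems \ref{p1}, \ref{p2} and Theorem \ref{p}; since $\sigma$ is cuspidal with first occurrence $n'$ and $\Theta^\epsilon_{n,n'}(\sigma)=\sigma'$, this forces the constituents of $\Theta^\epsilon_{n,m'}(\sigma)$ into $\cal{E}(G'_{m'},\sigma')$ for all $m'\ge n'$.

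For (ii): since occurrence persists under enlarging the target, it suffices to treat $m'=m'_0:=m+(n'-n)$, where $m'_0-(m-n)=n'$, so the displayed identity yields
\[
\Theta^\epsilon_{m,m'_0}\big(I^{G_m}_{\GGL_{m-n}\times G_n}(\rho\otimes\sigma)\big)=I^{G'_{m'_0}}_{\GGL_{m-n}\times G'_{n'}}(\rho\otimes\sigma')\ne 0.
\]
Additivity of $\Theta^\epsilon_{m,m'_0}$ on composition factors gives that some constituent of $I^{G_m}_{\GGL_{m-n}\times G_n}(\rho\otimes\sigma)$ has first occurrence index $\le m'_0$, hence, by the constancy noted above, so does $\pi$, that is $\Theta^\epsilon_{m,m'_0}(\pi)\ne 0$. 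Alternatively I would prove non-vanishing by a two-step see-saw built from $\sp_{2m}\supset\sp_{2n}\times\sp_{2(m-n)}$ and $\o^\epsilon_{2m'_0}\supset\o^\epsilon_{2n'}\times\o^+_{2(m-n)}$: restricting $\pi$ to $\sp_{2n}\times\sp_{2(m-n)}$ one isolates a summand $\sigma\otimes\pi_2$ with $\pi_2$ a constituent of the Siegel induction $I^{\sp_{2(m-n)}}_{\GGL_{m-n}}(\rho)$, then uses that $\sigma\otimes\sigma'$ occurs in $\omega^\epsilon_{n,n'}$ and that every such $\pi_2$ is a theta lift from $\o^+_{2(m-n)}$, producing an irreducible representation of $\o^\epsilon_{2m'_0}$ pairing nontrivially with $\pi$.

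The hard part, I expect, will not be the reductions but the tracking of the single constituent $\pi$ through the theta lift rather than the reducible induced module: both the non-vanishing in (ii) and the sharpness in (i) rest on the constancy of the first occurrence index along a Harish-Chandra induction, which itself depends on the explicit compatibility of theta with the modified Lusztig correspondence together with the constancy of defects; the see-saw with the balanced pair $(\sp_{2(m-n)},\o^+_{2(m-n)})$ is where that, and hence the proposition, gets pinned down.
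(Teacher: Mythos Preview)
The paper does not prove this proposition; it is quoted as Proposition~5.8 of \cite{LW3}. So there is no in-paper argument to compare against, and I comment on your proposal directly.

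Your route to (i) through the compatibility of theta lifting with parabolic induction is sound and is exactly the mechanism recorded in this paper as Proposition~\ref{w1} (itself from \cite{LW3}); reducing to the cuspidal case and reading the Harish-Chandra series off Pan's description is the expected argument.

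For (ii) there is a genuine gap. Your key claim, that all irreducible constituents of a fixed $I^{G_m}_{\GGL_k\times G_{m-k}}(\rho\otimes\tau)$ share one first occurrence index, is false in general. Proposition~\ref{q1}(iv) pins down only $\rm{def}(\Lambda)$ and $\rm{def}(\Lambda')$, whereas by Proposition~\ref{f1} the first occurrence index also depends on the leading part $\lambda_1$ of $\Upsilon(\Lambda)$, which does vary among constituents once $\rho$ involves the eigenvalue~$1$. Concretely, in $\sp_2\fq$ the two unipotent constituents of $I^{\sp_2}_{\GGL_1\times\sp_0}(1\otimes 1)$ have symbols with $\Upsilon(\Lambda)=\begin{bmatrix}1\\ -\end{bmatrix}$ and $\begin{bmatrix}-\\ 1\end{bmatrix}$ respectively, and Proposition~\ref{f1}(i) gives first occurrence indices $0$ and $1$ in ${\bf O}^+_{\rm{even}}$. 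So knowing that \emph{some} constituent lifts at level $m'_0=m+(n'-n)$ does not force $\pi$ to. Your alternative see-saw is also not correctly wired: the two containments $\sp_{2m}\supset\sp_{2n}\times\sp_{2(m-n)}$ and $\o^\epsilon_{2m'_0}\supset\o^\epsilon_{2n'}\times\o^+_{2(m-n)}$ point the same way, hence do not form a see-saw pair (which requires $H\subset G$ together with $G'\subset H'$). What (ii) actually needs is only the \emph{inequality} $n^\epsilon(\pi)\le m+(n'-n)$ for every $\pi\in\cal{E}(G_m,\sigma)$, not an equality; once one has Theorem~\ref{p} and the reductions of Theorems~\ref{p1}--\ref{p2}, this upper bound is precisely what the computation in Proposition~\ref{f1} delivers (for instance $n^+(\pi_\Lambda)=m-\lambda_1-k\le m-k=m+(n'-n)$ in the unipotent case with $\rm{def}(\Lambda)=2k+1$), but it cannot be deduced from the defect data alone.
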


\begin{corollary} \label{p3}
Let  $(G, G')$ be a dual pair  in the Witt tower  $(\bf{Sp},\bf{O}^\epsilon_{\rm{even}})$ or $(\bf{Sp},\bf{O}^\epsilon_{\rm{odd}})$. Let $\pi\in \cal{E}(\sp_{2n},\sigma)$ and $\pi'\in\cal{E}(\o^\epsilon_{2n'+1},\sigma')$ (resp. $\pi'\in\cal{E}(\o^\epsilon_{2n'},\sigma')$). Let $\pi$ and $\pi'$ satisfying the conditions in Theorem \ref{p1} (resp. Theorem \ref{p2}).

(i) Let $(G,G')= (\sp_{2n},\o^\epsilon_{2n'})$, and let $\{\pi_i'\}$ be defined as in (\ref{imega2}). Assume that $\pi'_i\in\cal{E}(\o^\epsilon_{2n'},\sigma'_i)$. Then
\begin{itemize}

\item There is exactly one of $\pi\otimes\pi_i'$ appearing in $ \omega^\epsilon_{n,n'}$;

\item  $\pi\otimes\pi_i'$ appears in $ \omega^\epsilon_{n,n'}$ if and only if $\sigma\otimes\sigma'_i$ appears in $\omega^\epsilon_{m,m'}$ for some $m,m'$.

\item If $\prllc\in\Theta_{n,n_1'}^\epsilon(\pi)$ and $\prlld\in\Theta_{n,n_2'}^\epsilon(\pi)$, then $\rm{def}(\Lambda_1)=\rm{def}(\Lambda_2)$ and $\Lambda_1'=\Lambda_2'$.
\end{itemize}

(ii)  Let $(G,G')= (\sp_{2n},\o^\epsilon_{2n'+1})$, and let $\{\pi_i\}$ be defined as in (\ref{imega1}). Assume that $\pi_i\in\cal{E}(\sp_{2n},\sigma_i)$ and $\pi'\in\cal{E}(\o^\epsilon_{2n'},\sigma')$. Then
\begin{itemize}

\item There is exactly one of $\pi_i\otimes\pi'$ appearing in $ \omega^\epsilon_{n,n'}$;

\item  $\pi_i\otimes\pi'$ appears in $ \omega^\epsilon_{n,n'}$ if and only if $\sigma_i\otimes\sigma'$ appears in $\omega^\epsilon_{m,m'}$ for some $m,m'$.

\item If $\pi_{\rho,\Lambda_1,\Lambda_1',\epsilon_1}\in\Theta_{n,n_1'}^\epsilon(\pi)$ and $\pi_{\rho,\Lambda_2,\Lambda_2',\epsilon_2}\in\Theta_{n,n_2'}^\epsilon(\pi)$, then $\rm{def}(\Lambda_1)=\rm{def}(\Lambda_2)$, $\epsilon_1=\epsilon_2$ and $\Lambda_1'=\Lambda_2'$.
\end{itemize}

\end{corollary}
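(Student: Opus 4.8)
The plan is to derive everything from Pan's characterizations (Theorems \ref{p1}, \ref{p2}) together with the Harish-Chandra-compatibility statement Proposition \ref{o3}. First I would observe that the three bullet points in each part are of two different flavors: the first two concern \emph{which} member of the relevant set $\{\pi_i'\}$ (resp. $\{\pi_i\}$) occurs in the Weil representation, while the third concerns the stability of certain symbol data (defect of $\Lambda_1$, the symbol $\Lambda_1'$) as the target dimension $n_1'$ varies. So I would treat them in that order.

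For the first bullet in part (i): by Theorem \ref{p1}, $\pi\otimes\pi_i'$ occurs in $\omega^\epsilon_{n,n'}$ exactly when the three conditions there hold, and those conditions determine $\pi^{\prime(1)}$, $\pi^{\prime(2)}$, $\pi^{\prime(3)}$ up to the ambiguity recorded in $\{\pi_i'\}$ (i.e.\ up to tensoring $\pi^{\prime(2)}$ and/or $\pi^{\prime(3)}$ with $\sgn$). The point is that the occurrence of $\ppp\otimes\pi^{\prime(2)}$ in $\omega_{\ddg,\ddf}$ (or its $\sgn$-twist) together with the equality of $\pp$ with $\pi^{\prime(3)}$ or $\sgn\pi^{\prime(3)}$ pins down exactly one of the four candidates in $\{\pi_i'\}$, because $\pi'(-I)=\pi(-I)$ by \cite[Proposition 3.1 (i)]{LW1} (as noted after Theorem \ref{p2}), and this value of $\pi'(-I)$ eliminates the spurious $\sgn$-twists. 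Here one has to use that the first occurrence index is finite (so some $\pi_i'$ does occur), which gives existence; uniqueness is the parity/central-character argument just described.

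For the second bullet, I would use Proposition \ref{o3}: if $\pi\in\cal{E}(\sp_{2n},\sigma)$ and $\pi_i'\in\cal{E}(\o^\epsilon_{2n'},\sigma_i')$, then $\pi\otimes\pi_i'$ occurring in some $\omega^\epsilon_{n,n'}$ forces, by part (i) of Proposition \ref{o3}, that $\sigma_i'$ lies in the Witt-tower descent of $\sigma$; conversely part (ii) of Proposition \ref{o3} (first occurrence plus the inequality $m'-m\ge n'-n$) shows that once $\sigma\otimes\sigma_i'$ occurs at the cuspidal level, the pair $\pi\otimes\pi_i'$ occurs at a suitably large level. Combining with the ``exactly one'' statement already proved, this is the equivalence claimed. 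For the third bullet, suppose $\prllc\in\Theta^\epsilon_{n,n_1'}(\pi)$ and $\prlld\in\Theta^\epsilon_{n,n_2'}(\pi)$. The conditions in Theorem \ref{p1} relating $\pi$ to its lift do not involve $n_1'$ or $n_2'$ except through $\ddf$ and $\dddf$; since $\pp\cong\pi^{\prime(3)}$ up to $\sgn$ forces the symbol parametrizing $\pi^{\prime(3)}$ — which is $\Lambda_j'$ — to be determined by $\pp$ (Lusztig's bijection is injective), we get $\Lambda_1'=\Lambda_2'$, and $\rm{def}(\Lambda_1)=\rm{def}(\Lambda_2)$ follows because the defect of $\Lambda_j$ is governed by the ``$\ppp\otimes\pi^{\prime(2)}$ occurs in $\omega_{\ddg,\ddf}$'' condition, in which $\rm{def}(\Lambda_j)=-\rm{def}(\Lambda)\pm1$ is fixed once the sign $\epsilon$ and the tower are fixed (via $\cal{B}^\pm$ in Theorem \ref{p}). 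Part (ii) is entirely parallel, using Theorem \ref{p2} in place of Theorem \ref{p1} and interchanging the roles of the second and third factors via the involution $\iota$.

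The main obstacle, I expect, is the bookkeeping of $\sgn$-twists: one must check carefully that the ambiguity inherent in the modified Lusztig correspondence (the sets $\{\pi_i\}$, $\{\pi_i'\}$ of \eqref{imega1}, \eqref{imega2}) is resolved uniquely, and this hinges on knowing $\pi'(-I)$ and on the fact, recorded in Remark \ref{sgn}, that Pan's theorems hold for \emph{any} choice of modified Lusztig correspondence so the "exactly one" phenomenon is choice-independent. Verifying that $\sigma_i'$ (resp.\ $\sigma_i$) is itself cuspidal in the relevant Harish-Chandra series — so that Proposition \ref{o3} genuinely applies — also requires a small argument using that $\cal{L}'_s$ sends cuspidals to cuspidals (Proposition \ref{Lus}).
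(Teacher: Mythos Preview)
Your central-character argument for the ``exactly one'' claim has a genuine gap. In case (i) the representations $\pi_i'$ live on an even orthogonal group, where $\sgn(-I)=\det(-I)=(-1)^{2n'}=1$; thus all four members of $\{\pi_i'\}$ share the same value at $-I$, and the constraint $\pi'(-I)=\pi(-I)$ cannot distinguish them. In case (ii) the $\pi_i$ live on the symplectic side, and Proposition~\ref{q1}\,(i) records explicitly that $\pi_{\Lambda,\Lambda'}(-I)=\pi_{\Lambda,\Lambda^{\prime t}}(-I)$; so again the central character is blind to the $\sgn$-twist on $\ppp$. The remark you cite after Theorem~\ref{p2} uses $\pi(-I)=\pi'(-I)$ to pin down the $\epsilon'$-component of the \emph{odd orthogonal} side (where $\sgn(-I)=-1$), which is a different ambiguity from the one you need here.

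The paper's argument is different and is what you should use: by Proposition~\ref{q1} (specifically parts (i), (ii), and (iv)), the members of $\{\pi_i'\}$ (resp.\ $\{\pi_i\}$) lie in \emph{pairwise distinct} Harish-Chandra series --- their cuspidal supports $\sigma_i'$ (resp.\ $\sigma_i$) are all different. Proposition~\ref{o3} then says that every constituent of $\Theta^\epsilon_{n,n'}(\pi)$ lies in the single Harish-Chandra series $\cal{E}(G',\sigma')$ determined by the first occurrence of $\sigma$, so at most one $\pi_i'$ can occur; existence is the hypothesis that $\pi,\pi'$ satisfy the conditions of Theorem~\ref{p1} (resp.\ \ref{p2}). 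This same mechanism handles the third bullet: both $\prllc$ and $\prlld$ lie in $\cal{E}(G',\sigma')$, and since by Proposition~\ref{q1}\,(iv) parabolic induction preserves defects, $\rm{def}(\Lambda_1)=\rm{def}(\Lambda_2)$; and since (again by Proposition~\ref{q1}) the Harish-Chandra series already separates $\Lambda'$ from $\Lambda^{\prime t}$, you get $\Lambda_1'=\Lambda_2'$ exactly, not merely up to transpose. Your argument for the third bullet via ``$\pp\cong\pi^{\prime(3)}$ up to $\sgn$'' only yields $\Lambda_1'\in\{\Lambda_2',\Lambda_2^{\prime t}\}$, which is too weak.
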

\begin{proof}
By Proposition \ref{q1}, for every $i\ne j$, we have $\sigma_i\ne\sigma_j$ (resp. $\sigma'_i\ne\sigma_j'$). Note that the defects are preserved by parabolic induction. Then the Corollary follows immediately from Proposition \ref{o3}, Theorem \ref{p1} and Theorem \ref{p2}.
\end{proof}

The next result shows that the theta lifting and the parabolic induction are compatible.

\begin{proposition}[Proposition 3.1 in \cite{LW3}]\label{w1}
Let $G_n$ and $G_{n+\ell}$ be two classical groups in the same Witt tower, $\ell\geq 0$. Let $\tau$ be an irreducible cuspidal representation of $\GGL_\ell(\mathbb{F}_{q})$, $\pi$ be an irreducible representation of $G_n(\Fq)$, and  $\pi':=\Theta_{n,n'}(\pi)$. Let $\chi_{\GGL_{\ell}}$ be the unique linear character of $\GGL_\ell\fq$ of order $2$. Let $\rho\subset I^{G_{n+\ell}}_{\GGL_\ell\times G_{n}}(\tau \otimes \pi)$ be an irreducible representation of $G_{n+\ell}$ and $\rho' \subset \Theta_{n+\ell, n'+\ell}(\rho)$ be an irreducible representation of $G'_{n'+\ell}$.  Assume that $\tau$ is non-selfdual if $\ell=1$. Then we have
\[
\rho'\subset I^{G'_{n'+\ell}}_{\GGL_\ell\times G'_{n'}}((\chi\otimes\tau)\otimes \pi'),
\]
where
\[
\chi=\left\{
\begin{array}{ll}
\chi_{\GGL_{\ell}}, &  \textrm{if $(G_{n+\ell},G'_{n'+\ell})$ contains an odd orthogonal group,}\\
1, & \textrm{otherwise.}
\end{array}\right.
\]
In particular, if $I^{G_{n+\ell}}_{\GGL_\ell\times G_{n}}(\tau \otimes \pi)$ is irreducible, then
\[
\Theta_{n+\ell, n'+\ell}(I^{G_{n+\ell}}_{\GGL_\ell\times G_{n}}(\tau \otimes \pi))= I^{G'_{n'+\ell}}_{\GGL_\ell\times G'_{n'}}((\chi\otimes\tau)\otimes \pi').
\]
\end{proposition}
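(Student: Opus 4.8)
The plan is to reduce the statement to the structure of the Weil representation under restriction to a maximal parabolic. Realise the dual pair $(G_{n+\ell},G'_{n'+\ell})$ inside $\sp_{2N}\fq$, write $V$ and $W'$ for the underlying spaces of $G_{n+\ell}$ and $G'_{n'+\ell}$, and let $P=MV_P\subset G_{n+\ell}$ be the $F$-stable maximal parabolic with Levi $M=\GGL_\ell\fq\times G_n\fq$ attached to an $\ell$-dimensional totally isotropic subspace $Y\subset V$; let $P'=M'V_{P'}\subset G'_{n'+\ell}$, with $M'=\GGL_\ell\fq\times G'_{n'}\fq$, be the parabolic attached to an $\ell$-dimensional isotropic subspace of $W'$. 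Write $r$, $r'$ for the corresponding Jacquet functors, two-sided adjoints of $I^{G_{n+\ell}}_{M}$, $I^{G'_{n'+\ell}}_{M'}$ (all functors exact, all representations semisimple, as the groups are finite). Then $\rho\subset I^{G_{n+\ell}}_{M}(\tau\otimes\pi)$ says $\tau\otimes\pi$ occurs in $r(\rho)$, so together with $\rho\boxtimes\rho'\subset\omega_{n+\ell,n'+\ell}$ we get that $(\tau\otimes\pi)\boxtimes\rho'$ occurs in $r(\omega_{n+\ell,n'+\ell})$ (the functor $r$ applied to the $G_{n+\ell}$-variable); dually the conclusion we want is equivalent to $(\chi\tau)\otimes\pi'$ occurring in $r'(\rho')$. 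So everything reduces to identifying $r(\omega_{n+\ell,n'+\ell})$ as an $M\times G'_{n'+\ell}$-module.

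The key step is a Kudla-type computation: realise $\omega_{n+\ell,n'+\ell}$ in a mixed Schrödinger model adapted to the isotropic subspace $Y\otimes W'$ of $V\otimes W'$, in which $\GGL(Y)\subset M$ acts linearly on $Y^{*}\otimes W'$, the factor $G_n\subset M$ acts through the Weil representation of the sub-pair $(G_n,G'_{n'})$ on the ``inner'' part, and $V_P$ acts by characters. Computing the $V_P$-coinvariants shows that $r(\omega_{n+\ell,n'+\ell})$ admits a filtration whose top piece is, as a $\GGL_\ell\fq\times G_n\fq\times G'_{n'+\ell}$-module,
\[
\bigoplus_{\sigma\in\mathrm{Irr}(\GGL_\ell\fq)}\sigma\;\boxtimes\;I^{G'_{n'+\ell}}_{\GGL_\ell\times G'_{n'}}\bigl((\chi\sigma)\boxtimes\omega_{n,n'}\bigr)
\]
--- here the $\GGL_\ell$-bimodule $\mathbb{C}[\GGL(Y)]=\bigoplus_\sigma\sigma\boxtimes\sigma^{\vee}$ arises as the functions on the open $\GGL(Y)\times G'_{n'+\ell}$-orbit, whose stabiliser identifies the ``right'' copy of $\GGL_\ell$ with the Levi factor of $P'$, and $\chi$ is the quadratic character produced by the splitting of the metaplectic cocycle over $\GGL(Y)$ (trivial unless an odd orthogonal group occurs in the pair). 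Every lower piece of the filtration is of the form $(\text{a representation induced from a proper parabolic of }\GGL_\ell)\boxtimes\omega_{n-i,n'+\ell-i}$ with $i\ge 1$; in particular the $\GGL_\ell$-factor there is acted on through a proper parabolic. (This is consistent with, and refines, Proposition~\ref{o3} on the behaviour of first occurrence along Harish-Chandra series.)

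To finish: since $\tau$ is cuspidal it does not occur in any representation parabolically induced from a proper parabolic of $\GGL_\ell$, so the constituent $(\tau\otimes\pi)\boxtimes\rho'$ of $r(\omega_{n+\ell,n'+\ell})$ lies in the top piece above; the hypothesis ``$\tau$ non-selfdual if $\ell=1$'' is exactly what is needed here, for when $\ell=1$ the group $\GGL_1\fq$ is abelian, ``cuspidal'' is vacuous, and the relevant lower piece carries $\GGL_1\fq$ through a character of order dividing $2$, which a self-dual $\tau$ could match. Taking the summand $\sigma=\tau$ forces $\pi\boxtimes\rho'$ to occur in $I^{G'_{n'+\ell}}_{\GGL_\ell\times G'_{n'}}\bigl((\chi\tau)\boxtimes\omega_{n,n'}\bigr)$, and Frobenius reciprocity on the $G'$-side combined with $\omega_{n,n'}=\bigoplus_{\varpi}\varpi\boxtimes\Theta_{n,n'}(\varpi)$ (each lift irreducible, by Pan) then yields $(\chi\tau)\otimes\pi'\subset r'(\rho')$, i.e.\ $\rho'\subset I^{G'_{n'+\ell}}_{\GGL_\ell\times G'_{n'}}((\chi\tau)\otimes\pi')$. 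The ``in particular'' clause then follows, because irreducibility of $I^{G_{n+\ell}}_{\GGL_\ell\times G_n}(\tau\otimes\pi)$ forces irreducibility of $I^{G'_{n'+\ell}}_{\GGL_\ell\times G'_{n'}}((\chi\tau)\otimes\pi')$ (run the same argument in the other direction), and a containment of irreducibles is an equality. I expect the main obstacle to be the precise bookkeeping of the splitting character $\chi$ --- in particular why it is $\chi_{\GGL_\ell}$ exactly when the pair contains an odd orthogonal group --- together with the verification that the lower pieces of the filtration cannot absorb a cuspidal (resp., for $\ell=1$, non-selfdual) representation of $\GGL_\ell$.
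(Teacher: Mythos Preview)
The paper does not itself prove this proposition: it is quoted verbatim as Proposition~3.1 of \cite{LW3}, with no proof given here. So there is no ``paper's own proof'' to compare against; the only internal hint is the proof of Proposition~\ref{w2}, which invokes ``the proof of Proposition~3.1 in \cite{LW3}'' to justify the identity $\langle\omega_{n+\ell,n'+\ell},I(\tau\otimes\pi)\otimes\rho'\rangle=\langle I((\chi\tau)\otimes\pi'),\rho'\rangle$, consistent with a Kudla-type Jacquet-module computation.

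Your approach is the standard one and is essentially correct: the filtration of $r(\omega_{n+\ell,n'+\ell})$ by $\GGL(Y)$-orbits on isotropic subspaces, with the open orbit contributing the top piece and the lower pieces supported on non-cuspidal $\GGL_\ell$-representations, is exactly how such compatibility statements are proved (cf.\ \cite{MVW}, and for finite fields the arguments underlying \cite{AMR} and \cite{P3}). Two small points. First, your parenthetical ``each lift irreducible, by Pan'' is not needed and is not quite true in general over finite fields; all you use is that $\pi\boxtimes\rho'\subset I^{G'_{n'+\ell}}_{M'}((\chi\tau)\boxtimes\omega_{n,n'})$ forces $\rho'\subset I^{G'_{n'+\ell}}_{M'}((\chi\tau)\boxtimes\Theta_{n,n'}(\pi))$, which follows from the definition of $\pi'=\Theta_{n,n'}(\pi)$ without any irreducibility. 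Second, for the ``in particular'' clause your sketch (``run the same argument in the other direction'') gives the reverse containment componentwise, but to get an equality of representations rather than a mutual containment of irreducible constituents you should also match multiplicities; over finite fields this comes out of the same see-saw/adjunction identity $\langle\omega,I(\tau\otimes\pi)\otimes\rho'\rangle=\langle I((\chi\tau)\otimes\pi'),\rho'\rangle$ applied to each $\rho'$, which is precisely the step the present paper borrows from \cite{LW3} in the proof of Proposition~\ref{w2}.
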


Suppose $I^{G_{n+\ell}}_{\GGL_\ell\times G_{n}}(\tau \otimes \pi)$ is not irreducible. We have following result which generalizes above Proposition.
\begin{proposition}\label{w2}
Let $G_n$ and $G_{n+\ell}$ be two classical groups in the same Witt tower of symplectic groups or orthogonal groups, $\ell\geq 0$. Let $\pi\in\cal{E}(G_n,s)$ be an irreducible representation of $G_n(\Fq)$ with $s\in G^*_n(\Fq)$, and  $\pi':=\Theta_{n,n'}(\pi)$. Let $\chi_{\GGL_{\ell}}$ be the unique linear character of $\GGL_\ell\fq$ of order $2$. Let $\tau\in\cal{E}(\GGL_\ell,s_0)$ be an irreducible cuspidal representation of $\GGL_\ell(\mathbb{F}_{q})$ with $s_0\in \GGL_\ell\fq$. Let $ I^{G_{n+\ell}}_{\GGL_\ell\times G_{n}}(\tau \otimes \pi)=\bigoplus_{i}\rho_i$ with $\rho_i$ irreducible.  Assume that $s$ and $s_0$ have no common eigenvalues, and $s_0$ has no eigenvalues $\pm1$. Then we have
\[
\bigoplus_{i}\Theta_{n+\ell, n'+\ell}(\rho_i)=I^{G'_{n'+\ell}}_{\GGL_\ell\times G'_{n'}}((\chi\otimes\tau)\otimes \pi').
\]
where
\[
\chi=\left\{
\begin{array}{ll}
\chi_{\GGL_{\ell}}, &  \textrm{if $(G_{n+\ell},G'_{n'+\ell})$ contains an odd orthogonal group,}\\
1, & \textrm{otherwise.}
\end{array}\right.
\]
Hence, by abuse of notations, we write
\[
\Theta_{n+\ell,n'+\ell}(I^{G_{n+\ell}}_{\GGL_\ell\times G_{n}}(\tau \otimes \pi))=I^{G'_{n'+\ell}}_{\GGL_\ell\times G'_{n'}}((\chi\otimes\tau)\otimes \pi').
\]
\end{proposition}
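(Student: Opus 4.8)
The plan is to transport the whole identity through the modified Lusztig correspondence $\cal{L}'$ and reduce it to the essentially formal compatibility of Harish--Chandra induction inside general linear and unitary groups with the theta correspondence on the $\pm1$-eigenspaces, in the form of Pan's Theorems \ref{p1} and \ref{p2}. The two hypotheses on $s_0$ are used as follows. Let $s'\in G_{n+\ell}^{*F}$ be the common semisimple parameter of the irreducible constituents $\rho_i\subset I^{G_{n+\ell}}_{\GGL_\ell\times G_n}(\tau\otimes\pi)$; its multiset of eigenvalues is the union of those of $s$, of $s_0$, and of $s_0^{-1}$. Since $s_0$ has no eigenvalue $\pm1$ and none in common with $s$, we have $\nu_1(s')=\nu_1(s)$ and $\nu_{-1}(s')=\nu_{-1}(s)$, hence $G^{(2)}(s')=G^{(2)}(s)$ and $G^{(3)}(s')=G^{(3)}(s)$, whereas $G^{(1)}(s')$ contains $G^{(1)}(s)$ and $C_{\GGL_\ell^{*F}}(s_0)=\GGL_1(\bb{F}_{q^\ell})$ as the two factors of a Levi subgroup; moreover $\cal{L}_{s_0}(\tau)$ and $\cal{L}_{s_0}(\chi\otimes\tau)$ are both the trivial unipotent representation of $\GGL_1(\bb{F}_{q^\ell})$, because $\chi$ twists $\tau$ by a central element of $\GGL_\ell^{*F}$.

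First I would use that $\cal{L}'$ commutes with Harish--Chandra induction. Writing $\cal{L}'_s(\pi)=\pi^{(1)}\otimes\pi^{(2)}\otimes\pi^{(3)}$ (with a trailing $\{\pm\}$-component in the odd orthogonal case that plays no role below), this yields
\[
\cal{L}'_{s'}\bigl(I^{G_{n+\ell}}_{\GGL_\ell\times G_n}(\tau\otimes\pi)\bigr)=I^{G^{(1)}(s')}_{\GGL_1(\bb{F}_{q^\ell})\times G^{(1)}(s)}\bigl(\mathbf 1\otimes\pi^{(1)}\bigr)\otimes\pi^{(2)}\otimes\pi^{(3)},
\]
so that, under $\cal{L}'_{s'}$, the $\rho_i$ are exactly the $\pi^{(1)}_i\otimes\pi^{(2)}\otimes\pi^{(3)}$, where $\pi^{(1)}_i$ runs over the irreducible constituents of the purely general linear/unitary induction on the right. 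Next I would apply Pan's Theorems \ref{p1} and \ref{p2}. By Proposition \ref{o3} (since $\Theta_{n,n'}(\pi)\neq 0$ and $\rho_i$ lies in the same Harish--Chandra series $\cal{E}(G_{n+\ell},\sigma)$ as $\pi$, for $\sigma$ the cuspidal support of $\pi$), each $\Theta_{n+\ell,n'+\ell}(\rho_i)$ is nonzero; and those theorems show that every irreducible constituent of $\Theta_{n+\ell,n'+\ell}(\rho_i)$ has $G^{(1)}$-component $\pi^{(1)}_i$, its other two components being the theta lifts of $\pi^{(2)},\pi^{(3)}$ on the dual pairs of the $\pm1$-eigenspaces. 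Since the same theorems give that the $G^{(1)}$-component of $\pi'=\Theta_{n,n'}(\pi)$ is $\pi^{(1)}$ and that its $\pm1$-components are these same lifts, summing over $i$ produces
\[
\cal{L}'\Bigl(\,\bigoplus_i\Theta_{n+\ell,n'+\ell}(\rho_i)\Bigr)=I^{G^{(1)}(s')}_{\GGL_1(\bb{F}_{q^\ell})\times G^{(1)}(s)}\bigl(\mathbf 1\otimes\pi^{(1)}\bigr)\otimes\pi^{\prime(2)}\otimes\pi^{\prime(3)}.
\]
Applying the same compatibility of $\cal{L}'$ with Harish--Chandra induction once more, this time in $G'_{n'+\ell}$, and using $\cal{L}_{s_0}(\chi\otimes\tau)=\mathbf 1$, the right-hand side equals $\cal{L}'\bigl(I^{G'_{n'+\ell}}_{\GGL_\ell\times G'_{n'}}((\chi\otimes\tau)\otimes\pi')\bigr)$. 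As $\cal{L}'$ is a bijection extended by linearity to virtual characters, this gives the asserted equality. (The same conclusion, in the weaker form of an inclusion, would also follow from Proposition \ref{w1} applied to each $\rho_i$, since the hypothesis on $s_0$ forces $\tau$ to be non-selfdual when $\ell=1$.)

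The step I expect to be the main obstacle is the bookkeeping of the twisting characters. Pan's theorems only say that the Howe-correspondence square commutes with $\cal{L}'$ \emph{up to} a $\rm{sgn}$-twist, so one must check that the cumulative effect of these twists, after summing over $i$ and over the $\pm1$-part theta lifts, is exactly absorbed by the single character $\chi$ appearing on the right-hand side --- equal to $\chi_{\GGL_\ell}$ precisely when an odd orthogonal group is present --- and that this twist is forced rather than merely permitted; this is carried out using $\rho_i(-I)=\Theta_{n+\ell,n'+\ell}(\rho_i)(-I)$ and the normalization that fixes the $\rm{sgn}$-ambiguity, exactly as in the proof of Proposition \ref{w1}. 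A subsidiary technical point is the non-vanishing and the surjectivity of the lifts $\Theta_{n+\ell,n'+\ell}(\rho_i)$ onto the expected $\pm1$-part constituents, which is where Proposition \ref{o3} and the persistence of theta lifting enter.
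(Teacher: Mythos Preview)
Your route --- transporting the whole statement through the modified Lusztig correspondence $\cal{L}'$ and invoking Pan's structural theorems --- is genuinely different from the paper's, which is shorter and more computational. The paper splits on whether $\tau$ is self-dual. If not, the induction $I^{G_{n+\ell}}_{\GGL_\ell\times G_n}(\tau\otimes\pi)$ is already irreducible and Proposition~\ref{w1} applies verbatim. If $\tau$ is self-dual, the paper observes (via Pan's theorems, as you do) that the components $\rho_i^{(1)}$ are pairwise distinct, so that for each irreducible $\rho'$ at most one $\rho_i\otimes\rho'$ occurs in $\omega$; it then invokes the inner-product identity
\[
\bigl\langle\,\omega^\epsilon_{n+\ell,n'+\ell},\; I^{G_{n+\ell}}_{\GGL_\ell\times G_n}(\tau\otimes\pi)\otimes\rho'\,\bigr\rangle
\;=\;\bigl\langle\,I^{G'_{n'+\ell}}_{\GGL_\ell\times G'_{n'}}((\chi\otimes\tau)\otimes\pi'),\;\rho'\,\bigr\rangle,
\]
taken from the Mackey-theoretic proof of \cite[Proposition~3.1]{LW3}. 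Since the left side equals $\sum_i\langle\Theta(\rho_i),\rho'\rangle$, the desired equality of representations follows at once, with no $\rm{sgn}$-bookkeeping.

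Your approach, as written, has a real gap beyond the $\rm{sgn}$-ambiguity you already flag. Pan's Theorems~\ref{p1} and~\ref{p2} are \emph{occurrence} statements: they say which $\rho_i\otimes\rho'$ appear in $\omega$, not what the multiplicity $\langle\omega,\rho_i\otimes\rho'\rangle$ is. Consequently your argument only matches the irreducible \emph{supports} of $\bigoplus_i\Theta(\rho_i)$ and of $I((\chi\otimes\tau)\otimes\pi')$; the sentence ``As $\cal{L}'$ is a bijection extended by linearity to virtual characters, this gives the asserted equality'' does not follow, because you have established an equality of sets of constituents, not of virtual characters. To salvage the argument you would need either a multiplicity-one statement for the theta lift on the $\pm1$-parts, or an independent dimension count, and neither is supplied by the results you cite. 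The paper's route bypasses this entirely because the displayed identity from \cite{LW3} is a genuine equality of multiplicities. (A small slip: in Pan's theorems only one of the $\pm1$-components is carried by a theta lift; the other is transported by the identity, up to $\rm{sgn}$.)
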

\begin{proof}
If $\tau$ is not self dual, then $I^{G_{n+\ell}}_{\GGL_\ell\times G_{n}}(\tau \otimes \pi)$ is irreducible. Then the proposition follows from Proposition \ref{w1}.

Suppose that $\tau$ is self dual. Let
\[
\cal{L}'_s(\rho_i)=\rho_i^{(1)}\otimes\rho_i^{(2)}\otimes\rho_i^{(3)}.
\]
Since $s_0$ has no eigenvalues $\pm1$, we have $\rho_i^{(1)}\ne \rho_j^{(1)}$ for $i\ne j$. Then by Theorem \ref{p1} and Theorem \ref{p2}, for any irreducible representation $\rho'$ of $G'_{n'+\ell}$, at most one of $\rho_i\otimes\rho'$ appears in $\omega^\epsilon_{n+\ell,n'+\ell}$. In other words, if $\rho'$ appears in $\Theta_{n+\ell,n'+\ell}(\rho_i)$, then we have
\[
\langle \omega^\epsilon_{n+\ell,n'+\ell}, \rho_i\otimes\rho'\rangle=\langle \omega^\epsilon_{n+\ell,n'+\ell}, I^{G_{n+\ell}}_{\GGL_\ell\times G_{n}}(\tau \otimes \pi)\otimes\rho'\rangle.
\]
By the proof of Proposition 3.1 in \cite{LW3}, we have
\[
\langle \omega^\epsilon_{n+\ell,n'+\ell}, I^{G_{n+\ell}}_{\GGL_\ell\times G_{n}}(\tau \otimes \pi)\otimes\rho'\rangle
=\langle  I^{ G'_{n'+\ell}}_{\GGL_{\ell}\times G'_{n'}}((\chi\otimes\tau)\otimes\pi') ,\rho' \rangle,
\]
which implies
\[
\bigoplus_{i}\Theta_{n+\ell, n'+\ell}(\rho_i)=I^{G'_{n'+\ell}}_{\GGL_\ell\times G'_{n'}}((\chi\otimes\tau)\otimes \pi').
\]
\end{proof}
\subsection{Symbol $\wla$}

Consider $(G,G')=(\o^\pm_{2n},\sp_{2n'})$.

Let $\prll$ be an irreducible representation of $\o^\epsilon_{2n}\fq$. Define
\[
\rm{def}(\Theta(\rho,\Lambda,\Lambda',\o^\epsilon)):=\{\rm{def}(\Lambda_1)|\prll\otimes\pi_{\rho,\Lambda_1,\Lambda_1'}\in \omega^\epsilon_{n,n'}\}.
\]
It is well defined by Theorem \ref{p1} and Corollary \ref{p3}.
\begin{definition}
For $\prll$ with $\rm{def}(\Lambda)\ne 0$, define
\[
\widetilde{\Lambda}\in\{\Lambda,\Lambda^{ t}\}
\]
satisfying
\[
\rm{def}(\widetilde{\Lambda})=\left\{
\begin{aligned}
&-\rm{def}(\Theta(\rho,\Lambda,\Lambda',\o^\epsilon))+1\textrm{ if }\epsilon=+;\\
&-\rm{def}(\Theta(\rho,\Lambda,\Lambda',\o^\epsilon))-1\textrm{ if }\epsilon=-.\\
\end{aligned}
\right.
\]

\end{definition}
By Theorem \ref{p1} and Corollary \ref{p3}, for two triples $(\rho,\Lambda,\Lambda')$ and $(\rho,\Lambda^t,\Lambda')$ with $\rm{def}(\Lambda)\ne 0$, we have $\widetilde{\Lambda}^t=\widetilde{\Lambda^t}$. Then we have a bijection:
\[
\begin{aligned}
\prwll \longleftrightarrow(\rho, \wla, \Lambda').
\end{aligned}
\]
Note that $\widetilde{\Lambda}$ depends on $(\rho,\Lambda,\Lambda')$, i.e. for two triples $(\rho_1,\Lambda,\Lambda_1')$ and $(\rho_2,\Lambda,\Lambda_2')$, we may pick different $\widetilde{\Lambda}$.
\begin{corollary} \label{ctheta2}
Let $\prll\in \cal{E}(\o^\epsilon_{2n})$ with $\rm{def}(\Lambda)\ne 0$.

(i) If $\prll\otimes\prllc$ appears in $\omega_{n,n'}^\epsilon$, then $(\Lambda_1,\wla)\in \mathcal{B}^{\epsilon_{\Lambda}}_{m_1,m}$ where $\epsilon_{\Lambda}=(-1)^{\frac{\rm{def}(\Lambda)}{2}}$, $m=\rm{rank}(\Lambda)$ and $m_1=\rm{rank}(\Lambda_1)$. In paritcular, $\pi_{\widetilde{\Lambda}}\otimes\pi_{\Lambda_1}$ appears in $\omega_{m,m_1}^{\epsilon_{\Lambda}}$.

(ii) If $(\rho,\Lambda,\Lambda')=(-,\Lambda,-)$, then $\wla=\Lambda$.
\end{corollary}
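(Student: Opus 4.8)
The plan is to deduce both statements from Pan's description of the theta correspondence for a symplectic--even orthogonal dual pair (Theorem \ref{p1}), the combinatorial description of the unipotent theta correspondence by the sets $\mathcal{B}^{\pm}$ (Theorem \ref{p}), and the well-definedness of $\rm{def}(\Theta(\rho,\Lambda,\Lambda',\o^\epsilon))$.

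For (i) I would start from the hypothesis that $\prll\otimes\prllc$ occurs in $\omega^\epsilon_{n,n'}$ for the pair $(\o^\epsilon_{2n},\sp_{2n'})$ and apply Theorem \ref{p1} with the symplectic member $\sp_{2n'}$ playing the role of $G$ (so $\pi=\prllc$, with modified Lusztig data $\p\otimes\pp\otimes\ppp=\rho\otimes\pi_{\Lambda_1}\otimes\pi_{\Lambda_1'}$) and the even orthogonal member $\o^\epsilon_{2n}$ playing the role of $G'$ (so $\pi'=\prll$, with data $\rho\otimes\pl\otimes\pll$). The commuting square of Theorem \ref{p1} has the $\GGL$/unitary blocks matched (here both $\rho$), the eigenvalue $-1$ blocks $\pll$ and $\pi_{\Lambda_1'}$ matched up to the sign character, and the eigenvalue $1$ blocks related by $\Theta$; hence the occurrence of $\prll\otimes\prllc$ forces the occurrence of $\pi_{\Lambda_1}\otimes\pl$ or $\pi_{\Lambda_1}\otimes\sgn\pl$ in the Weil representation of the dual pair $(\sp_{2m_1},\o^{\epsilon_\Lambda}_{2m})$, where $m=\rm{rank}(\Lambda)$, $m_1=\rm{rank}(\Lambda_1)$ and $\epsilon_\Lambda=(-1)^{\rm{def}(\Lambda)/2}$ is the Witt type of the even orthogonal group whose unipotent series contains $\pl$ (this type being forced by $\pl\in\cal{E}(\o^{\epsilon_\Lambda}_{2m},1)$). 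By Theorem \ref{p} this yields $(\Lambda_1,\Lambda)\in\mathcal{B}^{\epsilon_\Lambda}_{m_1,m}$ or $(\Lambda_1,\Lambda^t)\in\mathcal{B}^{\epsilon_\Lambda}_{m_1,m}$.

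It then remains to pick out the correct one and identify it with $\wla$. The defect condition built into $\mathcal{B}^{\epsilon_\Lambda}_{m_1,m}$ selects the symbol in $\{\Lambda,\Lambda^t\}$ whose defect is $-\rm{def}(\Lambda_1)+1$ if $\epsilon_\Lambda=+$ and $-\rm{def}(\Lambda_1)-1$ if $\epsilon_\Lambda=-$; since $\rm{def}(\Lambda_1)$ equals the single integer $\rm{def}(\Theta(\rho,\Lambda,\Lambda',\o^\epsilon))$ (well-defined by Theorem \ref{p1} and Corollary \ref{p3}), this defect is exactly the one prescribed in the definition of $\wla$, and because $\Lambda$ and $\Lambda^t$ have opposite defects it is $\wla$ that is so selected. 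Thus $(\Lambda_1,\wla)\in\mathcal{B}^{\epsilon_\Lambda}_{m_1,m}$, which is the first assertion of (i), and the ``in particular'' clause is Theorem \ref{p} read once more. The degenerate case $\rm{def}(\Lambda)=0$, in which $\Lambda$ and $\Lambda^t$ share a defect, has to be dealt with separately, using that it is $\pl$, not $\sgn\pl$, that occurs in the sub-correspondence.

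For (ii), the data $(\rho,\Lambda,\Lambda')=(-,\Lambda,-)$ makes $\prll=\pl$ the unipotent representation of $\o^\epsilon_{2n}$ attached to $\Lambda$, so $\epsilon=\epsilon_\Lambda$ and $m=n$; its theta lifts to the symplectic tower are unipotent, so by Theorem \ref{p} any occurring $\prllc=\pi_{\Lambda_1}$ has $(\Lambda_1,\Lambda)\in\mathcal{B}^\epsilon_{m_1,n}$. Reading the defect relation off $\mathcal{B}^\epsilon$ gives $\rm{def}(\Theta(-,\Lambda,-,\o^\epsilon))=-\rm{def}(\Lambda)+1$ or $-\rm{def}(\Lambda)-1$ according as $\epsilon=+$ or $\epsilon=-$, and substituting into the defining formula for $\wla$ gives $\rm{def}(\wla)=\rm{def}(\Lambda)$, hence $\wla=\Lambda$. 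The only genuinely delicate part of the argument is the bookkeeping of transposes and of the two Witt types --- keeping straight that Theorem \ref{p1} pairs the eigenvalue-$1$ symplectic block with the eigenvalue-$1$ orthogonal block, and that the defect condition cutting out $\mathcal{B}^{\epsilon_\Lambda}$ is exactly the one used to define $\wla$ --- together with the degenerate defect-$0$ case; I expect these to be the main sources of friction.
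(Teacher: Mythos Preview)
Your proposal is correct and follows essentially the same approach as the paper: apply Theorem \ref{p1} to reduce to the unipotent block, obtain that either $(\Lambda_1,\Lambda)$ or $(\Lambda_1,\Lambda^t)$ lies in $\mathcal{B}^{\epsilon_\Lambda}$, and then use the defect condition in $\mathcal{B}^{\epsilon_\Lambda}$ together with the definition of $\wla$ to identify the correct symbol; part (ii) is read off directly from Theorem \ref{p} and the defining formula for $\wla$. The paper's proof is terser and does not single out the $\rm{def}(\Lambda)=0$ case you flag, but otherwise the arguments coincide.
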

\begin{proof}
(ii) clearly follows from the definition of $\widetilde{\Lambda}$ and Theorem \ref{p}. We will only prove (i) for $\epsilon_\Lambda=+$, and the proof for $\epsilon_\Lambda=-$ is similar.

Suppose that $\prll\in\cal{E}(\o^\epsilon_{2n},s)$. Recall that
\[
C_{G^{*F}}(s)\cong\dg\times\ddg\times\dddg,
\]
where $\ddg\cong\o^{\epsilon_\Lambda}_{2\nu_{1}(s)}\fq$ and $\dddg\cong\o^{\epsilon_{\Lambda'}}_{2\nu_{-1}(s)}\fq$.
By Theorem \ref{p1}, either $(\Lambda_1,\Lambda)\in \mathcal{B}^+_{m_1,m}$ or $(\Lambda_1,\Lambda^{ t})\in \mathcal{B}^+_{m_1,m}$. On the other hand, for any $\Lambda^*\in\{\Lambda,\Lambda^t\}$, $(\Lambda_1,\Lambda^*)\in \mathcal{B}^+_{m_1,m}$ if and only if $\rm{def}(\Lambda')=-\rm{def}(\Lambda^*)+1$. Hence, (i) follows from the definition of $\widetilde{\Lambda}$.
\end{proof}

 Now consider $(G,G')=(\sp_{2n},\o^\pm_{2n'+1})$. In the same manner, by Theorem \ref{p} and Theorem \ref{p2}, we have
\begin{corollary} \label{ctheta3}
Let $\prll\in\cal{E}(\sp_{2n})$, and let $\pi_{\rho_1,\Lambda_1,\Lambda'_1}\in\cal{E}(\o^\epsilon_{2n'+1})$. Let $\epsilon_{\Lambda'}=(-1)^{\frac{\rm{def}(\Lambda')}{2}}$. If $\prll\otimes\pi_{\rho_1,\Lambda_1,\Lambda'_1}$ appears in $\omega^\epsilon_{m',m}$ for some $m$ and $m'$, then there exist a symbol $\widetilde{\Lambda'}\in\{\Lambda',\Lambda^{\prime t}\}$ such that $(\Lambda_1,\widetilde{\Lambda'})\in \cal{B}^{\epsilon_{\Lambda'}}_{n,n'}$ where $\widetilde{\Lambda'}$ depends on $\epsilon$, $\rho$ and $\Lambda$.
\end{corollary}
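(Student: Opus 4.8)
The plan is to run the argument of Corollary \ref{ctheta2} in this setting: transfer the occurrence of $\prll\otimes\pi_{\rho_1,\Lambda_1,\Lambda_1'}$ along Pan's Theorem \ref{p2} from the dual pair $(\sp_{2n},\o^\epsilon_{2n'+1})$ down to the $-1$-eigenvalue blocks of the two modified Lusztig data, where it becomes an occurrence statement for \emph{unipotent} representations of a symplectic and an even orthogonal group, and then read off the required $\cal{B}$-membership from Theorem \ref{p}.

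First I would fix $s,s'$ with $\prll\in\cal{E}(\sp_{2n},s)$ and $\pi_{\rho_1,\Lambda_1,\Lambda_1'}\in\cal{E}(\o^\epsilon_{2n'+1},s')$, and record from Subsection \ref{mlus} that $\dddg$ is an even orthogonal group carrying the unipotent representation $\pll$, hence $\dddg\cong\o^{\epsilon_{\Lambda'}}_{2\,\rm{rank}(\Lambda')}$ with $\epsilon_{\Lambda'}=(-1)^{\rm{def}(\Lambda')/2}$, while $\ddf$, the $+1$-eigenvalue block of $C_{G^{\prime*F}}(s')$, is a symplectic group $\cong\sp_{2\,\rm{rank}(\Lambda_1)}$ carrying $\pi_{\Lambda_1}$. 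Since $\rm{def}(\Lambda^{\prime t})=-\rm{def}(\Lambda')$ and $\rm{rank}(\Lambda^{\prime t})=\rm{rank}(\Lambda')$, the symbol $\Lambda^{\prime t}$ lies in the same family $\cal{S}^{\epsilon_{\Lambda'}}_{\rm{rank}(\Lambda')}$, so $\pi_{\Lambda^{\prime t}}=\sgn\pll$ is again a unipotent representation of $\dddg$; this is what guarantees that the ``either/or'' below produces a symbol in $\{\Lambda',\Lambda^{\prime t}\}$.

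Next, since $\prll$ is one of the representations $\{\pi_i\}$ attached to its Lusztig data, the occurrence of $\prll\otimes\pi_{\rho_1,\Lambda_1,\Lambda_1'}$ together with Theorem \ref{p2} forces in particular its third condition, namely that one of $\pll\otimes\pi_{\Lambda_1}$, $(\sgn\pll)\otimes\pi_{\Lambda_1}$ occurs in $\omega_{\dddg,\ddf}$ (the built-in ``either/or'' meaning I need not track which sgn-twist occurs). Writing $\widetilde{\Lambda'}\in\{\Lambda',\Lambda^{\prime t}\}$ for whichever of these occurs, and using the symmetry of occurrence in a Weil representation, I obtain that $\pi_{\Lambda_1}\otimes\pi_{\widetilde{\Lambda'}}$ occurs in the Weil representation of $(\sp_{2\,\rm{rank}(\Lambda_1)},\o^{\epsilon_{\Lambda'}}_{2\,\rm{rank}(\Lambda')})$, whence $(\Lambda_1,\widetilde{\Lambda'})\in\cal{B}^{\epsilon_{\Lambda'}}_{\rm{rank}(\Lambda_1),\rm{rank}(\Lambda')}$ by Theorem \ref{p}. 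This is the asserted membership, the subscripts $n,n'$ of the statement being read as $\rm{rank}(\Lambda_1)$ and $\rm{rank}(\widetilde{\Lambda'})=\rm{rank}(\Lambda')$, as forced by $\cal{B}^{\epsilon_{\Lambda'}}\subset\cal{S}\times\cal{S}^{\epsilon_{\Lambda'}}$.

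The step most in need of care is the final clause, that $\widetilde{\Lambda'}$ depends only on $\prll$ and $\epsilon$ (hence, as stated, on $\epsilon,\rho,\Lambda$) and not on $\pi_{\rho_1,\Lambda_1,\Lambda_1'}$ nor on the Witt level. Here I would observe that the defect clause of $\cal{B}^{\epsilon_{\Lambda'}}$ forces $\rm{def}(\widetilde{\Lambda'})=-\rm{def}(\Lambda_1)+1$ if $\epsilon_{\Lambda'}=+$ and $\rm{def}(\widetilde{\Lambda'})=-\rm{def}(\Lambda_1)-1$ if $\epsilon_{\Lambda'}=-$; since $\Lambda'$ and $\Lambda^{\prime t}$ have opposite defects, this singles out $\widetilde{\Lambda'}$ once $\rm{def}(\Lambda_1)$ is known, and by Corollary \ref{p3}(ii) the value $\rm{def}(\Lambda_1)$ is the same for every constituent of every theta lift $\Theta^\epsilon_{n,n''}(\prll)$. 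Equivalently, one may define $\widetilde{\Lambda'}$ in advance by this defect equation, in exact parallel with the definition of $\wla$ in the $(\o^\pm_{2n},\sp_{2n'})$ case. Beyond this canonicity point the proof is a direct transfer along Theorems \ref{p2} and \ref{p}, so I do not anticipate a genuine obstacle.
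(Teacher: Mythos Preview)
Your proof is correct and follows exactly the approach the paper intends: the paper merely writes ``In the same manner, by Theorem \ref{p} and Theorem \ref{p2}'' before the statement, and you have spelled out precisely that argument --- apply the third bullet of Theorem \ref{p2} to obtain an occurrence of $\pi_{\widetilde{\Lambda'}}\otimes\pi_{\Lambda_1}$ in the Weil representation of the dual pair $(\dddg,\ddf)=(\o^{\epsilon_{\Lambda'}}_{2\,\rm{rank}(\Lambda')},\sp_{2\,\rm{rank}(\Lambda_1)})$, then invoke Theorem \ref{p}. Your handling of the dependence clause via Corollary \ref{p3}(ii) is also the intended mechanism, paralleling the explicit definition of $\wla$ before Corollary \ref{ctheta2}; the only caveat (also unaddressed in the paper's treatment of Corollary \ref{ctheta2}) is the degenerate case $\rm{def}(\Lambda')=0$, where the defect equation no longer distinguishes $\Lambda'$ from $\Lambda^{\prime t}$, but since the statement only asserts existence this causes no difficulty.
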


\subsection{See-saw pairs}
Recall the general formalism of see-saw dual pairs. Let $(G, G')$ and $(H, H')$ be two reductive dual pairs in a symplectic group $\rm{Sp}(W)$ such that $H \subset G$ and $G' \subset H'$.  Then there is a see-saw diagram
\[
\setlength{\unitlength}{0.8cm}
\begin{picture}(20,5)
\thicklines
\put(6.8,4){$G$}
\put(6.8,1){$H$}
\put(12.1,4){$H'$}
\put(12,1){$G'$}
\put(7,1.5){\line(0,1){2.1}}
\put(12.3,1.5){\line(0,1){2.1}}
\put(7.5,1.5){\line(2,1){4.2}}
\put(7.5,3.7){\line(2,-1){4.2}}
\end{picture}
\]
and the associated see-saw identity
\[
\langle \Theta_{G',G}(\pi_{G'}),\pi_H\rangle_H =\langle \pi_{G'},\Theta_{H,H'}(\pi_H)\rangle_{G'},
\]
where $\pi_H$ and $\pi_{G'}$ are representations of $H$ and $G'$ respectively.

In this paper, we consider the following two cases.
(1) Consider the case that
\[
G\cong\o^\epsilon_{2n}\fq,\ H\cong\o_{2n-1}^{\epsilon'}\fq\times\o_1^{\epsilon''}\fq,\ H'\cong\sp_{2n'}\fq,\times\sp_{2n'}\fq\textrm{ and }G'\cong\sp_{2n'}\fq,
\]
where $\epsilon\cdot\ee=\epsilon'\cdot\epsilon''$ so that $H$ is embedded into $G$ by \cite{LW3} (1.3), and $G'$ is embedded into $H'$ diagonally.

(2) Consider the case that
 \[
 G\cong\o_{2n+1}^\epsilon\fq,\ H\cong\o^\e_{2n}\fq\times\o_1^{\epsilon''}\fq,\ H'\cong\sp_{2n'}\fq\times\sp_{2n'}\fq\textrm{ and }G'\cong\sp_{2n'}\fq,
 \]
 where $\epsilon=\e\cdot\epsilon''$.
For fixed $\epsilon$, so that $H$ is embedded into $G$ again by \cite{LW3} (1.3).
\section{First occurrence index of cuspidal representations}\label{sec7}

For a dual pair $(G_n,G'_{n'})$ and an irreducible cuspidal representation $\pi$ of $G_n$, by \cite[Theorem 2.2]{AM} , there is a cuspidal representation appearing in the theta lifting if and only if $n'$ is the first occurrence index of $\pi$. Moreover, the theta lifting of $\pi$ is an irreducible cuspidal representation.

\subsection{Symbol of unipotent cuspidal representations}
In \cite{L1}, we know that $\sp_{2n}$ (resp,  $\rm{SO}_{2n+1}$, $\rm{SO}^{\epsilon}_{2n}$) has a unique irreducible unipotent cuspidal representation if and only if $n=k(k+1)$ (resp. $n=k(k+1)$, $n=k^2$).

For $\sp_{2k(k+1)}$, the unique unipotent cuspidal representation $\pi_{\Lambda}$ is associated to the symbol:
\begin{equation}\label{cussp}
\Lambda=\left\{
\begin{aligned}
&\begin{pmatrix}
2k,2k-1,\cdots,1,0\\
-
\end{pmatrix}\textrm{ if }k\textrm{ is even};\\
&\begin{pmatrix}
-\\
2k,2k-1,\cdots,1,0
\end{pmatrix}\textrm{ if }k\textrm{ is odd};\\
\end{aligned}\right.
\end{equation}
of defect$(-1)^k(2k + 1)$.

The trivial character of the trivial group $O^+_0$ is regarded as unipotent cuspidal and is associated to the symbol $\begin{pmatrix}
-\\
-
\end{pmatrix}$. For $\o^{\epsilon}_{2k^2}$ with $\epsilon=(-1)^k$, there are two unipotent cuspidal representations $\pi_{\Lambda}$ and $\sgn\pl=\pi_{\Lambda^t}$ where \[\Lambda=\begin{pmatrix}
2k-1,2k-2,\cdots,1,0\\
-
\end{pmatrix}.\]
For $\o_{2k(k+1)+1}$, there are two unipotent cuspidal representations $\pi_{\Lambda,+}$ and $\sgn\pi_{\Lambda,+}=\pi_{\Lambda,-}$ where \begin{equation}\label{cusso}
\Lambda=\left\{
\begin{aligned}
&\begin{pmatrix}
2k,2k-1,\cdots,1,0\\
-
\end{pmatrix}\textrm{ if }k\textrm{ is even};\\
&\begin{pmatrix}
-\\
2k,2k-1,\cdots,1,0
\end{pmatrix}\textrm{ if }k\textrm{ is odd}.\\
\end{aligned}\right.
\end{equation}

\subsection{First occurrence index}\label{6.2}
\begin{theorem}[\cite{AM}, Theorem 5.2] \label{even}
The theta correspondence for dual pairs $(\sp_{2n}, \o^\epsilon_{2n'})$ takes unipotent cuspidal representations to unipotent cuspidal representations as follows :

(i)  $(\sp_{2k(k+ 1)}, \o^\epsilon _{2k^2})$, $\epsilon = \rm{sgn}(-1)^k$,
\[
\Theta^{\epsilon}_{k(k+1), k^2}: \left\{
\begin{aligned}
&\pl,\ \Lambda=\begin{pmatrix}
2k,\cdots,1,0\\
-
\end{pmatrix}
\to
\pll,\ \Lambda'=\begin{pmatrix}
-\\
2k-1,\cdots,1,0\\
\end{pmatrix}
\textrm{ if }k \textrm{ is even;}\\
&\pl,\ \Lambda=\begin{pmatrix}
-\\
2k,\cdots,1,0
\end{pmatrix}
\to
\pll,\ \Lambda'=\begin{pmatrix}
2k-1,\cdots,1,0\\
-
\end{pmatrix}
\textrm{ if }k \textrm{ is odd}.
\end{aligned}\right.
 \]

(ii)  $(\sp_{2k(k+ 1)}, \o^\epsilon _{2(k+1)^2})$, $\epsilon = \rm{sgn}(-1)^{k+1}$,
\[
\Theta^{\epsilon}_{k(k+1), (k+1)^2}: \left\{
\begin{aligned}
&\pl,\ \Lambda=\begin{pmatrix}
2k,\cdots,1,0\\
-
\end{pmatrix}
\to
\pll,\ \Lambda'=\begin{pmatrix}
-\\
2k+1,\cdots,1,0\\
\end{pmatrix}
\textrm{ if }k \textrm{ is even;}\\
&\pl,\ \Lambda=\begin{pmatrix}
-\\
2k,\cdots,1,0
\end{pmatrix}
\to
\pll,\ \Lambda'=\begin{pmatrix}
2k+1,\cdots,1,0\\
-
\end{pmatrix}
\textrm{ if }k \textrm{ is odd}.
\end{aligned}\right.
 \]
\end{theorem}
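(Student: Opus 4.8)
The plan is to reduce everything to the combinatorics of Pan's theorem on the unipotent Howe correspondence (Theorem \ref{p}), together with the first-occurrence formalism. Let $\pl$ be the unique unipotent cuspidal representation of $\sp_{2k(k+1)}\fq$, attached to the symbol $\Lambda$ of (\ref{cussp}); then $\rm{def}(\Lambda)=(-1)^k(2k+1)$, and a direct computation of $\Upsilon$ shows that $\Upsilon(\Lambda)^*$ and $\Upsilon(\Lambda)_*$ are both the empty partition. Since the theta correspondence carries unipotent representations to unipotent ones --- this is part of the matching of eigenvalue parts in Theorem \ref{p1}, the point being that $\dg$ and $\dddg$ are trivial for $\pl$ --- for every $n'$ and either even orthogonal Witt tower the constituents of $\Theta^\epsilon_{k(k+1),n'}(\pl)$ are unipotent, hence by Theorem \ref{p} are precisely the $\pll$ with $(\Lambda,\Lambda')\in\mathcal{B}^\epsilon_{k(k+1),n'}$. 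Consequently the first occurrence index of $\pl$ in a given tower is the least $n'$ admitting some $\Lambda'$ with $(\Lambda,\Lambda')\in\mathcal{B}^\epsilon_{k(k+1),n'}$, and at that $n'$ the lift is, by \cite[Theorem 2.2]{AM}, irreducible and cuspidal, so it equals the (unique) such $\pll$, and this $\pll$ is a unipotent cuspidal representation of $\o^\epsilon_{2n'}\fq$.

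So it remains to feed $\Lambda$ into the definition of $\mathcal{B}^\epsilon_{n,n'}$ and read off $n'$ and $\Lambda'$. Take first the tower ${\bf O}^{(-1)^k}_{\rm{even}}$, so we use $\mathcal{B}^{(-1)^k}$. The defect condition in $\mathcal{B}^\pm_{n,n'}$ is $\rm{def}(\Lambda')=-\rm{def}(\Lambda)\pm1$; here it yields $\rm{def}(\Lambda')=(-1)^{k+1}2k$, which lies in the residue class modulo $4$ appropriate to the tower. Because $\Upsilon(\Lambda)^*$ and $\Upsilon(\Lambda)_*$ are empty, the interlacing conditions ${}^{t}(\Upsilon(\Lambda')_*)\preccurlyeq{}^{t}(\Upsilon(\Lambda)^*)$ and ${}^{t}(\Upsilon(\Lambda)_*)\preccurlyeq{}^{t}(\Upsilon(\Lambda')^*)$ (and their $\mathcal{B}^-$ variants) force one row of $\Upsilon(\Lambda')$ to be empty and the other to be a single row $[j]$ with $j\ge0$; transporting this through the bijection $\Upsilon:\cal{S}_{n',\beta}\to\mathcal{P}_2(n'-(\beta/2)^2)$ with $\beta=(-1)^{k+1}2k$ (even) gives $\rm{rank}(\Lambda')=k^2+j$. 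Hence the minimal $n'$ is $k^2$, attained at $j=0$, where $\Lambda'$ has empty associated bi-partition and defect $(-1)^{k+1}2k$; writing out the corresponding symbol recovers exactly the $\Lambda'$ displayed in part (i). The same computation in the complementary tower ${\bf O}^{(-1)^{k+1}}_{\rm{even}}$ produces $\rm{def}(\Lambda')=(-1)^{k+1}2(k+1)$ and $\rm{rank}(\Lambda')=(k+1)^2+j$, so the first occurrence there is $(k+1)^2$, with the symbol of part (ii).

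Finally, the $\rm{sgn}$-ambiguity (whether the lift is $\pll=\pi_{\Lambda'}$ or $\pi_{\Lambda^{\prime t}}$) is settled by the sign of $\rm{def}(\Lambda')$ computed above --- equivalently by the central-character identity $\pl(-I)=\pll(-I)$ noted after Theorem \ref{p2} --- which gives the two formulas exactly as stated. I expect the principal difficulty to be the bookkeeping of the middle paragraph: one must be careful with the transpose and $\preccurlyeq$ conventions in the definition of $\mathcal{B}^\pm_{n,n'}$ and with the normalization of $\Upsilon$ on symbols of a fixed defect, since a sign slip there would interchange the two Witt towers, or the two rows of $\Lambda'$. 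The only ingredient that is not purely combinatorial is the preservation of unipotency under theta, which I would quote (e.g.\ from \cite{AMR}, or deduce from Theorem \ref{p1}) rather than reprove.
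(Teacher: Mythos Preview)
The paper does not prove this statement; it is quoted as \cite[Theorem 5.2]{AM} and used as a black box. Your proposal to rederive it from Pan's explicit description of the unipotent Howe correspondence (Theorem \ref{p}) is therefore an alternative argument rather than a reconstruction of anything in the paper. The combinatorics you outline is correct: the cuspidal symbol $\Lambda$ has empty associated bi-partition, the defect constraint in $\mathcal{B}^\epsilon$ fixes $\rm{def}(\Lambda')$, the interlacing conditions then force one row of $\Upsilon(\Lambda')$ to be empty and the other to be a single part $[j]$, and the rank formula gives $n'=k^2+j$ (resp.\ $(k+1)^2+j$), yielding the stated first occurrences and symbols at $j=0$. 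Two remarks. First, your closing paragraph about a $\rm{sgn}$-ambiguity is superfluous: Theorem \ref{p} already pins down $\Lambda'$ itself, not merely the set $\{\Lambda',\Lambda^{\prime t}\}$, since the parametrization of $\cal{E}(\o^\epsilon_{2n'},1)$ by symbols distinguishes $\pi_{\Lambda'}$ from $\rm{sgn}\cdot\pi_{\Lambda'}=\pi_{\Lambda^{\prime t}}$. Second, be aware of a potential circularity \emph{outside} this paper: Pan's proof of the conjecture of Aubert--Michel--Rouquier in \cite{P3} may well rely on the Adams--Moy cuspidal case as a base input, so while your derivation is valid within the logical structure of this paper (where both Theorem \ref{p} and Theorem \ref{even} are imported), it would not on its own constitute an independent proof of \cite[Theorem 5.2]{AM}.
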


Let $\prll$ (resp. $\pi_{\rho,\Lambda,\Lambda',\epsilon}$) be an irreducible representation of $\sp_{2n}\fq$, $\o^\pm_{2n}\fq$ or $\o_{2n+1}\fq$. Assume that $\Lambda$ and $\Lambda'$ correspond to unipotent cuspidal representations of $\ddg$ and $\dddg$, respectively. Let
\[
k=\left\{
\begin{aligned}
&\frac{|\rm{def}(\Lambda)|-1}{2}&\textrm{ if } \Lambda\in\cal{S}_m;\\
&\frac{\rm{def}(\Lambda)}{2}&\textrm{ if } \Lambda\in\cal{S}^\pm_m
\end{aligned}\right.
\]
and
\[
h=\left\{
\begin{aligned}
&\frac{|\rm{def}(\Lambda')|-1}{2}&\textrm{ if } \Lambda'\in\cal{S}_{m'};\\
&\frac{\rm{def}(\Lambda')}{2}&\textrm{ if } \Lambda'\in\cal{S}^\pm_{m'}.
\end{aligned}\right.
\]
For abbreviation, we write $\pkh$ (resp. $\pi_{\rho,k,h,\epsilon}$) instead of $\prll$ (resp. $\pi_{\rho,\Lambda,\Lambda',\epsilon}$).
We emphasize that $\pkh$ (resp. $\pi_{\rho,k,h,\epsilon}$) is \emph{not} necessarily cuspidal.

\begin{proposition}\label{cus1}
Let $\pkh$ be an irreducible representation of $\sp_{2n}\fq$.

(i) Let $n^\epsilon$ be the first occurrence index of $\pkh$ in the Witt tower ${\bf O}^\epsilon_\rm{even}$. Then either
\[
\left\{
\begin{array}{ll}
n^+=n- k;\\
n^-=n+k+1
\end{array}\right.
\textrm{ or }
\left\{
\begin{array}{ll}
n^+=n+ k+1;\\
n^-=n-k.
\end{array}\right.
\]
Moreover, if $n^\epsilon=n-k$, then $\Theta_{n,n^\epsilon}^\epsilon(\pkh)=\pi_{\rho,k_1,h_1}$ is irreducible with $k_1\in\{\pm k\}$ and $h_1\in\{\pm h\}$, and $\Theta_{n^\epsilon,n}^\epsilon(\pi_{\rho,k_1,h_1})=\pkh$. The first occurrence index of $\pi_{\rho,k,-h}$ in the Witt tower ${\bf O}^\epsilon_\rm{even}$ is also $n^\epsilon$, and $\Theta_{n,n^\epsilon}^\epsilon(\pi_{\rho,k,-h})\ne\pi_{\rho,k_1,h_1}$.

(ii) Let $n_1$ be the first occurrence index of $\pkh$ in the Witt tower ${\bf O}^\epsilon_\rm{odd}$. Then either
\[
\left\{
\begin{array}{ll}
n^+=n- h;\\
n^-=n+h
\end{array}\right.
\textrm{ or }
\left\{
\begin{array}{ll}
n^+=n+ h;\\
n^-=n-h.
\end{array}\right.
\]
Moreover, if $n^\epsilon=n-|h|$, then $\Theta_{n,n^\epsilon}^\epsilon(\pkh)=\pi_{\rho,k_1,h_1,\e}$ is irreducible with $k_1= |h|-1$ and $h_1= k$, and $\Theta_{n^\epsilon,n}^\epsilon(\pi_{\rho,k_1,h_1,\e})=\pkh$. The first occurrence index of $\pi_{\rho,k,-h}$ in the Witt tower ${\bf O}^{\epsilon}_\rm{odd}$ is $n+|h|$.

\end{proposition}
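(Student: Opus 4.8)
The plan is to reduce everything to the known unipotent-cuspidal computation of Theorem \ref{even} via the modified Lusztig correspondence and the compatibility results already established. Write $\cal{L}'_s(\pkh)=\rho\otimes\pl\otimes\pll$, where $\pl\in\cal{E}(\ddg,1)$ and $\pll\in\cal{E}(\dddg,1)$ are unipotent cuspidal, so by the formulas recalled before the statement $\ddg=\sp_{2h(h+1)}$ (a symplectic group, since $\Lambda$ here plays the role attached to eigenvalue $1$ in the symplectic case --- I am following the paper's convention that for $\sp_{2n}$ one has $\ddg$ symplectic of rank $\nu_1(s)$ and $\dddg$ even orthogonal of rank $\nu_{-1}(s)$) and $\dddg=\o^{\epsilon_{\Lambda'}}_{2k^2}$ with the sign $\epsilon_{\Lambda'}$ dictated by the defect of $\Lambda'$. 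First I would invoke Theorem \ref{p2}: a constituent $\pi_i\otimes\pi'$ occurs in $\omega^\epsilon_{n,n'}$ (for the tower ${\bf O}^\epsilon_{\rm{odd}}$) exactly when $\dg\cong\df$, $\p\cong\pi^{\prime(1)}$, $\ddg\cong\dddf$ with $\pp\cong\pi^{\prime(3)}$, and either $\ppp\otimes\pi^{\prime(2)}$ or $(\sgn\ppp)\otimes\pi^{\prime(2)}$ occurs in $\omega_{\dddg,\ddf}$. So the entire question of first occurrence in ${\bf O}^\epsilon_{\rm{odd}}$ is controlled by the first occurrence of the unipotent cuspidal $\pll$ of the even orthogonal group $\dddg=\o^{\epsilon_{\Lambda'}}_{2k^2}$ in the symplectic Witt tower, which is computed by Theorem \ref{even}(i)--(ii). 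Similarly, for part (i) the first occurrence in ${\bf O}^\epsilon_{\rm{even}}$ is governed by Theorem \ref{p1}, which reduces it to the first occurrence of the unipotent cuspidal $\pll$ of $\dddg$ (even orthogonal) under the dual pair $(\ddg,\dddg)$ of two even orthogonal-type groups --- wait, more precisely it reduces the $\ddg$-factor condition, and the relevant cuspidal whose occurrence we track is the one of the appropriate middle factor.

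For part (i), the key computation is the following: by Theorem \ref{even}, the unipotent cuspidal representation of $\sp_{2h(h+1)}$ first occurs in the even orthogonal towers at ranks $h^2$ (for $\epsilon=\sgn(-1)^h$) and $(h+1)^2$ (for the other sign). Now I would read off, using the rank bookkeeping $n=\rm{rank}(\rho\text{-part})+h(h+1)+k^2$ and the fact established in Proposition \ref{w1}/\ref{w2} that theta lifting commutes with parabolic induction (up to the character $\chi$), that $\Theta^\epsilon_{n,n'}(\pkh)$ is nonzero precisely at $n'=n-k$ for one sign $\epsilon$ and $n'=n+k+1$ for the other, by translating the cuspidal first-occurrence gap $k^2\leftrightarrow(k^2 \text{ vs }(k+1)^2$, i.e.\ a jump of $2k+1$) through the induction. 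This gives the dichotomy displayed in (i). The "moreover" clauses --- irreducibility of the lift, that $\Theta^\epsilon_{n^\epsilon,n}$ lands back on $\pkh$, the statement about $\pi_{\rho,k,-h}$ having the same first occurrence index but a different lift --- all follow by combining Corollary \ref{p3} (which gives at most one constituent of $\pi\otimes\pi_i'$ in the Weil representation, and pins down the defect and $\Lambda'$-symbol of the lift) with Proposition \ref{o3}(ii) (which guarantees nonvanishing once the rank gap is large enough, hence gives the back-and-forth stability) and the observation that changing $h$ to $-h$ means swapping $\Lambda'$ with $\Lambda'^t$, which under Theorem \ref{p1} lands in a different see-saw branch, forcing $\Theta_{n,n^\epsilon}^\epsilon(\pi_{\rho,k,-h})\ne\pi_{\rho,k_1,h_1}$.

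For part (ii), the same strategy applies with Theorem \ref{p2} in place of Theorem \ref{p1}: the occurrence of $\pkh$ in ${\bf O}^\epsilon_{\rm{odd}}$ requires $\ddg\cong\dddf$, so the middle factor $\pp$ (with symbol $\Lambda'$, of $\sp$-type in $\ddg$, but here playing the $(3)$-slot on the orthogonal side, an even orthogonal group), together with the condition that $\ppp\otimes\pi^{\prime(2)}$ or its sgn-twist occurs in $\omega_{\dddg,\ddf}$ where $\dddg$ is the even orthogonal group of rank $k^2$. Thus the first occurrence index in the odd tower is controlled by the first occurrence of the unipotent cuspidal of the even orthogonal $\dddg$ in the $\sp$-Witt tower --- again Theorem \ref{even}, read in the opposite direction --- producing the symmetric pair $n^+=n-h$, $n^-=n+h$ (or swapped). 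The identification of the lift as $\pi_{\rho,k_1,h_1,\e}$ with $k_1=|h|-1$ and $h_1=k$ comes from tracking the symbols through $\iota\circ\cal{L}'_{s'}$ in Theorem \ref{p2}: the $\Lambda$ and $\Lambda'$ symbols get swapped (that is what $\iota$ does), and the defect shift recorded in the definitions of $\cal{B}^\pm$ converts the defect of the lifted symbol into $k_1=|h|-1$; the sign $\e$ is forced by the remark after Theorem \ref{p2} that $\pi(-I)=\pi'(-I)$ determines the sgn of $\pi'$. The back-lift statement and the $\pi_{\rho,k,-h}$ claim follow as in part (i) from Corollary \ref{p3}(ii) and Proposition \ref{o3}.

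\textbf{Main obstacle.} The routine part is the rank arithmetic; the delicate part is bookkeeping the \emph{defects and the $t$-involution} correctly through the two non-canonical modified Lusztig correspondences and the twist-by-sgn ambiguity in Theorems \ref{p1}--\ref{p2}. In particular, pinning down that the lift is $\pi_{\rho,k_1,h_1}$ with $k_1\in\{\pm k\}$ rather than some other value (and similarly $k_1=|h|-1$, $h_1=k$ in part (ii)), and that $\Theta_{n,n^\epsilon}^\epsilon(\pi_{\rho,k,-h})\ne\pi_{\rho,k_1,h_1}$ rather than merely "possibly different", requires carefully using Corollary \ref{p3}'s uniqueness together with the explicit symbol recursions $\Upsilon(\Lambda)^*,\Upsilon(\Lambda)_*$ and the sets $\cal{B}^\pm$, $\mathcal{G}$; this is where I expect the argument to need the most care, and where the convention "$\ddg$ always associates with eigenvalue $1$" must be applied consistently.
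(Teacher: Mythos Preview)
Your overall strategy is the same as the paper's: reduce to the unipotent cuspidal computation of Theorem \ref{even} via Theorems \ref{p1}--\ref{p2} and Corollary \ref{p3}, then pass from the cuspidal case to the general case by Proposition \ref{w1}. The rank dichotomy and the identification of the lift go through exactly as you outline, once the $k$/$h$ bookkeeping is sorted out (you caught your own slip: for $\sp_{2n}$ one has $\ddg\cong\sp_{2k(k+1)}$ and $\dddg\cong\o^{(-1)^h}_{2h^2}$, not the other way around).

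There is, however, a genuine gap in your treatment of the last clause of part (ii). You write that the claim about $\pi_{\rho,k,-h}$ ``follows as in part (i)'', but the two claims are \emph{opposite}: in (i) one asserts that $\pi_{\rho,k,-h}$ has the \emph{same} first occurrence index $n^\epsilon$ as $\pkh$, whereas in (ii) one asserts it has the \emph{other} one, namely $n+|h|$. The mechanism in (i) is that the signs $\epsilon_0',\epsilon_0''$ of the factors $\ddg',\dddg'$ on the even-orthogonal side are forced by $\dddg'\cong\dddg$ and the constraint $\epsilon_0'\cdot\epsilon_0''=\ee\cdot\epsilon$, hence do not depend on the sign of $h$; this is a sign-analysis argument, not a ``see-saw branch'' argument, and it yields equality of first occurrences. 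No analogous sign rigidity is available in (ii), and your Corollary \ref{p3}/Proposition \ref{o3} invocation does not by itself rule out that $\pi_{\rho,k,-h}$ also first occurs at $n-|h|$. The paper closes this gap by contradiction using the \emph{conservation relation} for cuspidal representations \cite[Theorem 12.3]{P1}: if $\pi_{\rho,k,-h}$ first occurred at $n-|h|$, its lift would have to be $\pi_{\rho,k_1,h_1,-\e}$ (the only remaining option once $\pi_{\rho,k_1,h_1,\e}$ is taken by $\pkh$), and then $\pi_{\rho,k_1,h_1,-\e}$ would have first occurrence index $n$ in the symplectic tower, contradicting conservation. You should add this step; without it the last assertion of (ii) is unproved.
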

\begin{proof}
 The Proposition follows immediately from Theorem \ref{p1}, Theorem \ref{p2}, Corollary \ref{p3} and Theorem \ref{even} no matter whether $\pkh$ is cuspidal or not.
\end{proof}

\begin{proposition}\label{cus2}

(i) Let $\pkh$ be an irreducible representation of $\o^\epsilon_{2n}\fq$.  Let $n'$ be the first occurrence index of $\pkh$ in the Witt tower ${\bf Sp}$. Then $n'=n\pm k$.
Moreover, the following hold.
\begin{itemize}
\item
 If $n'=n+ |k|$, then $\Theta_{n,n'}^\epsilon(\pkh)=\pi_{\rho,k_1,h_1}$ with $k_1=|k|$ and $h_1\in\{\pm h\}$, and $\Theta_{n',n}^\epsilon(\pi_{\rho,k_1,h_1})=\pkh$. The first occurrence index of $\sgn\pi_{\rho,k,h}$ is $n-|k|$.
\item
 If $n'=n- |k|$, then $\Theta_{n,n'}^\epsilon(\pkh)=\pi_{\rho,k_1,h_1}$ with $k_1=|k|-1$ and $h_1\in\{\pm h\}$, and $\Theta_{n',n}^\epsilon(\pi_{\rho,k_1,h_1})=\pkh$. The first occurrence index of $\sgn\pi_{\rho,k,h}$ is $n+|k|$.
 \item
 Let $n_1$ and $n_2$ be the first occurrence index of $\pi_{\rho,-k,h}$ and $\pi_{\rho,k,-h}$. Then $n_2=n'$ and $n_1+n'=2n$, and $\Theta^\epsilon_{n,n'}(\pi_{\rho,k,-h})=\pi_{\rho,k_1,-h_1}$ and $\Theta^\epsilon_{n',n}(\pi_{\rho,k_1,-h_1})=\pi_{\rho,k,-h}$.
\end{itemize}

(ii) Let $\pi_{\rho,k,h,\e}$ be an irreducible representation of $\o^\epsilon_{2n+1}$.  Let $n'$ be the first occurrence index of $\pi_{\rho,k,h,\e}$ in the Witt tower ${\bf Sp}$. Then either $n'=n+ k+1$ or $n'=n-k$.
Moreover, the following hold.
\begin{itemize}
\item
 If $n'=n+ k+1$, then $\Theta_{n,n'}^\epsilon(\pi_{\rho,k,h,\e})=\pi_{\rho,k_1,h_1}$ with $k_1= h$ and $h_1\in\{\pm(k+1)\}$, and $\Theta_{n',n}^\epsilon(\pi_{\rho,k_1,h_1})=\pi_{\rho,k,h,\e}$. The first occurrence index of $\sgn\pi_{\rho,k,h,\e}$ is $n-k$.
\item
 If $n'=n- k$, then $\Theta_{n,n'}^\epsilon(\pi_{\rho,k,h,\e})=\pi_{\rho,k_1,h_1}$ with $k_1=h$ and $h_1\in\{\pm k\}$, and $\Theta_{n',n}^\epsilon(\pi_{\rho,k_1,h_1})=\pi_{\rho,k,h,\e}$. The first occurrence index of $\sgn\pi_{\rho,k,h,\e}$ is $n+k+1$.
\end{itemize}

\end{proposition}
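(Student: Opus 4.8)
The plan is to follow the proof of Proposition \ref{cus1} almost verbatim, interchanging the roles of the symplectic group and the orthogonal group in the reductive dual pair $(\sp_{2n},\o^\epsilon_{2n'})$ (resp. $(\sp_{2n'},\o^\epsilon_{2n+1})$).

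First I would reduce to the case in which $\pkh$ (resp. $\pi_{\rho,k,h,\e}$) is cuspidal. If it is not, it lies in a unique Harish--Chandra series $\cal{E}(\o^\epsilon_{2n},\sigma)$ (resp. $\cal{E}(\o^\epsilon_{2n+1},\sigma)$) over a cuspidal representation $\sigma$ of a smaller orthogonal group. By Proposition \ref{q1}(iv) the symbols of $\sigma$ have the same defects as those of $\pkh$, so $\sigma=\pi_{\rho_0,k,h}$ (resp. $\pi_{\rho_0,k,h,\e}$), and $\sigma$ is cuspidal because each of its three Lusztig components $\rho_0,\pl,\pll$ is. Propositions \ref{w1} and \ref{w2} then carry the statement from $\sigma$ to $\pkh$: the first occurrence index increases by the rank $\ell$ of the general linear factor used in the parabolic induction, so $n'=n\pm|k|$ (resp. $n'\in\{n+k+1,\,n-k\}$) follows from the corresponding equality for $\sigma$, while $\Theta^\epsilon_{n,n'}(\pkh)$ is the parabolic induction of $\Theta^\epsilon_{n-\ell,n'-\ell}(\sigma)$, which keeps $k_1$ and $h_1$ unchanged.

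In the cuspidal case I would write $\pkh\in\cal{E}(\o^\epsilon_{2n},s)$ (resp. $\pi_{\rho,k,h,\e}\in\cal{E}(\o^\epsilon_{2n+1},s)$) and use the decomposition $C_{(\o^\epsilon_{2n})^{*F}}(s)\cong\dg\times\ddg\times\dddg$ (together with a factor $\{\pm1\}$ in the odd case). Cuspidality forces $\rho$ to be a cuspidal representation of the product of general linear and unitary groups $\dg$, and forces $\ddg$ and $\dddg$ to be exactly the even orthogonal groups $\o_{2k^2}$, $\o_{2h^2}$ (resp. the symplectic groups $\sp_{2k(k+1)}$, $\sp_{2h(h+1)}$) carrying the unipotent cuspidal representations $\pl$ and $\pll$. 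By \cite[Theorem 2.2]{AM} a cuspidal representation occurs in the Witt tower ${\bf Sp}$ only at its first occurrence index, where its theta lift is again irreducible and cuspidal and the back lift recovers it; this gives every assertion of the shape $\Theta^\epsilon_{n',n}(\pi_{\rho,k_1,h_1})=\pkh$ once $n'$, $k_1$, $h_1$ have been identified. That identification I would carry out with Theorem \ref{p1} (even orthogonal case) and Theorem \ref{p2} (odd orthogonal case), which express $\Theta^\epsilon_{n,n'}(\pkh)$ through the identity on the $\dg$-component $\rho$, a spectator identification of one of $\ddg,\dddg$ with the matching Lusztig factor of $\sp_{2n'}$, and the theta correspondence of the remaining unipotent cuspidal constituent, which sits in a dual pair of a symplectic group $\sp_{2k(k+1)}$ and an even orthogonal group and is computed explicitly in Theorem \ref{even}. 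From Theorem \ref{even}(i),(ii) the unipotent cuspidal of $\sp_{2k(k+1)}$ first occurs in the two even orthogonal Witt towers at the indices $k^2$ and $(k+1)^2$; dually, the unipotent cuspidal of $\o_{2k^2}$ first occurs in ${\bf Sp}$ either at $k(k+1)=n+|k|$, with $k_1=|k|$, or, for its sign twist, at $k(k-1)=n-|k|$, with $k_1=|k|-1$. Plugging these three pieces back into the Lusztig data of $\sp_{2n'}$ and matching ranks --- e.g. $\bigl(k_1(k_1+1)+h_1^2\bigr)-\bigl(k(k+1)+h^2\bigr)=\pm|k|$ in case (i), and $\bigl(k_1(k_1+1)+h_1^2\bigr)-\bigl(k(k+1)+h(h+1)\bigr)\in\{k+1,\,-k\}$ in case (ii) --- reproduces exactly the tables in the statement.

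Finally, the sign assertions follow from the symmetry of these formulae. Passing from $\pi$ to $\sgn\pi$ replaces the pair $(\Lambda,\Lambda')$ by $(\Lambda^t,\Lambda^{\prime t})$ (resp. replaces $\e$ by $-\e$ in the odd case), and by Theorem \ref{even} this exchanges the two occurrence branches, so the first occurrence indices of $\pi$ and $\sgn\pi$ add up to $2n$ (resp. $2n+1$); this is the conservation relation \cite[Theorem 12.3]{P1} and it yields the asserted first occurrence of $\sgn\pi_{\rho,k,h}$ (resp. $\sgn\pi_{\rho,k,h,\e}$). Replacing only $\Lambda'$ by $\Lambda^{\prime t}$ (i.e. $h\mapsto -h$) alters only the spectator symbol on the symplectic side, hence leaves $n'$ unchanged and sends $h_1$ to $-h_1$; replacing only $\Lambda$ by $\Lambda^t$ (i.e. $k\mapsto -k$) swaps the branch, so the first occurrence $n_1$ of $\pi_{\rho,-k,h}$ satisfies $n_1+n'=2n$. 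I expect the main obstacle to be exactly this last layer of bookkeeping: tracking the discriminants of the various orthogonal groups through Pan's correspondence and through the parity constraints $\epsilon=\mathrm{sgn}(-1)^k$ of Theorem \ref{even}, and separating cases according to the parities of $k$ and $h$ so that each sign twist is attached to the correct occurrence branch. Everything else is routine given the data already recalled, but this is where essentially all the verification lies.
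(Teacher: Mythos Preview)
Your proposal is correct and largely tracks the paper's own proof: the first two bullets of (i) and all of (ii) are deduced immediately from Theorems \ref{p1}, \ref{p2}, Corollary \ref{p3}, Theorem \ref{even}, and the conservation relation \cite[Theorem 12.3]{P1}, exactly as you outline, and the reduction from the general case to the cuspidal case via Propositions \ref{w1}--\ref{w2} is also what the paper does (implicitly, by pointing back to Proposition \ref{cus1}).

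The one point of divergence is the last bullet of (i), namely that $n_2=n'$ and $\Theta^\epsilon_{n,n'}(\pi_{\rho,k,-h})=\pi_{\rho,k_1,-h_1}$. Your argument that changing $h$ to $-h$ affects only the spectator $\dddg$-component and ``hence sends $h_1$ to $-h_1$'' is not quite complete as stated: Theorem \ref{p1} only pins down the third Lusztig component up to $\rm{sgn}$, so one must still exclude the possibility that $\pi_{\rho,k,-h}$ also lifts to $\pi_{\rho,k_1,h_1}$. This is supplied by the injectivity of the first-occurrence theta lift on cuspidals (from \cite[Theorem 2.2]{AM} together with the back-lift identity $\Theta^\epsilon_{n',n}(\pi_{\rho,k_1,h_1})=\pi_{\rho,k,h}$ you already recorded), so your approach does go through once this is made explicit. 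The paper proves this step by a different device: it invokes the outer conjugation $\pi\mapsto\pi^c$ from Proposition \ref{q1}, uses that $\pi_{\rho,k,h}^c=\pi_{\rho,k,-h}$ on both the orthogonal and symplectic sides, and observes that the Weil representation $\omega^\epsilon_{n,n'}$ is invariant under the simultaneous conjugation, whence the pairings for $(\pi_{\rho,k,h},\pi_{\rho,k_1,h_1})$ and $(\pi_{\rho,k,-h},\pi_{\rho,k_1,-h_1})$ agree. Both arguments are short; yours stays entirely within the toolkit already assembled, while the paper's exploits a symmetry of the Weil representation that bypasses any case-by-case tracking of signs. The deduction of $n_1+n'=2n$ is then the same in both: conservation plus the already established $n_2=n'$ (combined with $\rm{sgn}\cdot\pi_{\rho,k,h}=\pi_{\rho,-k,-h}$ from Proposition \ref{q1}(ii)).
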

\begin{proof}
We only prove that last part of (i). The rest follows immediately from Theorem \ref{p1}, Theorem \ref{p2}, Corollary \ref{p3} and Theorem \ref{even}, and the conservation relation for cuspidal representations given in \cite[Theorem 12.3]{P1} as above Proposition.

By the same argument of the proof of Proposition 4.3 in \cite{W1}, we have $\pi_{\rho,k,h}^c=\pi_{\rho,k,-h}$ for the irreducible cuspidal representation of finite symplectic groups and finite even orthogonal groups. Let
\[
\omega_{n,n'}^{\epsilon,c}(g):=\omega_{n,n'}^{\epsilon}(xgx^{-1})
\]
where $g\in \o^\epsilon_{2n}\fq\times\sp_{2n'}\fq$ and $x=x_1\times x_2\in \rm{CO}^\epsilon_{2n}\fq\times\rm{CSp}_{2n'}\fq$ with $\zeta\circ\lambda_{x_i}=-1$ (see Proposition \ref{q1}). Since there is only one Weil representation for dual pair $(\o^\epsilon_{2n}\fq,\sp_{2n'}\fq)$, we conclude that $\omega_{n,n'}^{\epsilon,c}=\omega_{n,n'}^{\epsilon}$. So
\[
\begin{aligned}
\langle \pi_{\rho,k,h}\otimes\pi_{\rho,k_1,h_1},\omega^{\epsilon}_{n,n'}\rangle_{\o^\epsilon_{2n}\fq\times\sp_{2n'}\fq}=&\langle \pi_{\rho,k,h}^c\otimes\pi_{\rho,k_1,h_1}^c,\omega^{\epsilon,c}_{n,n'}\rangle_{\o^\epsilon_{2n}\fq\times\sp_{2n'}\fq}\\
=&\langle \pi_{\rho,k,-h}\otimes\pi_{\rho,k_1,-h_1},\omega^{\epsilon}_{n,n'}\rangle_{\o^\epsilon_{2n}\fq\times\sp_{2n'}\fq}.
\end{aligned}
\]
By the conservation relation and Proposition \ref{q1} (ii), we have $n_1+n'=2n$.
\end{proof}

\subsection{Strongly relevant pair of representations}\label{6.3}

Denote by $\epsilon_{-1}$, the square class of $-1$.
\begin{definition}\label{strongly relevant0}
Let $\psi$ be a fixed nontrivial additive character of $\Fq$.

(i) Let $\pi$ be an irreducible cuspidal representation of $\sp_{2n}\fq$, and let $\pi'$ be an irreducible cuspidal representation of $\sp_{2m}\fq$. Let $n^\epsilon$ be the first occurrence index of $\pi$ in the Witt tower ${\bf O}^\epsilon_\rm{even}$, and let $m^{\epsilon'}$ be the first occurrence index of $\pi'$ in the Witt tower ${\bf O}^{\epsilon'}_\rm{odd}$. Pick $\epsilon\in\{\pm\}$ such that $n^\epsilon\le n$. We say the pair of representations $(\pi,\pi')$ is $(\psi,\epsilon_0)$-relevant if $n-n^\epsilon=m-m^{\epsilon\cdot\epsilon_0}-1$ or $n-n^\epsilon= m-m^{\epsilon\cdot\epsilon_0}$. We say the pair of representations $(\pi,\pi')$ is $(\psi,\epsilon_0)$-strongly relevant if $(\pi,\pi')$ is $(\psi,\epsilon_0)$-relevant and $(\pi',\pi)$ is $(\psi,\ee\cdot\epsilon_0)$-relevant. It is easily to see $(\pi,\pi')$ is $(\psi,\epsilon_0)$-strongly relevant if and only if $(\pi',\pi)$ is $(\psi,\ee\cdot\epsilon_0)$-strongly relevant.

(ii) Let $\pi_{\rho_{},\Lambda_{},\Lambda_{}'}\in\cal{E}(\sp_{2n},\sigma_{})$, and let $\pi_{\rho_1,\Lambda_1,\Lambda_1'}\in\cal{E}(\sp_{2m},\sigma_1)$. Assume that either $\rm{def}(\Lambda_1')\ne 0$ or $\Lambda_1'=\begin{pmatrix}
-\\
-
\end{pmatrix}$. We say the pair of representations $(\pi_{\rho_{},\Lambda_{},\Lambda_{}'},\pi_{\rho_1,\Lambda_1,\Lambda_1'})$ is $(\psi,\epsilon_0)$-relevant if $(\sigma_{},\sigma_1)$ is $(\psi,\epsilon_0)$-relevant.

 If $\rm{def}(\Lambda_1')= 0$ and $\Lambda_1'\ne\begin{pmatrix}
-\\
-
\end{pmatrix}$, then we define $(\psi,\epsilon_0)$-relevant pair by induction on $|\Upsilon(\Lambda_1')^*|+|\Upsilon(\Lambda_1')_*|$.
 Let $n^\epsilon$ be the first occurrence index of $\pi_{\rho_{},\Lambda_{},\Lambda_{}'}$ in the Witt tower ${\bf O}^\epsilon_\rm{even}$, and let $m^{\epsilon'}$ be the first occurrence index of $\pi_{\rho_1,\Lambda_1,\Lambda_1'}$ in the Witt tower ${\bf O}^{\epsilon'}_\rm{odd}$. Pick $\epsilon\in\{\pm\}$ such that $n^\epsilon\le n$.
 Let $\pi_{}^\star=\Theta^\epsilon_{n,n^\epsilon}(\pi_{\rho_{},\Lambda_{},\Lambda_{}'})$ and $\pi_1^\star=\Theta^{\epsilon\cdot\epsilon_0}_{m,m^{\epsilon\cdot\epsilon_0}}(\pi_{\rho_1,\Lambda_1,\Lambda_1'})$. By Theorem \ref{p1} and Theorem \ref{p2}, we know that $\pi^\star_{}$ and $\pi^\star_1$ are irreducible.  Let $n^\star$ and $m^\star$ be the first occurrence index of $\pi^\star_{}$ and $\pi^\star_1$ in the Witt tower ${\bf Sp}$, respectively. With same argument, we know that $\Theta^\epsilon_{n^\epsilon,n^\star}(\pi_{}^\star)$ and $\Theta^{\epsilon\cdot\epsilon_0}_{m^{\epsilon\cdot\epsilon_0},m^\star}(\pi_1^\star)$ are irreducible, and we denote them by $\pi_{\rho_{},\Psi_{},\Psi_{}'}$ and  $\pi_{\rho_1,\Psi_1,\Psi_1'}$. Note that $|\Upsilon(\Psi_1')^*|+|\Upsilon(\Psi_1')_*|<|\Upsilon(\Lambda_1')^*|+|\Upsilon(\Lambda_1')_*|$ and $\rm{def}(\Psi_1)=0$.

 We say the pair of representations $(\pi_{\rho_{},\Lambda_{},\Lambda_{}'},\pi_{\rho_1,\Lambda_1,\Lambda_1'})$ is $(\psi,\epsilon_0)$-relevant if the following hold:
 \begin{itemize}
\item $n-n^\epsilon=m-m^{\epsilon\cdot\epsilon_0}-1$ or $n-n^\epsilon= m-m^{\epsilon\cdot\epsilon_0}$;
\item $n^\epsilon-n^\star-1=m^{\epsilon\cdot\epsilon_0}-m^\star$ or $n^\epsilon-n^\star= m^{\epsilon\cdot\epsilon_0}-m^\star$;
\item $(\pi_{\rho_{},\Psi_{},\Psi_{}'},\pi_{\rho_1,\Psi_1,\Psi_1'})$ is $(\psi,\epsilon_0)$-\emph{strongly} relevant.
\end{itemize}
 We say the pair of representations $(\pi_{\rho_{},\Lambda_{},\Lambda_{}'},\pi_{\rho_1,\Lambda_1,\Lambda_1'})$ is $(\psi,\epsilon_0)$-strongly relevant if $(\pi_{\rho_{},\Lambda_{},\Lambda_{}'},\pi_{\rho_1,\Lambda_1,\Lambda_1'})$ is $(\psi,\epsilon_0)$-relevant and $(\pi_{\rho_1,\Lambda_1,\Lambda_1'},\pi_{\rho_{},\Lambda_{},\Lambda_{}'})$ is $(\psi,\ee\cdot\epsilon_0)$-relevant.

(iii) We will write $(\psi,\epsilon_0)$-relevant (resp. $(\psi,\epsilon_0)$-strongly relevant) simply $\epsilon_0$-relevant (resp. $\epsilon_0$-strongly relevant) when no confusion can arise.
\end{definition}

For orthogonal groups, the first occurrence index does not depend on $\psi$. In fact, for even orthogonal groups, the Weil representation is the same for different choices of $\psi$. For odd orthogonal groups, let $\omega_{\rm{Sp}_{2N},\psi}$ and $\omega_{\rm{Sp}_{2N},\psi'}$  be the Weil representations of the finite symplectic group $\rm{Sp}_{2N}(\Fq)$ corresponding to $\psi$ and $\psi'$ respectively. Note that restricted to the dual pairs $\sp_{2n'}\fq\times\o_{2n+1}^\epsilon\fq$ with $N=n'(2n+1)$, one has
\begin{equation}\label{omega}
\omega^\epsilon_{n',n,\psi}\cong \omega^{-\epsilon}_{n',n,\psi'}
\end{equation}
via the isomorphism $\o_{2n+1}^\epsilon\cong \o_{2n+1}^{-\epsilon}$.
Let $\pi\in \cal{E}(\o_{2n+1}^\epsilon,s)$ be an irreducible cuspidal representation. Assume that the first occurrence index of $\pi$ (resp. $\rm{sgn}\cdot\pi$) is $n^\epsilon_1$ (resp. $n^\epsilon_2$) and $\Theta^\epsilon_{n,n^\epsilon_1}(\pi)=\prll$ (resp. $\Theta^\epsilon_{n,n^\epsilon_2}(\rm{sgn}\cdot\pi)=\prllc$). Then $\prll$ (resp. $\prllc$) is cuspidal and so is $\pi_{\rho,\Lambda,\Lambda^{\prime t}}$ (resp. $\pi_{\rho,\Lambda_1,\Lambda_1^{\prime t}}$).
By Theorem \ref{p2}, if $1$ is not a eigenvalue of $s$, then $n^\pm_i=n$ do not depend on $\psi$.
Assume that $s$ has a eigenvalue $1$. By Proposition \ref{q1}, we have
\[
\prll(-I)=\pi_{\rho,\Lambda,\Lambda^{\prime t}}(-I)\textrm{ and }\prllc(-I)=\pi_{\rho,\Lambda_1,\Lambda_1^{\prime t}}(-I).
 \]
 Recall that for any $\pi_1\in\cal{E}(\o^\epsilon_{2n+1})$ and $\pi_2\in\cal{E}(\sp_{2n^\epsilon_1})$, if $\pi_1\otimes\pi_2$ appears in $\omega^\epsilon_{n,n^\epsilon_1}$, then $\pi_1(-I)=\pi_2(-I)$. Then $\pi_{\rho,\Lambda_1,\Lambda_1^{\prime t}}(-I)=\sgn\pi(-I)\ne \pi(-I)$, therefore $\pi\otimes\pi_{\rho,\Lambda_1,\Lambda_1^{\prime t}}$ does not appear in $\omega^{-\epsilon}_{n,n^\epsilon_2}$. On the other hand, by Theorem \ref{p2} and the conservation relation, either
$\pi\otimes\pi_{\rho,\Lambda_1,\Lambda_1^{\prime t}}$ or $\pi\otimes\pi_{\rho,\Lambda,\Lambda^{\prime t}}$ appears in $\omega^{-\epsilon}_{n,m}$ for some $m$. So the first occurrence index of $\pi$ is $n^\epsilon_1$ in $\o_{2n+1}^{-\epsilon}$.

\begin{definition}\label{strongly relevant2}

(i) Let $\pi$ be an irreducible cuspidal representation of $\rm{O}^\epsilon_{2n+1}(\Fq)$, and let $\pi'$ be an irreducible cuspidal representation of $\rm{O}^{\epsilon'}_{2m}(\Fq)$. For $\chi_0\in\{1,\rm{sgn}\}$, let $n_0^\epsilon$ (resp. $m_0^{\epsilon'}$) be the first occurrence index of $\chi_0\otimes\pi$ (resp. $\chi_0\otimes\pi'$).
Pick $\chi_0$ such that $n_0^\epsilon\le n$. We say the pair of representations $(\pi,\pi')$ is relevant if
$n-n_0^\epsilon=m-m_0^{\epsilon'}-1\textrm{ or }n-n_0^\epsilon= m-m_0^{\epsilon'}$.
 We say the pair of representations $(\pi,\pi')$ is strongly relevant if both $(\pi,\pi')$ and $(\chi\otimes\pi,\chi\otimes\pi')$ are relevant where $\chi$ is defined as Proposition \ref{q1}.

(ii) Let $\pi_{\rho_{},\Lambda_{},\Lambda_{}',\epsilon_{}}\in\cal{E}(\rm{O}^\epsilon_{2n+1},\sigma_{})$, and let $\pi_{\rho_1,\Lambda_1,\Lambda_1'}\in\cal{E}(\rm{O}^{\epsilon'}_{2m},\sigma_1)$. Assume that either $\rm{def}(\Lambda_1)\ne 0$ or $\Lambda_1=\begin{pmatrix}
-\\
-
\end{pmatrix}$.
We say the pair of representations $(\pi_{\rho_{},\Lambda_{},\Lambda_{}',\epsilon_{}},\pi_{\rho_1,\Lambda_1,\Lambda_1'})$ is relevant if $(\sigma_{},\sigma_1)$ is relevant.

 If $\rm{def}(\Lambda_1)= 0$ and $\Lambda_1\ne\begin{pmatrix}
-\\
-
\end{pmatrix}$, then we define relevant pair by induction on $|\Upsilon(\Lambda_1)^*|+|\Upsilon(\Lambda_1)_*|$. Fix a nontrivial additive character $\psi$ of $\Fq$. For $\chi_0\in\{1,\rm{sgn}\}$, let $n_0^\epsilon$ (resp. $m_0^{\epsilon'}$) be the first occurrence index of $\chi_0\otimes\pi_{\rho_{},\Lambda_{},\Lambda_{}',\epsilon_{}}$ (resp. $\chi_0\otimes\pi_{\rho_1,\Lambda_1,\Lambda_1'}$).
Pick $\chi_0$ such that $n_0^\epsilon\le n$. Let $\pi_{\psi,1}^\star=\Theta^\epsilon_{n,n_0^\epsilon}(\pi_{\rho_{},\Lambda_{},\Lambda_{}',\epsilon_{}})$ and $\pi_{\psi,2}^\star=\Theta^{\epsilon'}_{m,m_0^{\epsilon'}}(\pi_{\rho_1,\Lambda_1,\Lambda_1'})$. By Theorem \ref{p1} and Theorem \ref{p2}, we know that $\pi^\star_{\psi,1}$ and $\pi^\star_{\psi,2}$ are irreducible.  Let $n^\star$ and $m^\star$ be the first occurrence index of $\pi^\star_{\psi,1}$ and $\pi^\star_{\psi,2}$ in the Witt tower ${\bf O}^{\epsilon}_\rm{odd}$ and ${\bf O}^{\epsilon'}_\rm{even}$, respectively. With same argument, we know that $\Theta^\epsilon_{n_0^\epsilon,n^\star}(\pi_{\psi,1}^\star)$ and $\Theta^{\epsilon'}_{m_0^{\epsilon'},m^\star}(\pi_{\psi,2}^\star)$ are irreducible, and we denote them by $\pi_{\rho_{},\Psi_{},\Psi_{}',\epsilon_{}}$ and  $\pi_{\rho_1,\Psi_1,\Psi_1'}$. Note that $|\Upsilon(\Psi_1)^*|+|\Upsilon(\Psi_1)_*|<|\Upsilon(\Lambda_1)^*|+|\Upsilon(\Lambda_1)_*|$  and $\rm{def}(\Psi_1)=0$.

 We say the pair of representations $(\pi_{\rho_{},\Lambda_{},\Lambda_{}',\epsilon_{}},\pi_{\rho_1,\Lambda_1,\Lambda_1'})$ is relevant if the following hold:
 \begin{itemize}
\item $n-n_0^\epsilon=m-m_0^{\epsilon'}-1\textrm{ or }n-n_0^\epsilon= m-m_0^{\epsilon'}$;
\item $n_0^\epsilon-n^\star-1=m_0^{\epsilon'}-m^\star$ or $n_0^\epsilon-n^\star= m_0^{\epsilon'}-m^\star$;
\item $(\pi_{\rho_{},\Psi_{},\Psi_{}',\epsilon_{}},\pi_{\rho_1,\Psi_1,\Psi_1'})$ is \emph{strongly} relevant.
\end{itemize}

 We say the pair of representations $(\pi_{\rho_{},\Lambda_{},\Lambda_{}',\epsilon_{}},\pi_{\rho_1,\Lambda_1,\Lambda_1'})$ is strongly relevant if both $(\pi_{\rho_{},\Lambda_{},\Lambda_{}',\epsilon_{}},\pi_{\rho_1,\Lambda_1,\Lambda_1'})$ and $(\chi\otimes\pi_{\rho_{},\Lambda_{},\Lambda_{}',\epsilon_{}},\chi\otimes\pi_{\rho_1,\Lambda_1,\Lambda_1'})$ are relevant where $\chi$ is defined as Proposition \ref{q1}.

Recall that the first occurrence index $n_0^\epsilon$ and $m_0^{\epsilon'}$ do not depend on $\psi$. By conservation relation in \cite[section 9]{P4}, the first occurrence index $n^\star$ and $m^\star$ also do not depend on $\psi$. By Theorem \ref{p1}, Theorem \ref{p2} and Lemma \ref{5.5}, the different choice of $\psi$ does not change the representations $\pi_{\rho_{},\Psi_{},\Psi_{}',\epsilon_{}}$ and $\pi_{\rho_1,\Psi_1,\Psi_1'}$. So the definition of relevant pair and strongly relevant does not depond on the choice of $\psi$.

\end{definition}

\begin{corollary}\label{strongly relevant}
Let $\pkh$ be an irreducible representation of $\sp_{2n}\fq$, and let $\pkhp$ be an irreducible representation of and $\sp_{2m}\fq$.
For any $\psi$ and $\epsilon_0$,  the following hold.

(i) If $(\pkh,\pkhp)$ is $(\psi,\epsilon_0)$-relevant, then $k=|h'|$ or $k=|h'|-1$.

(ii) If $(\pkh,\pkhp)$ is $(\psi,\epsilon_0)$-relevant, then $(\pkh,\pi_{\rho,k',-h'})$ is not.

(iii) If $(\pkh,\pkhp)$ is not $(\psi,\epsilon_0)$-relevant and $k=|h'|$ or $k=|h'|-1$, then $(\pkh,\pi_{\rho,k',-h'})$ is $(\psi,\epsilon_0)$-relevant.
\end{corollary}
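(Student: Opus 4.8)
The plan is to reduce each of the three assertions to an arithmetic comparison of first occurrence indices, all of which are given in closed form by Proposition \ref{cus1}. First I would note that, by Definition \ref{strongly relevant0}, $(\psi,\epsilon_0)$-relevance of a (possibly non-cuspidal) pair is by definition that of the cuspidal supports; since passing to the cuspidal support preserves the defects of $\Lambda$ and $\Lambda'$, hence the integers $k$ and $|h'|$ (Proposition \ref{q1}(iv)), and since the quantities entering Definition \ref{strongly relevant0} depend only on $k$ and $|h'|$, it suffices to treat the case in which $\pkh$ and $\pkhp$ are cuspidal.

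Next I would extract the two inputs I need. From Proposition \ref{cus1}(i) the first occurrence indices of $\pkh$ in the two even Witt towers are $n-k$ and $n+k+1$; since $k\ge 0$, the tower $\epsilon$ singled out by $n^\epsilon\le n$ satisfies $n^\epsilon=n-k$, so $n-n^\epsilon=k$. From Proposition \ref{cus1}(ii) the first occurrence indices of $\pkhp$ in the two odd towers are $m-|h'|$ and $m+|h'|$, and --- this is the point that demands attention --- the representation $\pi_{\rho',k',-h'}$ occurs first at $m+|h'|$ in exactly the odd tower in which $\pkhp$ occurs first at $m-|h'|$, and conversely. Hence $m^{\epsilon'}(\pkhp)+m^{\epsilon'}(\pi_{\rho',k',-h'})=2m$ for each $\epsilon'\in\{\pm\}$, so setting $d:=m-m^{\epsilon\cdot\epsilon_0}(\pkhp)\in\{\,|h'|,\,-|h'|\,\}$ we also get $m-m^{\epsilon\cdot\epsilon_0}(\pi_{\rho',k',-h'})=-d$.

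With these values, Definition \ref{strongly relevant0} says that $(\pkh,\pkhp)$ is $(\psi,\epsilon_0)$-relevant exactly when $k\in\{d,\,d-1\}$ and that $(\pkh,\pi_{\rho',k',-h'})$ is $(\psi,\epsilon_0)$-relevant exactly when $k\in\{-d,\,-d-1\}$. Then (i) is immediate: $k\in\{d,d-1\}$ together with $k\ge 0$ and $|d|=|h'|$ forces $k\in\{|h'|,|h'|-1\}$. For (ii), if both pairs were relevant then $\{d,d-1\}\cap\{-d,-d-1\}\ne\varnothing$, which over $\mathbb{Z}$ forces $d=0$, i.e.\ $h'=0$; but then $\pi_{\rho',k',-h'}=\pkhp$ and the statement is degenerate, so under the stated hypotheses the two cannot both be relevant. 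For (iii), the hypothesis that $(\pkh,\pkhp)$ is not relevant gives $k\notin\{d,d-1\}$ while $k\in\{|h'|,|h'|-1\}$; since $\{|h'|,|h'|-1\}$ equals $\{d,d-1\}$ when $d=|h'|$ and equals $\{-d,-d-1\}$ when $d=-|h'|$, the former alternative is ruled out, we are in the latter, $k\in\{-d,-d-1\}$, and therefore $(\pkh,\pi_{\rho',k',-h'})$ is relevant.

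I expect the only real obstacle to be the sign bookkeeping underlying the complementarity relation in the second paragraph: one has to verify, straight from the statements of Theorems \ref{p1}, \ref{p2} and Proposition \ref{cus1}(ii), that replacing $h'$ by $-h'$ interchanges $m-|h'|$ and $m+|h'|$ \emph{within each fixed odd tower} (rather than moving between towers), and that the tower $\epsilon$ chosen on the $\pkh$ side is correctly matched with the tower $\epsilon\cdot\epsilon_0$ appearing in the definition of relevance. Once that is pinned down, the remainder is the elementary integer arithmetic above, and the fully degenerate cases $h=0$ or $h'=0$ are settled by inspection.
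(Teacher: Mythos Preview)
Your proof is correct and is exactly the approach the paper has in mind: the paper's own proof consists of the single line ``It follows immediately from Proposition \ref{cus1},'' and you have filled in precisely those details by reading off $n-n^\epsilon=k$ and $m-m^{\epsilon\cdot\epsilon_0}\in\{\pm|h'|\}$ from Proposition \ref{cus1} and reducing relevance to the arithmetic of the sets $\{d,d-1\}$ and $\{-d,-d-1\}$. Your remark on the degenerate case $h'=0$ in part (ii) is a valid observation about the statement itself rather than a gap in the argument.
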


\begin{proof}
It follows immediately from Proposition \ref{cus1}.
\end{proof}

For orthogonal groups, we have follow result. It follows immediately from Proposition \ref{cus2}.
\begin{corollary}\label{strongly relevant1}
(i) Let $\pkh$ be an irreducible representation of $\o^\epsilon_{2n}\fq$, and let $\pi_{\rho',k',h',\epsilon''}$ be an irreducible representation of $\o^\e_{2m+1}\fq$. If $(\pkh,\pi_{\rho',k',h',\epsilon''})$ is relevant, then $|k|=k'$ or $|k|=k'-1$.

(ii) Let $\pi_{\rho,k,h,\epsilon''}$ be an irreducible representation of $\o^\epsilon_{2n+1}\fq$, and let $\pkhp$ be an irreducible representation of and $\o^\e_{2m}\fq$. If $(\pi_{\rho,k,h,\epsilon''},\pkhp)$ is relevant, then $|k'|=k$ or $|k'|=k+1$.
\end{corollary}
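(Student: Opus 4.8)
The plan is to read this off from Definition~\ref{strongly relevant2} by substituting in the first occurrence indices computed in Proposition~\ref{cus2}. Statements (i) and (ii) are symmetric under interchanging the even and odd orthogonal groups, so I will describe (i) and only indicate the change needed for (ii).

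First I would reduce to the cuspidal case. By Definition~\ref{strongly relevant2}(ii) the pair $(\pkh,\pi_{\rho',k',h',\epsilon''})$ is relevant exactly when the pair of its cuspidal supports is relevant; and since the integers $k,h$ (resp. $k',h'$) are extracted from the defects of $\Lambda,\Lambda'$, which by Proposition~\ref{q1}(iv) are unchanged by parabolic induction, the cuspidal supports carry the same parameters $k,h$ (resp. $k',h'$). So I may assume $\pkh$ is a cuspidal representation of $\o^\epsilon_{2n}\fq$ and $\pi_{\rho',k',h',\epsilon''}$ is a cuspidal representation of $\o^{\epsilon'}_{2m+1}\fq$.

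Next I would compute the quantities entering the definition of relevant. By Proposition~\ref{cus2}(i), the first occurrence indices of $\pkh$ and of $\sgn\pkh$ in the Witt tower ${\bf Sp}$ are $n-|k|$ and $n+|k|$ in some order (they sum to $2n$); hence the twist $\chi_0\otimes\pkh$ chosen so that $n_0^\epsilon\le n$ has $n_0^\epsilon=n-|k|$, i.e. $n-n_0^\epsilon=|k|$. Applying that same $\chi_0$ to $\pi_{\rho',k',h',\epsilon''}$, Proposition~\ref{cus2}(ii) gives $m_0^{\epsilon'}\in\{m-k',\,m+k'+1\}$, so $m-m_0^{\epsilon'}\in\{k',\,-k'-1\}$. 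Inserting these into the two defining alternatives of ``relevant'' in Definition~\ref{strongly relevant2}(i), and using $|k|\ge 0$ to rule out $m-m_0^{\epsilon'}=-k'-1$ apart from the degenerate case $k=k'=0$, one is left with $|k|=k'$ or $|k|=k'-1$. For (ii) the same computation applies, now using Proposition~\ref{cus2}(ii) for $\pi_{\rho,k,h,\epsilon''}$ on the odd orthogonal group and Proposition~\ref{cus2}(i) for $\pkhp$ on the even orthogonal group; the asymmetry of the index shifts (namely $\pm|k|$ for even orthogonal groups versus $n-k$ and $n+k+1$ for odd orthogonal groups) is exactly what turns the ``$-1$'' in (i) into the ``$+1$'' in (ii).

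The delicate point is the bookkeeping of the $\pm1$'s: one must align the parity branch of Definition~\ref{strongly relevant2} with the even/odd types that actually occur, keep the single twist $\chi_0$ consistent for both members of the pair (rather than optimizing it separately for each), and dispose of the degenerate configurations in which the two first occurrence indices coincide. None of this is deep, but it has to be carried out carefully in order to land on precisely the inequalities stated rather than an off-by-one variant.
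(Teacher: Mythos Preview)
Your approach is exactly the paper's: the paper's entire proof is the single sentence ``It follows immediately from Proposition~\ref{cus2},'' which unpacks to precisely the computation you describe --- reduce to the cuspidal case via Definition~\ref{strongly relevant2}(ii), read off the first occurrence indices from Proposition~\ref{cus2}, and match them against the numerical conditions in Definition~\ref{strongly relevant2}(i). Your final caveat about the $\pm1$ bookkeeping is apt; that is the only place any care is needed, and the paper leaves it entirely to the reader.
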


\section{The Gan-Gross-Prasad problem: cuspidal case}\label{sec8}

From now on, we fix a character $\psi$ of $\Fq$. We write $\omega_{n,\psi}$ simply $\omega_n$ when no confusion can arise. Let $\pi\in\cal{E}(\sp_{2n},s)$ be an irreducible representation of $\sp_{2n}\fq$. Recall that
\[
\cal{L}'_s(\pi)=\p\otimes\pp\otimes\ppp.
\]
where $\cal{L}'_s$ is the modified Lusztig correspondence. In this section we study the Gan-Gross-Prasad problem for representation $\pi$ such that $\pp$ and $\ppp$ are cuspidal, i.e. we consider the representations which is of the form $\pkh$.

By abuse of notation, for $\pi=\pi_{\rho,-,-}\in\cal{E}(\sp_{2n})$ and $\pi'=\pi_{\rho',-,-}\in\cal{E}(\sp_{2m})$, we write
\begin{equation}\label{8}
m_\psi(\pi,\pi')=\left\{
\begin{array}{ll}
m_\psi(\pi,\pi'),&\textrm{ if }n\ge m;\\
m_\psi(\pi',\pi),&\textrm{ if }n< m.
\end{array}
\right.
\end{equation}
If $n=m$, then by Proposition \ref{rho}, we know that (\ref{8}) is well defined.
For any irreducible representations, $\pi\in\cal{E}(\o^\epsilon_{n})$ and $\pi'\in\cal{E}(\o^\e_{m})$, we write
\[
m(\pi,\pi')=\left\{
\begin{array}{ll}
m(\pi,\pi'),&\textrm{ if }n> m;\\
m(\pi',\pi),&\textrm{ if }n< m.
\end{array}
\right.
\]

We will prove the following result, which is the  Fourier-Jacobi case of Theorem \ref{main1}.
\begin{theorem}\label{8.1}
Let $n\ge m$. Let $\pkh$ be an irreducible representation of $\sp_{2n}\fq$, and let $\pkhp$ be an irreducible representation of $\sp_{2m}\fq$. Then
\[
m_\psi(\pkh,\pkhp)=\left\{
\begin{array}{ll}
m_\psi(\pw_{\rho} ,\pw_{\rho'})  &\textrm{ if }(\pkh,\pkhp)\textrm{ is $(\psi,\ee)$-strongly relevant};\\
0&\textrm{ otherwise}
\end{array}\right.
\]
where $m_\psi(\pw_{\rho} ,\pw_{\rho'})$ does not depend on $\psi.$
\end{theorem}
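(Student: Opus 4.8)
The plan is to strip off, via theta correspondence and see-saw dual pairs, the unipotent cuspidal constituents $\pl$ and $\pll$ carried by $\pkh$ (and likewise by $\pkhp$), thereby reducing $m_\psi(\pkh,\pkhp)$ to the Fourier--Jacobi multiplicity $m_\psi(\pw_\rho,\pw_{\rho'})$ of the uniform ``$\dg$-parts'', which is finally evaluated by Reeder's formula; the non-vanishing is governed by whether the Witt towers relevant to $\pkh$ and to $\pkhp$ reach one another, and this is exactly $(\psi,\ee)$-strong relevance. As a first reduction, by Definition~\ref{strongly relevant0}(ii) the property of being $(\psi,\ee)$-strongly relevant depends only on the cuspidal supports, so it suffices to prove the theorem for $\pkh$ and $\pkhp$ cuspidal and then descend. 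Writing $\pkh\in\cal{E}(\sp_{2n},\sigma)$ with $\sigma$ cuspidal, the defects of $\Lambda,\Lambda'$ — hence $k$ and $h$ — are preserved under parabolic induction by Proposition~\ref{q1}(iv), so $\sigma=\pi_{\rho_0,k,h}$ for a cuspidal $\rho_0$, and similarly $\pkhp\in\cal{E}(\sp_{2m},\sigma')$ with $\sigma'=\pi_{\rho'_0,k',h'}$. Since parabolic induction preserves the Fourier--Jacobi multiplicity (the reduction~\eqref{2sp} and Proposition~\ref{w2}), applied on both the $\sp_{2n}$ and the $\sp_{2m}$ side, one obtains $m_\psi(\pkh,\pkhp)=m_\psi(\sigma,\sigma')$ and $m_\psi(\pw_\rho,\pw_{\rho'})=m_\psi(\pw_{\rho_0},\pw_{\rho'_0})$; hence the statement for $(\sigma,\sigma')$ yields the statement for $(\pkh,\pkhp)$.

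For the cuspidal case, $\rho$ is a cuspidal representation of the product of general linear and unitary groups $\dg$, $\pl$ is the unipotent cuspidal representation of $\ddg=\sp_{2k(k+1)}\fq$, $\pll$ is the unipotent cuspidal representation of $\dddg=\o^{(-1)^{|h|}}_{2h^2}\fq$, and $n=n_\rho+k(k+1)+h^2$ with $\pw_\rho$ cuspidal on $\sp_{2n_\rho}\fq$. I would first invoke~\eqref{2sp} to pass to the basic case, so the quantity to compute becomes $\langle\pkh\otimes\omega^{\ee}_{n,\psi},\pi''\rangle_{\sp_{2n}\fq}$ with $\pi''=I^{\sp_{2n}}_{P}(\tau\otimes\pkhp)$, and then run the see-saw machinery of Section~\ref{sec6} — the see-saw dual pairs built from $\o^\epsilon_{2N}$, $\o^\epsilon_{2N+1}$ and $\o_1^{\ee}$, for which $\omega^{\ee}_{n,\psi}$ is the theta lift of the trivial representation of $\o_1^{\ee}\fq$. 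Applying Pan's Theorems~\ref{p1} and~\ref{p2} together with the first-occurrence computations of Proposition~\ref{cus1}: the first occurrence of $\pkh$ in ${\bf O}^\epsilon_{\rm{even}}$ lies at $n-k$ or $n+k+1$, that in ${\bf O}^\epsilon_{\rm{odd}}$ at $n-|h|$ or $n+|h|$, and the lift at first occurrence is $\pi_{\rho,k_1,h_1}$ (resp. $\pi_{\rho,k_1,h_1,\e}$) carrying the \emph{same} $\rho$, the whole shift being dictated by $k$ and $h$; the analogous statements hold for $\pkhp$. Iterating the see-saw identities walks both representations down their towers until their $\dg$-parts can be paired; the $\rho$-part stays inert, and once the unipotent cuspidal pieces are absorbed one is left with $m_\psi(\pw_\rho,\pw_{\rho'})$ — \emph{provided} the two descents actually meet. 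That meeting is possible exactly when the tower-shift inequalities hold from both directions, which is the definition of $(\psi,\ee)$-strong relevance (the sign $\ee$ appearing through the superscript of $\omega^{\ee}_{n,\psi}$); when they do not meet, some step of the see-saw forces a theta lift strictly below first occurrence — the contrapositive of Proposition~\ref{o3}, exactly as in the $\o^+_5\fq$ example of the introduction — so $m_\psi(\pkh,\pkhp)=0$.

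It then remains to evaluate $m_\psi(\pw_\rho,\pw_{\rho'})$ and check its independence of $\psi$. Because $\pw_\rho$ and $\pw_{\rho'}$ are uniform (their centralizers $C_{G^*}(s)$ are products of general linear and unitary groups, $s$ having no eigenvalue $\pm1$) and have the simple, irreducible theta lifts described in Proposition~\ref{cus2}, one further pass through the see-saw dual pairs of Section~\ref{sec6} converts $m_\psi(\pw_\rho,\pw_{\rho'})$ into a Bessel inner product $\langle\tau_1,\tau_2\rangle_{\so^\e_{n-1}\fq}$ of uniform representations of special orthogonal groups having no common eigenvalue and no eigenvalue $\pm1$; this is evaluated by Proposition~\ref{regular}, i.e. by Reeder's formula. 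For $\psi$-independence: even orthogonal Weil representations are independent of $\psi$, while for the odd orthogonal ones~\eqref{omega} gives $\omega^\epsilon_{n',n,\psi}\cong\omega^{-\epsilon}_{n',n,\psi'}$, which merely relabels $\epsilon\leftrightarrow-\epsilon$ coherently along the whole chain and leaves the final scalar — which also visibly depends only on $\rho,\rho'$ — unchanged.

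The crux is the second paragraph: carefully propagating the symbol data (the defects, the discriminant signs $\epsilon$ of the orthogonal Witt towers, and the sign $\ee$ of the Weil representation) through the chain of see-saw identities and verifying that the resulting non-vanishing condition is \emph{precisely} $(\psi,\ee)$-strong relevance, neither weaker nor stronger. In particular one must keep track of the ``$n^+=n-k$ versus $n^-=n-k$'' alternative in Proposition~\ref{cus1}(i) and of which Witt tower is in play at each stage, since — as the $\o^+_5\fq$ example in the introduction illustrates — this is exactly where the Fourier--Jacobi multiplicity can drop to $0$.
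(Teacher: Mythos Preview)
Your overall strategy---pass to the basic case via~\eqref{2sp}, then use see-saw dual pairs and Propositions~\ref{cus1}--\ref{cus2} to strip off the unipotent cuspidal pieces indexed by $k,h,k',h'$, landing on $m_\psi(\pi_\rho,\pi_{\rho'})$---is the same as the paper's. But the reduction ``it suffices to prove the theorem for $\pkh$ and $\pkhp$ cuspidal'' is not justified, and the references you give do not support it. Equation~\eqref{2sp} (equivalently Proposition~\ref{so2}) says $m_\psi(\pi,\pi')=\langle\pi\otimes\omega_n^{\ee},\,I_P(\tau\otimes\pi')\rangle$ for a \emph{specific auxiliary} cuspidal $\tau$ whose eigenvalues avoid those of $s,s'$; it does not say that $m_\psi$ is constant along a Harish-Chandra series, nor that $m_\psi(\pkh,\pkhp)=m_\psi(\sigma,\sigma')$ for the cuspidal supports $\sigma,\sigma'$. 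Proposition~\ref{w2} is a compatibility between theta lifting and parabolic induction, not a statement about $m_\psi$. Since the identity $m_\psi(\pkh,\pkhp)=m_\psi(\sigma,\sigma')$ is essentially equivalent to the theorem you are trying to prove, invoking it here is circular. The paper avoids this: it reformulates via Proposition~\ref{so2} as Theorem~\ref{8.6} and runs the see-saw induction directly on the (possibly non-cuspidal) $\pkh$, using that the first-occurrence indices in Proposition~\ref{cus1} are governed by $(k,h)$ alone (the non-cuspidal case there following from Proposition~\ref{w1}).

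A second point: your description of the see-saw induction elides the most delicate step. After two see-saw moves the paper lands on a pair $(\pi_{\rho,k_2,h_2},\pi_{\rho',k_2',h_2'})$ with $k_2=k-1$, $|h_2'|=|h'|-1$ and $k_2'=k'$, $h_2\in\{\pm h\}$; one must check this new pair is again $(\psi,\ee)$-strongly relevant in order to apply the induction hypothesis. This is not automatic: the sign of $h_2$ is not determined a priori, and the wrong sign would break relevance. The paper handles this by a contradiction argument (Proposition~\ref{cusnv}): if the new pair were not $\ee$-strongly relevant, then by Corollary~\ref{strongly relevant}(iii) the pair with $-h_2$ would be, and tracing the see-saw back up forces $m_\psi(\pi_{\rho,k,-h},\pkhp)\neq0$, contradicting the vanishing result (Proposition~\ref{cusvan}) since $(\pi_{\rho,k,-h},\pkhp)$ is not $\ee$-strongly relevant by Corollary~\ref{strongly relevant}(ii). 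You flag this as ``the crux'' but do not supply the mechanism; without it the induction does not close.
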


For the Bessel case, we have the similar result, and we will only give a sketch of the proof.
\begin{theorem}\label{8.2}
Let $\pi_{\rho,h,k,\epsilon''}$ be an irreducible representation of $\o^\epsilon_{2n+1}\fq$, and let $\pkhp$ be an irreducible representation of and $\o^\e_{2m}\fq$. Then
\[
m(\pi_{\rho,h,k,\epsilon''},\pkhp)=\left\{
\begin{array}{ll}
m_\psi(\pw_{\rho} ,\pw_{\rho'})  &\textrm{ if }(\pi_{\rho,h,k,\epsilon''},\pkhp)\textrm{ is strongly relevant};\\
0&\textrm{ otherwise}
\end{array}\right.
\]
where $m_\psi(\pw_{\rho} ,\pw_{\rho'})$ is the same thing as in Theorem \ref{8.1}.
\end{theorem}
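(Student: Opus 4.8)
The plan is to prove Theorems \ref{8.1} and \ref{8.2} together by induction on the size $|k|+|h|+|k'|+|h'|$ of the quadratic-unipotent part; Theorem \ref{8.2} is the Bessel analogue of \ref{8.1}, and its proof runs in parallel, the Fourier--Jacobi see-saw being replaced throughout by the Bessel see-saw of type (2) --- with $\o^\epsilon_{2n+1}$ over $\o_{2n}\times\o_1$ on the left and $\sp_{2n'}\times\sp_{2n'}$ over $\sp_{2n'}$ on the right --- and the Weil-representation twist on the orthogonal side replaced by honest restriction.

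First I would reduce to the basic case $n=m$: by the compatibility of the Bessel multiplicity with parabolic induction (the reduction (\ref{2o}) explained in the introduction) together with Proposition \ref{w1}, the computation of $m(\pi_{\rho,h,k,\epsilon''},\pkhp)$ comes down to $\langle\pi\otimes\bar\nu,\pi'\rangle$ with $\pi\in\cal{E}(\o^\epsilon_{2n+1})$ and $\pi'\in\cal{E}(\o^{\e}_{2n})$; the hypothesis of strong relevance is preserved by this reduction (Definition \ref{strongly relevant2}(ii)) and $\pw_\rho,\pw_{\rho'}$ are unchanged. One may moreover assume $\pi,\pi'$ cuspidal, using their Harish-Chandra support and Proposition \ref{w1}.

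Next I would run one see-saw step. Feeding into the see-saw identity $\langle\Theta_{G',G}(\pi_{G'}),\pi_H\rangle_H=\langle\pi_{G'},\Theta_{H,H'}(\pi_H)\rangle_{G'}$ the first-occurrence data of Propositions \ref{cus1} and \ref{cus2} --- the first occurrence index of $\pi$ in ${\bf Sp}$ is $n+k+1$ or $n-k$ with cuspidal theta lift $\pi_{\rho,k_1,h_1}$, and that of $\pi'$ is $m\pm|k'|$ with cuspidal theta lift $\pi_{\rho',k_1',h_1'}$ --- rewrites $m(\pi,\pi')$ as a Fourier--Jacobi multiplicity $m_\psi(\pi_1,\Theta(\pi'))$ on the symplectic side, with the $\o_1$-factor of the see-saw supplying the Weil representation $\omega^{\ee}$ (its character pinned down by $\pi(-I)=\pi'(-I)$, \cite[Proposition 3.1]{LW1}), and to this I would apply the induction hypothesis, namely Theorem \ref{8.1} in smaller size. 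Unwinding the recursion --- each see-saw step exchanges ${\bf Sp}$ with the orthogonal tower and strictly decreases the parameters --- leads to the base case $|k|=|h|=|k'|=|h'|=0$, where $\pi_{\rho,k,h}=\pw_\rho$ is uniform and $m_\psi(\pw_\rho,\pw_{\rho'})$ is computed by the standard see-saw reduction of the Fourier--Jacobi case to the Bessel case and then Reeder's formula \cite{R} (cf. Proposition \ref{regular}), and is independent of $\psi$; this is exactly the quantity in Theorem \ref{8.1}. At each step one must also check, using the conservation relation \cite[Theorem 12.3]{P1} and Corollary \ref{strongly relevant1}, that when the relevant first-occurrence indices fail to match --- precisely the failure of strong relevance, as in the $\o_5^+$ example of the introduction --- the multiplicity vanishes, giving the ``otherwise'' clause; independence from the choice of modified Lusztig correspondence is then Remark \ref{sgn}.

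The hard part will be the bookkeeping in the see-saw step: propagating the defects and signs of the symbols $\Lambda,\Lambda'$ (equivalently $k,h,\epsilon,\epsilon''$) through each step and verifying that ``no vanishing occurs along the way'' is faithfully encoded by the two-part condition in Definition \ref{strongly relevant2} (``$(\pi,\pi')$ relevant'' together with ``$(\chi\otimes\pi,\chi\otimes\pi')$ relevant''). Concretely one has to control how the sign character $\chi$ (the character $\rm{sp}$ of \cite{W1}) and the $\{\pm1\}$-factor $\epsilon''$ of the odd orthogonal group are transported by theta --- in particular the fact, visible in Theorem \ref{p2}, that theta interchanges the eigenvalue-$1$ and eigenvalue-$(-1)$ parts when passing between odd-orthogonal and symplectic groups. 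This is the genuine point of departure from the proof of Theorem \ref{8.1}, where neither $\chi$ nor $\epsilon''$ intervenes.
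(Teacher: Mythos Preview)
Your overall plan matches the paper's (carried out in Proposition~\ref{7bp}): reduce to the basic case via Proposition~\ref{7.21}, run a see-saw to transport the Bessel multiplicity to the symplectic side, and invoke the Fourier--Jacobi result Theorem~\ref{8.1}.

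There is, however, a structural gap in your induction. You propose a \emph{joint} induction on $|k|+|h|+|k'|+|h'|$ and assert that ``each see-saw step\ldots strictly decreases the parameters''. But the Bessel-to-Fourier--Jacobi step does not decrease this quantity. In the case the paper actually treats (first occurrence of $\pi_{\rho,k,h,\epsilon''}$ in ${\bf Sp}$ at $n^\epsilon=n+k+1$, Proposition~\ref{cus2}(ii)), the symplectic lift is $\pi_{\rho,k_1,h_1}$ with $k_1=h$ and $|h_1|=k+1$, while the lift of $\pkhp$ at $m_1=m+|k'|$ has $k_1'=|k'|$ and $|h_1'|=|h'|$; the total parameter \emph{increases} by one. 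Hence ``Theorem~\ref{8.1} in smaller size'' is not available as an induction hypothesis.

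The paper's fix is to decouple the two theorems. First prove Theorem~\ref{8.1} (equivalently Theorem~\ref{8.6}) in full, for \emph{all} parameter sizes, by its own self-contained induction (Propositions~\ref{cusvan} and~\ref{cusnv}, where each round trip $\mathrm{FJ}\to\mathrm{Bessel}\to\mathrm{FJ}$ genuinely drops $k$ by one). Only afterwards is Theorem~\ref{8.2} obtained by one see-saw step and an application of the \emph{already established} Theorem~\ref{8.6} --- no further induction. The delicate sign-and-$\epsilon''$ bookkeeping you rightly flag is then handled not by tracking defects through an alternating recursion but by a contradiction argument (the second half of the proof of Proposition~\ref{7bp}): if the symplectic pair $(\pi_{\rho,k_1,h_1},\pi_{\rho',k_1',h_1'})$ produced by the see-saw were not $(\psi,\epsilon_0)$-strongly relevant, one manufactures via Corollary~\ref{strongly relevant}(iii) and Proposition~\ref{cus1}(i) a companion pair $(\pi_{\rho,k_1,h_1},\pi_{\rho',k_1',-h_1'})$ that \emph{is}, traces it back through the same see-saw to the Bessel pair $(\pi_{\rho,k,h,\epsilon''},\pi_{\rho',k',-h'})$, checks via Proposition~\ref{cus2}(i) that the latter is \emph{not} strongly relevant, and invokes the vanishing half to reach a contradiction.
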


\subsection{Reduction to the basic case}
We first show that parabolic induction preserves multiplicities, and thereby make a reduction to the basic case. We need Proposition
\ref{w1} and the following result which generalizes \cite[Proposition 6.1]{LW3}. Similar to \cite[Proposition 6.1]{LW3}, the proof of Proposition \ref{so2} is an adaptation of that of \cite[Theorem 16.1]{GGP1}. Recall that $ \overline{\omega_n^\epsilon}= \omega_n^{\ee\cdot\epsilon}$.
\begin{proposition}\label{so2}
Let $s$ be a semisimple element of $\sp_{2n}(\Fq)^*=\so_{2n+1}\fq$, and $s'$ be a semisimple element of $\sp_{2m}(\Fq)^*=\so_{2m+1}\fq$. Let $\pi\in\cal{E}(\sp_{2n},s)$ be an irreducible representation of $\sp_{2n}(\Fq)$, and let $\pi'\in\cal{E}(\sp_{2m},s')$ be an irreducible representation of $\sp_{2m}$ with $n \ge m$. Let $P$ be an $F$-stable maximal parabolic subgroup of $\sp_{2n}$ with Levi factor $\GGL_{n-m} \times \sp_{2m}$. Let $s_0$ be a semisimple element of $\GGL_{n-m}\fq$ and let $\tau\in\cal{E}(\GGL_{n-m},s_0)$ be an irreducible cuspidal representation of $\GGL_{n-m}\fq$ which is nontrivial  if $n-m=1$. Assume that $s_0$ has no common eigenvalues with $s$ and $s'$. Then we have
\begin{equation}\label{sp}
m_\psi(\pi, \pi')=\langle \pi\otimes\bar{\nu},\pi'\rangle_{H(\Fq)}=\langle  \pi\otimes \overline{\omega_n^+}, I_{P}^{\sp_{2n}}(\tau\otimes\pi')\rangle _{\sp_{2n}(\Fq)}=\langle  \pi\otimes \omega_n^{\ee}, I_{P}^{\sp_{2n}}(\tau\otimes\pi')\rangle _{\sp_{2n}(\Fq)},
\end{equation}
where the data $(H,\nu)$ is given by \cite[(1.2)]{LW3}.
\end{proposition}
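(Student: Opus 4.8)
The plan is to prove the middle equality in \eqref{sp}: the first equality is the definition of $m_\psi$ from \eqref{ggp}, and the last is the case $\epsilon=+$ of $\overline{\omega_n^{\epsilon}}=\omega_n^{\ee\cdot\epsilon}$, recalled just above the statement. The middle equality is the finite field counterpart of the unfolding argument behind \cite[Theorem 16.1]{GGP1}, and it refines \cite[Proposition 6.1]{LW3}; the genuinely new point is that the datum on the $\GGL_{n-m}$--factor may be taken to be an \emph{arbitrary} cuspidal $\tau$ whose semisimple part $s_0$ is disjoint from $s$ and $s'$, and keeping track of Lusztig series is exactly what makes this legitimate. Read concretely, the identity says that the Fourier--Jacobi pairing of the pair $(\pi,\pi')$ of unequal ranks equals the basic Fourier--Jacobi pairing of $\pi$ with the parabolically induced representation $I_P^{\sp_{2n}}(\tau\otimes\pi')$ on $\sp_{2n}$.

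First I would set up the geometry and unfold. Write $W_{2n}=W_{2m}\perp Z$ with $Z$ split symplectic of dimension $2(n-m)$, fix a Lagrangian $X\subset Z$, and let $P=MU$ be the $F$--stable maximal parabolic of $\sp_{2n}$ stabilizing $X$, so that $M\cong\GGL_{n-m}\times\sp_{2m}$; the Fourier--Jacobi subgroup $H$ sits inside $P$ with $H\cap M=\sp_{2m}$, and $\bar\nu$ is the restriction to $H(\Fq)$ of $\overline{\omega_{n,\psi}}$ twisted by a non-degenerate character of $(H\cap U)(\Fq)$. Applying Frobenius reciprocity and Mackey's formula along $H\subset P\subset\sp_{2n}$, and using the mixed (Schr\"odinger) model of $\omega_{n,\psi}$ attached to $P$ --- under which the Heisenberg--Weil part of $H$ reproduces the restriction of $\overline{\omega_{n,\psi}}$ --- I would rewrite $\langle\pi\otimes\bar\nu,\pi'\rangle_{H(\Fq)}$ as a pairing over $\sp_{2n}(\Fq)$ of $\pi\otimes\overline{\omega_{n,\psi}}$ against a parabolic induction $I_P^{\sp_{2n}}(\Gamma\otimes\pi')$, where $\Gamma$ is the representation of $\GGL_{n-m}(\Fq)$ obtained by inducing the residual unipotent character up the $\GGL$--factor.

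Next I would use the geometric lemma to compute the Jacquet module of $\pi\otimes\overline{\omega_{n,\psi}}$ along $U^F$ and pair it against $\Gamma\otimes\pi'$ on $M$. Here the hypothesis that $s_0$ shares no eigenvalue with $s$ or $s'$ enters decisively: it forces all ``mixing'' terms in that expansion --- those in which the $\GGL_{n-m}$--directions interact nontrivially with the $\sp_{2m}$--part --- to lie in Lusztig series with incompatible semisimple support, hence to contribute nothing, by the orthogonality of Deligne--Lusztig characters exploited in Proposition~\ref{regular}; the part of $\Gamma$ that does survive is seen only through its $\tau$--isotypic component, which it meets with multiplicity one, so the $\GGL_{n-m}$--direction collapses onto $\tau$ independently of the choice of such $\tau$. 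The assumption that $\tau$ be nontrivial when $n-m=1$ is used only to exclude the self--dual $\GGL_1$--character that would otherwise create an extra double coset. An essentially equivalent route, closer to \cite{GGP1}, is to realize $m_\psi(\pi,\pi')$ via the see-saw pairs of Section~\ref{sec6} and the theta correspondence, and then transport the parabolic induction across $\Theta$ using Proposition~\ref{w1}. Either way, I expect the main obstacle to be precisely this control of the Mackey / geometric--lemma expansion --- showing that the disjointness of $s_0$ from $s$ and $s'$ kills all but the single $\tau$--term --- together with the bookkeeping needed to track the Weil--representation twists, the character $\chi$ of Proposition~\ref{w1}, and the $\epsilon$--conventions, so that the Weil representation ultimately paired with $\pi$ is $\overline{\omega_n^+}=\omega_n^{\ee}$ and not its companion in the pair.
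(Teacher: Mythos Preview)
Your proposal is correct and follows essentially the same route as the paper: both defer to the unfolding proof of \cite[Theorem~16.1]{GGP1}, the point being that the cuspidality of $\pi$ assumed there is invoked only to force a single vanishing, namely $\langle I^{\sp_{2n}}_{P'}(\tau\otimes(\pi'\otimes\omega_m^+)),\pi\rangle_{\sp_{2n}(\Fq)}=0$, and that in the present setting the disjointness of $s_0$ from $s$ yields this vanishing without cuspidality. One small correction: that vanishing comes directly from the compatibility of parabolic induction with Lusztig series --- every constituent of the induced representation carries the eigenvalues of $s_0$ in its semisimple support, hence cannot lie in $\mathcal{E}(\sp_{2n},s)$ --- rather than from Proposition~\ref{regular}, which is a separate restriction computation.
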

\begin{proof}
It can be proved in the same way as \cite[Theorem 16.1]{GGP1}. The cuspidality assumption of $\pi$ in \cite[Theorem 16.1]{GGP1} was used to obtain the following statement: for an $F$-stable maximal parabolic subgroup $P'$ of $\sp_{2n}$ with Levi factor $\GGL_{n-m} \times \sp_{2m}$,
\[
\langle I^{\sp_{2n}}_{P'}\left(\tau\otimes(\pi'\otimes\omega_m^+\right),\pi\rangle _{\sp_{2n}(\Fq)}=0.
\]
Since in our case $s_0$ has no common eigenvalues with $s$ and $s'$, this  multiplicity is zero.
The rest of the proof is the same as that of \cite[Theorem 16.1]{GGP1}.
\end{proof}

We also have similar result for Bessel case which generalizes Proposition 5.3 and Corollary 5.4 in \cite{LW3}.
\begin{proposition} \label{7.21}
Let $s$ be a semisimple element of $\rm{SO}^\epsilon_n(\Fq)^*$, and $s'$ be a semisimple element of $\rm{SO}^{\epsilon'}_m(\Fq)^*$. Let $\pi\in\cal{E}(\rm{SO}^\epsilon_n,s)$ be an irreducible representation of $\rm{SO}^\epsilon_n(\Fq)$, and let $\pi'\in\cal{E}(\rm{SO}^{\epsilon'}_m,s')$ be an irreducible representation of $\rm{SO}^{\epsilon'}_m(\Fq)$ with $n > m$, $n\equiv m+1$ mod $2$. Let $P$ be an $F$-stable maximal parabolic subgroup of $\rm{SO}^{\epsilon'}_{n+1}$ with Levi factor $\GGL_{\ell} \times \rm{SO}^{\epsilon'}_{m}$, $\ell=(n+1-m)/2$. Let $s_0$ be a semisimple element of $\GGL_{\ell}(\bb{F}_{q})$. Let $\tau_1$ (resp. $\tau_2$) be an irreducible cuspidal representations of $\GGL_{\ell'}(\bb{F}_{q})$ (resp. $\GGL_{\ell-\ell'}(\bb{F}_{q}))$, $\ell'\leq \ell$, which is nontrivial if $\ell'=1$ (resp.  $\ell-\ell'=1$), and
\[
\tau=I_{\GGL_{\ell'}\times  \GGL_{\ell- \ell'}}^{\GGL_\ell}(\tau_1\times\tau_2).
\]
Assume that $\tau\in\cal{E}(\GGL_\ell,s_0)$, and $s_0$ has no common eigenvalues with $s$ and $s'$. Then we have
\begin{equation}\label{7.22}
m(\pi, \pi')=\langle \pi\otimes \bar{\nu}, \pi'\rangle _{H(\Fq)}=\langle I^{\so^\e_{n+1}}_{P}(\tau\otimes\pi'),\pi\rangle _{\so^\epsilon_{n}(\Fq)},
\end{equation}
where the data $(H,\nu)$ is given by \cite[(1.2)]{LW3}.
\end{proposition}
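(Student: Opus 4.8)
The plan is to imitate the proof of the Bessel case of the finite Gan--Gross--Prasad reduction, namely \cite[Theorem 15.1]{GGP1} as transported to finite fields in the proofs of \cite[Proposition 5.3 and Corollary 5.4]{LW3} (the Bessel counterparts of Proposition \ref{so2}). The first equality is nothing but the definition of $m(\pi,\pi')$ through the datum $(H,\nu)$ of \cite[(1.2)]{LW3}, so all the content lies in the second. Since $n-m$ is odd, $\ell=(n+1-m)/2$ is an integer, $\so^\e_m\subset\so^\e_{n+1}$ has even codimension $2\ell$ (so that a parabolic $P$ with Levi factor $\GGL_\ell\times\so^\e_m$ makes sense), and $\so^\epsilon_n\subset\so^\e_{n+1}$ is the codimension-one embedding implicit in the statement; thus the right-hand side of \eqref{7.22} is the plain restriction (i.e. basic Bessel) multiplicity for the pair $(\so^\e_{n+1},\so^\epsilon_n)$, and the aim is to identify it with the Bessel multiplicity for the deep pair $(\so^\epsilon_n,\so^\e_m)$.

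First I would compute $\mathrm{Hom}_{\so^\epsilon_n(\Fq)}\big(I^{\so^\e_{n+1}}_P(\tau\otimes\pi'),\pi\big)$ by Frobenius reciprocity together with the Bruhat-type decomposition of $P\backslash\so^\e_{n+1}/\so^\epsilon_n$ into $\so^\epsilon_n(\Fq)$-orbits --- this is exactly the unfolding in \cite[Theorem 15.1]{GGP1} and in the proof of \cite[Proposition 5.3]{LW3}, which I would transcribe. The unique open orbit reconstitutes the Bessel integral: the unipotent radical of $P$ together with the Gelfand--Graev (Whittaker) functional of $\tau$ reproduces the group $N$ and the character $\nu$, so this orbit contributes precisely $\langle\pi\otimes\bar\nu,\pi'\rangle_{H(\Fq)}$; here the hypotheses that $\tau_1$ (resp. $\tau_2$) be non-self-dual when $\ell'=1$ (resp. $\ell-\ell'=1$) are used exactly as in \cite{LW3} to guarantee that the relevant $\GGL_1(\Fq)$-character entering the functional is nontrivial, so that no spurious invariants appear. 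Each remaining, non-open orbit contributes a Hom space of one of the two shapes
\[
\mathrm{Hom}_{\so^\epsilon_n(\Fq)}\big(I^{\so^\epsilon_n}_Q(\rho\otimes\xi),\pi\big),\qquad
\mathrm{Hom}_{\so^\e_m(\Fq)}\big(I^{\so^\e_m}_{Q'}(\rho'\otimes\xi'),\pi'\big),
\]
where $Q$, $Q'$ are proper $F$-stable parabolic subgroups whose Levi factors contain a $\GGL$-block and $\rho$, $\rho'$ are constituents of (Jacquet modules of) $\tau=I^{\GGL_\ell}_{\GGL_{\ell'}\times\GGL_{\ell-\ell'}}(\tau_1\times\tau_2)$, hence lie in Lusztig series $\E(\GGL_k,t)$ all of whose eigenvalues occur among those of $s_0$.

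The only point where the argument departs from \cite{GGP1} is the vanishing of these residual terms: \cite{GGP1} reads it off from the cuspidality of $\pi$, whereas here $\pi$ need not be cuspidal. Instead I would use that Harish--Chandra induction carries $\E(L^F,t)$ into $\E(G^F,t)$, so that every irreducible constituent of $I^{\so^\epsilon_n}_Q(\rho\otimes\xi)$ lies in a Lusztig series $\E(\so^\epsilon_n,t')$ whose multiset of eigenvalues contains that of $t$, hence contains an eigenvalue of $s_0$; since $s_0$ and $s$ share no eigenvalue, $\pi\in\E(\so^\epsilon_n,s)$ is not such a constituent and the Hom space is zero, and the terms on the $\so^\e_m$-side vanish in the same way using that $s_0$ and $s'$ share no eigenvalue. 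This is the same device already used at the end of the proof of Proposition \ref{so2}. Note that the hypothesis is insensitive to passing from the cuspidal $\tau$ of \cite{LW3} to the (possibly reducible) $\tau=I^{\GGL_\ell}_{\GGL_{\ell'}\times\GGL_{\ell-\ell'}}(\tau_1\times\tau_2)$: all one ever uses about $\tau$ is that it is Gelfand--Graev generic of $\GGL_\ell(\Fq)$ --- which it is, since $\tau_1,\tau_2$ are cuspidal hence generic --- and that its $\GGL$-sub-quotients carry eigenvalues drawn from $s_0$.

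The main obstacle I anticipate is purely bookkeeping: certifying that in the $\so^\epsilon_n(\Fq)$-orbit stratification of $P\backslash\so^\e_{n+1}/\so^\epsilon_n$ every non-open stratum indeed contributes only through a parabolic $Q$ (or $Q'$) carrying a $\GGL$-block fed by $\tau$, so that the eigenvalue argument applies, and tracking the various quadratic characters (the twist $\chi$, the discriminants $\epsilon$ versus $\e$) correctly through the induction. But this combinatorial part is identical to that in \cite[Theorem 15.1]{GGP1} and in the proofs of \cite[Proposition 5.3 and Corollary 5.4]{LW3}, so I would import it wholesale and replace only the vanishing step by the one above. When $0<\ell'<\ell$, so that $\tau$ is genuinely reducible, one may either run the same orbit analysis verbatim or, using induction in stages $I^{\so^\e_{n+1}}_P(\tau\otimes\pi')=I^{\so^\e_{n+1}}_{\GGL_{\ell'}\times\GGL_{\ell-\ell'}\times\so^\e_m}(\tau_1\otimes\tau_2\otimes\pi')$, peel off the two cuspidal factors one at a time and invoke the single-cuspidal case of \cite{LW3}.
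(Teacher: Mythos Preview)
Your proposal is correct and follows essentially the same approach as the paper. The paper does not spell out a proof of Proposition~\ref{7.21} but notes that it generalizes \cite[Proposition~5.3 and Corollary~5.4]{LW3}, with the same modification as in the proof of Proposition~\ref{so2}: run the orbit unfolding of \cite[Theorem~15.1]{GGP1} and replace the cuspidality-based vanishing of the non-open orbit terms by the Lusztig-series eigenvalue disjointness argument, exactly as you describe. One small slip: the hypothesis on $\tau_1,\tau_2$ when $\ell'=1$ (resp.\ $\ell-\ell'=1$) is that they be \emph{nontrivial}, not non-self-dual; this is what is needed for the Whittaker character to be nontrivial, as you yourself note a line later.
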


\begin{corollary}\label{o4}
Keep the assumptions in Proposition \ref{7.21}. Then
\begin{equation} \label{eq-cor}
m(\pi, \pi')=\langle I^{\so^\e_{n+1}}_{P}(\tau\otimes\pi'),\pi\rangle _{\so^\epsilon_{n}(\Fq)}=m\left(I^{\so^\e_{n+1-2\ell'}}_{\GGL_{\ell-\ell'}\times \so^\e_{m}}(\tau_2\otimes\pi'),\pi\right).
\end{equation}
\end{corollary}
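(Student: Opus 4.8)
The plan is to combine the identity from Proposition \ref{7.21} with the transitivity of parabolic induction (induction in stages). By Proposition \ref{7.21} we already have
\[
m(\pi,\pi')=\langle I^{\so^\e_{n+1}}_{P}(\tau\otimes\pi'),\pi\rangle_{\so^\epsilon_n(\Fq)},
\]
where $P$ is the $F$-stable maximal parabolic of $\so^{\epsilon'}_{n+1}$ with Levi factor $\GGL_\ell\times\so^{\epsilon'}_m$ and $\tau=I^{\GGL_\ell}_{\GGL_{\ell'}\times\GGL_{\ell-\ell'}}(\tau_1\times\tau_2)$. First I would factor the parabolic induction: choosing an $F$-stable parabolic $Q\subset P$ whose Levi factor is $\GGL_{\ell'}\times\GGL_{\ell-\ell'}\times\so^{\epsilon'}_m$, induction in stages (the formula $I^G_L\circ I^L_M=I^G_M$ recalled in Section \ref{sec2}) gives
\[
I^{\so^\e_{n+1}}_{P}(\tau\otimes\pi')
= I^{\so^\e_{n+1}}_{\GGL_{\ell'}\times\GGL_{\ell-\ell'}\times\so^\e_m}(\tau_1\otimes\tau_2\otimes\pi')
= I^{\so^\e_{n+1}}_{\GGL_{\ell'}\times\so^\e_{n+1-2\ell'}}\bigl(\tau_1\otimes I^{\so^\e_{n+1-2\ell'}}_{\GGL_{\ell-\ell'}\times\so^\e_m}(\tau_2\otimes\pi')\bigr),
\]
where in the last step I regroup the first $\GGL_{\ell'}$ factor outside and absorb $\GGL_{\ell-\ell'}\times\so^\e_m$ into a smaller orthogonal group $\so^\e_{n+1-2\ell'}$ (its discriminant is still $\e'$ since we are removing a split $2\ell'$-dimensional subspace, i.e. $\ell'$ hyperbolic planes).

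Second, set $\pi'' := I^{\so^\e_{n+1-2\ell'}}_{\GGL_{\ell-\ell'}\times\so^\e_m}(\tau_2\otimes\pi')$; this is a (possibly reducible) representation of $\so^\e_{n+1-2\ell'}$, and note $n+1-2\ell' > m$ with $n+1-2\ell' \equiv m+1 \bmod 2$, and $\ell-\ell' = \bigl((n+1-2\ell')+1-m\bigr)/2$, so the data fit the hypotheses of Proposition \ref{7.21} again — this time with the pair $(\pi, \pi'')$... except $\pi''$ need not be irreducible. So instead I would argue directly: by Frobenius reciprocity / adjunction between $I^{\so^\e_{n+1}}_{\GGL_{\ell'}\times\so^\e_{n+1-2\ell'}}$ and the corresponding Jacquet functor, together with $\pi=\so^\e_n\subset\so^\e_{n+1}$ and the Bessel data $(H,\nu)$, the multiplicity $\langle I^{\so^\e_{n+1}}_{\GGL_{\ell'}\times\so^\e_{n+1-2\ell'}}(\tau_1\otimes\pi''),\pi\rangle_{\so^\e_n(\Fq)}$ should reduce to $m(\pi'',\pi) := m\bigl(I^{\so^\e_{n+1-2\ell'}}_{\GGL_{\ell-\ell'}\times\so^\e_m}(\tau_2\otimes\pi'),\pi\bigr)$ — that is, the $\GGL_{\ell'}$-cuspidal factor $\tau_1$ with $s_0$-eigenvalues disjoint from those of $s,s'$ can be peeled off without cost. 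Concretely, I would run the proof of \cite[Theorem 16.1]{GGP1} / Proposition \ref{7.21} backwards one step: the same mirabolic/Mackey computation that produced the original identity shows that stripping one $\GGL$-block preserves the Hom space, precisely because the semisimple parameter $s_0$ of $\tau_1$ shares no eigenvalue with $s$ or $s'$ (so the "wrong" intertwining terms vanish, exactly as in the proof of Proposition \ref{so2}).

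I expect the main obstacle to be the bookkeeping in that last step: making the reduction rigorous when $\pi''$ is reducible, and checking that the see-saw / Mackey argument underlying Proposition \ref{7.21} is genuinely compatible with further parabolic induction on the $\GGL$-side (the discriminant tracking and the nontriviality hypothesis when $\ell'=1$ or $\ell-\ell'=1$ must be handled so that Proposition \ref{w1} applies to each irreducible constituent). One clean way around the reducibility issue is to decompose $\pi'' = \bigoplus_j \pi''_j$ into irreducibles, apply Proposition \ref{7.21} to each pair $(\pi, \pi''_j)$ — legitimate since each $\pi''_j$ lies in $\cal{E}(\so^\e_{n+1-2\ell'}, s')$ with the same $s'$, by the compatibility of Lusztig series with parabolic induction — and sum, using linearity of both sides of \eqref{7.22} in the $\so^\e_{n+1-2\ell'}$-variable. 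This yields exactly \eqref{eq-cor}.
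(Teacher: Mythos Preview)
Your overall strategy is the intended one (the paper states the corollary without proof, as the direct analogue of \cite[Corollary~5.4]{LW3}): write $I_P(\tau\otimes\pi')=I\bigl(\tau_1\otimes\pi''\bigr)$ with $\pi''=I^{\so^{\e}_{n+1-2\ell'}}_{\GGL_{\ell-\ell'}\times\so^{\e}_m}(\tau_2\otimes\pi')$ by induction in stages, decompose $\pi''=\bigoplus_j\pi''_j$, and apply Proposition~\ref{7.21} to each pair $(\pi,\pi''_j)$ with inducing datum $\tau_1$ on $\GGL_{\ell'}$.

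There is one slip in your justification, however. The constituents $\pi''_j$ do \emph{not} lie in $\cal{E}(\so^{\e}_{n+1-2\ell'},s')$ as you write; compatibility of Lusztig series with parabolic induction places them in $\cal{E}(\so^{\e}_{n+1-2\ell'},s'')$, where the eigenvalues of $s''$ are those of $s'$ together with those of the parameter of $\tau_2$. Hence re-applying Proposition~\ref{7.21} to $(\pi,\pi''_j)$ with inducing datum $\tau_1$ requires the parameter of $\tau_1$ to avoid that of $\tau_2$ as well --- which is not part of the standing hypotheses and can genuinely fail (e.g.\ when $\ell'=\ell-\ell'$ and $\tau_1\cong\tau_2$). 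The repair is painless: the first equality in~\eqref{eq-cor} already shows that $\langle I_P(\tau\otimes\pi'),\pi\rangle$ is independent of the choice of $\tau$, and the right-hand side of~\eqref{eq-cor} does not involve $\tau_1$ at all; so you may replace $\tau_1$ by a fresh cuspidal representation whose parameter avoids $s$, $s'$, and the parameter of $\tau_2$, after which your decomposition argument goes through verbatim. Your alternative suggestion --- running the proof of \cite[Theorem~15.1]{GGP1} directly one step --- also works and sidesteps this issue entirely, since the vanishing statements needed there involve only $s$, $s'$, and $s_0$.
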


\begin{remark}
Recall that we assume that the order $q$ of finite filed $\Fq$ is large enough such that the main theorem in \cite{S} holds. For any irreducible representation $\pi$ and $\pi'$, there is always a $\tau$ satisfying the conditions in Proposition \ref{7.21}. More precisely, $s$ and $s'$ have at most $n+m<2n$ different eigenvalues $s[i]\in \bb{F}_{q^{d_i}}$ and $s'[j]\in\bb{F}_{q^{d_j}}$. Since $\tau$ is cuspidal, the
eigenvalues of $s_0$ appear in $\bb{F}_{q^{\ell}}$. By our assumption of $q$, we have $|\Fq|\ge 2n$. So we can always pick a $s_0$ which has no common eigenvalues with $s$ and $s'$.
\end{remark}

In order to apply the theta correspondence we will work with orthogonal groups instead of special orthogonal groups. In Proposition \ref{7.21}, for $m=0$, assume that $\tau\in\cal{E}(\GGL_\ell,s_0)$ such that $\pm1$ are not eigenvalues of $s_0$.
Let $\bf{1}$ denotes the trivial representation of trivial group. By Proposition \ref{regular}, we set
\[
m(\pi,{\bf{1}}):=\langle I^{\o^\e_{n+1}}_{P}(\tau),\pi\rangle _{\o^\epsilon_{n}(\Fq)}=\left\{
\begin{array}{ll}
1,&\textrm{ if }\pi \textrm{ is regular };\\
0,&\textrm{ otherwise}.
\end{array}
\right.
\]
By the standard arguments of theta correspondence and see-saw dual pairs, we set
\[
m_\psi(\pi,{\bf{1}}):=\langle  \pi\otimes \omega_n^{\ee}, I_{P}^{\sp_{2n}}(\tau)\rangle _{\sp_{2n}(\Fq)}=\left\{
\begin{array}{ll}
1,&\textrm{ if }\pi \textrm{ is regular };\\
0,&\textrm{ otherwise'}
\end{array}
\right.
\]

To prove Theorem \ref{8.1} and Theorem \ref{8.2}, by Proposition \ref{so2} and Proposition \ref{7.21}, it suffices to calculate RHS of (\ref{sp}) and (\ref{7.22}).
\subsection{Reformulation}
We now prove the Fourier-Jacobi case. It is not hard to see that Theorem \ref{8.1} readily follows from Theorem \ref{8.6} below.

\begin{theorem}\label{8.6}
Let $s$ be a semisimple element of $\sp_{2n}(\Fq)^*=\so_{2n+1}\fq$, and $s'$ be a semisimple element of $\sp_{2m}(\Fq)^*=\so_{2m+1}\fq$. Let $\pkh\in\cal{E}(\sp_{2n},s)$ be an irreducible representation of $\sp_{2n}(\Fq)$, and let $\pkhp\in\cal{E}(\sp_{2m},s')$ be an irreducible representation of $\sp_{2m}\fq$. Assume that $n\ge m$, and let $\ell=n-m$. Let $P$ be an $F$-stable maximal parabolic subgroup of $\sp_{2n}$ with Levi factor $\GGL_{\ell} \times \sp_{2m}$. Let $s_0$ be a semisimple element of $\GGL_{\ell}\fq$ and let $\tau\in\cal{E}(\GGL_{\ell},s_0)$ be an irreducible cuspidal representation of $\GGL_{\ell}\fq$ which is nontrivial if $\ell=1$. Assume that $s_0$ has no common eigenvalues with $s$ and $s'$. Then we have
\[
\langle  \pkh\otimes \omega_n^{\epsilon_0}, I_{P}^{\sp_{2n}}(\tau\otimes\pkhp)\rangle _{\sp_{2n}(\Fq)}=\left\{
\begin{array}{ll}
m_\psi(\pi_{\rho} ,\pi_{\rho'})  &\textrm{ if }(\pkh,\pkhp)\textrm{ is $\epsilon_0$-strongly relevant};\\
0&\textrm{ otherwise}
\end{array}\right.
\]
where $m_\psi(\pw_{\rho} ,\pw_{\rho'})$ does not depend on $\psi.$

\end{theorem}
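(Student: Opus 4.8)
The plan is to reduce the multiplicity on the left-hand side to a Bessel-type multiplicity for orthogonal groups via see-saw dual pairs, and then to strip off the cuspidal data $\Lambda,\Lambda'$ (equivalently $k,h$) by repeatedly applying the first-occurrence results of Section \ref{sec7} until only the uniform pieces $\pi_\rho,\pi_{\rho'}$ remain. First I would fix $\epsilon_0$ and choose $\epsilon$ so that $n^\epsilon\le n$, where $n^\epsilon$ is the first occurrence index of the cuspidal part of $\pkh$ in the Witt tower ${\bf O}^\epsilon_{\rm even}$; by Proposition \ref{cus1}(i) we have $n^\epsilon=n-k$ (after possibly replacing $h$ by $-h$), and $\Theta^\epsilon_{n,n-k}(\pkh)=\pi_{\rho,k_1,h_1}$ with $|k_1|=k$, $|h_1|=h$. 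The idea is to realize $\langle \pkh\otimes\omega_n^{\epsilon_0},I_P^{\sp_{2n}}(\tau\otimes\pkhp)\rangle$ inside a see-saw between $(\sp_{2n},\o^\epsilon_{2(n-k)})$ and a pair of orthogonal groups of type ${\bf O}^{\epsilon}_{\rm odd}$ (using the see-saw configuration (2) from Subsection on see-saw pairs), so that by the see-saw identity the multiplicity becomes $\langle \Theta^\epsilon_{n,n-k}(\pkh), \Theta(\ldots)\rangle$ on the smaller orthogonal group $\o^{\epsilon''}_{2(n-k)+1}$ or $\o^\epsilon_{2(n-k)}$, with the parabolically-induced $\tau$ transported along by Proposition \ref{w1} (or Proposition \ref{w2} if the induction is reducible).

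The second step is to iterate: on the orthogonal side the cuspidal data is encoded by $h$ (via Corollary \ref{ctheta3} and Proposition \ref{cus2}), and another see-saw together with the first-occurrence statements of Proposition \ref{cus2} peels off the remaining symbol, landing us on a multiplicity of the form $m(\pi_\rho',\pi_{\rho'}')$ for uniform representations $\pi_\rho',\pi_{\rho'}'$ of smaller (special) orthogonal groups with no eigenvalues $\pm1$. At each stage, the see-saw identity forces a numerical compatibility between the first-occurrence indices of $\pkh$ and $\pkhp$ — namely $n-n^\epsilon= m-m^{\epsilon\cdot\epsilon_0}$ or $n-n^\epsilon=m-m^{\epsilon\cdot\epsilon_0}-1$ on one side and the mirrored condition on the other — and when either compatibility fails the relevant theta lift vanishes (as in the worked $\o^+_5,\o^+_4$ example in the introduction), giving the ``$0$ otherwise'' clause; the conjunction of both compatibilities is exactly the definition of $\epsilon_0$-strongly relevant (Definition \ref{strongly relevant0}(i), then (ii) for the non-cuspidal case). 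When they hold, the see-saw identities chain together to give $\langle \pkh\otimes\omega_n^{\epsilon_0},I_P(\tau\otimes\pkhp)\rangle = m(\pi_\rho',\pi_{\rho'}')$, which by Proposition \ref{regular} and Reeder's formula (\ref{reg1}) equals $m_\psi(\pi_\rho,\pi_{\rho'})$; the independence of $\psi$ follows because on even orthogonal groups the Weil representation does not depend on $\psi$, while the odd-orthogonal twist (\ref{omega}) only permutes the two towers without changing first-occurrence indices (as discussed before Definition \ref{strongly relevant2}), and $\pi_\rho,\pi_{\rho'}$ themselves do not involve the sign ambiguity of the modified Lusztig correspondence.

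The main obstacle I expect is bookkeeping the signs and Witt-tower indices through the successive see-saws: one must track (a) which $\epsilon$ makes $n^\epsilon\le n$, (b) the $\pm$ ambiguity in $k_1,h_1$ coming from $\Theta^\epsilon_{n,n-k}$, (c) the odd-orthogonal sign twist in Proposition \ref{w1} (the character $\chi_{\GGL_\ell}$) and in (\ref{omega}), and (d) the discriminant constraint $\epsilon\cdot\epsilon'=\ee\cdot\epsilon_0$ relating the two orthogonal factors in the centralizer decomposition. Getting these to line up so that the final uniform multiplicity is genuinely $m_\psi(\pi_\rho,\pi_{\rho'})$ with the asserted $\epsilon_0$-strongly-relevant hypothesis — and verifying that the non-cuspidal case (where $\pkh\in\cal{E}(\sp_{2n},\sigma)$ for a cuspidal $\sigma=\pi_{\rho',k,h}$) reduces to the cuspidal case by Proposition \ref{w1} exactly as in the last paragraph of the proof of Proposition \ref{cus1} — is where the real care is needed; the analytic input (see-saw identity, Reeder's formula, first occurrence) is all already available.
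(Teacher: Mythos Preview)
Your overall strategy matches the paper's: the proof is split into a vanishing part (Proposition~\ref{cusvan}) and a non-vanishing part (Proposition~\ref{cusnv}), and the latter is proved by iterated see-saw reductions using the first-occurrence results of Propositions~\ref{cus1} and~\ref{cus2}, together with Proposition~\ref{w2} to transport the parabolic induction. The $\psi$-independence is handled separately (Proposition~\ref{rho}), again by a see-saw, not via Reeder's formula as you suggest; and the endpoint of the induction is simply the tautological base case $k=k'=h=h'=0$, not a further computation with Proposition~\ref{regular}.

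There is, however, a genuine gap in your sketch. The paper organizes the argument as an induction on $|k|+|k'|+|h|+|h'|$: one round of two see-saws takes $(\pkh,\pkhp)$ to a pair $(\pi_{\rho,k_2,h_2},\pi_{\rho',k_2',h_2'})$ with $k_2=k-1$, $|h_2'|=|h'|-1$, $k_2'=k'$, $h_2\in\{\pm h\}$ (so it is $k$ and $h'$ that drop together, not $k$ and $h$ as you write). The see-saw gives an \emph{equality} here because the theta lifts at first occurrence are irreducible, so the multiplicity equals, by induction, either $m_\psi(\pi_\rho,\pi_{\rho'})$ or $0$ according to whether the new pair is $\ee$-strongly relevant. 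The step you are missing is that strong relevance is \emph{not} automatically preserved by this reduction: one only knows a priori that the new pair is $\ee$-relevant in one direction. The paper closes this by a contradiction argument that feeds the vanishing result back into the non-vanishing proof: if the reduced pair fails to be strongly relevant, then by Corollary~\ref{strongly relevant} the sign-flipped pair $(\pi_{\rho,k_2,-h_2},\pi_{\rho',k_2',h_2'})$ \emph{is} strongly relevant, hence has multiplicity $m_\psi(\pi_\rho,\pi_{\rho'})\ne 0$; but tracing this back through the same see-saws (using the last clause of Proposition~\ref{cus1}(i) to identify the companion lift) yields a nonzero multiplicity for $(\pi_{\rho,k,-h},\pkhp)$, which is \emph{not} strongly relevant and therefore must vanish by Proposition~\ref{cusvan}. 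Without this feedback loop your ``chaining'' of see-saw identities does not establish the claimed equality with $m_\psi(\pi_\rho,\pi_{\rho'})$.
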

We now turn to prove $m_\psi(\pw_{\rho} ,\pw_{\rho'})$ does not depend on $\psi.$
\begin{proposition}\label{rho}
Keep the assumptions in Theorem \ref{8.6}. Let $k=h=k'=k'=0$. Then $m_\psi(\pi_{\rho} ,\pi_{\rho'})=m_{\psi'}(\pi_{\rho} ,\pi_{\rho'})$ where Let $\psi'$ be another nontrivial additive character of $\Fq$ not in the square class of $\psi$. Moreover, if $n=m$, then $m_\psi(\pi_{\rho} ,\pi_{\rho'})=m_\psi(\pi_{\rho'} ,\pi_{\rho})$.
\end{proposition}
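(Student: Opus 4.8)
The plan is to prove the independence of $\psi$ first, and then to deduce the symmetry $m_\psi(\pi_\rho,\pi_{\rho'})=m_\psi(\pi_{\rho'},\pi_\rho)$ in the case $n=m$ from it by a contragredient computation. The decisive preliminary observation is that, under the hypothesis $k=h=k'=h'=0$, we have $\pi_\rho=\pi_{\rho,0,0}\in\cal{E}(\sp_{2n},s)$ and $\pi_{\rho'}=\pi_{\rho',0,0}\in\cal{E}(\sp_{2m},t)$ with $\cal{L}'_s(\pi_\rho)=\rho\otimes\mathbf{1}\otimes\mathbf{1}$ and $\cal{L}'_t(\pi_{\rho'})=\rho'\otimes\mathbf{1}\otimes\mathbf{1}$; in particular neither $s$ nor $t$ has an eigenvalue $\pm1$. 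I would record at once that, if $c$ denotes conjugation by an element of $\mathrm{CSp}^{\pm}_{2n}(\Fq)$ (resp. $\mathrm{CSp}^{\pm}_{2m}(\Fq)$) whose similitude is a non-square, then $\pi_\rho^{c}=\pi_\rho$ and $\pi_{\rho'}^{c}=\pi_{\rho'}$: this is the case $\Lambda=\Lambda'=\emptyset$ of the relation $\prll^{c}=\pi_{\rho,\Lambda,\Lambda^{\prime t}}$, i.e. the extension of Proposition \ref{q1} to arbitrary representations, equivalently the $h=0$ instance of $\pi_{\rho,k,h}^{c}=\pi_{\rho,k,-h}$ used in the proof of Proposition \ref{cus2}.

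For the independence of $\psi$: since the Weil representation depends on the additive character only through its square class, it suffices to treat $\psi'(x)=\psi(ax)$ with $a\in\Fq^{\times}$ a non-square. By Proposition \ref{so2}, $m_\psi(\pi_\rho,\pi_{\rho'})=\langle\pi_\rho\otimes\omega_{n,\psi}^{\ee},I^{\sp_{2n}}_{P}(\tau\otimes\pi_{\rho'})\rangle_{\sp_{2n}(\Fq)}$ (for $n=m$ this is simply $\langle\pi_\rho\otimes\omega_{n,\psi}^{\ee},\pi_{\rho'}\rangle_{\sp_{2n}(\Fq)}$). I would then choose $h\in\mathrm{CSp}^{\pm}_{2n}(\Fq)$ of similitude $a$ that normalizes $P$ and acts trivially by conjugation on the $\GGL_\ell$-factor of its Levi while acting on the $\sp_{2m}$-factor through a non-square-similitude conjugation — e.g. the block similitude that is the identity on the totally isotropic subspace stabilized by $P$, the scalar $a$ on a complementary isotropic subspace, and a similitude of factor $a$ on $W_{2m}$. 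Conjugation by $h$ is an automorphism of $\sp_{2n}(\Fq)$; it preserves $\langle\,,\,\rangle_{\sp_{2n}(\Fq)}$, it fixes $\tau$ and $\pi_{\rho'}$ (hence $I^{\sp_{2n}}_{P}(\tau\otimes\pi_{\rho'})$), it fixes $\pi_\rho$ (by the previous paragraph, $\lambda(h)$ being a non-square), and it carries the Weil representation attached to $\psi$ to the one attached to $\psi'$ (a standard property of $\omega_{\rm{Sp}}$; cf. \cite{Ger}). Applying this automorphism to the identity of Proposition \ref{so2} gives $m_\psi(\pi_\rho,\pi_{\rho'})=\langle\pi_\rho\otimes\omega_{n,\psi'}^{\ee},I^{\sp_{2n}}_{P}(\tau\otimes\pi_{\rho'})\rangle_{\sp_{2n}(\Fq)}=m_{\psi'}(\pi_\rho,\pi_{\rho'})$, the last equality again by Proposition \ref{so2} (now for $\psi'$). (Equivalently, for $n>m$ one may conjugate the Fourier–Jacobi datum $(H,\nu)$ directly: the datum attached to $\psi'$ is the $h$-conjugate of the one attached to $\psi$, and $h$ acts through $c$ on both $\sp_{2m}(\Fq)$ and $\sp_{2n}(\Fq)$.)

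For the symmetry when $n=m$: writing $m_\psi(\pi_\rho,\pi_{\rho'})=\dim\mathrm{Hom}_{\sp_{2n}(\Fq)}(\pi_\rho\otimes\omega,\pi_{\rho'})$ for the relevant Weil representation $\omega$, and using that $\dim\mathrm{Hom}(A,B)=\dim\mathrm{Hom}(B,A)$ for complex representations of a finite group, the tensor–hom adjunction $\mathrm{Hom}(A,B\otimes C)\cong\mathrm{Hom}(A\otimes C^{\vee},B)$, and the fact that $\omega^{\vee}$ is the Weil representation of the inverse character, I obtain $m_\psi(\pi_\rho,\pi_{\rho'})=\dim\mathrm{Hom}(\pi_{\rho'}\otimes\omega^{\vee},\pi_\rho)=m_{\psi^{-1}}(\pi_{\rho'},\pi_\rho)$, which by the independence of $\psi$ just proved (applied to the ordered pair $(\pi_{\rho'},\pi_\rho)$) equals $m_\psi(\pi_{\rho'},\pi_\rho)$. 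I expect the only genuinely delicate point to be the conjugation argument of the middle paragraph: one must check carefully that a non-square-similitude conjugation really does interchange the two Weil representations and, for $n>m$, the two Fourier–Jacobi data, while fixing $\pi_\rho$ and $\pi_{\rho'}$ — the last fact being exactly where the vanishing of the $\pm1$-eigenvalue data and Proposition \ref{q1} are used; everything else is formal manipulation of Hom-spaces.
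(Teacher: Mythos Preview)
Your argument is correct, and for the symmetry statement when $n=m$ it coincides with the paper's proof (contragredient plus independence of $\psi$). For the independence of $\psi$, however, you take a genuinely different route. The paper proves it by a see-saw argument: using Theorems \ref{p1} and \ref{p2} it shows that the theta lifts $\Theta^{+}_{n,n,\psi}(\pi_\rho)$ and $\Theta^{\ee}_{n,n,\psi}(\pi_{\rho'})$ to the orthogonal side are the \emph{same} representations for $\psi$ and for $\psi'$ (the latter requiring the $\pi(-I)$ compatibility to pin down the sign), and then rewrites both $\langle\pi_\rho\otimes\omega^{\ee}_{n,\psi},I_P(\tau\otimes\pi_{\rho'})\rangle$ and its $\psi'$ counterpart as the \emph{same} Bessel-type pairing $\langle\pi'_\rho, I^{\o^{\ee}_{2n+1}}(\tau\otimes\pi'_{\rho',\epsilon_1})\rangle$ on orthogonal groups. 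Your approach bypasses the theta machinery entirely: you transport the whole identity of Proposition~\ref{so2} by the automorphism $\mathrm{Ad}(h)$ for a non-square-similitude $h$, using $(\omega_{n,\psi}^{\ee})^{h}\cong\omega_{n,\psi'}^{\ee}$ together with $\pi_\rho^{c}=\pi_\rho$, $\pi_{\rho'}^{c}=\pi_{\rho'}$ (the $h=h'=0$ case of the $c$-action from Proposition~\ref{q1}). This is shorter and more elementary; what the paper's argument buys in exchange is the explicit equality \eqref{cusnv1}, i.e.\ the identification of $m_\psi(\pi_\rho,\pi_{\rho'})$ with a Bessel multiplicity on orthogonal groups, which is reused later in the inductive see-saw computations.
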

\begin{proof}
Assume $\pi_\rho$ and $\pi_{\rho'}$ are irreducible representations of $\sp_{2n}\fq$ and $\sp_{2m}\fq$, respectively. By Proposition \ref{so2}, we only need to prove
\[
\langle  \pi_{\rho}\otimes \omega_{n,\psi}^{\ee}, I_{P}^{\sp_{2n}}(\tau\otimes\pi_{\rho'})\rangle _{\sp_{2n}(\Fq)}=\langle  \pi_{\rho}\otimes \omega_{n,\psi'}^{\ee}, I_{P}^{\sp_{2n}}(\tau\otimes\pi_{\rho'})\rangle _{\sp_{2n}(\Fq)}
\]
Consider the see-saw diagram
\[
\setlength{\unitlength}{0.8cm}
\begin{picture}(20,5)
\thicklines
\put(6.5,4){$\sp_{2n}\times \sp_{2n}$}
\put(7.3,1){$\sp_{2n}$}
\put(12.3,4){$\o^{\ee}_{2n+1}$}
\put(11.6,1){$\o^{+}_{2n}\times \o^{\ee}_1$}
\put(7.7,1.5){\line(0,1){2.1}}
\put(12.8,1.5){\line(0,1){2.1}}
\put(8,1.5){\line(2,1){4.2}}
\put(8,3.7){\line(2,-1){4.2}}
\end{picture}
\]

By Theorem \ref{p1}, we have
\[
\Theta^{+}_{n,n,\psi}(\pi_\rho)=\Theta^{+}_{n,n,\psi'}(\pi_\rho)=\pi_{\rho}'
\]
where $\pi_{\rho}'\in\cal{E}(\o^{+}_{2n})$.
By Theorem \ref{p2} and \cite[p.14]{LW1} we have
\[
\Theta^{\ee}_{n,n,\psi}(\pi_{\rho'})=\pi_{\rho',\epsilon_1}'\textrm{ and }
\Theta^{\ee}_{n,n,\psi'}(\pi_{\rho'})=\pi_{\rho',\epsilon_2}'
\]
where $\pi_{\rho',\epsilon_1}'$ and $\pi_{\rho',\epsilon_2}'\in\cal{E}(\o^{\ee}_{2n+1})$. Since $\pi_{\rho',\epsilon_1}'(-I)=\pi_{\rho'}(-I)=\pi_{\rho',\epsilon_2}'(-I)$, we have $\epsilon_1=\epsilon_2$.

On the other hand,
\[
\Theta^{+}_{n,n,\psi}(\pi_\rho')=\Theta^{+}_{n,n,\psi'}(\pi_\rho')=\pi_{\rho}
\]
and
\[
\Theta^{\ee}_{n,n,\psi}(\pi_{\rho',\epsilon_1}')=\Theta^{\ee}_{n,n,\psi'}(\pi_{\rho',\epsilon_2}')=\pi_{\rho'}.
\]
Then by Proposition \ref{w2}, we have
\begin{equation}\label{cusnv1}
\begin{aligned}
&\langle  \pi_{\rho}\otimes \omega_{n,\psi}^{\ee}, I_{P}^{\sp_{2n}}(\tau\otimes\pi_{\rho'})\rangle _{\sp_{2n}(\Fq)}\\
=&\langle  \Theta^{+}_{n,n,\psi}(\pi_\rho')\otimes \omega^{\ee}_{n,\psi}, I^{\sp_{2n}}_P(\tau\otimes\pi_{\rho'})\rangle _{\sp_{2n}(\Fq)}\\
=&\langle  \pi_\rho', \Theta_{n,n}^{\ee} (I^{\sp_{2n}}_P(\tau\otimes\pi_{\rho'}))\rangle_{\o^\epsilon_{2n}(\Fq)}\\
=&\langle  \pi_\rho',  I^{\o^{\ee}_{2n+1}}_{P'}(\tau\otimes\pi_{\rho',\epsilon_1}')\rangle_{\o^\epsilon_{2n}(\Fq)}\\
\end{aligned}
\end{equation}
Similarly, we have
\[
\langle  \pi_{\rho}\otimes \omega_{n,\psi'}^{\ee}, I_{P}^{\sp_{2n}}(\tau\otimes\pi_{\rho'})\rangle _{\sp_{2n}(\Fq)}=\langle  \pi_\rho',  I_{P'}^{\o^{\ee}_{2n+1}}(\tau\otimes\pi_{\rho',\epsilon_1}')\rangle_{\o^\epsilon_{2n}(\Fq)},
\]
which implies
\[
\langle  \pi_{\rho}\otimes \omega_{n,\psi'}^{\ee}, I_{P}^{\sp_{2n}}(\tau\otimes\pi_{\rho'})\rangle _{\sp_{2n}(\Fq)}=\langle  \pi_{\rho}\otimes \omega_{n,\psi}^{\ee}, I_{P}^{\sp_{2n}}(\tau\otimes\pi_{\rho'})\rangle _{\sp_{2n}(\Fq)}.
\]
Suppose that $n=m$. Then
\[
m_\psi(\pi_{\rho},\pi_{\rho'})=\langle  \pi_{\rho}\otimes \omega_{n,\psi}^{\ee}, \pi_{\rho'}\rangle _{\sp_{2n}(\Fq)}=\langle  \pi_{\rho}, \pi_{\rho'}\otimes \overline{\omega_{n,\psi}^{\ee}}\rangle _{\sp_{2n}(\Fq)}=\langle  \pi_{\rho}, \pi_{\rho'}\otimes \omega_{n,\psi}^{+}\rangle _{\sp_{2n}(\Fq)}
\]
Recall that $\omega_{n,\psi}^{\epsilon}=\omega_{n,\psi'}^{-\epsilon}$. Hence
\[
\begin{aligned}
\langle  \pi_{\rho}, \pi_{\rho'}\otimes \omega_{n,\psi}^{+}\rangle _{\sp_{2n}(\Fq)}
=&\left\{
\begin{array}{ll}
\langle \pi_{\rho}, \pi_{\rho'}\otimes \omega_{n,\psi}^{\ee}\rangle _{\sp_{2n}(\Fq)},&\textrm{if $\ee=+$ ;}\\
\langle \pi_{\rho}, \pi_{\rho'}\otimes \omega_{n,\psi'}^{\ee}\rangle _{\sp_{2n}(\Fq)},&\textrm{if $\ee=-$ ;}
\end{array}\right.\\
=&\left\{
\begin{array}{ll}
m_\psi(\pi_{\rho'},\pi_{\rho}),&\textrm{if $\ee=+$ ;}\\
m_{\psi'}(\pi_{\rho'},\pi_{\rho}),&\textrm{if $\ee=-$ ;}
\end{array}\right.\\
=&m_\psi(\pi_{\rho'},\pi_{\rho}).
\end{aligned}
\]
\end{proof}
The rest of this section is devoted to the proof of Theorem \ref{8.6}, which will be divided into two parts.

\subsection{Vanishing result}
As a first step towards the proof, we establish the cases where the multiplicity in Theorem \ref{8.6} vanishes.

\begin{proposition}\label{cusvan}
Keep the assumptions in Theorem \ref{8.6}. Assume that $n\ge m$. If $(\pkh,\pkhp)$ is not $\epsilon_0$-strongly relevant, then we have
\[
\langle  \pkh\otimes\omega_n^{\epsilon_0}, I_{P}^{\sp_{2n}}(\tau\otimes\pkhp)\rangle _{\sp_{2n}(\Fq)}=0.
\]

\end{proposition}
\begin{proof}
It follows immediately from \cite[Proposition 5.6]{LW3}, Proposition \ref{cus1} and the standard arguments of theta correspondence and
see-saw dual pairs.
\end{proof}

\subsection{Non-vanishing result}
To finish the proof of  Theorem \ref{8.6}, it remains to prove the following result.

\begin{proposition}\label{cusnv}
Keep the assumptions in Theorem \ref{8.6}. Assume that $n\ge m$. If $(\pkh,\pkhp)$ is $\epsilon_0$-strongly relevant, then we have
\[
\langle  \pkh\otimes \omega_n^{\epsilon_0}, I_{P}^{\sp_{2n}}(\tau\otimes\pkhp)\rangle _{\sp_{2n}(\Fq)}=m_\psi(\pi_{\rho} ,\pi_{\rho'}).
\]

\end{proposition}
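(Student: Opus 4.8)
The plan is to reduce the statement, via the standard see-saw machinery, to the Bessel multiplicity $m_\psi(\pi_\rho,\pi_{\rho'})$ which by Proposition \ref{rho} is already known to be well defined and independent of $\psi$. Concretely, I would first invoke Proposition \ref{w1} (compatibility of theta lifting with parabolic induction) together with Proposition \ref{cus1} to strip off the cuspidal data $\tau$ on the general linear factor and the symbols $\Lambda,\Lambda'$ (equivalently the parameters $k,h$) from $\pkh$, replacing $\pkh$ by its cuspidal ``core'' $\pi_{\rho',k,h}$ and eventually by $\pi_\rho$; the point is that since $(\pkh,\pkhp)$ is $\epsilon_0$-strongly relevant, the first-occurrence bookkeeping in Proposition \ref{cus1} tells us exactly which theta lifts are nonzero and irreducible, so each application of a see-saw identity produces precisely one surviving term with multiplicity $1$.

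The key steps, in order, are: (1) By Proposition \ref{w1}, write $I_P^{\sp_{2n}}(\tau\otimes\pkhp)\otimes\omega_n^{\epsilon_0}$ in terms of a parabolic induction on an orthogonal group after applying $\Theta$, using the see-saw pair of type (2) in Subsection on see-saw pairs with $G=\o^{\epsilon_0\cdot\ee}_{2n+1}$, $H=\o^{+}_{2n}\times\o^{\epsilon''}_1$. (2) Use the see-saw identity to convert $\langle\pkh\otimes\omega_n^{\epsilon_0},I_P^{\sp_{2n}}(\tau\otimes\pkhp)\rangle$ into $\langle\Theta^{+}_{n,n'}(\pkh),I_{P'}^{\o}(\tau'\otimes\Theta(\pkhp))\rangle$ on an orthogonal group of smaller (or appropriate) size; here strong relevance guarantees that the relevant first-occurrence indices line up so that no term is killed and the lifts are irreducible cuspidal when we have reached the core. (3) Iterate, peeling $\Lambda$ then $\Lambda'$, invoking Theorem \ref{p1}, Theorem \ref{p2}, Theorem \ref{even} and Corollary \ref{p3} to track that at each stage exactly one constituent contributes, until one is left with $\langle\pi_\rho\otimes\omega_n^{\ee}, I_P^{\sp_{2n}}(\tau\otimes\pi_{\rho'})\rangle$, i.e. $m_\psi(\pi_\rho,\pi_{\rho'})$ by Proposition \ref{so2}. (4) For the Bessel direction needed by symmetry, use Proposition \ref{7.21} and Corollary \ref{o4} analogously; the $\psi$-independence is then inherited from Proposition \ref{rho}.

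The main obstacle I anticipate is bookkeeping the signs and the Witt-tower parameters consistently through the chain of see-saws: one has to be careful that the ``strongly relevant'' hypothesis — which was defined precisely so that both $(\pi,\pi')$ and its $\chi$-twist (resp.\ $(\psi,\ee\cdot\epsilon_0)$-twisted) versions are relevant — is exactly what is needed to ensure that the first-occurrence index of the partially-lifted representation on each side stays in the range where Proposition \ref{o3}(ii) (non-vanishing of $\Theta$) and the irreducibility statements of Proposition \ref{cus1} apply. In particular I expect the delicate point to be verifying, when we pass from $\pkh$ to its core, that the relevant lift lands on the correct $\epsilon$-form of the orthogonal group (so that the see-saw actually closes up with the target $\o^{+}_{2n}$ appearing on the $H$-side), and that the discriminant condition $\epsilon\cdot\ee=\epsilon'\cdot\epsilon''$ in the see-saw setup is compatible with the choices forced by strong relevance. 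Once this is checked, the non-vanishing and the evaluation to $m_\psi(\pi_\rho,\pi_{\rho'})$ follow by exactly the same argument as in \cite{GI, Ato}, adapted to the finite-field setting as in \cite{LW3}.
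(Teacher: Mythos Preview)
Your overall strategy is correct: the paper also proceeds by repeated see-saw steps, reducing the parameters $k,k',h,h'$ until one reaches $\pi_\rho$ and $\pi_{\rho'}$. More precisely, the paper argues by induction on $|k|+|k'|+|h|+|h'|$, and at each step applies one see-saw (of exactly the two types you describe) to replace the pair $(\pkh,\pkhp)$ by a pair $(\pi_{\rho,k_2,h_2},\pi_{\rho',k_2',h_2'})$ with $k_2=k-1$ and $|h_2'|=|h'|-1$.

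However, your proposal glosses over the one genuinely delicate point, and this is where the paper spends most of its effort. After a single see-saw step, Propositions \ref{cus1} and \ref{cus2} only tell you that $h_2\in\{\pm h\}$ and $h_2'\in\{\pm(|h'|-1)\}$; the signs are \emph{not} determined by the strong-relevance hypothesis on the original pair. So it is not automatic that the new pair $(\pi_{\rho,k_2,h_2},\pi_{\rho',k_2',h_2'})$ is again $\epsilon_0$-strongly relevant, and without this you cannot apply the induction hypothesis. Your sentence ``strong relevance guarantees that the relevant first-occurrence indices line up'' is exactly the claim that needs proof, not an input.

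The paper resolves this by a contradiction argument that crucially uses the \emph{vanishing} result, Proposition \ref{cusvan}. Suppose the reduced pair is not $\epsilon_0$-strongly relevant. Using Corollary \ref{strongly relevant} (iii) and Proposition \ref{cus1} (i), the paper shows that the sign-flipped pair $(\pi_{\rho,k_2,-h_2},\pi_{\rho',k_2',h_2'})$ \emph{is} $\epsilon_0$-strongly relevant, so by the induction hypothesis its multiplicity equals $m_\psi(\pi_\rho,\pi_{\rho'})$. But running the same see-saw with $\pi_{\rho,k,-h}$ in place of $\pkh$ shows that this nonzero multiplicity equals $\langle\pi_{\rho,k,-h}\otimes\omega_n^{\epsilon_0},I_P^{\sp_{2n}}(\tau\otimes\pkhp)\rangle$; since $(\pi_{\rho,k,-h},\pkhp)$ is \emph{not} $\epsilon_0$-strongly relevant (Corollary \ref{strongly relevant} (ii)), this contradicts Proposition \ref{cusvan}. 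This interplay between the vanishing and non-vanishing statements is the heart of the argument, and your proposal does not mention it.
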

\begin{proof}
 It is trivial if $k=k'=h=h'=0$.
We prove the proposition by induction on $|k|+|k'|+|h|+|h'|$. Assume that this proposition holds for $|k|+|k'|+|h|+|h'|<N$. We only prove on $|k|+|k'|+|h|+|h'|=N$ for $\epsilon_0=\ee$. The proof of $\epsilon_0=-\ee$ is similar and will be left to the reader. To ease notations we suppress various Levi subgroups from the parabolic induction in the sequel, which should be clear from the context.

Since $(\pkh,\pkhp)$ is $\ee$-strongly relevant, by Corollary \ref{strongly relevant}, we have $k=|h'|$ or $k=|h'|-1$ and $k'=|h|$ or $k'=|h|-1$, which implies if $|k|+|k'|+|h|+|h'|>0$, then $|k|+|k'|>0$.
So there are two cases as follows:
\begin{itemize}
\item[]
Case (A): $k>0$
\item[]
Case (B): $k=0$. In this case, we have $k'>0$.
\end{itemize}
We now prove the Case (A).

(1) Suppose that $k=|h'|$.

 Put $n_1=n-k$. By Proposition \ref{cus1} (i), we can pick $\epsilon\in\{\pm\}$ such that the first occurrence index of $\pkh$ in the Witt tower ${\bf O}^\epsilon_\rm{even}$ is $n_1$ and $\Theta_{n,n_1}^\epsilon(\pkh)=\pi_{\rho_1,k_1,h_1}$ is an irreducible representation of $\o^\epsilon_{2n_1}\fq$ with $k_1\in\{\pm k\}$ and $h_1\in\{\pm h\}$, and $\Theta^\epsilon_{n_1,n}(\pi_{\rho_1,k_1,h_1})=\pkh$. Since $(\pkh,\pkhp)$ is $\ee$-strongly relevant and $k=|h'|$, the first occurrence index of $\pkhp$ in the Witt tower ${\bf O}^{\ee\cdot\epsilon}_\rm{odd}$ is $m-k$ and by Proposition \ref{cus1} (ii), $\Theta^{\ee\cdot\epsilon}_{m,m-k}(\pkhp)=\pi_{\rho'_1,k_1',h_1',\e}$ with $k_1'=|h'|-1=k-1=|k_1|-1$ and $h_1'=k'$.

Consider the see-saw diagram
\[
\setlength{\unitlength}{0.8cm}
\begin{picture}(20,5)
\thicklines
\put(6.5,4){$\sp_{2n}\times \sp_{2n}$}
\put(7.3,1){$\sp_{2n}$}
\put(12.3,4){$\o^{\ee\cdot\epsilon}_{2n_1+1}$}
\put(11.6,1){$\o^{\epsilon}_{2n_1}\times \o^{\ee}_1$}
\put(7.7,1.5){\line(0,1){2.1}}
\put(12.8,1.5){\line(0,1){2.1}}
\put(8,1.5){\line(2,1){4.2}}
\put(8,3.7){\line(2,-1){4.2}}
\end{picture}
\]
By Proposition \ref{w2} and Proposition \ref{cus1}, one has,
\begin{equation}\label{cusnv1}
\begin{aligned}
&\langle  \pkh\otimes \omega^{\ee}_n, I^{\sp_{2n}}(\tau\otimes\pkhp)\rangle _{\sp_{2n}(\Fq)}\\
=&\langle  \Theta_{n_1,n}^{\epsilon}(\pi_{\rho_1,k_1,h_1})\otimes \omega^{\ee}_n, I^{\sp_{2n}}(\tau\otimes\pkhp)\rangle _{\sp_{2n}(\Fq)}\\
=&\langle  \pi_{\rho_1,k_1,h_1}, \Theta_{n,n_1}^{\ee\cdot\epsilon} (I^{\sp_{2n}}(\tau\otimes\pkhp))\rangle_{\o^\epsilon_{2n_1}(\Fq)}\\
=&\langle  \pi_{\rho_1,k_1,h_1},  I^{\o^{\ee\cdot\epsilon}_{2n_1+1}}((\chi\otimes\tau)\otimes\pi_{\rho'_1,k_1',h_1',\e})\rangle _{\o^\epsilon_{2n_1}(\Fq)}\\
\end{aligned}
\end{equation}

$\bullet$ Suppose that $n>m$. By Corollary \ref{o4}, let $\ell'=1$, one has
\[
\begin{aligned}
&\langle  \pi_{\rho_1,k_1,h_1},  I^{\o^{\ee\cdot\epsilon}_{2n_1+1}}((\chi\otimes\tau)\otimes\pi_{\rho'_1,k_1',h_1',\e})\rangle _{\o^\epsilon_{2n_1}(\Fq)}\\
=&m_\psi(\pi_{\rho_1,k_1,h_1},I^{\o^{\ee\cdot\epsilon}_{2n_1-1}}(\tau'\otimes\pi_{\rho'_1,k_1',h_1',\e}))\\
=&\langle  \pi_{\rho_1,k_1,h_1},  I^{\o^{\ee\cdot\epsilon}_{2n_1-1}}(\tau'\otimes\pi_{\rho'_1,k_1',h_1',\e})\rangle _{\o^{\ee\cdot\epsilon}_{2n_1-1}(\Fq)}.
\end{aligned}
\]
where $\tau'$ is $\tau_2$ in Corollary \ref{o4}.

Let $n_2=n_1-1-|k_1'|=n_1-|k_1|$. Then by Proposition \ref{cus2} (i), we have $\Theta^\epsilon_{n_1,n_2}(\sgn \pi_{\rho_1,k_1,h_1})= \pi_{\rho,k_2,h_2}$
 is an irreducible representation of $\sp_{2n_2}\fq$ with $k_2=k-1$ and $h_2\in\{\pm h\}$ and $\Theta^\epsilon_{n_2,n_1}( \pi_{\rho,k_2,h_2})= \sgn \pi_{\rho_1,k_1,h_1}$.
Consider the see-saw diagram
\[
\setlength{\unitlength}{0.8cm}
\begin{picture}(20,5)
\thicklines
\put(6.3,4){$\sp_{2n_2}\times \sp_{2n_2}$}
\put(7.2,1){$\sp_{2n_2}$}
\put(12.3,4){$\o^\epsilon_{2n_1}$}
\put(11.6,1){$\o^{\ee\cdot\epsilon}_{2n_1-1}\times \o^{+}_1$}
\put(7.7,1.5){\line(0,1){2.1}}
\put(12.8,1.5){\line(0,1){2.1}}
\put(8,1.5){\line(2,1){4.2}}
\put(8,3.7){\line(2,-1){4.2}}
\end{picture}
\]
By Proposition \ref{w2} and Proposition \ref{cus2}, one has,
\[
\begin{aligned}
&\langle  \pi_{\rho_1,k_1,h_1},  I^{\o^{\ee\cdot\epsilon}_{2n_1-1}}(\tau'\otimes\pi_{\rho'_1,k_1',h_1',\e})\rangle _{\o^{\ee\cdot\epsilon}_{2n_1-1}(\Fq)}\\
=&\langle \sgn \pi_{\rho_1,k_1,h_1}, \sgn I^{\o^{\ee\cdot\epsilon}_{2n_1-1}}(\tau'\otimes\pi_{\rho'_1,k_1',h_1',\e})\rangle _{\o^{\ee\cdot\epsilon}_{2n_1-1}(\Fq)}\\
=&\langle \sgn \pi_{\rho_1,k_1,h_1},  I^{\o^{\ee\cdot\epsilon}_{2n_1-1}}(\tau'\otimes(\sgn\pi_{\rho'_1,k_1',h_1',\e}))\rangle _{\o^{\ee\cdot\epsilon}_{2n_1-1}(\Fq)}\\
=&\langle  \Theta_{n_2,n_1}^\epsilon(\pi_{\rho,k_2,h_2}),  I^{\o^{\ee\cdot\epsilon}_{2n_1-1}}(\tau'\otimes(\sgn\pi_{\rho'_1,k_1',h_1',\e}))\rangle _{\o^{\ee\cdot\epsilon}_{2n_1-1}(\Fq)}\\
=&\langle  \pi_{\rho,k_2,h_2},  \Theta^{\ee\cdot\epsilon}_{n_1-1,n_2}(I^{\o^{\ee\cdot\epsilon}_{2n_1-1}}(\tau'\otimes(\sgn\pi_{\rho'_1,k_1',h_1',\e})))\otimes \omega^+_{n_2}\rangle _{\sp_{2n_2}(\Fq)}\\
=&\langle  \pi_{\rho,k_2,h_2},  I^{\sp_{2n_2}}((\chi\otimes\tau')\otimes\pi_{\rho',k_2',h_2'})\otimes \omega^+_{n_2}\rangle _{\sp_{2n_2}(\Fq)}\\
=&\langle  \pi_{\rho,k_2,h_2}\otimes \omega^{\ee}_{n_2},  I^{\sp_{2n_2}}((\chi\otimes\tau')\otimes\pi_{\rho',k_2',h_2'})\rangle _{\sp_{2n_2}(\Fq)}\\
\end{aligned}
\]
where $k_2'=|h_1'|=k'$ and $h_2'\in\{\pm k_1'\}=\{|h'|-1,-|h'|+1\}$.

$\bullet$ Suppose that $n=m$. Then $\ell=0$, and $\tau$ dose not appear. Similarly, by Corollary \ref{o4}, one has
\[
\begin{aligned}
&\langle  \pi_{\rho_1,k_1,h_1}, \pi_{\rho'_1,k_1',h_1',\e}\rangle _{\o^\epsilon_{2n_1}(\Fq)}
=\langle  I^{\o^\epsilon_{2(n_1+1)}}(\tau'\otimes\pi_{\rho_1,k_1,h_1}), \pi_{\rho'_1,k_1',h_1',\e}\rangle _{\o^{\ee\cdot\epsilon}_{2n_1+1}(\Fq)}.
\end{aligned}
\]
where $\tau'$ is $\tau_2$ in Corollary \ref{o4}.

Let $n_2=n_1-|k_1'|=n_1-|k_1|+1$. Then by Proposition \ref{w2} and Proposition \ref{cus2} (ii), we have
\[
\Theta_{n_1,n_2}^\epsilon( I^{\o^\epsilon_{2(n_1+1)}}(\tau'\otimes(\sgn\pi_{\rho_1,k_1,h_1}))=  I^{\sp_{2n_2}}(\tau'\otimes\pi_{\rho,k_2,h_2})
\]
with $k_2=k-1$ and $h_2\in\{\pm h\}$.
Consider the see-saw diagram
\[
\setlength{\unitlength}{0.8cm}
\begin{picture}(20,5)
\thicklines
\put(6.3,4){$\sp_{2n_2}\times \sp_{2n_2}$}
\put(7.2,1){$\sp_{2n_2}$}
\put(12.1,4){$\o^\epsilon_{2(n_1+1)}$}
\put(11.5,1){$\o^{\ee\cdot\epsilon}_{2n_1+1}\times \o^{+}_1$}
\put(7.7,1.5){\line(0,1){2.1}}
\put(12.8,1.5){\line(0,1){2.1}}
\put(8,1.5){\line(2,1){4.2}}
\put(8,3.7){\line(2,-1){4.2}}
\end{picture}
\]
By Proposition \ref{w2} and Proposition \ref{cus2}, one has,
\[
\begin{aligned}
&\langle  I^{\o^\epsilon_{2(n_1+1)}}(\tau'\otimes\pi_{\rho_1,k_1,h_1}), \pi_{\rho'_1,k_1',h_1',\e}\rangle _{\o^{\ee\cdot\epsilon}_{2n_1+1}(\Fq)}\\
=&\langle  \sgn I^{\o^\epsilon_{2(n_1+1)}}(\tau'\otimes\pi_{\rho_1,k_1,h_1}), \sgn \pi_{\rho'_1,k_1',h_1',\e}\rangle _{\o^{\ee\cdot\epsilon}_{2n_1+1}(\Fq)}\\
=&\langle  I^{\o^\epsilon_{2(n_1+1)}}(\tau'\otimes(\sgn\pi_{\rho_1,k_1,h_1})),\sgn \pi_{\rho'_1,k_1',h_1',\e}\rangle _{\o^{\ee\cdot\epsilon}_{2n_1+1}(\Fq)}\\
=&\langle \Theta^\epsilon_{n_2,n_1+1}(I^{\sp_{2n_2}}(\tau'\otimes\pi_{\rho,k_2,h_2})),\sgn \pi_{\rho'_1,k_1',h_1',\e}\rangle _{\o^{\ee\cdot\epsilon}_{2n_1+1}(\Fq)}\\
=&\langle  I^{\sp_{2n_2}}(\tau'\otimes\pi_{\rho,k_2,h_2}),  \Theta^\epsilon_{n_1,n_2}(\sgn \pi_{\rho'_1,k_1',h_1',\e})\otimes \omega^+_{n_2}\rangle _{\sp_{2n_2}(\Fq)}\\
=&\langle  I^{\sp_{2n_2}}(\tau'\otimes\pi_{\rho,k_2,h_2}),  \pi_{\rho',k_2',h_2'}\otimes \omega^+_{n_2}\rangle _{\sp_{2n_2}(\Fq)}\\
\end{aligned}
\]
where $k_2'=|h_1'|=k'$ and $h_2'\in\{\pm k_1'\}=\{|h'|-1,-|h'|+1\}$.

Summarizing, by induction hypothesis, we have
\[
\begin{aligned}
&\langle  \pkh\otimes \omega_n^{\ee}, I^{\sp_{2n}}(\tau\otimes\pkhp)\rangle _{\sp_{2n}(\Fq)}\\
=&\left\{
\begin{array}{ll}
\langle  \pi_{\rho,k_2,h_2}\otimes\omega^{\ee}_{n_2},  I^{\sp_{2n_2}}(\tau'\otimes\pi_{\rho',k_2',h_2'})\rangle _{\sp_{2n_2}(\Fq)},&\textrm{ if }n>m;\\
\langle  I^{\sp_{2n_2}}(\tau'\otimes\pi_{\rho,k_2,h_2}),  \pi_{\rho',k_2',h_2'}\otimes \omega^+_{n_2}\rangle _{\sp_{2n_2}(\Fq)},&\textrm{ if }n=m.\\
\end{array}\right.\\
=&\left\{
\begin{array}{ll}
m_\psi(\pw_{\rho} ,\pw_{\rho'}),&\textrm{ if $n>m$ and }(\pi_{\rho,k_2,h_2},\pi_{\rho',k_2',h_2'})\textrm{ is $\ee$-strongly relevant};\\
m_{\psi}(\pw_{\rho} ,\pw_{\rho'}),&\textrm{ if $n=m$ and }(\pi_{\rho',k_2',h_2'},\pi_{\rho,k_2,h_2})\textrm{ is $+$-strongly relevant};\\
0,&\textrm{ otherwise. }\\
\end{array}\right.
\end{aligned}
\]

Note that $(\pi_{\rho,k_2,h_2},\pi_{\rho',k_2',h_2'})\textrm{ is $\ee$-strongly relevant}$ if and only if $(\pi_{\rho',k_2',h_2'},\pi_{\rho,k_2,h_2})$ is $+$-strongly relevant. If $(\pi_{\rho,k_2,h_2},\pi_{\rho',k_2',h_2'})$ is $\ee$-strongly relevant or $m_\psi(\pi_{\rho} ,\pi_{\rho'})=0$, then the Proposition holds. Assume that $m_\psi(\pi_{\rho} ,\pi_{\rho'})\ne0$. It remains to prove that $(\pi_{\rho,k_2,h_2},\pi_{\rho',k_2',h_2'})$ must be $\ee$-strongly relevant.

Assume that $(\pi_{\rho,k_2,h_2},\pi_{\rho',k_2',h_2'})$ is not $\ee$-strongly relevant. Let $n_2^{\epsilon}$ and $m_2^{\ee\cdot\epsilon}$ be the first occurrence index of $\pi_{\rho,k_2,h_2}$ and $\pi_{\rho',k_2',h_2'}$ in the Witt tower ${\bf O}^{\epsilon}_\rm{even}$ and ${\bf O}^{\ee\cdot\epsilon}_\rm{odd}$, respectively. Recall that $\pi_{\rho,k_2,h_2}$ and $\pi_{\rho',k_2',h_2'}$ are irreducible representations of $\sp_{2(n-2k)}\fq=\sp_{2n_2^*}\fq$ and $\sp_{2(m-2|h'|+1)}\fq=\sp_{2m_2^*}\fq$, respectively.
By above see-saw argument, we have
 \[
 n_2^*\le n_2^{\epsilon}=n-k\textrm{ and } m_2^*\le m_2^{\ee\cdot\epsilon}=m-|h'|.
 \]
 By Proposition \ref{cus1}, we have
 \[
 n_2^*\ge n_2^{-\epsilon}=n-2k-k_2\textrm{ and } m_2^*\ge m_2^{-\ee\cdot\epsilon}=m-2|h'|+1-|h_2'|,
 \]
 where $n_2^{-\epsilon}$ and $m_2^{-\ee\cdot\epsilon}$ be the first occurrence index of $\pi_{\rho,k_2,h_2}$ and $\pi_{\rho',k_2',h_2'}$ in the Witt tower ${\bf O}^{-\epsilon}_\rm{even}$ and ${\bf O}^{-\ee\cdot\epsilon}_\rm{odd}$, respectively.
 Recall that $k_2=k-1$, $|h'|=k$ and $|h_2'|=|h'|-1=k-1$, which implies that
 \[
 n_2^*-n_2^{-\epsilon}=m_2^*- m_2^{-\ee\cdot\epsilon}=k-1.
 \]
 Then $(\pi_{\rho,k_2,h_2},\pi_{\rho',k_2',h_2'})$ is $\ee$-relevant. By our assumption, $(\pi_{\rho,k_2,h_2},\pi_{\rho',k_2',h_2'})$ is not $\ee$-strongly relevant, which implies $(\pi_{\rho',k_2',h_2'},\pi_{\rho,k_2,h_2})$ is not $+$-relevant. Note that by Corollary \ref{strongly relevant} (iii), $(\pi_{\rho',k_2',h_2'},\pi_{\rho,k_2,-h_2})$ is $+$-relevant, and by Proposition \ref{cus1} (i),  $(\pi_{\rho,k_2,-h_2},\pi_{\rho',k_2',h_2'})$ is $\ee$-relevant.
 Therefore $(\pi_{\rho,k_2,-h_2},\pi_{\rho',k_2',h_2'})$ is $\ee$-strongly relevant, and by induction on $|k|+|k'|+|h|+|h'|$, we have
\[
\left\{
\begin{array}{ll}
\langle  \pi_{\rho,k_2,-h_2}\otimes \omega^{\ee}_{n_2},  I^{\sp_{2n_2}}(\tau'\otimes\pi_{\rho',k_2',h_2'})\rangle _{\sp_{2n_2}(\Fq)}=m_\psi(\pw_{\rho} ,\pw_{\rho'})\ne 0,&\textrm{ if }n>m;\\
\langle  I^{\sp_{2n_2}}(\tau'\otimes\pi_{\rho,k_2,-h_2}),  \pi_{\rho',k_2',h_2'}\otimes \omega^+_{n_2}\rangle _{\sp_{2n_2}(\Fq)}=m_{\psi}(\pw_{\rho} ,\pw_{\rho'})\ne 0,&\textrm{ if }n=m.\\
\end{array}\right.
\]

  By Proposition \ref{cus1} (i), the first occurrence index of $\pi_{\rho,k,-h}$ in the Witt tower ${\bf O}^\epsilon_\rm{even}$ is also $n_1$. Then $\Theta_{n,n_1}^\epsilon(\pi_{\rho,k,-h})=\pi_{\rho_1,k_1^*,h_1^*}$ is an irreducible representation of $\o^\epsilon_{2n_1}\fq$ where $k_1^*\in\{\pm k\}$ and $h_1^*\in\{\pm h\}$ and $\pi_{\rho_1,k_1^*,h_1^*}\ne \pi_{\rho_1,k_1,h_1}$. By Proposition \ref{cus2} (i), the first occurrence index of $\sgn\pi_{\rho_1,k_1^*,h_1^*}$ (resp. $I^{\o^\epsilon_{2(n_1+1)}}(\tau'\otimes(\sgn\pi_{\rho_1,k_1,h_1})$) is also $n_2$, and $\Theta_{n_1,n_2}^\epsilon(\pi_{\rho_1,k_1^*,h_1^*})=\pi_{\rho,k_2^*,h_2^*}$ (resp. $\Theta_{n_1,n_2}^\epsilon( I^{\o^\epsilon_{2(n_1+1)}}(\tau'\otimes(\sgn\pi_{\rho_1,k_1^*,h_1^*}))=  I^{\sp_{2n_2}}(\tau'\otimes\pi_{\rho,k_2^*,h_2^*})$) is an irreducible representation of $\sp_{2n_2}\fq$ where $k_2^*=k-1=k_2$ and $h_2^*\in\{\pm h\}$. Note that $\pi_{\rho,k_2,h_2}\notin \Theta_{n_1,n_2}^\epsilon(\pi_{\rho_1,k_1^*,h_1^*})$, which implies $\pi_{\rho,k_2^*,h_2^*}=\pi_{\rho,k_2,-h_2}$.
With same see-saw argument above, we have
 \begin{equation}\label{notstrongly relevant}
 \begin{aligned}
 &\langle  \pi_{\rho,k,-h}\otimes \omega^{\ee}_n, I^{\sp_{2n}}(\tau\otimes\pkhp)\rangle _{\sp_{2n}(\Fq)}\\
=&\langle  \pi_{\rho_1,k_1^*,h_1^*},  I^{\o^{\ee\cdot\epsilon}_{2n_1+1}}(\tau\otimes\pi_{\rho'_1,k_1',h_1'})\rangle _{\o^\epsilon_{2n_1}(\Fq)}\\
=&\left\{
\begin{array}{ll}
\langle  \pi_{\rho,k_2,-h_2}\otimes \omega_{n_2}^{\ee},  I^{\sp_{2n_2}}(\tau'\otimes\pi_{\rho',k_2',h_2'})\rangle _{\sp_{2n_2}(\Fq)},&\textrm{ if }n>m;\\
\langle  I^{\sp_{2n_2}}(\tau'\otimes\pi_{\rho,k_2,-h_2}),  \pi_{\rho',k_2',h_2'}\otimes \omega_{n_2}^+\rangle _{\sp_{2n_2}(\Fq)},&\textrm{ if }n=m.\\
\end{array}\right.\\
=&m_\psi(\pw_{\rho} ,\pw_{\rho'})\\
\ne&0.
\end{aligned}
 \end{equation}
 Since $(\pi_{\rho,k,h},\pkhp)$ is $\ee$-strongly relevant, by Corollary \ref{strongly relevant} (ii), $(\pi_{\rho,k,-h},\pkhp)$ is not $\ee$-strongly relevant, which contradicts with (\ref{notstrongly relevant}) by Proposition \ref{cusvan}.

 (2) Suppose $k=|h'|-1$. One has
\[
\langle  \pkh\otimes \omega_n^{\ee}, I^{\sp_{2n}}(\tau\otimes\pkhp)\rangle _{\sp_{2n}(\Fq)}=\langle  \pkh, I^{\sp_{2n}}(\tau\otimes\pkhp)\otimes \omega_n^+\rangle _{\sp_{2n}(\Fq)}
\]
Pick $n_1$ and $\epsilon$ as before.
Consider the see-saw diagram
\[
\setlength{\unitlength}{0.8cm}
\begin{picture}(20,5)
\thicklines
\put(6.5,4){$\sp_{2n}\times \sp_{2n}$}
\put(7.3,1){$\sp_{2n}$}
\put(12.3,4){$\o^\epsilon_{2n_1}$}
\put(11.6,1){$\o^{\ee\cdot\epsilon}_{2n_1-1}\times \o^{+}_1$}
\put(7.7,1.5){\line(0,1){2.1}}
\put(12.8,1.5){\line(0,1){2.1}}
\put(8,1.5){\line(2,1){4.2}}
\put(8,3.7){\line(2,-1){4.2}}
\end{picture}
\]
Using the same see-saw arguments, we have
\[
\begin{aligned}
\langle  \pkh, I^{\sp_{2n}}(\tau\otimes\pkhp)\otimes \omega_n^+\rangle _{\sp_{2n}(\Fq)}
=&\langle  \pi_{\rho_1,k_1,h_1},  I^{\o^{\ee\cdot\epsilon}_{2n_1-1}}((\chi\otimes\tau)\otimes\pi_{\rho'_1,k_1',h_1',\e})\rangle _{\o^{\ee\cdot\epsilon}_{2n_1-1}(\Fq)}\\
\end{aligned}
\]
where $k_1\in\{\pm k\}$, $h_1\in\{\pm h\}$, $k_1'=|h'|-1=k$ and $h_1'= k'$. As before,
\[
\begin{aligned}
\langle  \pi_{\rho_1,k_1,h_1},  I^{\o^{\ee\cdot\epsilon}_{2n_1-1}}((\chi\otimes\tau)\otimes\pi_{\rho'_1,k_1',h_1',\e})\rangle _{\o^{\ee\cdot\epsilon}_{2n_1-1}(\Fq)}
=\langle  \pi_{\rho_1,k_1,h_1},  I^{\o^{\ee\cdot\epsilon}_{2n_1+1}}(\tau'\otimes\pi_{\rho'_1,k_1',h_1',\e})\rangle _{\o^{\ee\cdot\epsilon}_{2n_1-1}(\Fq)}.
\end{aligned}
\]
Let $n_2=n_1-(|k_1|-1)$. Consider see-saw diagram
\[
\setlength{\unitlength}{0.8cm}
\begin{picture}(20,5)
\thicklines
\put(6.3,4){$\sp_{2n_2}\times \sp_{2n_2}$}
\put(7.2,1){$\sp_{2n_2}$}
\put(12.1,4){$\o^\epsilon_{2(n_1+1)}$}
\put(11.5,1){$\o^{\ee\cdot\epsilon}_{2n_1+1}\times \o^{+}_1$}
\put(7.7,1.5){\line(0,1){2.1}}
\put(12.8,1.5){\line(0,1){2.1}}
\put(8,1.5){\line(2,1){4.2}}
\put(8,3.7){\line(2,-1){4.2}}
\end{picture}
\]
Similarly, we have
\[
\langle  \pi_{\rho_1,k_1,h_1},  I^{\o^{\ee\cdot\epsilon}_{2n_1+1}}(\tau'\otimes\pi_{\rho'_1,k_1',h_1',\e})\rangle _{\o^{\ee\cdot\epsilon}_{2n_1-1}(\Fq)}
=\langle  \pi_{\rho,k_2,h_2},  I^{\sp_{n_2}}(\tau'\otimes\pi_{\rho',k_2',h_2'})\otimes \omega_{n_2}^+\rangle _{\sp_{2n_2}(\Fq)}
\]
where $k_2=k-1$, $h_2\in\{\pm h\}$, $k_2'= k'$ and $h_2'\in\{|h'|-1,-|h'|+1\}$. The rest of the proof runs as before.

We now turn to prove Case (B).
Consider the see-saw diagram like this:
\[
\setlength{\unitlength}{0.8cm}
\begin{picture}(20,5)
\thicklines
\put(6.5,4){$\sp_{2n}\times \sp_{2n}$}
\put(7.3,1){$\sp_{2n}$}
\put(12.3,4){$\o^{\epsilon}_{2n_1+1}$}
\put(11.6,1){$\o^{\epsilon}_{2n_1}\times \o^{+}_1$}
\put(7.7,1.5){\line(0,1){2.1}}
\put(12.8,1.5){\line(0,1){2.1}}
\put(8,1.5){\line(2,1){4.2}}
\put(8,3.7){\line(2,-1){4.2}}
\end{picture}
\]
 The rest of the proof runs as before.
\end{proof}

\subsection{the Bessel case}
We have established the Fourier-Jacobi case. We now prove the similar result for Bessel case.

\begin{proposition}\label{7bp}
 Let $s$ be a semisimple element of $(\rm{O}^\epsilon_{2n+1}(\Fq)^*)^0$, and $s'$ be a semisimple element of $(\rm{O}^{\epsilon'}_{2m}(\Fq)^*)^0$. Let $\pi_{\rho,k,h,\epsilon''}\in\cal{E}(\rm{O}^\epsilon_{2n+1},s)$, and $\pkhp\in\cal{E}(\rm{O}^{\epsilon'}_{2m},s')$.

 (i) Assume that $n\ge m$. Let $P$ be an $F$-stable maximal parabolic subgroup of $\rm{O}^\e_{2(n+1)}$ with Levi factor $\GGL_{n-m+1} \times \rm{O}^{\epsilon'}_{2m}$. Let $s_0$ be a semisimple element of $\GGL_{n-m+1}\fq$ and let $\tau\in\cal{E}(\GGL_{n-m+1},s_0)$ be an irreducible cuspidal representation of $\GGL_{n-m+1}\fq$ which is nontrivial  if $n-m+1=1$. Assume that $s_0$ has no common eigenvalues with $s$ and $s'$. Then we have
\[
\begin{aligned}
&\langle  \pi_{\rho,k,h,\epsilon''},I^{\o^\e_{2(n+1)}}_P(\tau\otimes\pkhp)\rangle _{\rm{O}^{\epsilon}_{2n+1}(\Fq)}\\
=&\left\{
\begin{array}{ll}
m_\psi(\pw_{\rho} ,\pw_{\rho'}),  &\textrm{ if }(\pi_{\rho,k,h,\epsilon''},\pkhp)\textrm{ is strongly relevant;}\\
0,&\textrm{ otherwise}
\end{array}\right.
\end{aligned}
\]
where $m_\psi(\pw_{\rho} ,\pw_{\rho'})$ is given in Theorem \ref{8.6} and $m_\psi(\pw_{\rho} ,\pw_{\rho'})$ does not depend on $\psi$.

 (ii) Assume that $n< m$. Let $P$ be an $F$-stable maximal parabolic subgroup of $\rm{O}^\epsilon_{2m+1}$ with Levi factor $\GGL_{m-n} \times \rm{O}^{\epsilon}_{2n+1}$. Let $s_0$ be a semisimple element of $\GGL_{m-n}\fq$ and let $\tau\in\cal{E}(\GGL_{m-n},s_0)$ be an irreducible cuspidal representation of $\GGL_{m-n}\fq$ which is nontrivial  if $m-n=1$. Assume that $s_0$ has no common eigenvalues with $s$ and $s'$. Then we have
\[
\begin{aligned}
&\langle I^{\o^\epsilon_{2m+1}}_P(\tau\otimes\pi_{\rho,k,h,\epsilon''}),\pkhp\rangle _{\rm{O}^{\e}_{2m}(\Fq)}\\
=&\left\{
\begin{array}{ll}
m(\pw_{\rho,\epsilon''} ,\pw_{\rho'}),  &\textrm{ if }(\pi_{\rho,k,h,\epsilon''},\pkhp)\textrm{ is strongly relevant;}\\
0,&\textrm{ otherwise}.
\end{array}\right.
\end{aligned}
\]
where $m_\psi(\pw_{\rho} ,\pw_{\rho'})$ is given in Theorem \ref{8.6} and $m_\psi(\pw_{\rho} ,\pw_{\rho'})$ does not depend on $\psi$.
\end{proposition}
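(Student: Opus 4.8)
\emph{Overview.} The plan is to mirror the proof of Theorem~\ref{8.6} (that is, of Propositions~\ref{cusvan} and~\ref{cusnv}), replacing the Fourier--Jacobi see-saws by the two Bessel configurations recalled at the end of Section~\ref{sec6} and using Proposition~\ref{cus2} in place of Proposition~\ref{cus1} to keep track of the first occurrence indices and theta lifts of $\pi_{\rho,k,h,\epsilon''}$ and $\pkhp$. As there, the argument divides into a vanishing part and a non-vanishing part, and both rest on the compatibility of theta lifting with parabolic induction (Propositions~\ref{w1} and~\ref{w2}), which is available since $s_0$ has no eigenvalue $\pm1$ and no eigenvalue in common with $s$ or $s'$.

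\emph{Vanishing.} First I would prove the analogue of Proposition~\ref{cusvan}: if $(\pi_{\rho,k,h,\epsilon''},\pkhp)$ is not strongly relevant, then both inner products in the statement vanish. This follows from \cite[Proposition~5.6]{LW3}, Proposition~\ref{cus2}, Corollary~\ref{strongly relevant1} and the standard see-saw manipulation, exactly as in the symplectic case: when the pair fails to be relevant, the first occurrence of one member lies beyond the range dictated by the other, so the inner product is squeezed between two provably zero terms.

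\emph{Non-vanishing and the base case.} For the strongly relevant case I would induct on $|k|+|h|+|k'|+|h'|$. When $k=h=k'=h'=0$ the representations are uniform, and the assertion reduces, via Proposition~\ref{7.21}, Corollary~\ref{o4} and the see-saw identity, precisely to the definitions of $m_\psi(\pi_\rho,\pi_{\rho'})$ (part~(i)) and $m(\pi_{\rho,\epsilon''},\pi_{\rho'})$ (part~(ii)); the $\psi$-independence in part~(i) is Proposition~\ref{rho}. For the inductive step, as in Case~(A)/(B) of the proof of Proposition~\ref{cusnv}, I would: pick $\epsilon$ and the first occurrence index $n_1$ of $\pi_{\rho,k,h,\epsilon''}$ so that $\Theta^\epsilon_{n,n_1}(\pi_{\rho,k,h,\epsilon''})$ is the explicit irreducible $\pi_{\rho_1,k_1,h_1}$ furnished by Proposition~\ref{cus2} (ii); plug this into the appropriate Bessel see-saw and apply Proposition~\ref{w2} to rewrite the inner product over $\o^{\ee\cdot\epsilon}_{2n_1\pm1}$; peel off another layer by applying Proposition~\ref{cus2} (i) on the orthogonal Witt tower; and conclude by the induction hypothesis, using Theorem~\ref{even} to identify which constituent of the theta lift occurs. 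Part~(ii) is the same with the roles of the two groups interchanged and Corollary~\ref{o4} used to transfer $\tau$ onto the correct factor, while the conservation relation \cite[Theorem~12.3]{P1} pins down the occurrence indices of the $\rm{sgn}$-twists.

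\emph{The main obstacle.} The delicate point, exactly as in Proposition~\ref{cusnv}, will be to show that the pair one obtains after peeling off a layer is \emph{again} strongly relevant whenever the relevant base multiplicity is nonzero: the numerology only gives ``relevant'' for free. To upgrade it I would run the parallel computation with the sign of $h$ (resp.\ $h'$) reversed, observe via Proposition~\ref{cus2} and Corollary~\ref{strongly relevant1} that \emph{that} twisted pair must be strongly relevant and hence, by the induction hypothesis, yields a nonzero value, and then contradict the vanishing part, since $(\pi_{\rho,k,-h,\epsilon''},\pkhp)$ cannot be strongly relevant when $(\pi_{\rho,k,h,\epsilon''},\pkhp)$ is. Carrying the discriminant signs $\epsilon,\epsilon',\epsilon''$ and the defects of $\Lambda,\Lambda'$ consistently through the successive see-saws, and checking the few boundary configurations by hand, is where the bulk of the bookkeeping will lie.
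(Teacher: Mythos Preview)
Your overall plan is sound and would succeed, but it takes a longer road than the paper does. You propose to rerun the entire induction on $|k|+|h|+|k'|+|h'|$ inside the Bessel setting, peeling off two layers per step (odd orthogonal $\to$ symplectic $\to$ even orthogonal) before invoking the induction hypothesis on a smaller Bessel pair. The paper instead performs a \emph{single} see-saw step: it lifts $\pi_{\rho,k,h,\epsilon''}$ (or its $\mathrm{sgn}$-twist, after reducing to the case $n^\epsilon=n+k+1$) to $\pi_{\rho,k_1,h_1}\in\mathcal{E}(\mathrm{Sp}_{2n_1})$ via Proposition~\ref{cus2}(ii), and simultaneously lifts $\pi_{\rho',k',h'}$ to a symplectic representation via Proposition~\ref{cus2}(i). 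The Bessel pairing then becomes
\[
\langle\pi_{\rho,k_1,h_1}\otimes\omega_{n_1}^{\epsilon_{-1}\cdot\epsilon'\cdot\epsilon},\,I^{\mathrm{Sp}_{2n_1}}(\tau\otimes\pi_{\rho',k_1',h_1'})\rangle_{\mathrm{Sp}_{2n_1}(\mathbb{F}_q)},
\]
to which the already-established Theorem~\ref{8.6} applies directly. No fresh induction is needed; the Bessel case is deduced from the Fourier--Jacobi case in one move.

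The remaining work in the paper's proof is exactly the obstacle you identify: one must check that the symplectic pair $(\pi_{\rho,k_1,h_1},\pi_{\rho',k_1',h_1'})$ so obtained is $(\epsilon_{-1}\cdot\epsilon'\cdot\epsilon)$-strongly relevant. The paper handles this by the same sign-flip contradiction you sketch, but applied once at the symplectic level (using Corollary~\ref{strongly relevant} and Proposition~\ref{cus2}(i) for $\pi_{\rho',k',-h'}$) rather than repeatedly inside an induction. Your approach would work, but it duplicates the labor already packaged in Theorem~\ref{8.6}; the paper's reduction is both shorter and avoids re-tracking the discriminant signs through an entire descending chain. One small inaccuracy in your write-up: after the first see-saw you are in symplectic territory, so the second ``peel'' would invoke Proposition~\ref{cus1}, not Proposition~\ref{cus2}(i).
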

\begin{proof}
We can get the vanishing result by \cite{LW3}. We now prove the non-vanishing result.

 We only prove (i). The proof of (ii) is similar and will be left to the reader. As before, we suppress various Levi subgroups from the parabolic induction in the sequel.

Let $n^\epsilon$ (resp. $n^\epsilon_0$, $m^\e$ and $m_0^\e$) be the first occurrence index of $\pi_{\rho,k,h,\epsilon''}$ (resp. $\rm{sgn}\otimes\pi_{\rho,k,h,\epsilon''}$, $\pkhp$ and $\sgn\pkhp$). Recall that by definition, $(\pi_{\rho,k,h,\epsilon''},\pkhp)$ is strongly relevant if and only if $(\rm{sgn}\otimes\pi_{\rho,k,h,\epsilon''},\rm{sgn}\otimes\pkhp)$ is. Note that
\[
\langle  \pi_{\rho,k,h,\epsilon''},I^{\o^\e_{2(n+1)}}_P(\tau\otimes\pkhp)\rangle _{\rm{O}^{\epsilon}_{2n+1}(\Fq)}\\
=\langle  \rm{sgn}\otimes\pi_{\rho,k,h,\epsilon''},I^{\o^\e_{2(n+1)}}_P(\tau\otimes\rm{sgn}\otimes\pkhp)\rangle _{\rm{O}^{\epsilon}_{2n+1}(\Fq)}.
\]
By Proposition \ref{cus2}, we have
\[
\left\{
\begin{array}{ll}
n^\epsilon=n+k+1;\\
n^\epsilon_0=n-k
\end{array}\right.
\textrm{ or }
\left\{
\begin{array}{ll}
n^\epsilon=n-k;\\
n^\epsilon_0=n+k+1.
\end{array}\right.
\]
Hence it suffices to prove for $n^\epsilon=n+k+1$.

Put $n_1=n^\epsilon$ and $m_1=m^\e$. By Proposition \ref{cus2}, we have $\Theta^\epsilon_{n,n_1}(\pi_{\rho,k,h,\epsilon''})=\pi_{\rho,k_1,h_1}$ with $k_1= h$ and $h_1\in \{\pm (k+1)\}$ and $\Theta^\epsilon_{n_1,n}(\pi_{\rho,k_1,h_1})=\pi_{\rho,k,h,\epsilon''}$.
Since $(\pi_{\rho,k,h,\epsilon''},\pkhp)$ is strongly relevant, by Proposition \ref{cus2} (i), $m_1-m=k$ or $m_1-m=k+1$.

$\bullet$ Suppose $m_1-m=k$. Then by Proposition \ref{cus2}, $\Theta^\e_{m,m_1}(\pkhp)=\pi_{\rho', k_1',h_1'}$ with $k_1'= |k'|=k=|h_1|-1$ and $h_1'\in \{\pm h'\}$, and $\Theta^\e_{m_1,m}(\pi_{\rho', k_1',h_1'})=\pkhp$. By Proposition \ref{w2}, we know that
\[
\Theta^\e_{n_1,n+1}(I^{\sp_{2n_1}}(\tau\otimes\pi_{\rho', k_1',h_1'}))=I_P^{\o^\e_{2(n+1)}}(\tau\otimes\Theta^\e_{m_1,m}(\pi_{\rho', k_1',h_1'}))=I^{\o^\e_{2(n+1)}}_P(\tau\otimes\pkhp).
\]
Consider the see-saw diagram:
\[
\setlength{\unitlength}{0.8cm}
\begin{picture}(20,5)
\thicklines
\put(6.5,4){$\sp_{2n_1}\times \sp_{2n_1}$}
\put(7.3,1){$\sp_{2n_1}$}
\put(12.3,4){$\o^{\e}_{2(n+1)}$}
\put(11.6,1){$\o^{\epsilon}_{2n+1}\times \o^{\ee\cdot\e\cdot\epsilon}_1$}
\put(7.7,1.5){\line(0,1){2.1}}
\put(12.8,1.5){\line(0,1){2.1}}
\put(8,1.5){\line(2,1){4.2}}
\put(8,3.7){\line(2,-1){4.2}}
\end{picture}
\]
By Theorem \ref{8.6}, one has
\[
\begin{aligned}
&\langle  \pi_{\rho,k,h,\epsilon''},I^{\o^\e_{2(n+1)}}_P(\tau\otimes\pkhp)\rangle _{\rm{O}^{\epsilon}_{2n+1}(\Fq)}\\
=&\langle  \pi_{\rho,k,h,\epsilon''},\Theta^\e_{n_1,n+1}(I^{\sp_{2n_1}}(\tau\otimes\pi_{\rho', k_1',h_1'}))\rangle _{\rm{O}^{\epsilon}_{2n+1}(\Fq)}\\
=&\langle  \Theta^\epsilon_{n,n_1}(\pi_{\rho,k,h,\epsilon''})\otimes\omega^{\ee\cdot\e\cdot\epsilon}_{n_1},I^{\sp_{2n_1}}(\tau\otimes\pi_{\rho', k_1',h_1'})\rangle _{\sp_{2n_1}(\Fq)}\\
=&\langle  \pi_{\rho,k_1,h_1}\otimes\omega^{\ee\cdot\e\cdot\epsilon}_{n_1},I^{\sp_{2n_1}}(\tau\otimes\pi_{\rho', k_1',h_1'})\rangle _{\sp_{2n_1}(\Fq)}\\
=&\left\{
\begin{array}{ll}
m_\psi(\pi_{\rho} ,\pi_{\rho'})  &\textrm{ if }(\pi_{\rho,k_1,h_1},\pi_{\rho', k_1',h_1'})\textrm{ is $\ee\cdot\e\cdot\epsilon$-strongly relevant};\\
0&\textrm{ if }(\pi_{\rho,k_1,h_1},\pi_{\rho', k_1',h_1'})\textrm{ is not $\ee\cdot\e\cdot\epsilon$-strongly relevant}.
\end{array}\right.
\end{aligned}
\]

If $m_\psi(\pi_{\rho} ,\pi_{\rho'})=0$ or $(\pi_{\rho,k_1,h_1},\pi_{\rho', k_1',h_1'})$ is a $\ee\cdot\e\cdot\epsilon$-strongly relevant pair of irreducible representations of symplectic groups, then we complete the proof.

We now suppose that $m_\psi(\pi_{\rho} ,\pi_{\rho'})\ne0$. We show that the pair $(\pi_{\rho,k_1,h_1},\pi_{\rho', k_1',h_1'})$ must be $\ee\cdot\e\cdot\epsilon$-strongly relevant. Assume that it is not $\ee\cdot\e\cdot\epsilon$-strongly relevant. By above see-saw argument, we know that $(\pi_{\rho', k_1',h_1'},\pi_{\rho,k_1,h_1})$ is $\e\cdot\epsilon$-relevant, which implies that $(\pi_{\rho,k_1,h_1},\pi_{\rho', k_1',h_1'})$ is not $\ee\cdot\e\cdot\epsilon$-relevant. Recall that $k_1'= |k'|=k=|h_1|-1$. Since $(\pi_{\rho,k,h,\epsilon''},\pkhp)$ is strongly relevant, by Corollary \ref{strongly relevant1}, we have $h=|h'|$ or $h+1=|h'|$. Then
\[
\left\{
\begin{array}{ll}
k_1'=|h_1|-1;\\
k_1=|h_1'|
\end{array}\right.
\textrm{ or }
\left\{
\begin{array}{ll}
k_1'=|h_1|-1;\\
k_1=|h_1'|-1.
\end{array}\right.
\]
Hence by Corollary \ref{strongly relevant} (iii), $(\pi_{\rho,k_1,h_1},\pi_{\rho', k_1',-h_1'})$ is $\ee\cdot\e\cdot\epsilon$-relevant. On the other hand, by Proposition \ref{cus1} (i), $(\pi_{\rho', k_1',-h_1'},\pi_{\rho,k_1,h_1})$ is $\e\cdot\epsilon$-relevant. So
$(\pi_{\rho,k_1,h_1},\pi_{\rho', k_1',-h_1'})$ is $\ee\cdot\e\cdot\epsilon$-strongly relevant. By Theorem \ref{8.6}, one has
\begin{equation}\label{bess1}
\langle  \pi_{\rho,k_1,h_1}\otimes\omega^{\ee\cdot\e\cdot\epsilon}_{n_1},I^{\sp_{2n_1}}(\tau\otimes\pi_{\rho', k_1',-h_1'})\rangle _{\sp_{2n_1}(\Fq)}
=m_\psi(\pi_{\rho} ,\pi_{\rho'})\ne 0.
\end{equation}

Now consider above see-saw diagram again, by Proposition \ref{cus2} (i):
\[
\begin{aligned}
&\langle  \pi_{\rho,k_1,h_1}\otimes\omega^{\ee\cdot\e\cdot\epsilon}_{n_1},I^{\sp_{2n_1}}(\tau\otimes\pi_{\rho', k_1',-h_1'})\rangle _{\sp_{2n_1}(\Fq)}\\
=&\langle  \Theta^\epsilon_{n,n_1}(\pi_{\rho,k,h,\epsilon''})\otimes\omega^{\ee\cdot\e\cdot\epsilon}_{n_1},I^{\sp_{2n_1}}(\tau\otimes\pi_{\rho', k_1',-h_1'})\rangle _{\sp_{2n_1}(\Fq)}\\
=&\langle  \pi_{\rho,k,h,\epsilon''},\Theta^\e_{n_1,n+1}(I^{\sp_{2n_1}}(\tau\otimes\pi_{\rho', k_1',-h_1'}))\rangle _{\rm{O}^{\epsilon}_{2n+1}(\Fq)}\\
=&\langle  \pi_{\rho,k,h,\epsilon''},I^{\o^\e_{2(n+1)}}_P(\tau\otimes\pi_{\rho',k',-h'})\rangle _{\rm{O}^{\epsilon}_{2n+1}(\Fq)}.
\end{aligned}
\]
Since $(\pi_{\rho,k,h,\epsilon''},\pkhp)$ is strongly relevant, we know that $(\chi\otimes\pi_{\rho,k,h,\epsilon''},\chi\otimes\pkhp)=(\pi_{\rho_1,h,k,\epsilon''},\pi_{\rho_1',h',k'})$ is relevant.
By Proposition \ref{cus2} (i), $(\pi_{\rho_1,h,k,\epsilon''},\pi_{\rho_1',-h',k'})$ is not relevant. Then $(\pi_{\rho,k,h,\epsilon''},\pi_{\rho',k',-h'})$ is not strongly relevant, and by vanishing result
\[
\langle  \pi_{\rho,k_1,h_1}\otimes\omega^{\ee\cdot\e\cdot\epsilon}_{n_1},I^{\sp_{2n_1}}(\tau\otimes\pi_{\rho', k_1',-h_1'})\rangle _{\sp_{2n_1}(\Fq)}=\langle  \pi_{\rho,k,h,\epsilon''},I^{\o^\e_{2(n+1)}}_P(\tau\otimes\pi_{\rho',k',-h'})\rangle _{\rm{O}^{\epsilon}_{2n+1}(\Fq)}=0,
\]
which contradicts with (\ref{bess1}). Hence $(\pi_{\rho,k_1,h_1},\pi_{\rho', k_1',h_1'})$ must be $\ee\cdot\e\cdot\epsilon$-strongly relevant, which completes the proof.

$\bullet$ Suppose that $m_1-m=k+1$. By Corollary \ref{o4}, we have
\[
\langle  \pi_{\rho,k,h,\epsilon''},I^{\o^\e_{2(n+1)}}_P(\tau\otimes\pkhp)\rangle _{\rm{O}^{\epsilon}_{2n+1}(\Fq)}=
\langle  \pi_{\rho,k,h,\epsilon''},I^{\o^\e_{2n}}_P(\tau_2\otimes\pkhp)\rangle _{\rm{O}^{\e}_{2n}(\Fq)},
\]
where $\tau_2$ is given in Corollary \ref{o4}.
Consider the see-saw diagram:
\[
\setlength{\unitlength}{0.8cm}
\begin{picture}(20,5)
\thicklines
\put(6.5,4){$\sp_{2n_1}\times \sp_{2n_1}$}
\put(7.3,1){$\sp_{2n_1}$}
\put(12.3,4){$\o^{\epsilon}_{2n+1}$}
\put(11.6,1){$\o^{\ee}_{2n}\times \o^{\ee\cdot\epsilon}_1$}
\put(7.7,1.5){\line(0,1){2.1}}
\put(12.8,1.5){\line(0,1){2.1}}
\put(8,1.5){\line(2,1){4.2}}
\put(8,3.7){\line(2,-1){4.2}}
\end{picture}
\]
The rest of the proof runs as before.

\end{proof}

\section{The Gan-Gross-Prasad problem: generalization}\label{sec9}
The goal of this section is to prove Theorem \ref{main2}, which extends the previous result to arbitrary representations. We shall follow the method in section \ref{sec8}.

Let
\[
\begin{aligned}
&\mathcal{G}^{\rm{even},+}_{n,m}:=\left\{(\Lambda,\Lambda')|
\Upsilon(\Lambda')_*\preccurlyeq\Upsilon(\Lambda)_*,\Upsilon(\Lambda)^*\preccurlyeq\Upsilon(\Lambda')^*,\rm{def}(\Lambda)>0,\rm{def}(\Lambda')=\rm{def}(\Lambda)-1\right\}; \\
&\mathcal{G}^{\rm{even},-}_{n,m}:=\left\{(\Lambda,\Lambda')|\Upsilon(\Lambda')_*\preccurlyeq\Upsilon(\Lambda)^*,
\Upsilon(\Lambda)_*\preccurlyeq\Upsilon(\Lambda')^*,\rm{def}(\Lambda)>0,\rm{def}(\Lambda')=-\rm{def}(\Lambda)-1\right\}; \\
&\mathcal{G}^{\rm{odd},-}_{n,m}:=\left\{(\Lambda,\Lambda')|\Upsilon(\Lambda')^*\preccurlyeq \Upsilon(\Lambda)^*,
\Upsilon(\Lambda)_*\preccurlyeq\Upsilon(\Lambda')_*,\rm{def}(\Lambda)<0,\rm{def}(\Lambda')=\rm{def}(\Lambda)+1\right\};\\
&\mathcal{G}^{\rm{odd},+}_{n,m}:=\left\{(\Lambda,\Lambda')|\Upsilon(\Lambda')^*\preccurlyeq \Upsilon(\Lambda)_*,
\Upsilon(\Lambda)^*\preccurlyeq\Upsilon(\Lambda')_*,\rm{def}(\Lambda)<0,\rm{def}(\Lambda')=-\rm{def}(\Lambda)+1\right\}
\end{aligned}
\]
be subsets of $\cal{S}_n\times\cal{S}_m^{\pm}$, and let
\[
\mathcal{G}=\bigcup_{n,m}\left(\mathcal{G}^{\rm{even},+}_{n,m}\bigcup\mathcal{G}^{\rm{even},-}_{n,m}
\bigcup\mathcal{G}^{\rm{odd},-}_{n,m}\bigcup\mathcal{G}^{\rm{odd},+}_{n,m}\right).
\]

To prove the Fourier-Jacobi case of Theorem \ref{main2}, by Proposition \ref{so2}, it suffices to calculate (\ref{sp}). It is not hard to see that Theorem \ref{main2} (i) readily follows from Theorem \ref{9.1} below.

\begin{theorem}\label{9.1}
Let $s$ be a semisimple element of $\sp_{2n}(\Fq)^*=\so_{2n+1}\fq$, and $s'$ be a semisimple element of $\sp_{2m}(\Fq)^*=\so_{2m+1}\fq$. Let $\prll\in\cal{E}(\sp_{2n},s)$ be an irreducible representation of $\sp_{2n}(\Fq)$, and $\pi_{\rho_1,\Lambda_1,\Lambda_1'}\in\cal{E}(\sp_{2m},s')$ be an irreducible representation of $\sp_{2m}\fq$.
Assume that $n\ge m$, and let $\ell=n-m$. Let $P$ be an $F$-stable maximal parabolic subgroup of $\sp_{2n}$ with Levi factor $\GGL_{\ell} \times \sp_{2m}$. Let $s_0$ be a semisimple element of $\GGL_{\ell}\fq$ and let $\tau\in\cal{E}(\GGL_{\ell},s_0)$ be an irreducible cuspidal representation of $\GGL_{\ell}\fq$ which is nontrivial if $\ell=1$. Assume that $s_0$ has no common eigenvalues with $s$ and $s'$. Then we have
\[
\begin{aligned}
&\langle  \prll\otimes \omega_n^{\epsilon_0}, I_{P}^{\sp_{2n}}(\tau\otimes\pi_{\rho_1,\Lambda_1,\Lambda_1'})\rangle _{\sp_{2n}(\Fq)}\\
=&\left\{
\begin{array}{ll}
m_\psi(\pw_{\rho} ,\pw_{\rho_1}),  &\textrm{ if }(\prll,\pi_{\rho_1,\Lambda_1,\Lambda_1'})\textrm{ is $(\psi,\epsilon_0)$-strongly relevant, and there are } \widetilde{\Lambda_1'}\in\{\Lambda_1',\Lambda_1^{\prime t}\}\\
&\textrm{ and }\widetilde{\Lambda'}\in\{\Lambda',\Lambda^{\prime t}\}\textrm{ such that }(\Lambda,\widetilde{\Lambda_1'})\textrm{ and }(\Lambda_1,\widetilde{\Lambda'})\in \mathcal{G};\\
0,&\textrm{ otherwise}.
\end{array}\right.
\end{aligned}
\]
\end{theorem}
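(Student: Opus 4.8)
The plan is to follow the strategy of the cuspidal case in Section~\ref{sec8}. Having reduced via Proposition~\ref{so2} to the inner product on the left-hand side, I would split the argument into a vanishing part and a non-vanishing part, and in each case push the inner product through a chain of see-saw dual pairs and theta liftings so as to reduce, by induction, to Theorem~\ref{8.6}. Write $\pwl$ and $\pll$ for the unipotent parts of $\prll$ on $\ddg$ and $\dddg$, and let $\sigma_\Lambda,\sigma_{\Lambda'}$ be their cuspidal supports (unipotent cuspidal representations of a symplectic and an even orthogonal group respectively). When $\pwl$ and $\pll$ are cuspidal one has $\prll=\pkh$ and the statement is exactly Theorem~\ref{8.6}; in that case the $\mathcal{G}$-conditions hold automatically given strong relevance, by a direct check with the cuspidal symbols~(\ref{cussp}) and~(\ref{cusso}) together with Corollary~\ref{ctheta2} and Corollary~\ref{ctheta3}. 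So I would induct on $\rm{rank}(\Lambda)+\rm{rank}(\Lambda')+\rm{rank}(\Lambda_1)+\rm{rank}(\Lambda_1')$, with Theorem~\ref{8.6} as the base case.

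For the vanishing part, the sub-case where $(\prll,\pi_{\rho_1,\Lambda_1,\Lambda_1'})$ fails to be $(\psi,\epsilon_0)$-strongly relevant is handled exactly as in Proposition~\ref{cusvan}: by \cite[Proposition 5.6]{LW3} and the see-saw this involves only the first occurrence indices of $\sigma_\Lambda,\sigma_{\Lambda_1}$, hence of $\rho,\rho_1$, and reduces to the vanishing statement for $\pw_\rho,\pw_{\rho_1}$. The genuinely new sub-case is: strongly relevant, but no transpose choices $\widetilde{\Lambda_1'}\in\{\Lambda_1',\Lambda_1^{\prime t}\}$, $\widetilde{\Lambda'}\in\{\Lambda',\Lambda^{\prime t}\}$ with $(\Lambda,\widetilde{\Lambda_1'})$, $(\Lambda_1,\widetilde{\Lambda'})\in\mathcal{G}$. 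Here I would lift $\prll$ by a first-occurrence theta lift $\Theta^\epsilon_{n,\cdot}$ to an even orthogonal group and use the see-saw identity to turn the inner product into a Howe-correspondence multiplicity between unipotent parts, and then invoke Theorem~\ref{p} together with Corollary~\ref{ctheta2} and Corollary~\ref{ctheta3}: occurrence in the Weil representation forces $(\Lambda_1,\widetilde{\Lambda'})$, for the transpose dictated by the symbol $\widetilde{\Lambda'}$, to lie in the appropriate $\mathcal{B}^{\epsilon_{\Lambda'}}$, and the passage from the $\mathcal{B}$-conditions to the $\mathcal{G}$-conditions --- purely a matter of recording which transpose is selected and of the defect relations --- shows this is membership in $\mathcal{G}$; the symmetric argument on the $\sp_{2m}$ side gives $(\Lambda,\widetilde{\Lambda_1'})\in\mathcal{G}$. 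If no such choices exist, the inner product must vanish.

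For the non-vanishing part, assuming $(\psi,\epsilon_0)$-strong relevance and the two $\mathcal{G}$-conditions, I would mimic the induction in Proposition~\ref{cusnv}. Picking $\epsilon$ so that the first occurrence index $n^\epsilon$ of $\prll$ in ${\bf O}^\epsilon_{\rm{even}}$ is $\le n$, Corollary~\ref{p3} identifies $\Theta^\epsilon_{n,n^\epsilon}(\prll)$ as an irreducible representation of an even orthogonal group whose unipotent parts have strictly smaller rank, and a see-saw of the shape $\sp_{2n}\times\sp_{2n}\supset\sp_{2n}$ against $\o^\bullet_{2n^\epsilon+1}\supset\o^\epsilon_{2n^\epsilon}\times\o^\bullet_1$, combined with Proposition~\ref{w2}, rewrites the inner product as a Bessel-type multiplicity for $\o^\epsilon_{2n^\epsilon}$; lifting back to $\sp$ and peeling off a parabolic via Corollary~\ref{o4} strictly decreases $\rm{rank}(\Lambda)+\rm{rank}(\Lambda')+\rm{rank}(\Lambda_1)+\rm{rank}(\Lambda_1')$ while preserving the value $m_\psi(\pw_\rho,\pw_{\rho_1})$, so the induction hypothesis applies. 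As in Proposition~\ref{cusnv}, the subtle point is that the reduced pair must again be strongly relevant and satisfy the $\mathcal{G}$-conditions: if not, one runs the same see-saw chain on the transpose-flipped representation ($\pi_{\rho_1,\Lambda_1,\Lambda_1^{\prime t}}$, respectively the flip on the $\sp_{2n}$ side) and obtains a nonzero multiplicity for a pair that by Corollary~\ref{strongly relevant} is not strongly relevant, contradicting the vanishing part. Summing the resulting identity over all $(\Lambda_1,\Lambda_1')$ then also yields the $m=n$ branching statement of Theorem~\ref{main2}(i).

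I expect the main obstacle to be the transpose bookkeeping: tracking precisely which of $\{\Lambda',\Lambda^{\prime t}\}$ and $\{\Lambda_1',\Lambda_1^{\prime t}\}$ survives each theta lift, so that the symmetric $\mathcal{B}$-conditions of Pan's theorem become the asymmetric $\mathcal{G}$-conditions, and --- as in Proposition~\ref{cusnv} --- closing the induction by ruling out the ``accidentally relevant but not strongly relevant'' scenario through a contradiction with the vanishing result.
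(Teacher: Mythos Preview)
Your overall architecture matches the paper's: split into a vanishing part (Proposition~\ref{van} and Proposition~\ref{van1}) and a non-vanishing part (Proposition~\ref{fnv}), reduce by see-saw and theta lifts, and induct down to Theorem~\ref{8.6}. The base case and the ``not strongly relevant'' vanishing are handled as you say.

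However, there is a genuine gap in the core inductive step. You assert that a single theta lift places $(\Lambda_1,\widetilde{\Lambda'})$ in $\mathcal{B}^{\epsilon_{\Lambda'}}$ and that ``the passage from the $\mathcal{B}$-conditions to the $\mathcal{G}$-conditions is purely a matter of recording which transpose is selected and of the defect relations.'' This is not correct: the $\mathcal{B}^\pm$-conditions are of the form ${}^t(\Upsilon(\cdot)_*)\preccurlyeq{}^t(\Upsilon(\cdot)^*)$, i.e.\ horizontal-strip conditions on the associated Young diagrams, whereas the $\mathcal{G}$-conditions are $\Upsilon(\cdot)^*\preccurlyeq\Upsilon(\cdot)^*$ directly, i.e.\ vertical-strip conditions. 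These are not equivalent and no transpose choice converts one into the other. The paper does \emph{not} obtain the $\mathcal{G}$-conditions from a single Howe-correspondence constraint; instead it uses the explicit first-occurrence computation of Proposition~\ref{f1} (which you do not invoke) to show that nonzero multiplicity forces the inequalities $\lambda_1\le\lambda_1'\le\lambda_1+1$ and $\mu_1'\le\mu_1\le\mu_1'+1$ on the \emph{first parts} of the bi-partitions --- exactly one row of the $\preccurlyeq$ condition --- by comparing first-occurrence indices on the two sides of successive see-saws (equations (\ref{eq7.1})--(\ref{eq7.5})). The induction then replaces $\Lambda$ by the symbol $\Gamma$ obtained by deleting $\lambda_1$ and $\mu_1$ (and similarly for $\Lambda_1'$), which is why the induction parameter is $r=\sum|\Upsilon(\cdot)^*|+|\Upsilon(\cdot)_*|$ rather than anything detected by a single theta lift.

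A second, related gap is in the non-vanishing direction: you treat $\Theta^\epsilon_{n^\epsilon,n}\big(\Theta^\epsilon_{n,n^\epsilon}(\prll)\big)$ as if it returned $\prll$. By Proposition~\ref{f1} it is $\prll+\bigoplus_{\Lambda''}\pi_{\rho,\Lambda'',\Lambda'}$ with $(\Upsilon(\Lambda'')^*)_1>\lambda_1$, and the proof of Proposition~\ref{fnv} must show each extra term contributes zero --- which it does precisely by checking that $(\Lambda'',\widetilde{\Lambda_1'})\notin\mathcal{G}$ (since $(\Upsilon(\Lambda'')^*)_1>\lambda_1'$) and applying the already-proved vanishing part. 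Without this, the see-saw inequality does not become an equality and the induction does not close.
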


In particular, for unipotent representation $\pl$ of $\sp_{2n}\fq$ and $\theta$-epresentation $\pi_{-,\Lambda'}$ of $\sp_{2m}\fq$, we have

\begin{corollary}\label{7.1}
Let $n\ge m$. Let $\pl$ be an irreducible unipotent representation of $\sp_{2n}\fq$, and let $\pi_{-,\Lambda'}$ be an irreducible $\theta$-epresentation of $\sp_{2m}\fq$. Then
\[
m_\psi(\pl,\pi_{-,\Lambda'})=\left\{
\begin{array}{ll}
1, &\textrm{if }(\pl,\pi_{-,\Lambda'})\textrm{ is $(\psi,\ee)$-strongly relevant, and there is } \widetilde{\Lambda'}\in\{\Lambda',\Lambda^{\prime t}\}\textrm{ such}\\
&\textrm{that }(\Lambda,\widetilde{\Lambda'})\in \mathcal{G};\\
0, & \textrm{otherwise,}
\end{array}\right.
\]
\end{corollary}

Similarly, we have the same result for Bessel case.
\begin{theorem}\label{bess}
 Let $s$ be a semisimple element of $(\rm{O}^\epsilon_{2n+1}(\Fq)^*)^0$, and $s'$ be a semisimple element of $(\rm{O}^{\epsilon'}_{2m}(\Fq)^*)^0$. Let  $\pi_{\rho,\Omega,\Omega'}\in\cal{E}(\rm{O}^{\epsilon'}_{2m},s')$, and $\pi_{\rho_1,\Omega_1,\Omega_1',\epsilon''}\in\cal{E}(\rm{O}^\epsilon_{2n+1},s)$.

 (i) Assume that $n\ge m$. Let $P$ be an $F$-stable maximal parabolic subgroup of $\rm{O}^\e_{2(n+1)}$ with Levi factor $\GGL_{n-m+1} \times \rm{O}^{\epsilon'}_{2m}$. Let $s_0$ be a semisimple element of $\GGL_{n-m+1}\fq$ and let $\tau\in\cal{E}(\GGL_{n-m+1},s_0)$ be an irreducible cuspidal representation of $\GGL_{n-m+1}\fq$ which is nontrivial  if $n-m+1=1$. Assume that $s_0$ has no common eigenvalues with $s$ and $s'$. Then we have
\[
\begin{aligned}
&\langle  \pi_{\rho_1,\Omega_1,\Omega_1',\epsilon''},I^{\o^\e_{2(n+1)}}_P(\tau\otimes\pi_{\rho,\Omega,\Omega'})\rangle _{\rm{O}^{\epsilon}_{2n+1}(\Fq)}\\
=&\left\{
\begin{array}{ll}
m_\psi(\pw_{\rho} ,\pw_{\rho_1}),  &\textrm{ if }(\pi_{\rho,\Omega,\Omega',\epsilon''},\pi_{\rho_1,\Omega_1,\Omega_1'})\textrm{ is strongly relevant, and there are } \widetilde{\Omega}\in\{\Omega,\Omega^{ t}\}\\
&\textrm{ and } \widetilde{\Omega'}\in\{\Omega',\Omega^{\prime t}\}\textrm{ such that }(\Omega_1,\widetilde{\Omega})\textrm{ and }(\Omega_1',\widetilde{\Omega'})\in \mathcal{G};\\
0,&\textrm{ otherwise}.
\end{array}\right.
\end{aligned}
\]
where $m_\psi(\pw_{\rho} ,\pw_{\rho_1})$ is given in Theorem \ref{9.1} and $m_\psi(\pw_{\rho} ,\pw_{\rho_1})$ does not depend on $\psi$.

 (ii) Assume that $n< m$. Let $P$ be an $F$-stable maximal parabolic subgroup of $\rm{O}^\epsilon_{2m+1}$ with Levi factor $\GGL_{m-n} \times \rm{O}^{\epsilon}_{2n+1}$. Let $s_0$ be a semisimple element of $\GGL_{m-n}\fq$ and let $\tau\in\cal{E}(\GGL_{m-n},s_0)$ be an irreducible cuspidal representation of $\GGL_{m-n}\fq$ which is nontrivial  if $m-n=1$. Assume that $s_0$ has no common eigenvalues with $s$ and $s'$. Then we have
\[
\begin{aligned}
&\langle I^{\o^\epsilon_{2m+1}}_P( \tau\otimes\pi_{\rho_1,\Omega_1,\Omega'_1,\epsilon''}),\pi_{\rho,\Omega,\Omega'}\rangle _{\rm{O}^{\epsilon'}_{2m}(\Fq)}\\
=&\left\{
\begin{array}{ll}
m_\psi(\pw_{\rho} ,\pw_{\rho_1}),  &\textrm{ if }(\pi_{\rho,\Omega,\Omega',\epsilon''},\pi_{\rho_1,\Omega_1,\Omega_1'})\textrm{ is strongly relevant, and there are } \widetilde{\Omega}\in\{\Omega,\Omega^{ t}\}\\
&\textrm{ and } \widetilde{\Omega'}\in\{\Omega',\Omega^{\prime t}\}\textrm{ such that }(\Omega_1,\widetilde{\Omega})\textrm{ and }(\Omega_1',\widetilde{\Omega'})\in \mathcal{G};\\
0,&\textrm{ otherwise}.
\end{array}\right.
\end{aligned}
\]
where $m_\psi(\pw_{\rho} ,\pw_{\rho_1})$ is given in Theorem \ref{9.1} and $m_\psi(\pw_{\rho} ,\pw_{\rho_1})$ does not depend on $\psi$.

\end{theorem}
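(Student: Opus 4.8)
I would deduce this from Theorem~\ref{9.1} in exactly the way Proposition~\ref{7bp} is deduced from Theorem~\ref{8.6}: transport the Bessel multiplicity to a Fourier--Jacobi multiplicity for a pair of symplectic groups by a see-saw computation, and then quote Theorem~\ref{9.1}. I would write out (i) and merely note that (ii), with the two groups exchanged, is identical. The first step is a normalisation: twisting both representations by the sign character changes neither side --- by Proposition~\ref{q1} it replaces the relevant symbols by their transposes, while ``strongly relevant'' and ``membership in $\mathcal{G}$'' are insensitive to transposition --- so, using Proposition~\ref{cus2}(ii) together with the conservation relation of \cite[Theorem 12.3]{P1}, I may assume the first occurrence index $n_1$ of $\pi_{\rho_1,\Omega_1,\Omega_1',\epsilon''}$ in the Witt tower $\mathbf{Sp}$ is the larger of its two possible values, so that $\Theta^{\epsilon}_{n,n_1}(\pi_{\rho_1,\Omega_1,\Omega_1',\epsilon''})$ is an irreducible representation of $\sp_{2n_1}\fq$ with symbol data determined by $(\Omega_1,\Omega_1')$ through Proposition~\ref{cus2}(ii).

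\textbf{The see-saw step.} Next I would push the parabolic induction $I^{\o^{\e}_{2(n+1)}}_P(\tau\otimes\pi_{\rho,\Omega,\Omega'})$ through the theta correspondence on the orthogonal leg by Proposition~\ref{w2} (or Proposition~\ref{w1} when it is irreducible), possibly after a further rank reduction by Corollary~\ref{o4}, and apply the see-saw identity for the diagram displayed in the proof of Proposition~\ref{7bp}. This rewrites the left-hand side as
\[
\langle \Theta^{\epsilon}_{n,n_1}(\pi_{\rho_1,\Omega_1,\Omega_1',\epsilon''})\otimes\omega^{\ee\cdot\e\cdot\epsilon}_{n_1},\ I^{\sp_{2n_1}}(\tau'\otimes\Theta(\pi_{\rho,\Omega,\Omega'}))\rangle_{\sp_{2n_1}\fq},
\]
where $\tau'$ is the twisted cuspidal datum produced by Propositions~\ref{w1}--\ref{w2} and $\Theta(\pi_{\rho,\Omega,\Omega'})$ is the irreducible first-occurrence symplectic theta lift of $\pi_{\rho,\Omega,\Omega'}$; this right-hand side is precisely of the form computed in Theorem~\ref{9.1}.

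\textbf{Matching the hypotheses.} It then remains to verify that the two conditions of Theorem~\ref{9.1} for the lifted symplectic pair are equivalent to the two conditions in the statement for the original Bessel pair. Strong relevance transfers via Corollary~\ref{strongly relevant1} and the comparison of Definitions~\ref{strongly relevant0} and~\ref{strongly relevant2}, again using the conservation relation to line up first occurrence indices across towers. For the symbol condition, Theorems~\ref{p1} and~\ref{p2} identify, up to the sign ambiguity of Remark~\ref{sgn}, the symbols of $\Theta^{\epsilon}_{n,n_1}(\pi_{\rho_1,\Omega_1,\Omega_1',\epsilon''})$ with $(\Omega_1,\Omega_1')$ and those of $\Theta(\pi_{\rho,\Omega,\Omega'})$ with $(\Omega,\Omega')$; then Corollaries~\ref{ctheta2} and~\ref{ctheta3}, together with the defect relations built into $\mathcal{G}$ and into the sets $\mathcal{B}^{\pm}$, turn the $\mathcal{B}^{\pm}$-memberships imposed by Theorem~\ref{p} inside the see-saw into the assertion that there are $\widetilde{\Omega}\in\{\Omega,\Omega^{t}\}$ and $\widetilde{\Omega'}\in\{\Omega',\Omega^{\prime t}\}$ with $(\Omega_1,\widetilde{\Omega}),(\Omega_1',\widetilde{\Omega'})\in\mathcal{G}$. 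For the vanishing clause I would argue as in Proposition~\ref{cusvan}: if the pair is not strongly relevant, quote \cite[Proposition 5.6]{LW3}; if it is strongly relevant but the symbol condition fails, the equivalence just established together with the vanishing clause of Theorem~\ref{9.1} forces the multiplicity to be $0$. Independence of $\psi$ is inherited from Theorem~\ref{9.1}, and when $m=n$ the branching law follows from Frobenius reciprocity and the multiplicity formula exactly as in Theorem~\ref{9.1}.

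\textbf{The main obstacle.} The delicate point is the combinatorial bookkeeping in the matching step: I must follow how the pair of symbols, together with the auxiliary signs $\epsilon''$ and the choices $\widetilde{\Omega},\widetilde{\Omega'}$, evolve under the iterated theta lifts and parabolic inductions, and confirm that membership in $\mathcal{G}$ is exactly the combinatorial shadow of the $\mathcal{B}^{\pm}$-conditions governing non-vanishing of the theta lifts in the see-saw. The two-fold freedom $\widetilde{\Omega}\in\{\Omega,\Omega^{t}\}$ is genuinely needed here: it absorbs simultaneously the sign indeterminacy of the modified Lusztig correspondence (Remark~\ref{sgn}) and the first-occurrence/conservation dichotomy of Proposition~\ref{cus2}, under which passing to the sign twist interchanges a symbol with its transpose.
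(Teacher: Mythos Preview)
Your overall strategy is right and matches the paper: reduce the Bessel multiplicity to a Fourier--Jacobi multiplicity via a see-saw and then invoke Theorem~\ref{9.1}. But the specific implementation you describe has a gap that the paper fills with a different device.

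You appeal to Proposition~\ref{cus2}(ii) to control the first-occurrence theta lift of $\pi_{\rho_1,\Omega_1,\Omega_1',\epsilon''}$ and of $\pi_{\rho,\Omega,\Omega'}$, but that proposition is stated only for representations of type $\pi_{\rho,k,h,\epsilon'}$, i.e.\ those whose second and third Lusztig pieces are \emph{cuspidal}. For general symbols $\Omega,\Omega',\Omega_1,\Omega_1'$ the relevant control is Proposition~\ref{f1} combined with Theorems~\ref{p1}--\ref{p2}, and there the reverse theta lift at the same index is \emph{not} irreducible: it returns the original representation plus a sum of terms $\pi_{\Lambda''}$ whose first partition part is strictly larger. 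Consequently, when you feed Proposition~\ref{w2} the first-occurrence symplectic lift $\sigma$ of $\pi_{\rho,\Omega,\Omega'}$, the theta lift back is $\pi_{\rho,\Omega,\Omega'}$ \emph{plus} extra terms, and your displayed see-saw identity is only an inequality (or an equality with uncontrolled additional Bessel multiplicities). Nothing in your outline explains why those extra contributions vanish, and they need not: a symbol with larger first part than $\Omega$ can still pair with $\Omega_1$ inside $\mathcal{G}$.

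The paper (Proposition~\ref{bv} and equation~(\ref{bv1})) resolves this not by lifting the orthogonal representations, but by going the other way: it fixes an integer $M>n+m$ and \emph{constructs} symplectic representations $\pi_{\rho,\Lambda,\Lambda'}$, $\pi_{\rho_1,\Lambda_1,\Lambda_1'}$ by augmenting the relevant partitions to $[M,\lambda]$ and $[M,\lambda']$, arranged so that the given orthogonal representations are exactly their first-occurrence theta lifts. The see-saw then yields the Bessel multiplicity on one side and, on the other, the FJ multiplicity for $(\pi_{\rho,\Lambda,\Lambda'},\pi_{\rho_1,\Lambda_1,\Lambda_1'})$ together with the extra terms $\pi_{\rho,\Lambda'',\Lambda'}$ satisfying $(\Upsilon(\Lambda'')^*)_1>M$. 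Since $M$ dominates every part of $\widetilde{\Lambda_1'}$, these $\Lambda''$ fail the $\mathcal{G}^{\mathrm{even},+}$ inequality $\Upsilon(\Lambda'')^*\preccurlyeq\Upsilon(\widetilde{\Lambda_1'})^*$ for purely combinatorial reasons, and Proposition~\ref{van1} annihilates them. This produces the exact equality~(\ref{bv1}), after which both the vanishing and non-vanishing clauses follow from Theorem~\ref{9.1} (Propositions~\ref{van1} and~\ref{fnv}). The large-$M$ augmentation is the missing ingredient in your sketch.
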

The rest of this section is devoted to the proof of Theorem \ref{9.1} and Theorem \ref{bess}, which will be divided into two parts.
\subsection{Vanishing result}
As before, we establish the cases where the multiplicity in Theorem \ref{9.1} and Theorem \ref{bess} vanishes.

\begin{proposition}\label{van}
Keep the assumptions in Proposition \ref{9.1}. If $(\pi_{\rho_{},\Lambda_{},\Lambda_{}'},\pi_{\rho_1,\Lambda_1,\Lambda_1'})$ is not $(\psi,\epsilon_0)$-strongly relevant, then we have
\[
\langle  \pi_{\rho_{},\Lambda_{},\Lambda_{}'}\otimes \omega_n^{\epsilon_0}, I_{P}^{\sp_{2n}}(\tau\otimes\pi_{\rho_1,\Lambda_1,\Lambda_1'})\rangle _{\sp_{2n}(\Fq)}=0.
\]
\end{proposition}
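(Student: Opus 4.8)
The plan is to reduce Proposition \ref{van} to its cuspidal counterpart Proposition \ref{cusvan} by passing to Harish--Chandra supports, in the same spirit in which the proof of Proposition \ref{cusvan} reduces the basic case to cuspidal data via the theta correspondence. The first observation is that $(\psi,\epsilon_0)$-strong relevance of $(\prll,\pi_{\rho_1,\Lambda_1,\Lambda_1'})$ depends only on the cuspidal supports of the two representations: by Proposition \ref{q1}(iv) the defects of $\Lambda,\Lambda'$ are preserved under parabolic induction, so if $\prll\in\cal{E}(\sp_{2n},\sigma)$ and $\pi_{\rho_1,\Lambda_1,\Lambda_1'}\in\cal{E}(\sp_{2m},\sigma')$ then $\sigma,\sigma'$ are cuspidal representations whose associated invariants $k,h$ (resp.\ $k',h'$) coincide with those determined by $\rm{def}(\Lambda),\rm{def}(\Lambda')$ (resp.\ $\rm{def}(\Lambda_1),\rm{def}(\Lambda_1')$), so that by Definition \ref{strongly relevant0}(ii) the pair $(\prll,\pi_{\rho_1,\Lambda_1,\Lambda_1'})$ is $(\psi,\epsilon_0)$-strongly relevant if and only if $(\sigma,\sigma')$ is. In particular the hypothesis of Proposition \ref{van} says exactly that $(\sigma,\sigma')$ is not $(\psi,\epsilon_0)$-strongly relevant.

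I would then pick irreducible constituents $\prll\subset I^{\sp_{2n}}_{\GGL\times\sp_{2n_0}}(\xi\otimes\sigma)$ and $\pi_{\rho_1,\Lambda_1,\Lambda_1'}\subset I^{\sp_{2m}}_{\GGL\times\sp_{2m_0}}(\xi_1\otimes\sigma')$ with $\sigma,\sigma'$ cuspidal, and bound
\[
\langle\prll\otimes\omega_n^{\epsilon_0},I_P^{\sp_{2n}}(\tau\otimes\pi_{\rho_1,\Lambda_1,\Lambda_1'})\rangle\le\langle I^{\sp_{2n}}(\xi\otimes\sigma)\otimes\omega_n^{\epsilon_0},I_P^{\sp_{2n}}(\tau\otimes I^{\sp_{2m}}(\xi_1\otimes\sigma'))\rangle.
\]
Reducing to the case $\epsilon_0=\ee$ via $\omega_{n,\psi}^{\epsilon}\cong\omega_{n,\psi'}^{-\epsilon}$, and using Proposition \ref{so2} to pass between such pairings and Fourier--Jacobi multiplicities, I would run the see-saw argument from the inductive step of the proof of Proposition \ref{cusnv} together with Proposition \ref{w1} and Proposition \ref{w2}, which carry the general linear factors $\xi,\xi_1,\tau$ across the theta correspondence. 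This rewrites the right-hand side as a finite sum of multiplicities of precisely the shape treated in Proposition \ref{cusvan}, attached to the cuspidal pair $(\sigma,\sigma')$ but with an enlarged general linear inducing datum; since $(\sigma,\sigma')$ is not $(\psi,\epsilon_0)$-strongly relevant, every term vanishes by Proposition \ref{cusvan}, whence the left-hand side vanishes. Equivalently one may package this as a direct application of \cite[Proposition 5.6]{LW3} and Proposition \ref{cus1}: by Proposition \ref{o3} the first occurrence index of $\prll$ and the Harish--Chandra series containing its theta lift are controlled by $\sigma$, and the failure of strong relevance is exactly the numerical mismatch of first occurrence indices which \cite[Proposition 5.6]{LW3} shows forces vanishing.

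The step I expect to cause the most trouble is the bookkeeping around the eigenvalue hypotheses: Proposition \ref{w2} requires the general linear parameter to have no eigenvalues $\pm1$ and no eigenvalue in common with the remaining semisimple data, whereas the Harish--Chandra inducing characters $\xi,\xi_1$ attached to an arbitrary $\prll$ need not satisfy this. I would handle it by separating the support of $\xi$ (and $\xi_1$) into a \emph{generic} part, to which Proposition \ref{w1} and Proposition \ref{w2} apply verbatim, and a remaining part whose theta lift is trivial for the purposes of the first occurrence analysis --- exactly as the non-cuspidal case is reduced to the cuspidal one in the proof of Proposition \ref{cus1}. Once this is arranged, no new combinatorics is needed: the numerical input is identical to that of Proposition \ref{cusvan}.
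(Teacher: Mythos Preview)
Your alternative packaging at the end of the second paragraph is exactly the paper's proof: the paper gives the one-line argument ``It follows immediately from \cite[Proposition 5.6]{LW3}, Proposition \ref{cus1} and the standard arguments of theta correspondence and see-saw dual pairs,'' word-for-word identical to the proof of Proposition \ref{cusvan}. In particular, the paper does \emph{not} reduce Proposition \ref{van} to Proposition \ref{cusvan}; both are proved in parallel by the same direct appeal to \cite[Proposition 5.6]{LW3}, since (as you correctly note) Proposition \ref{o3} pins the first occurrence index of $\prll$ to that of its cuspidal support $\sigma$, and Definition \ref{strongly relevant0}(ii) makes $(\psi,\epsilon_0)$-strong relevance a property of $(\sigma,\sigma')$ alone.

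Your primary route --- embedding $\prll$ and $\pi_{\rho_1,\Lambda_1,\Lambda_1'}$ into full parabolic inductions from $\sigma,\sigma'$ and bounding above --- is therefore unnecessary, and the difficulty you flag is real: the general linear inducing data $\xi,\xi_1$ attached to an arbitrary Harish--Chandra support can have eigenvalues $\pm 1$ or collide with $s,s'$, so Proposition \ref{w2} does not apply cleanly and the ``separate into generic and non-generic parts'' fix is not obviously compatible with the see-saw step. Dropping that detour and going straight to \cite[Proposition 5.6]{LW3} plus Proposition \ref{cus1}, as you yourself suggest, avoids the issue entirely.
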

\begin{proof}
Assume that for any $\Lambda_0'\in\{\Lambda_{}',\Lambda_1'\}$, either $\rm{def}(\Lambda_0')\ne 0$ or $\Lambda_0'=\begin{pmatrix}
-\\
-
\end{pmatrix}$. In this case, the proposition follows immediately from \cite[Proposition 5.6]{LW3}, Proposition \ref{cus1} and the standard arguments of theta correspondence and see-saw dual pairs.

Assume that there is $\Lambda_0'\in\{\Lambda_{}',\Lambda_1'\}$ such that $\rm{def}(\Lambda_0')= 0$ and $\Lambda_0'\ne \begin{pmatrix}
-\\
-
\end{pmatrix}$. If the first occurrence index conditions of $(\psi,\epsilon_0)$-strongly relevant are not satisfied, then with the same see-saw argument in the proof of \cite[Proposition 5.6]{LW3} or in the see-saw example in P.6, the  multiplicity is 0. If the first occurrence index conditions hold, then $(\pi_{\rho_{},\Psi_{},\Psi_{}'},\pi_{\rho_1,\Psi_1,\Psi_1'})$ is not $(\psi,\epsilon_0)$-strongly relevant. By the same see-saw argument in the proof of Proposition \ref{cusnv}, we have
\[
\langle  \pi_{\rho_{},\Lambda_{},\Lambda_{}'}\otimes \omega_n^{\epsilon_0}, I_{P}^{\sp_{2n}}(\tau\otimes\pi_{\rho_1,\Lambda_1,\Lambda_1'})\rangle _{\sp_{2n}(\Fq)}\le\langle  \pi_{\rho_{},\Psi_{},\Psi_{}'}\otimes \omega_{n^{*}}^{\epsilon_0}, I_{P'}^{\sp_{2n^\star}}(\tau\otimes\pi_{\rho_1,\Psi_1,\Psi_1'})\rangle _{\sp_{2n^\star}(\Fq)}
\]
where $n^\star$, $\pi_{\rho_{},\Psi_{},\Psi_{}'}$ and $\pi_{\rho_1,\Psi_1,\Psi_1'}$ are defined in Definition \ref{strongly relevant0}. Note that $|\Upsilon(\Psi_1')^*|+|\Upsilon(\Psi_1')_*|<|\Upsilon(\Lambda_1')^*|+|\Upsilon(\Lambda_1')_*|$. Then we complete the proof by induction on $|\Upsilon(\Lambda_1')^*|+|\Upsilon(\Lambda_1')_*|$.

\end{proof}

\subsection{Non-vanishing result}

To prove the non-vanishing result we first need to know the theta correspondence of irreducible representations in the first occurrence index.

For a partition $\lambda=[\lambda_1,\lambda_2,\cdots,\lambda_k]$, we will denote by $\lambda^2$ the parition $[\lambda_2,\cdots,\lambda_k]$.
\begin{proposition}\label{f1}
Let $(G,G')=(\sp_{2n},\o^\epsilon_{2n'})$. Let $\Lambda\in\cal{S}_n$, and let $\Upsilon(\Lambda)=\begin{bmatrix}
\lambda\\
\mu
\end{bmatrix}$.
Let $\Lambda'\in\cal{S}^\epsilon_{n'}$, and let $\Upsilon(\Lambda')=\begin{bmatrix}
\mu'\\
\lambda'
\end{bmatrix}$.
Let $\pl$ be an irreducible unipotent representation of $\sp_{2n}\fq$, and let $\pll$ be an irreducible unipotent representation of $\o^\epsilon_{2n'}\fq$.

(i) Assume that $\epsilon=+$ and $n'$ is the first occurrence index of $\pl$. Then $n'=n-\lambda_1-\frac{\rm{def}(\Lambda)-1}{2}$ and $\Theta_{n,n'}^+(\pl)=\pll$ where $\Upsilon(\Lambda')=\begin{bmatrix}
\mu\\
\lambda^2
\end{bmatrix}$ and $\rm{def}(\Lambda')=-\rm{def}(\Lambda)+1$. Moreover, if $\Lambda''\ne \Lambda$ and $\pi_{\Lambda''}$ occurs in $\Theta_{n',n}^+\left(\Theta_{n,n'}^+(\pl)\right)$, then $(\Upsilon(\Lambda'')^*)_1>\lambda_1.$

(ii) Assume that $\epsilon=-$ and $n'$ is the first occurrence index of $\pl$. Then  $n'=n-\mu_1-\frac{-\rm{def}(\Lambda)-1}{2}$ and $\Theta_{n,n'}^-(\pl)=\pll$ where $\Upsilon(\Lambda')=\begin{bmatrix}
\mu^2\\
\lambda
\end{bmatrix}$ and $\rm{def}(\Lambda')=-\rm{def}(\Lambda)-1$. Moreover, if $\Lambda''\ne \Lambda$ and $\pi_{\Lambda''}$ occurs in $\Theta_{n',n}^-\left(\Theta_{n,n'}^-(\pl)\right))$, then $(\Upsilon(\Lambda'')_*)_1>\mu_1.$

(iii)  Assume that $\epsilon=+$ and $n$ is the first occurrence index of $\pll$. Then $n=n'-\mu_1'-\frac{\rm{def}(\Lambda)}{2}$ and $\Theta_{n',n}^+(\pll)=\pl$ where $\Upsilon(\Lambda)=\begin{bmatrix}
\lambda'\\
\mu^{\prime 2}
\end{bmatrix}$ and $\rm{def}(\Lambda'')=-\rm{def}(\Lambda')+1$. Moreover, if $\Lambda''\ne \Lambda$ and $\pi_{\Lambda''}$ occurs in $\Theta_{n,n'}^+\left(\Theta_{n',n}^+(\pl)\right)$, then $(\Upsilon(\Lambda'')^*)_1>\mu'_1.$

(iv) Assume that $\epsilon=-$ and $n$ is the first occurrence index of $\pll$. Then $n=n'-\lambda_1'+\frac{\rm{def}(\Lambda)}{2}$ and $\Theta_{n',n}^-(\pll)=\pl$ where $\Upsilon(\Lambda)=\begin{bmatrix}
\lambda^{\prime 2}\\
\mu'
\end{bmatrix}$ and $\rm{def}(\Lambda'')=-\rm{def}(\Lambda')-1$.  Moreover, if $\Lambda''\ne \Lambda$ and $\pi_{\Lambda''}$ occurs in $\Theta_{n,n'}^-\left(\Theta_{n',n}^-(\pl)\right)$, then $(\Upsilon(\Lambda'')_*)_1>\lambda'_1.$

\end{proposition}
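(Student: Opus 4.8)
The plan is to reduce everything to the combinatorial description of the unipotent Howe correspondence furnished by Theorem~\ref{p} (together with the multiplicity-freeness of that correspondence, due to Pan~\cite{P3}), and then to solve an extremal problem on symbols. Fix $\pi_\Lambda\in\cal{E}(\sp_{2n},1)$ with $\Upsilon(\Lambda)=\begin{bmatrix}\lambda\\\mu\end{bmatrix}$. By Theorem~\ref{p}, the full theta lift $\Theta^+_{n,n'}(\pi_\Lambda)$ is the (multiplicity-free) sum of the $\pi_{\Lambda'}$ with $\Lambda'\in\cal{S}^+_{n'}$ and $(\Lambda,\Lambda')\in\mathcal{B}^+_{n,n'}$; consequently the first occurrence index is the least $n'$ for which such a $\Lambda'$ exists, and the lift there is irreducible once we check that the minimizing $\Lambda'$ is unique.

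The core step is to unwind the relation $\preccurlyeq$ occurring in the definition of $\mathcal{B}^+_{n,n'}$. Writing $\Upsilon(\Lambda')=\begin{bmatrix}\alpha\\\beta\end{bmatrix}$, the conditions ${}^t(\Upsilon(\Lambda')_*)\preccurlyeq{}^t(\Upsilon(\Lambda)^*)$ and ${}^t(\Upsilon(\Lambda)_*)\preccurlyeq{}^t(\Upsilon(\Lambda')^*)$ say exactly that $\beta$ is obtained from $\lambda$ by deleting at most one box from each column, and that $\alpha$ is obtained from $\mu$ by adjoining at most one box to each column, while $\rm{def}(\Lambda')=-\rm{def}(\Lambda)+1$ is forced. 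Since $\rm{rank}(\Lambda')=|\alpha|+|\beta|+c(\rm{def}(\Lambda'))$ with $c$ the additive constant displayed in the bijection $\Upsilon$ of Subsection~\ref{sec4.2}, and the two constraints are independent, minimizing $n'$ amounts to minimizing $|\alpha|$ and $|\beta|$ separately. This is achieved \emph{uniquely} by $\alpha=\mu$ (adjoin no box) and $\beta=\lambda^2$ (delete one box from every nonempty column of $\lambda$, i.e.\ remove the first part), giving $\Upsilon(\Lambda')=\begin{bmatrix}\mu\\\lambda^2\end{bmatrix}$. Substituting into the rank formula and comparing the constants $c$ for $\rm{def}(\Lambda)$ and $\rm{def}(\Lambda)-1$ yields $n'=n-\lambda_1-\tfrac{\rm{def}(\Lambda)-1}{2}$; one also checks $\rm{def}(\Lambda')\equiv 0\pmod 4$, so the target does lie in $\mathbf{O}^+_{\mathrm{even}}$, consistent with (i). Part (ii) is the mirror computation obtained by transposing the symbols (this is the $\mathcal{B}^-$ tower, where the forced defect is $-\rm{def}(\Lambda)-1$), and parts (iii)--(iv) are the dual optimization in which one instead minimizes the rank of the symplectic symbol for a fixed orthogonal symbol.

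For the ``Moreover'' assertions I would describe $\Theta^+_{n',n}(\pi_{\Lambda'})$, again by Theorem~\ref{p}, as $\bigoplus_{\Lambda''}\pi_{\Lambda''}$ over all $\Lambda''\in\cal{S}_n$ with $(\Lambda'',\Lambda')\in\mathcal{B}^+_{n,n'}$, one of which is $\Lambda$. Writing $\Upsilon(\Lambda'')=\begin{bmatrix}a\\b\end{bmatrix}$, the constraints force $\rm{def}(\Lambda'')=\rm{def}(\Lambda)$, hence $|a|+|b|=|\lambda|+|\mu|$, together with ${}^tb\preccurlyeq{}^t\mu$ (so $|b|\le|\mu|$, whence $|a|\ge|\lambda|$) and ${}^t(\lambda^2)\preccurlyeq{}^ta$. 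If moreover $(\Upsilon(\Lambda'')^*)_1=a_1\le\lambda_1$, then $({}^ta)_j=0$ for $j>\lambda_1$ while $({}^t\lambda)_j-1\le({}^ta)_j\le({}^t\lambda)_j$ for $j\le\lambda_1$, forcing $|a|\le|\lambda|$; combined with $|a|\ge|\lambda|$ this gives $a=\lambda$ and then $b=\mu$, i.e.\ $\Lambda''=\Lambda$. Thus every other constituent has $(\Upsilon(\Lambda'')^*)_1>\lambda_1$, and the analogous bookkeeping (with rows/first rows replaced appropriately) handles (ii)--(iv).

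The main obstacle is purely combinatorial: keeping straight the passage between a partition and its transpose under $\preccurlyeq$, and verifying that the defect-dependent constants in the rank formula assemble to exactly the stated shifts $\lambda_1+\tfrac{\rm{def}(\Lambda)-1}{2}$ and their three variants. Once the dictionary ``${}^t(\cdot)\preccurlyeq{}^t(\cdot)$ $=$ adding/removing at most one box per column'' is in place, each of the four cases is a short extremal argument, and no representation-theoretic input is needed beyond Theorem~\ref{p} and the multiplicity-freeness of the unipotent Howe correspondence.
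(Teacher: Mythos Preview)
Your proposal is correct and follows essentially the same route as the paper: both reduce to Theorem~\ref{p}, translate the conditions in $\mathcal{B}^+_{n,n'}$ into the columnwise inequalities $({}^t\lambda)_i-1\le({}^t\beta)_i\le({}^t\lambda)_i$ and $({}^t\mu)_i\le({}^t\alpha)_i\le({}^t\mu)_i+1$, and then solve the resulting extremal problem via the rank formula. The only noticeable difference is in the ``Moreover'' step: the paper first disposes of the case $(\Upsilon(\Lambda'')^*)_1<\lambda_1$ by invoking the already-established first-occurrence formula for $\pi_{\Lambda''}$ (which would then exceed $n'$, a contradiction), and only afterwards runs the size-counting argument for $(\Upsilon(\Lambda'')^*)_1=\lambda_1$; your argument handles both subcases uniformly via $|a|\le|\lambda|$ under the hypothesis $a_1\le\lambda_1$. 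Both are valid, and yours is marginally cleaner.
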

\begin{proof}
We will only prove the (i). The proof of (ii), (iii) and (iv) is similar and will be left to the reader.
Recall that $\pl\otimes\pll\in \omega_{n,n'}^+$ if and only if
 \[
(\Lambda,\Lambda')\in\mathcal{B}^+_{n,n'}=\left\{(\Lambda,\Lambda')|{}^t\Upsilon(\Lambda')_*\preccurlyeq{}^t\Upsilon(\Lambda)^*,
{}^t\Upsilon(\Lambda)_*\preccurlyeq{}^t\Upsilon(\Lambda')^*,\rm{def}(\Lambda')=-\rm{def}(\Lambda)+1\right\},
\]
which implies that
\begin{equation}\label{eq9.1}
{}^t\lambda_i-1\le{}^t\lambda'_i\le{}^t\lambda_i
\end{equation}
and
\begin{equation}\label{eq9.2}
{}^t\mu'_i-1\le{}^t\mu\le{}^t\mu'_i.
\end{equation}
It follows that
\[
|\lambda|-\lambda_1=|\lambda^2|\le|\lambda'|\le|\lambda|
\]
and
\[
|\mu|\le|\mu'|.
\]
Recall that for every symbol $\Lambda_0$ (c.f. \cite[p.10]{P3} for details):
\[
\rm{rank}(\Lambda_0)=
\left\{
\begin{array}{ll}
|\Upsilon(\Lambda_0)^*|+|\Upsilon(\Lambda_0)_*|+(\frac{\rm{def}(\Lambda_0)-1}{2})(\frac{\rm{def}(\Lambda_0)+1}{2}),&\textrm{ if }\rm{def}(\Lambda_0)\textrm{ is odd;}\\
|\Upsilon(\Lambda_0)^*|+|\Upsilon(\Lambda_0)_*|+(\frac{\rm{def}(\Lambda_0)}{2})^2,&\textrm{ if }\rm{def}(\Lambda_0)\textrm{ is even.}\\
\end{array}
\right.
\]
Hence, there exist $\Lambda'$ such that $(\Lambda,\Lambda')\in\mathcal{B}^+_{n,n'}$ if and only if
\[
n'\ge |\mu|+|\lambda^2|+\left(\frac{\rm{def}(\Lambda')}{2}\right)^2=|\mu|+|\lambda^2|+\left(\frac{\rm{def}(\Lambda)-1}{2}\right)^2=n-\lambda_1-\frac{\rm{def}(\Lambda)-1}{2}.
\]
Moreover, if $n'=n-\lambda_1-\frac{\rm{def}(\Lambda)-1}{2}$, then $|\mu|=|\mu'|$ and $|\lambda^2|=|\lambda'|$. By (\ref{eq9.1}) and (\ref{eq9.2}), we get $
{}^t\lambda_i-1={}^t\lambda'_i\textrm{ and }{}^t\mu={}^t\mu'_i$.
Therefore,
\[
\Theta^{+}_{n,n'}(\pl)=\left\{
\begin{array}{ll}
0&\textrm{ if }n'<n-\lambda_1-\frac{\rm{def}(\Lambda)-1}{2};\\
\pll&\textrm{ if }n'=n-\lambda_1-\frac{\rm{def}(\Lambda)-1}{2}.
\end{array}\right.
\]
where $\Upsilon(\Lambda')=\begin{bmatrix}
\mu\\
\lambda^2
\end{bmatrix}$ and $\rm{def}(\Lambda')=-\rm{def}(\Lambda)+1$.

For any irreducible representation $\pi_{\Lambda''}\in\Theta_{n',n}^+\left(\Theta_{n,n'}^+(\pl)\right)=\Theta_{n',n}^+\left(\pll\right)$, we have $\rm{def}(\Lambda'')=\rm{def}(\Lambda)$ and $|\mu|+|\lambda|=|\Upsilon(\Lambda'')^*|+|\Upsilon(\Lambda'')_*|$. If $(\Upsilon(\Lambda'')^*)_1<\lambda_1$, then the first occurrence index $n''$ of $\pi_{\Lambda''}$ is equal to $n-(\Upsilon(\Lambda'')^*)_1-\frac{\rm{def}(\Lambda'')-1}{2}>n'$, which is impossible.
Suppose that $(\Upsilon(\Lambda'')^*)_1=\lambda_1$. Since $\pi_{\Lambda''}\otimes\pll\in \omega_{n,n'}^+$, we have
\begin{equation}\label{eq9.3}
({}^t\Upsilon(\Lambda'')^*)_i-1\le{}^t\lambda'_i\le({}^t\Upsilon(\Lambda'')^*)_i
\end{equation}
and
\begin{equation}\label{eq9.4}
{}^t\mu'_i-1\le({}^t\Upsilon(\Lambda'')_*)_i\le{}^t\mu'_i.
\end{equation}
Then
\[
|\Upsilon(\Lambda'')^*|-\lambda_1\le|\lambda^2|=|\lambda|-\lambda_1.
\]
and
\[
|\Upsilon(\Lambda'')_*|\le|\mu'|=|\mu|.
\]
Note that
\[
|\Upsilon(\Lambda'')^*|=|\mu|+|\lambda|-|\Upsilon(\Lambda'')_*|\ge|\lambda|.
 \]
Hence, $|\Upsilon(\Lambda'')^*|=|\lambda|$ and $|\Upsilon(\Lambda'')_*|=|\mu|$. By (\ref{eq9.3}) and (\ref{eq9.4}), we get $\Lambda''=\Lambda$.
So if $\Lambda''\ne\Lambda$, then $(\Upsilon(\Lambda'')^*)_1>\lambda_1.$
\end{proof}

\begin{proposition}\label{van1}
Keep the assumptions in Theorem \ref{9.1}. Assume that $(\prll,\pi_{\rho_1,\Lambda_1,\Lambda_1'})$ is $\epsilon_0$-strongly relevant.
If
\[
\langle  \prll\otimes \omega_n^{\epsilon_0}, I_{P}^{\sp_{2n}}(\tau\otimes\pi_{\rho_1,\Lambda_1,\Lambda_1'})\rangle _{\sp_{2n}(\Fq)}\ne0,
\]
then there are $\widetilde{\Lambda_1'}\in\{\Lambda_1',\Lambda_1^{\prime t}\}$ and $\widetilde{\Lambda'}\in\{\Lambda',\Lambda^{\prime t}\}$ such that $(\Lambda,\widetilde{\Lambda_1'})$ and $(\Lambda_1,\widetilde{\Lambda'})\in \mathcal{G}$.

\end{proposition}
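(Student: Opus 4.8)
The plan is to run the same see-saw induction that proves Proposition \ref{cusnv}, but to track the full symbols $\Lambda,\Lambda',\Lambda_1,\Lambda_1'$ through every first-occurrence step rather than only the integers $k,h,k',h'$, using Proposition \ref{f1} as the bookkeeping device. By Proposition \ref{van} we may assume $(\prll,\prllc)$ is $(\psi,\epsilon_0)$-strongly relevant, so that the relevant first-occurrence indices in the towers ${\bf O}^\epsilon_{\rm even}$ and ${\bf O}^{\ee\cdot\epsilon}_{\rm odd}$ are the ones controlled by Proposition \ref{cus1} applied to the cuspidal supports. Starting from
\[
\langle \prll\otimes\omega_n^{\epsilon_0}, I_P^{\sp_{2n}}(\tau\otimes\prllc)\rangle_{\sp_{2n}(\Fq)}\ne 0,
\]
the see-saw diagram
\[
\sp_{2n}\times\sp_{2n}\supset\sp_{2n},\qquad \o^{\ee\cdot\epsilon}_{2n_1+1}\supset\o^\epsilon_{2n_1}\times\o^{\ee}_1
\]
together with Proposition \ref{w2} forces $\Theta^\epsilon_{n,n_1}(\prll)\ne 0$ and $\Theta^{\ee\cdot\epsilon}_{m,m_1}(\prllc)\ne0$ at their first occurrences; since $s_0$ has no common eigenvalues with $s$ and $s'$ the $\dg$-components match automatically, so the $\rho$-parts never enter the symbol bookkeeping.

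Next I would feed these non-vanishings into Theorems \ref{p1} and \ref{p2} and Proposition \ref{f1}: at a first occurrence the theta lift replaces the bi-partition $\Upsilon$ of the moving ($\pm1$-eigenvalue) symbol by the one obtained by peeling the first part off one of its rows and flips the defect according to the even/odd rule, and the inequalities of Theorem \ref{p} become \emph{tight} in the peeled coordinates. Iterating the inner see-saw — passing back down to a symplectic group $\sp_{2n_2}$ via Proposition \ref{cus2} and \ref{f1}, exactly as in the proof of Proposition \ref{cusnv} — reduces the multiplicity to one attached to peeled symbols of strictly smaller $|k|+|k'|+|h|+|h'|$, to which the induction hypothesis applies and yields $\mathcal{G}$-membership for the reduced pair. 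Composing the two peeling relations produced at the two steps (one in an odd Witt tower, one in an even one) and comparing defects then recovers exactly one of the four defining inclusions of $\mathcal{G}$: the sign of $\rm{def}(\Lambda)$ together with $\epsilon_0$ selects $\mathcal{G}^{\rm{even},\pm}$ or $\mathcal{G}^{\rm{odd},\pm}$, while the tightness of the peeled inequalities turns the ${}^t\Upsilon$-interlacing defining $\mathcal{B}^\pm$ into the $\Upsilon$-interlacing $\Upsilon(\Lambda)^*\preccurlyeq\Upsilon(\widetilde{\Lambda_1'})^*$, $\Upsilon(\widetilde{\Lambda_1'})_*\preccurlyeq\Upsilon(\Lambda)_*$ (and the analogous pair in the remaining cases). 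This produces $(\Lambda,\widetilde{\Lambda_1'})\in\mathcal{G}$ for a suitable $\widetilde{\Lambda_1'}\in\{\Lambda_1',\Lambda_1^{\prime t}\}$; running the argument with the two representations interchanged — legitimate since $(\prllc,\prll)$ is $(\psi,\ee\cdot\epsilon_0)$-strongly relevant, and in the boundary case $m=n$ by the symmetry of Proposition \ref{rho} — gives $(\Lambda_1,\widetilde{\Lambda'})\in\mathcal{G}$.

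The main obstacle will be the transpose/sign bookkeeping. Pan's correspondence (Theorems \ref{p1}, \ref{p2}) pins the symbols down only up to the $\rm{sgn}$-twist, i.e. up to replacing a symbol by its transpose, and the inner see-saw with the $\o^{\ee}_1$ factor couples these ambiguities between the two representations; so at each first-occurrence step one must determine which of $\Lambda'$ or $\Lambda^{\prime t}$ (resp. $\Lambda_1'$ or $\Lambda_1^{\prime t}$) the peeled symbol actually continues from — this is exactly what Corollary \ref{ctheta2} and Corollary \ref{ctheta3} are designed to control via $\widetilde{\Lambda}$ and $\widetilde{\Lambda'}$ — and then check that one consistent choice survives to the bottom of the induction. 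Correlatively one must verify that the chain of $\mathcal{B}^\pm$-relations, each of which flips the defect sign and shifts it by $1$, lands on precisely the defect constraint ($\rm{def}(\Lambda)>0$ or $<0$, and $\rm{def}(\Lambda')=\pm\rm{def}(\Lambda)\mp1$) built into the definitions of $\mathcal{G}^{\rm{even},\pm}_{n,m}$ and $\mathcal{G}^{\rm{odd},\pm}_{n,m}$. This parity/sign matching, rather than the interlacing inequalities (which come out directly from the tightness at first occurrence), is the delicate combinatorial core of the proof.
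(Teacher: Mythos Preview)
Your overall architecture — see-saw induction, Proposition~\ref{f1} to track how the moving symbol transforms at each first occurrence, and Corollaries~\ref{ctheta2}--\ref{ctheta3} to handle the transpose ambiguity — matches the paper's proof. But there is a genuine gap in the induction scheme.

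You propose to induct on $|k|+|k'|+|h|+|h'|$, as in Proposition~\ref{cusnv}. In the general setting this quantity does \emph{not} decrease. Each first-occurrence step sends the defect $d$ of the moving symbol to $-d+1$ (Proposition~\ref{f1}), so after the two steps that carry you from $\sp_{2n}$ through an even-orthogonal group and back to a smaller symplectic group the defect returns to its original value; hence the $k$'s and $h$'s (which are determined by the defects of $\Lambda,\Lambda',\Lambda_1,\Lambda_1'$) are unchanged. What \emph{does} strictly decrease is the total bi-partition size
\[
r=\sum_{X\in\{\Lambda,\Lambda',\Lambda_1,\Lambda_1'\}}\bigl(|\Upsilon(X)^*|+|\Upsilon(X)_*|\bigr),
\]
since each first-occurrence step peels the leading part off one row of $\Upsilon$. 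The paper inducts on $r$, and the base case $r=0$ is exactly the situation of Theorem~\ref{8.6} (cuspidal symbols on the $\pm1$-eigenvalue pieces). Your ``strictly smaller $|k|+|k'|+|h|+|h'|$'' is therefore false as stated, and the argument as written does not terminate.

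There is a second, related gap. The $\Upsilon$-interlacing conditions defining $\mathcal{G}$ are not obtained from ``tightness of $\mathcal{B}^\pm$ at first occurrence'' for a single see-saw. A single see-saw (writing $\prll$ as a theta lift and pushing the induced representation through) only yields a \emph{one-sided} bound, e.g.\ $\lambda_1'\ge\lambda_1$, via vanishing of $\Theta^{\ee\cdot\epsilon}_{m,m-(n-n_1)}(\prllc)$ when $\lambda_1'<\lambda_1$. To pin down $\lambda_1\in\{\lambda_1',\lambda_1'-1\}$ you need a \emph{second} see-saw with the roles reversed (write $\prllc$ as a theta lift from $\o^{\ee\cdot\epsilon}_{2m_1'+1}$), which gives the opposite bound $\lambda_1\ge\lambda_1'-1$. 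The paper then repeats this squeeze for $\mu_1,\mu_1'$ with another pair of see-saws, and only after these four inequalities does the induction on $r$ close up to yield $(\Lambda,\widetilde{\Lambda_1'})\in\mathcal{G}^{\mathrm{even},+}$. Your outline collapses these four bounding steps into a vague ``tightness'' claim and so misses the actual mechanism that converts the ${}^t\Upsilon$-interlacing of $\mathcal{B}^\pm$ into the $\Upsilon$-interlacing of $\mathcal{G}$.
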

\begin{proof}

We prove the proposition by induction on
\[
r=|\Upsilon(\Lambda)^*|+|\Upsilon(\Lambda)_*|+|\Upsilon(\Lambda')^*|+|\Upsilon(\Lambda')_*|+|\Upsilon(\Lambda_1)^*|+|\Upsilon(\Lambda_1)_*|+|\Upsilon(\Lambda_1')^*|+|\Upsilon(\Lambda_1')_*|.
 \]
For $r=0$, it is Theorem \ref{8.6}. Assume that this proposition hold for $r<N$, we prove for $r=N$. We only prove $(\Lambda,\widetilde{\Lambda_1'})\in \cal{G}$, and the proof for $(\Lambda',\widetilde{\Lambda_1})\in \mathcal{G}$ is similar. And we only prove for $\epsilon_0=\ee$.

Since $r>0$, there are two cases of symbols as follows:
\begin{itemize}
\item[]
Case (A): $|\Upsilon(\Lambda)^*|+|\Upsilon(\Lambda)_*|+|\Upsilon(\Lambda_1')^*|+|\Upsilon(\Lambda_1')_*|\ne 0$;
\item[]
Case (B): $|\Upsilon(\Lambda)^*|+|\Upsilon(\Lambda)_*|+|\Upsilon(\Lambda_1')^*|+|\Upsilon(\Lambda_1')_*|= 0$ and $|\Upsilon(\Lambda')^*|+|\Upsilon(\Lambda')_*|+|\Upsilon(\Lambda_1)^*|+|\Upsilon(\Lambda_1)_*|\ne 0$.
\end{itemize}
We will only prove the case (A). The proof of the case (B) is similar.
Let $\prll\in\cal{E}(\sp_{2n},\pi_{\rho',k,h})$ and $\pi_{\rho_1,\Lambda_1,\Lambda_1'}\in\cal{E}(\sp_{2m},\pi_{\rho'_1,k_1,h_1})$ where $\pi_{\rho',k,h}$ and $\pi_{\rho'_1,k_1,h_1}$ are two cuspidal representations of $\sp_{2n'}\fq$ and $\sp_{2m'}\fq$, respectively.
Since $(\prll,\pi_{\rho_1,\Lambda_1,\Lambda_1'})$ is $\ee$-strongly relevant, by Corollary \ref{strongly relevant}, there are four possibilities:
\begin{itemize}
\item[]
Case (A.1): $\rm{def}(\Lambda)>0$ and $k=|h_1| $;
\item[]
Case (A.2): $\rm{def}(\Lambda)>0$ and $k=|h_1|-1$;
\item[]
Case (A.3): $\rm{def}(\Lambda)<0$ and $k=|h_1| $;
\item[]
Case (A.4): $\rm{def}(\Lambda)<0$ and $k=|h_1|-1$.
\end{itemize}
We will only prove the case (A.1). The proof of the rest cases is similar and will be left to the reader.

Assume that $\rm{def}(\Lambda_1')\ne 0$ or $\Lambda_1'=\begin{pmatrix}
-\\
-
\end{pmatrix}$. Pick $\epsilon\in\{\pm\}$ such that the first occurrence index of $\pi_{\rho',k,h}$ in the Witt tower ${\bf O}^\epsilon_\rm{even}$ is $n'-k$. Since $\rm{def}(\Lambda)>0$, by Proposition \ref{q1} (iv) and (\ref{cussp}), we can conclude that $k$ is even. Let $\pi_{\rho',k',h'}=\Theta^\epsilon_{n'.n'-k}(\pi_{\rho',k,h})$. By Proposition \ref{cus1}, we know that $k'$ is also even. For any irreducible representation $\pi_{\rho,\Omega,\Omega'}$ of $\o^\epsilon_{2n^*}\fq$, if $\prll\otimes\pi_{\rho,\Omega,\Omega'}$ appears in $\omega^\epsilon_{n,n^*}$, then by Proposition \ref{o3}, we have
\[
\pi_{\rho,\Omega,\Omega'}\in\cal{E}(\o^\epsilon_{2n^*},\pi_{\rho',k',h'}) \textrm{ and }\rm{def}(\Omega)=2k'=0\textrm{ mod }4.
\]
Hence, by Corollary \ref{ctheta2} there is a symbol $\widetilde{\Omega}\in\{\Omega,\Omega^t\}$ such that if $\prll\otimes\pi_{\rho,\Omega,\Omega'}$ appears in $\omega^\epsilon_{n,n^*}$, then $(\Lambda,\widetilde{\Omega})\in \cal{B}^+_{\rm{rk}(\Lambda),\rm{rk}(\widetilde{\Omega})}$ i.e.
\begin{equation}\label{b+1}
\pi_\Lambda\otimes\pi_{\widetilde{\Omega}}\textrm{ appears in } \omega^+_{\rm{rk}(\Lambda),\rm{rk}(\widetilde{\Omega})}.
\end{equation}
With same argument, for any irreducible representation $\pi_{\rho,\Gamma,\Gamma'}$ of $\sp_{2n_*}\fq$, if $\pi_{\rho,\Gamma,\Gamma'}\otimes\pi_{\rho,\Omega,\Omega'}$ appears in $\omega^\epsilon_{n_*,n^*}$, then we have $(\Gamma,\widetilde{\Omega})\in \cal{B}^+_{\rm{rk}(\Gamma),\rm{rk}(\widetilde{\Omega})}$ i.e.
\begin{equation}\label{b++1}
\pi_\Gamma\otimes\pi_{\widetilde{\Omega}} \textrm{ appears in } \omega^+_{\rm{rk}(\Gamma),\rm{rk}(\widetilde{\Omega})}.
\end{equation}

Recall that $k=|h_1|$, then $h_1$ is even.
Since $(\pi_{\rho',k,h},\pi_{\rho'_1,k_1,h_1})$ is $\ee$-strongly relevant and $k=|h_1|$, by Proposition \ref{cus1}, the first occurrence index of $\pi_{\rho_1',k_1,h_1}$ in the Witt tower ${\bf O}^{\ee\cdot\epsilon}_\rm{odd}$ is $m'-|h_1|=m'-k$. Moreover, we have
$
\Theta^{\ee\cdot\epsilon}_{n',m'-k}(\pi_{\rho_1',k_1,h_1})=\pi_{\rho_1',k'_1,h'_1,\epsilon_1}
$
 with $k_1'=|h_1'|-1=k-1$ and $h'_1= k_1$. Since $\pi_{\rho_1,\Lambda_1,\Lambda_1'}\in\cal{E}(\sp_{2m},\pi_{\rho'_1,k_1,h_1})$, by Proposition \ref{o3} (i), for any irreducible representation $\pi_{\rho_1,\Omega_1,\Omega_1',\epsilon_1}$ of $\o^{\ee\cdot\epsilon}_{2m^*+1}\fq$ such that $\pi_{\rho_1,\Lambda_1,\Lambda_1'}\otimes\pi_{\rho_1,\Omega_1,\Omega_1',\epsilon_1}$ appears in $\omega^{\ee\cdot\epsilon}_{m,m^*}$, we conclude that
 \[
 \pi_{\rho_1,\Omega_1,\Omega_1',\epsilon_1}\in \cal{E}(\o^{\ee\cdot\epsilon}_{2m^*+1},\pi_{\rho_1',k'_1,h'_1,\epsilon_1})\textrm{ and }\frac{|\rm{def}(\Omega_1)|-1}{2}=k-1.
 \]
 Since
$\rm{def}(\Omega_1)=1\ (\textrm{mod }4)$ and $k$ is even,
  we get $\rm{def}(\Omega_1)=-2k+1$.
Note that $h_1$ is even, we have
\[
\rm{def}(\Lambda_1')=2h_1=0\ (\textrm{mod }4).
\]
By Corollary \ref{ctheta3}, either $(\Omega_1,\Lambda_1')$ or $(\Omega_1,\Lambda_1^{\prime t})\in \cal{B}^{+}_{\rm{rk}(\Omega_1),\rm{rk}(\Lambda_1')}$. Pick a symbol $\widetilde{\Lambda_1'}\in\{\Lambda_1',\Lambda_1^{\prime t}\}$ with
$
\rm{def}(\widetilde{\Lambda_1'})=2|h_1|=2k>0,
$
 then we have
\begin{equation}\label{b+2}
\pi_{\Omega_1}\otimes\pi_{\widetilde{\Lambda_1'}} \textrm{ appears in } \omega^+_{\rm{rk}(\Omega_1),\rm{rk}(\Lambda_1')}.
\end{equation}
With same argument, for any irreducible representation $\pi_{\rho_1,\Gamma_1,\Gamma_1'}$ of $\sp_{2m_*}\fq$, if $\pi_{\rho_1,\Gamma_1,\Gamma_1'}\otimes\pi_{\rho_1,\Omega_1,\Omega_1'}$ appears in $\omega^{\ee\cdot\epsilon}_{m_*,m^*}$, then we have
\begin{equation}\label{b++2}
\pi_{\Omega_1}\otimes\pi_{\widetilde{\Gamma_1'}} \textrm{ appears in } \omega^+_{\rm{rk}(\Omega_1),\rm{rk}(\widetilde{\Gamma_1'})}
\end{equation}
where $\widetilde{\Gamma_1'}\in\{\Gamma_1',\Gamma_1^{\prime t}\}\in$ such that $\rm{def}(\Gamma_1')=\rm{def}(\Lambda_1')$.

Write $\Upsilon(\Lambda)=\begin{bmatrix}
\lambda\\
\mu
\end{bmatrix}$ and
$\Upsilon(\widetilde{\Lambda_1'})=\begin{bmatrix}
\lambda'\\
\mu'
\end{bmatrix}$. To ease notations we suppress various Levi subgroups from the parabolic induction in the
sequel, which should be clear from the context.

Let $n_1=n-\lambda_1-\frac{\rm{def}(\Lambda)-1}{2}=n-\lambda_1-k$. By Theorem \ref{p1}, Proposition \ref{f1} (i) and (\ref{b+1}), $n_1$ is the first occurrence index of $\prll$ in the Witt tower ${\bf O}^\epsilon_\rm{even}$, and there is an irreducible representation $\pi_{\rho,\Omega,\Omega'}$ of $\o^\epsilon_{2n_1}\fq$ such that $\Upsilon(\widetilde{\Omega})=\begin{bmatrix}
\mu\\
\lambda^2
\end{bmatrix}$ and $\rm{def}(\widetilde{\Omega})=-\rm{def}(\Lambda)+1$,
and
\[
\prll\subset\Theta^\epsilon_{n_1,n}(\pi_{\rho,\Omega,\Omega'})
\]
where $\Omega'\in\{\Lambda',\Lambda^{\prime t}\}$.

Consider the see-saw diagram
\[
\setlength{\unitlength}{0.8cm}
\begin{picture}(20,5)
\thicklines
\put(6.5,4){$\sp_{2n}\times \sp_{2n}$}
\put(7.3,1){$\sp_{2n}$}
\put(12.3,4){$\o^{\ee\cdot\epsilon}_{2n_1+1}$}
\put(11.6,1){$\o^\epsilon_{2n_1}\times \o^{\ee}_1$}
\put(7.7,1.5){\line(0,1){2.1}}
\put(12.8,1.5){\line(0,1){2.1}}
\put(8,1.5){\line(2,1){4.2}}
\put(8,3.7){\line(2,-1){4.2}}
\end{picture}
\]

By Proposition \ref{w2}, one has
\[
\begin{aligned}
\langle \prll\otimes\omega_{n}^{\ee},I^{\sp_{2n}}(\tau\otimes \pi_{\rho_1,\Lambda_1,\Lambda_1'})\rangle_{\sp_{2n}(\Fq)}\le&\langle \Theta^\epsilon_{n_1,n}(\pi_{\rho,\Omega,\Omega'})\otimes\omega_{n}^{\ee},I^{\sp_{2n}}(\tau\otimes \pi_{\rho_1,\Lambda_1,\Lambda_1'})\rangle_{\sp_{2n}(\Fq)},\\
=&\langle \pi_{\rho,\Omega,\Omega'},\Theta^{\ee\cdot\epsilon}_{n,n_1}(I^{\sp_{2n}}(\tau\otimes \pi_{\rho_1,\Lambda_1,\Lambda_1'}))\rangle_{\o^\epsilon_{2n_1}(\Fq)}
\end{aligned}
\]
By Proposition \ref{w1}, every irreducible constituent of $\Theta^{\ee\cdot\epsilon}_{n,n_1}(I^{\sp_{2n}}(\tau\otimes \pi_{\rho_1,\Lambda_1,\Lambda_1'}))$ occurs in
 \[
 I^{\o^{\ee\cdot\epsilon}_{2n_1+1}}\left((\chi\otimes\tau)\otimes \Theta^{\ee\cdot\epsilon}_{m,m-(n-n_1)}(\pi_{\rho_1,\Lambda_1,\Lambda_1'})\right).
  \]
 By Theorem \ref{p2}, Proposition \ref{f1} (iii) and (\ref{b+2}), the first occurrence index of $\pi_{\rho_1,\Lambda_1,\Lambda_1'}$ is
 \[
 m-\lambda_1'-|h_1|=m-\lambda_1'-k.
 \]
  If $\lambda_1'<\lambda_1$, then
\[
\Theta^{\ee\cdot\epsilon}_{m,m-(n-n_1)}(\pi_{\rho_1,\Lambda_1,\Lambda_1'})
=\Theta^{\ee\cdot\epsilon}_{m,m-\lambda_1-k}(\pi_{\rho_1,\Lambda_1,\Lambda_1'})=0,
\]
which implies that
\[
\langle  \prll\otimes \omega_n^{\ee}, I_{P}^{\sp_{2n}}(\tau\otimes\pi_{\rho_1,\Lambda_1,\Lambda_1'})\rangle _{\sp_{2n}(\Fq)}=0.
\]
This contradicts our assumption. So
\begin{equation}\label{eq7.1}
\lambda_1'\ge\lambda_1.
\end{equation}

Let $m_1'=m-\lambda_1'-\frac{\rm{def}(\widetilde{\Lambda_1'})}{2}=m-\lambda_1'-|h_1|$. By Theorem \ref{p2}, Proposition \ref{f1} (iii) and (\ref{b+2}), there is an irreducible representation $\pi_{\rho_1,\Omega_1,\Omega_1',\epsilon_1}$ of $\o^{\ee\cdot\epsilon}_{2m_1'+1}\fq$ where $\Omega_1$ is a symbol such that $\Upsilon(\Omega_1)=\begin{bmatrix}
\mu'\\
\lambda^{\prime 2}
\end{bmatrix}$ and $\rm{def}(\Omega_1)=-\rm{def}(\widetilde{\Lambda_1'})+1$, and
\[
\pi_{\rho_1,\Lambda_1,\Lambda_1'}\subset\Theta^{\ee\cdot\epsilon}_{m_1',m}(\pi_{\rho_1,\Omega_1,\Omega_1',\epsilon_1}).
\]
with $\Omega_1'=\Lambda_1$. Let $n_1'=n-\lambda_1'-|h_1|$.

Now consider the see-saw diagram
\[
\setlength{\unitlength}{0.8cm}
\begin{picture}(20,5)
\thicklines
\put(6.5,4){$\sp_{2n}\times \sp_{2n}$}
\put(7.3,1){$\sp_{2n}$}
\put(12,4){$\o^\epsilon_{2(n_1'+1)}$}
\put(11.3,1){$\o^{\ee\cdot\epsilon}_{2n_1'+1}\times \o^{+}_1$}
\put(7.7,1.5){\line(0,1){2.1}}
\put(12.8,1.5){\line(0,1){2.1}}
\put(8,1.5){\line(2,1){4.2}}
\put(8,3.7){\line(2,-1){4.2}}
\end{picture}
\]
One has
\[
\begin{aligned}
\langle \prll\otimes\omega_{n}^{\ee},I^{\sp_{2n}}(\tau\otimes \prllc)\rangle_{\sp_{2n}(\Fq)}
=\langle  I^{\sp_{2n}}(\tau\otimes \pi_{\rho_1,\Lambda_1,\Lambda_1'})\otimes\omega_{n}^+, \prll \rangle_{\sp_{2n}(\Fq)}.\\
\end{aligned}
\]
For every irreducible constituent $\rho_1\in I^{\sp_{2n}}(\tau\otimes \pi_{\rho_1,\Lambda_1,\Lambda_1'})$, by Proposition \ref{w1}, there is an irreducible representation
\[
\rho_1'\in I^{\o^{\ee\cdot\epsilon}_{2n_1'+1}}\left((\chi\otimes\tau)\otimes \Theta^{\ee\cdot\epsilon}_{m,m_1'}(\pi_{\rho_1,\Omega_1,\Omega_1',\epsilon_1})\right)
 \]
 such that $\rho_1\in \Theta^{\ee\cdot\epsilon}_{n_1',n}(\rho_1')$. Then
\[
\langle  \rho_1\otimes\omega_{n}^+, \prll \rangle_{\sp_{2n}(\Fq)}
\le\langle  \Theta^{\ee\cdot\epsilon}_{n_1',n}(\rho_1')\otimes\omega_{n}^+, \prll \rangle_{\sp_{2n}(\Fq)}
=\langle  \rho_1', \Theta^\epsilon_{n,n_1'+1}(\prll)\rangle_{\o^{\ee\cdot\epsilon}_{2n_1'+1}(\Fq)}.
\]
Recall that the first occurrence index of $\prll$ is $n-\lambda_1-k$. If $\lambda_1<\lambda_1'-1$, then
\[
\Theta^\epsilon_{n,n_1+1}(\prll)
=\Theta^\epsilon_{n,n-(\lambda_1'-1)-k}(\prll)=0.
\]
which implies that
\[
\langle  \prll\otimes \omega_n^{\ee}, I_{P}^{\sp_{2n}}(\tau\otimes\pi_{\rho_1,\Lambda_1,\Lambda_1'})\rangle _{\sp_{2n}(\Fq)}=0.
\]
This contradicts our assumption. So
\begin{equation}\label{eq7.2}
\lambda_1'-1\le\lambda_1.
\end{equation}

By (\ref{eq7.1}) and (\ref{eq7.2}), there are only two cases for $\lambda$ to be considered: $\lambda_1=\lambda_1'$ or $\lambda_1=\lambda_1'-1$.
We will only prove the proposition for $\lambda_1=\lambda_1'$ by using our first see-saw again. The proof for $\lambda_1=\lambda_1'-1$ is similar by the second one, and will be left to the reader.

Suppose that $\lambda_1=\lambda_1'$.
Then by above discussion, one has
\[
\Theta^{\ee\cdot\epsilon}_{m,m-(n-n_1)}(\pi_{\rho_1,\Lambda_1,\Lambda_1'})={\pi}_{\rho_1,\Omega_1,\Omega_1',\epsilon_1}.
\]
Consider the first see-saw, we can conclude that if
\[
\langle \pi_{\rho,\Omega,\Omega'},I^{\o^{\ee\cdot\epsilon}_{2n_1+1}}((\chi\otimes\tau)\otimes {\pi}_{\rho_1,\Omega_1,\Omega_1',\epsilon_1})\rangle_{\o^\epsilon_{2n_1}(\Fq)}=0 ,
\]
then
\[
\langle \prll\otimes\omega^{\ee}_{n},I^{\sp_{2n}}(\tau\otimes \pi_{\rho_1,\Lambda_1,\Lambda_1'})\rangle_{\sp_{2n}(\Fq)}=0.
\]
So it remains to prove that if
\[
\langle \pi_{\rho,\Omega,\Omega'},I^{\o^{\ee\cdot\epsilon}_{2n_1+1}}((\chi\otimes\tau)\otimes {\pi}_{\rho_1,\Omega_1,\Omega_1',\epsilon_1})\rangle_{\o^\epsilon_{2n_1}(\Fq)}\ne0 ,
\]
 then $(\Lambda,\widetilde{\Lambda_1'})\in \mathcal{G}$.

We now turn to prove (\ref{eq7.3}), (\ref{eq7.4}) and (\ref{eq7.5}) which means $\mu'_1=\mu_1$ or $\mu_1'=\mu_1-1$. Let $m_2'=n_1-\mu_1'-\frac{\rm{def}(\Omega_1)-1}{2}=m-\mu_1'-\lambda_1=m-\mu_1'-\lambda_1'$, and let $n_2'=n-\mu_1'-\lambda_1'$. By Theorem \ref{p2}, Proposition \ref{f1} (i) and (\ref{b++2}), with the same argument as $\widetilde{\Lambda_1'}$, there is a representation $\pi_{\rho_1,\Gamma_1,\Gamma_1'}$ of $\sp_{2m_2'}\fq$ such that $\Upsilon(\widetilde{\Gamma_1'})=\begin{bmatrix}
\lambda^{\prime 2}\\
\mu^{\prime 2}
\end{bmatrix}$ and $\rm{def}(\widetilde{\Gamma_1'})=-\rm{def}(\Omega_1)+1=\rm{def}(\widetilde{\Lambda_1'})=2k>0$, and
\[
{\pi}_{\rho_1,\Omega_1,\Omega_1',\epsilon_1}\subset\Theta^{\ee\cdot\epsilon}_{n_2',n_1}(\pi_{\rho_1,\Gamma_1,\Gamma_1'})
\]
where $\Gamma_1=\Lambda_1$ and $\widetilde{\Gamma_1'}\in\{\Gamma_1',\Gamma_1^{\prime t}\}$ with $\rm{def}(\widetilde{\Gamma_1'})>0$.

 Consider the see-saw diagram
\[
\setlength{\unitlength}{0.8cm}
\begin{picture}(20,5)
\thicklines
\put(6.3,4){$\sp_{2n_2'}\times \sp_{2n_2'}$}
\put(7.3,1){$\sp_{2n_2'}$}
\put(12.3,4){$\o^{\ee\cdot\epsilon}_{2n_1+1}$}
\put(11.6,1){$\o^\epsilon_{2n_1}\times \o^{\ee}_1$}
\put(7.7,1.5){\line(0,1){2.1}}
\put(12.8,1.5){\line(0,1){2.1}}
\put(8,1.5){\line(2,1){4.2}}
\put(8,3.7){\line(2,-1){4.2}}
\end{picture}
\]
For every irreducible constituent $\rho_2\in I^{\o^{\ee\cdot\epsilon}_{2n_1+1}}((\chi\otimes\tau)\otimes {\pi}_{\rho_1,\Omega_1,\Omega_1',\epsilon_1})$, by Proposition \ref{w1}, there is a representation $\rho_2'\in I^{\sp_{2n_2'}}(\tau\otimes \pi_{\rho_1,\Gamma_1,\Gamma_1'})$ such that $\rho_2\in \Theta^{\ee\cdot\epsilon}_{n_2',n_1}(\rho_2')$. Then
\[
\begin{aligned}
\langle \pi_{\rho,\Omega,\Omega'},\rho_2\rangle_{\o^\epsilon_{2n_1}(\Fq)}
\le\langle \pi_{\rho,\Omega,\Omega'},\Theta^{\ee\cdot\epsilon}_{n_2',n_1}(\rho_2')\rangle_{\o^\epsilon_{2n_1}(\Fq)}
=\langle \Theta^\epsilon_{n_1,n_2'}(\pi_{\rho,\Omega,\Omega'})\otimes\omega_{n_2'}^{\ee} ,\rho_2'\rangle_{\sp_{2n_2'}(\Fq)}.
\end{aligned}
\]
By Theorem \ref{p1}, Proposition \ref{f1} and (\ref{b++1}), the first occurrence index of $\pi_{\rho,\Omega,\Omega'}$ is
\[
n_1-\mu_1+k=n-\lambda_1-\mu_1.
\]
If $\mu_1<\mu_1'$, then
\[
\Theta^\epsilon_{n_1,n_2'}(\pi_{\rho,\Omega,\Omega'})=\Theta^\epsilon_{n_1,n-\mu_1'-\lambda_1'}(\pi_{\rho,\Omega,\Omega'})=0,
\]
which implies that
\[
\langle \pi_{\rho,\Omega,\Omega'},I^{\o^{\ee\cdot\epsilon}_{2n_1+1}}((\chi\otimes\tau)\otimes {\pi}_{\rho_1,\Omega_1,\Omega_1',\epsilon_1})\rangle_{\o^\epsilon_{2n_1}(\Fq)}=0 .
\]
This contradicts our assumption. So
\begin{equation}\label{eq7.3}
\mu_1\ge\mu_1'.
\end{equation}

Let $n_2=n_1-\mu_1-\frac{\rm{def}(\widetilde{\Omega})}{2}=n-\mu_1-\lambda_1$. By Theorem \ref{p1}, Proposition \ref{f1} (iii) and (\ref{b++1}), there is a representation $\pi_{\rho,\Gamma,\Gamma'}$ of $\sp_{2n_2}\fq$ where $\Gamma$ is a symbol such that $\Upsilon(\Gamma)=\begin{bmatrix}
\lambda^2\\
\mu^2
\end{bmatrix}$ and $\rm{def}(\Gamma)=-\rm{def}(\widetilde{\Omega})+1=\rm{def}(\Lambda)$, and
\[
{\pi}_{\rho,\Omega,\Omega'}\subset\Theta^\epsilon_{n_2,n_1}(\pi_{\rho,\Gamma,\Gamma'}).
\]
with $\Gamma'=\Lambda'$.

By Proposition \ref{7.21} and Corollary \ref{o4}, recall that $\tau\in\cal{E}(\GGL_\ell)$, one has
\[
\begin{aligned}
&\langle {\pi}_{\rho,\Omega,\Omega'},I^{\o^{\ee\cdot\epsilon}_{2n_1+1}}((\chi\otimes\tau)\otimes {\pi}_{\rho_1,\Omega_1,\Omega_1',\epsilon_1})\rangle_{\o^\epsilon_{2n_1}(\Fq)}\\
=&\left\{
\begin{array}{ll}
\langle {\pi}_{\rho,\Omega,\Omega'},I^{\o^{\ee\cdot\epsilon}_{2n_1-1}}(\tau_1\otimes \pi_{\rho_1,\Omega_1,\Omega_1',\epsilon_1})\rangle_{\o^{\ee\cdot\epsilon}_{2n_1-1}(\Fq)},&\textrm{ if }\ell\ne 0;\\
\langle I^{\o^\epsilon_{2(n_1+1)}}(\tau_1\otimes{\pi}_{\rho,\Omega,\Omega'}), \pi_{\rho_1,\Omega_1,\Omega_1',\epsilon_1}\rangle_{\o^{\ee\cdot\epsilon}_{2n_1+1}(\Fq)},&\textrm{ if }\ell= 0,\\
\end{array}\right.
\end{aligned}
\]
where $\tau_1\in\cal{E}(\GGL_1(\bb{F}_{q^2}),s_1)$ is a cuspidal representation with $s_1\ne s_1^{-1}$ and $s_1$ have no common eigenvalues with $s$ and $s'$.

Suppose that $\ell\ne 0$. Now consider the see-saw diagram
\[
\setlength{\unitlength}{0.8cm}
\begin{picture}(20,5)
\thicklines
\put(6.3,4){$\sp_{2n_2}\times \sp_{2n_2}$}
\put(7.3,1){$\sp_{2n_2}$}
\put(12.2,4){$\o^\epsilon_{2n_1}$}
\put(11.3,1){$\o^{\ee\cdot\epsilon}_{2n_1-1}\times \o^{+}_1$}
\put(7.7,1.5){\line(0,1){2.1}}
\put(12.8,1.5){\line(0,1){2.1}}
\put(8,1.5){\line(2,1){4.2}}
\put(8,3.7){\line(2,-1){4.2}}
\end{picture}
\]
By Proposition \ref{f1}, one has
\[
\langle {\pi}_{\rho,\Omega,\Omega'},I^{\o^{\ee\cdot\epsilon}_{2n_1-1}}(\tau_1\otimes \pi_{\rho_1,\Omega_1,\Omega_1',\epsilon_1})\rangle_{\o^{\ee\cdot\epsilon}_{2n_1-1}(\Fq)}
\le\langle\Theta^\epsilon_{n_2,n_1} (\pi_{\rho,\Gamma,\Gamma'}),I^{\o^{\ee\cdot\epsilon}_{2n_1-1}}(\tau_1\otimes \pi_{\rho_1,\Omega_1,\Omega_1',\epsilon_1})\rangle_{\o^{\ee\cdot\epsilon}_{2n_1-1}(\Fq)}
\]
For every irreducible constituent $\rho_3\in I^{\o^{\ee\cdot\epsilon}_{2n_1-1}}(\tau_1\otimes \pi_{\rho_1,\Omega_1,\Omega_1',\epsilon_1})$, one has
\[
\begin{aligned}
\langle\Theta^\epsilon_{n_2,n_1} (\pi_{\rho,\Gamma,\Gamma'}),\rho_3\rangle_{\o^{\ee\cdot\epsilon}_{2n_1-1}(\Fq)}
=\langle\pi_{\rho,\Gamma,\Gamma'},\Theta^{\ee\cdot\epsilon}_{n_1-1,n_2}(\rho_3)\otimes\omega_{n_2}^+\rangle_{\sp_{2n_2}(\Fq)}.
\end{aligned}
\]
By Proposition \ref{w1}, every irreducible constituent of $\Theta^{\ee\cdot\epsilon}_{n_1-1,n_2}(\rho_3)$ appears in
\[
I^{\sp_{2n_2}}\left((\chi\otimes\tau_1)\otimes\Theta^{\ee\cdot\epsilon}_{m-(n-n_1),m-(n-n_2)}(\pi_{\rho_1,\Omega_1,\Omega_1',\epsilon_1})\right).
\]
By Theorem \ref{p2}, Proposition \ref{f1} (i) and (\ref{b++2}), the first occurrence index of $\pi_{\rho_1,\Omega_1,\Omega_1',\epsilon_1}$ is $m-\lambda_1-\mu_1'$. If $\mu_1'<\mu_1-1$, then
\[
\Theta^{\ee\cdot\epsilon}_{n_1-1,n_2}(I^{\o^{\ee\cdot\epsilon}_{2n_1-1}}(\tau_1\otimes \pi_{\rho_1,\Omega_1,\Omega_1',\epsilon_1}))=\Theta^{\ee\cdot\epsilon}_{n_1-1,n_1-\mu_1+k-1}(I^{\o^{\ee\cdot\epsilon}_{2n_1-1}}(\tau_1\otimes \pi_{\rho_1,\Omega_1,\Omega_1',\epsilon_1}))=0.
\]
which implies that
\[
\langle \pi_{\rho,\Omega,\Omega'},I^{\o^{\ee\cdot\epsilon}_{2n_1+1}}((\chi\otimes\tau)\otimes {\pi}_{\rho_1,\Omega_1,\Omega_1',\epsilon_1})\rangle_{\o^\epsilon_{2n_1}(\Fq)}=0 .
\]
This contradicts our assumption. So
\begin{equation}\label{eq7.4}
\mu_1'\ge\mu_1-1.
\end{equation}

Suppose that $\ell= 0$. Now consider the see-saw diagram
\[
\setlength{\unitlength}{0.8cm}
\begin{picture}(20,5)
\thicklines
\put(5.5,4){$\sp_{2(n_2+1)}\times \sp_{2(n_2+1)}$}
\put(6.8,1){$\sp_{2(n_2+1)}$}
\put(12.2,4){$\o^\epsilon_{2(n_1+1)}$}
\put(11.3,1){$\o^{\ee\cdot\epsilon}_{2n_1+1}\times \o^{+}_1$}
\put(7.7,1.5){\line(0,1){2.1}}
\put(12.8,1.5){\line(0,1){2.1}}
\put(8,1.5){\line(2,1){4.2}}
\put(8,3.7){\line(2,-1){4.2}}
\end{picture}
\]
By Proposition \ref{w2} and Proposition \ref{f1}, one has
\[
\begin{aligned}
&\langle I^{\o^\epsilon_{2(n_1+1)}}(\tau_1\otimes\pi_{\rho,\Omega,\Omega'}), \pi_{\rho_1,\Omega_1,\Omega_1',\epsilon_1}\rangle_{\o^{\ee\cdot\epsilon}_{2n_1+1}(\Fq)}\\
\le&\langle\Theta^\epsilon_{n_2+1,n_1+1}(I^{\sp_{2(n_2+1)}}(\tau_1\otimes\pi_{\rho,\Gamma,\Gamma'})), \pi_{\rho_1,\Omega_1,\Omega_1',\epsilon_1}\rangle_{\o^{\ee\cdot\epsilon}_{2n_1+1}(\Fq)}\\
=&\langle I^{\sp_{2(n_2+1)}}(\tau_1\otimes\pi_{\rho,\Gamma,\Gamma'}),\Theta^{\ee\cdot\epsilon}_{n_1+1,n_2+1}(\pi_{\rho_1,\Omega_1,\Omega_1',\epsilon_1})\otimes\omega_{n_2+1}^+\rangle_{\sp_{2(n_2+1)}(\Fq)}\\
\end{aligned}
\]
Similarly, if $\mu_1'<\mu_1-1$, then
\[
\Theta^{\ee\cdot\epsilon}_{n_1+1,n_2+1}(\pi_{\rho_1,\Omega_1,\Omega_1',\epsilon_1})=0.
\]
which implies
\[
\langle \pi_{\rho,\Omega,\Omega'},I^{\o^{\ee\cdot\epsilon}_{2n_1+1}}((\chi\otimes\tau)\otimes {\pi}_{\rho_1,\Omega_1,\Omega_1',\epsilon_1})\rangle_{\o^\epsilon_{2n_1}(\Fq)}=0 .
\]
This contradicts our assumption. So
\begin{equation}\label{eq7.5}
\mu_1'\ge\mu_1-1
\end{equation}

By (\ref{eq7.3}), (\ref{eq7.4}) and (\ref{eq7.5}), there are only two cases for $\mu'$ to be considered: $\mu_1=\mu_1'$ or $\mu_1'=\mu_1-1$.
If
\[
\left\{
\begin{array}{ll}
\langle \pi_{\rho,\Gamma,\Gamma'}\otimes\omega_{n_2' }^{\ee},I^{\sp_{2n_2'}}(\tau\otimes \pi_{\rho_1,\Gamma_1,\Gamma_1'})\rangle_{\sp_{2n_2'}(\Fq)}=0&\textrm{ if }\mu_1'=\mu_1;\\
\langle \pi_{\rho,\Gamma,\Gamma'}\otimes\omega_{n_2 }^{\ee},I^{\sp_{2n_2}}(\tau_1\otimes \pi_{\rho_1,\Gamma_1,\Gamma_1'})\rangle_{\sp_{2n_2}(\Fq)}=0&\textrm{ if }\mu_1'=\mu_1-1\textrm{ and }\ell\ne0;\\
\langle I^{\sp_{2(n_2+1)}}(\tau_1\otimes \pi_{\rho,\Gamma,\Gamma'})\otimes\omega_{n_2+1 }^{\ee},\pi_{\rho_1,\Gamma_1,\Gamma_1'}\rangle_{\sp_{2(n_2+1)}(\Fq)}=0&\textrm{ if }\mu_1'=\mu_1-1\textrm{ and }\ell=0,\\
\end{array}\right.
\]
then
\[
\langle \pi_{\rho,\Omega,\Omega'},I^{\o^{\ee\cdot\epsilon}_{2n_1+1}}((\chi\otimes\tau)\otimes {\pi}_{\rho_1,\Omega_1,\Omega_1',\epsilon_1})\rangle_{\o^\epsilon_{2n_1}(\Fq)}=0 .
\]
So it remains to prove that if
\[
\left\{
\begin{array}{ll}
\langle \pi_{\rho,\Gamma,\Gamma'}\otimes\omega_{n_2' }^{\ee},I^{\sp_{2n_2'}}(\tau\otimes \pi_{\rho_1,\Gamma_1,\Gamma_1'})\rangle_{\sp_{2n_2'}(\Fq)}\ne0&\textrm{ if }\mu_1'=\mu_1;\\
\langle \pi_{\rho,\Gamma,\Gamma'}\otimes\omega_{n_2 }^{\ee},I^{\sp_{2n_2}}(\tau_1\otimes \pi_{\rho_1,\Gamma_1,\Gamma_1'})\rangle_{\sp_{2n_2}(\Fq)}\ne0&\textrm{ if }\mu_1'=\mu_1-1\textrm{ and }\ell\ne0;\\
\langle I^{\sp_{2(n_2+1)}}(\tau_1\otimes \pi_{\rho,\Gamma,\Gamma'})\otimes\omega_{n_2+1 }^{\ee},\pi_{\rho_1,\Gamma_1,\Gamma_1'}\rangle_{\sp_{2(n_2+1)}(\Fq)}\ne0&\textrm{ if }\mu_1'=\mu_1-1\textrm{ and }\ell=0,\\
\end{array}\right.
\]
 then $(\Lambda,\widetilde{\Lambda_1'})\in \mathcal{G}$.

By (\ref{eq7.1}), (\ref{eq7.2}), (\ref{eq7.3}), (\ref{eq7.4}) and (\ref{eq7.5}), if $(\Gamma,\widetilde{\Gamma_1'})\in \mathcal{G}^{\rm{even}+}_{n'',m''}$ for some integers $n''$ and $m''$, then $(\Lambda,\widetilde{\Lambda_1'})\in \mathcal{G}^{\rm{even}+}_{n,m}$.
  Recall that in the case (A), we have $|\Upsilon(\Lambda)^*|+|\Upsilon(\Lambda)_*|+|\Upsilon(\Lambda_1')^*|+|\Upsilon(\Lambda_1')_*|\ne 0$ i.e. $\lambda_1+\lambda_1'+\mu_1+\mu_1'> 0$, which implies that
   \[
   \begin{aligned}
    &|\Upsilon(\Gamma)^*|+|\Upsilon(\Gamma)_*|+|\Upsilon(\Gamma')^*|+|\Upsilon(\Gamma')_*|+|\Upsilon(\Gamma_1)^*|+|\Upsilon(\Gamma_1)_*|+|\Upsilon(\Gamma_1')^*|+|\Upsilon(\Gamma_1')_*|\\
    <&
   |\Upsilon(\Lambda)^*|+|\Upsilon(\Lambda)_*|+|\Upsilon(\Lambda')^*|+|\Upsilon(\Lambda')_*|+|\Upsilon(\Lambda_1)^*|+|\Upsilon(\Lambda_1)_*|+|\Upsilon(\Lambda_1')^*|+|\Upsilon(\Lambda_1')_*|.
   \end{aligned}
   \]
    By Proposition \ref{van}, we know that $(\pi_{\rho,\Gamma,\Gamma'}, \pi_{\rho_1,\Gamma_1,\Gamma_1'})$ is $\epsilon_{-1}$-strongly relevant, and by induction hypothesis, we have
 $(\Gamma,\widetilde{\Gamma_1'})\in \mathcal{G}$. Recall that we now consider the case (A.1), we have $\rm{def}(\Gamma)-1=\rm{def}(\Lambda)-1=\rm{def}(\Lambda_1')=\rm{def}(\Gamma_1')>0$, which implies $(\Gamma,\widetilde{\Gamma_1'})\in \mathcal{G}^{\rm{even}+}_{n'',m''}$.

We now turn to prove the case $\rm{def}(\Lambda_1')= 0$ and $\Lambda_1'\ne\begin{pmatrix}
-\\
-
\end{pmatrix}$. With the same argument in the proof of Proposition \ref{van}, we have
\[
\langle  \pi_{\rho_{},\Lambda_{},\Lambda_{}'}\otimes \omega_n^{\epsilon_0}, I_{P}^{\sp_{2n}}(\tau\otimes\pi_{\rho_1,\Lambda_1,\Lambda_1'})\rangle _{\sp_{2n}(\Fq)}=\langle  \pi_{\rho_{},\Psi_{},\Psi_{}'}\otimes \omega_{n^{\star}}^{\epsilon_0}, I_{P'}^{\sp_{2n^\star}}(\tau\otimes\pi_{\rho_1,\Psi_1,\Psi_1'})\rangle _{\sp_{2n^\star}(\Fq)}
\]
where $n^\star$, $\pi_{\rho_{},\Psi_{},\Psi_{}'}$ and $\pi_{\rho_1,\Psi_1,\Psi_1'}$ are defined in Definition \ref{strongly relevant0}. In the similar manner we can see that
\begin{itemize}
\item $|\Upsilon(\Psi_1')^*|+|\Upsilon(\Psi_1')_*|<|\Upsilon(\Lambda_1')^*|+|\Upsilon(\Lambda_1')_*|$ and $|\Upsilon(\Lambda)^*|+|\Upsilon(\Lambda)_*|=|\Upsilon(\Psi)^*|+|\Upsilon(\Psi)_*|$;
\item $|\Upsilon(\Lambda')^*|+|\Upsilon(\Lambda')_*|+|\Upsilon(\Lambda_1)^*|+|\Upsilon(\Lambda_1)_*|=|\Upsilon(\Psi')^*|+|\Upsilon(\Psi')_*|+|\Upsilon(\Psi_1)^*|+|\Upsilon(\Psi_1)_*|$;
\item
there are $\widetilde{\Lambda_1'}\in\{\Lambda_1',\Lambda_1^{\prime t}\}$ such that $(\Lambda,\widetilde{\Lambda_1'})\in \mathcal{G}$ if and only if $\widetilde{\Psi_1'}\in\{\Psi_1',\Psi_1^{\prime t}\}$ such that $(\Psi,\widetilde{\Psi_1'})\in \mathcal{G}$.
\end{itemize}
Here the difference between this $\rm{def}(\Lambda_0')= 0$ and $\Lambda_0'\ne\begin{pmatrix}
-\\
-
\end{pmatrix}$ case and the above case is that we can write down the symbol $\Gamma_1'$ in the above case, but we can only get $\Psi$ up to a transpose in this case, i.e. we can only write down $\{\Psi,\Psi^t\}$. However, since $\rm{def}(\Psi)=0$, the set of partitions $\{\Upsilon(\Psi_1')^*,\Upsilon(\Psi_1')_*\}$ does not depend on the choice of $\{\Psi,\Psi^t\}$. So  we can still prove this case by our induction assumption.
\end{proof}

We now turn to the Bessel case.
\begin{proposition}\label{bv}
Keep the assumptions in Theorem \ref{bess}. Assume that $(\pi_{\rho,\Omega,\Omega',\epsilon''},\pi_{\rho_1,\Omega_1,\Omega_1'})$ is strongly relevant.

(i) Assume that $n\ge m$. If
\[
\langle  \pi_{\rho_1,\Omega_1,\Omega_1',\epsilon''},I^{\o^\e_{2(n+1)}}_P(\tau\otimes\pi_{\rho,\Omega,\Omega'})\rangle _{\rm{O}^{\epsilon}_{2n+1}(\Fq)}\ne0.
\]
then  there are $ \widetilde{\Omega}\in\{\Omega,\Omega^{t}\}$ and $\widetilde{\Omega'}\in\{\Omega',\Omega^{\prime t}\}$ such that $(\Omega_1,\widetilde{\Omega})$ and $(\Omega_1',\widetilde{\Omega'})\in \mathcal{G}$.

(ii)
Assume that $n\le m$. If
\[
\langle I^{\o^\epsilon_{2m+1}}_P( \tau\otimes\pi_{\rho,\Omega,\Omega',\epsilon''}),\pi_{\rho_1,\Omega_1,\Omega_1'}\rangle _{\rm{O}^{\epsilon'}_{2m}(\Fq)}\ne0.
\]
then there are $ \widetilde{\Omega}\in\{\Omega,\Omega^{t}\}$ and $\widetilde{\Omega'}\in\{\Omega',\Omega^{\prime t}\}$ such that $(\Omega_1,\widetilde{\Omega})$ and $(\Omega_1',\widetilde{\Omega'})\in \mathcal{G}$.
\end{proposition}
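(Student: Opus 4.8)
The plan is to prove Proposition \ref{bv} by the same method as Proposition \ref{van1}. The vanishing halves are already contained in \cite{LW3}, so the whole point is the implication ``the displayed multiplicity is nonzero $\Rightarrow$ the two pairs lie in $\mathcal{G}$''. I would prove (i) and (ii) together by induction on
\[
r=|\Upsilon(\Omega)^*|+|\Upsilon(\Omega)_*|+|\Upsilon(\Omega')^*|+|\Upsilon(\Omega')_*|+|\Upsilon(\Omega_1)^*|+|\Upsilon(\Omega_1)_*|+|\Upsilon(\Omega_1')^*|+|\Upsilon(\Omega_1')_*|.
\]
For $r=0$ all four symbols are those of unipotent cuspidal representations, the $\preccurlyeq$-conditions defining $\mathcal{G}^{\mathrm{even},\pm}$ and $\mathcal{G}^{\mathrm{odd},\pm}$ become vacuous, and the defect relations recorded in the proof of Proposition \ref{7bp} show that a strongly relevant cuspidal pair already satisfies the remaining defect conditions; so the base case is Proposition \ref{7bp}.

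For the inductive step ($r>0$) I would, as in Proposition \ref{van1}, first write $\pi_{\rho,\Omega,\Omega'}$ and the odd-orthogonal representation in terms of their cuspidal supports $\pi_{\rho',k,h}$ and $\pi_{\rho'_1,k_1,h_1,\e}$, then split into the two cases according to which of the blocks $\{\Omega,\Omega_1'\}$, $\{\Omega',\Omega_1\}$ actually contributes to $r$, with a further subdivision (mirroring the cases (A.1)--(A.4) of Proposition \ref{van1}) by the signs of $\mathrm{def}(\Omega_1)$, $\mathrm{def}(\Omega')$ and the relevance data coming from Corollary \ref{strongly relevant1}, Proposition \ref{cus1} and Proposition \ref{cus2}. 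In each case the idea is to peel off the leading part of one partition by threading the multiplicity through the two families of see-saw diagrams recalled in Section \ref{sec6} (with $\o_{2n+1}$ or $\o_{2(n+1)}$ at the top, $\o_{2n}\times\o_1$ or $\o_{2n+1}\times\o_1$ at the bottom left, and symplectic groups on the right). The symbols occurring in the successive theta lifts are tracked by Proposition \ref{f1} for the unipotent parts, Corollary \ref{ctheta2} and Corollary \ref{ctheta3} for the change of symbols when lifting between an orthogonal and a symplectic group, and Proposition \ref{cus2} for the relevant first occurrence indices; theta lifting is commuted past parabolic induction by Propositions \ref{w1} and \ref{w2}. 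Running this exactly as in Proposition \ref{van1} yields inequalities of the shape $\lambda_1\le\lambda_1'$, $\lambda_1'-1\le\lambda_1$, $\mu_1\ge\mu_1'$, $\mu_1'\ge\mu_1-1$ (cf.\ (\ref{eq7.1})--(\ref{eq7.5})) comparing the leading parts of the partitions of $\Omega,\Omega'$ with those of $\Omega_1,\Omega_1'$, and reduces the original non-vanishing to the non-vanishing of a Bessel multiplicity for representations whose symbols $\Gamma,\Gamma',\Gamma_1,\Gamma_1'$ have strictly smaller $r$. By Proposition \ref{van} that smaller pair is again strongly relevant, so the inductive hypothesis provides $\widetilde{\Gamma}\in\{\Gamma,\Gamma^t\}$, $\widetilde{\Gamma'}\in\{\Gamma',\Gamma^{\prime t}\}$ with $(\Gamma_1,\widetilde{\Gamma}),(\Gamma_1',\widetilde{\Gamma'})\in\mathcal{G}$; the inequalities above propagate the $\preccurlyeq$ relations back up to $(\Omega_1,\widetilde{\Omega}),(\Omega_1',\widetilde{\Omega'})$, and the defect signs fixed at the start of the case determine into which of $\mathcal{G}^{\mathrm{even},\pm}$, $\mathcal{G}^{\mathrm{odd},\pm}$ they land.

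Part (ii) would follow from the same see-saw manipulations with the roles of the even and odd orthogonal groups exchanged --- or, in the boundary case $n=m$, directly from the duality between $\langle\pi\otimes\bar{\nu},\pi'\rangle$ and $\langle\pi'\otimes\bar{\nu},\pi\rangle$ recorded in Proposition \ref{7bp}. I expect the main obstacle to be the bookkeeping of defect signs and of the $\Omega\leftrightarrow\Omega^t$ ambiguity along the chain of see-saws, which is more delicate here than in Proposition \ref{van1}: because an odd orthogonal group sits at the top, the discriminant sign $\epsilon$ and the twist $\chi=\mathrm{sp}$ of Proposition \ref{q1} enter the Witt-tower analysis, and one must be careful that it is the first occurrence data of $\chi\otimes(\text{the odd-orthogonal representation})$, rather than of the representation itself, that controls the comparison inequalities. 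Keeping this consistent through the (A.1)--(A.4)-type subdivision and matching it against the four-fold definition of $\mathcal{G}$ is the step that needs the most care.
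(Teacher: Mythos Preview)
Your plan is correct in outline and would yield a valid proof, but the paper takes a genuinely different and shorter route. Rather than rerunning the whole see-saw induction on $r$ in the Bessel setting, the paper reduces Proposition~\ref{bv} in one stroke to the already-proved Fourier--Jacobi statement (Proposition~\ref{van1}) by means of a ``large first part'' trick.

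Concretely, the paper only treats (ii) in Case~(1) (your (A.1)), and does the following. Given $\pi_{\rho,\Omega,\Omega'}\in\cal{E}(\o^{\epsilon'}_{2m})$ and $\pi_{\rho_1,\Omega_1,\Omega_1',\epsilon''}\in\cal{E}(\o^\epsilon_{2n+1})$, choose an integer $M>n+m$ and prepend $M$ to the relevant partitions to manufacture symplectic representations $\pi_{\rho,\Lambda,\Lambda'}$ of $\sp_{2m^*}$ and $\pi_{\rho_1,\Lambda_1,\Lambda_1'}$ of $\sp_{2n^*}$ (with $m^*-m=n^*-n=M+|k|$) whose first occurrences in the appropriate orthogonal Witt towers are \emph{exactly} $m$ and $n$, with theta lifts $\pi_{\rho,\Omega,\Omega'}$ and $\pi_{\rho_1,\Omega_1,\Omega_1',\epsilon''}$ respectively (this uses Proposition~\ref{f1} and Corollaries~\ref{ctheta2}, \ref{ctheta3}). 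A single see-saw then expresses the Bessel multiplicity as
\[
\langle \Theta^\epsilon_{m,m^*}(\pi_{\rho,\Omega,\Omega'})\otimes\omega_{m^*}^{\ee},\,I^{\sp_{2m^*}}(\tau\otimes \pi_{\rho_1,\Lambda_1,\Lambda_1'})\rangle_{\sp_{2m^*}}.
\]
By Proposition~\ref{f1} the back-lift $\Theta^\epsilon_{m,m^*}(\pi_{\rho,\Omega,\Omega'})$ equals $\pi_{\rho,\Lambda,\Lambda'}+\bigoplus_{\Lambda''}\pi_{\rho,\Lambda'',\Lambda'}$ with $(\Upsilon(\Lambda'')^*)_1>M$; since $M$ dominates every part of $\Lambda_1'$, no such $\Lambda''$ can satisfy the $\preccurlyeq$-constraints in $\mathcal{G}$, so Proposition~\ref{van1} kills all the extra terms. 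One is left with the identity~(\ref{bv1}), namely that the Bessel multiplicity equals a single Fourier--Jacobi multiplicity $\langle \pi_{\rho,\Lambda,\Lambda'}\otimes\omega_{m^*}^{\ee},I^{\sp_{2m^*}}(\tau\otimes \pi_{\rho_1,\Lambda_1,\Lambda_1'})\rangle$, and Proposition~\ref{van1} finishes the job.

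What this buys: the paper avoids rerunning the entire two-step see-saw cycle and the associated bookkeeping of defect signs and $\Omega\leftrightarrow\Omega^t$ that you (rightly) flag as the delicate part of your approach; all of that is outsourced to the symplectic case, already settled. Your approach has the merit of being self-contained and parallel to Proposition~\ref{van1}, but it effectively reproves that proposition in the orthogonal setting. Note also that your induction does not actually stay inside Bessel multiplicities: each see-saw cycle passes through a Fourier--Jacobi pairing, so your argument is really a joint induction, whereas the paper's reduction uses Proposition~\ref{van1} as a black box.
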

\begin{proof}
We will only prove (ii) for $\epsilon=\e$. The rest of the proof is similar and will be left to the reader.

Let $ \pi_{\rho_1,\Omega_1,\Omega_1',\epsilon''}\in\cal{E}(\o^\epsilon_{2n+1},\pi_{\rho_1',k_1,h_1,\epsilon''})$ and $\pi_{\rho,\Omega,\Omega'}\in\cal{E}(\o^\e_{2m},\pi_{\rho',k,h})$ where $\pi_{\rho'_1,k_1,h_1,\epsilon''}$ and $\pi_{\rho',k,h}$ are two cuspidal representations of $\o^\epsilon_{2n'+1}\fq$ and $\o^\e_{2m'}\fq$, respectively. Let $m_0$ and $m_0'$ be the first occurrence index of $\pi_{\rho',k,h}$ and $\sgn\pi_{\rho',k,h}$, respectively. Note that
\[
\langle  \pi_{\rho_1,\Omega_1,\Omega_1',\epsilon''},I^{\o^\e_{2(n+1)}}_P(\tau\otimes\pi_{\rho,\Omega,\Omega'})\rangle _{\rm{O}^{\epsilon}_{2n+1}(\Fq)}=\langle  \sgn\pi_{\rho_1,\Omega_1,\Omega'_1,\epsilon''},I^{\o^\e_{2(n+1)}}_P(\tau\otimes(\sgn\pi_{\rho,\Omega,\Omega'}))\rangle _{\rm{O}^{\epsilon}_{2n+1}(\Fq)}
\]
and
if $m_0\ge m'$, then $m_0'<m'$.
So we only need to prove the case $m_0\ge m'$.
Recall that by Proposition \ref{q1} and Proposition \ref{o3}, there exists an integer $N$ with the following property: for any irreducible representation $\prll$ such that $\pi_{\rho,\Omega,\Omega'}\otimes\prll$ appears in $\omega^\epsilon_{m,m^*}$, we have $N=\rm{def}(\Lambda)$.
Since $(\pi_{\rho,\Omega,\Omega',\epsilon''},\pi_{\rho_1,\Omega_1,\Omega_1'})$ is strongly relevant, by Corollary \ref{strongly relevant1}, there are four possibilities:
\begin{itemize}
\item[]
Case (1): $N>0$ and $k_1=|k|-1 $;
\item[]
Case (2): $N>0$ and $k_1=|k|$;
\item[]
Case (3): $N<0$ and $k_1=|k|-1 $;
\item[]
Case (4): $N<0$ and $k_1=|k|$.
\end{itemize}
We will only prove the Case (1), which is correspondence to Case (A.1) in the proof of Proposition \ref{van1}. And we only prove the case either $\rm{def}(\Omega_1)\ne 0$ or $\Omega_1=\begin{pmatrix}
-\\
-
\end{pmatrix}$ in the Case (1). The proof of the rest cases is similar and will be left to the reader.

Note that in this case $k$ and $k_1$ are even, and $N=2|k|+1$. With the same argument of the proof of Proposition \ref{van1}, the following hold.

 \begin{itemize}
\item
For any irreducible representation $\prll$ of $\sp_{2m^*}\fq$, if $\prll\otimes\pi_{\rho,\Omega,\Omega'}$ appears in $\omega^\epsilon_{m^*,m}$, then there is a symbol $\widetilde{\Omega}\in\{\Omega,\Omega^t\}$ such that $(\Lambda,\widetilde{\Omega})\in \cal{B}^+_{\rm{rk}(\Lambda),\rm{rk}(\widetilde{\Omega})}$.

\item
For any irreducible representation $\pi_{\rho_1,\Lambda_1,\Lambda_1'}$ of $\sp_{2n^*}\fq$, if $\pi_{\rho_1,\Lambda_1,\Lambda_1'}\otimes\pi_{\rho_1,\Omega_1,\Omega_1',\epsilon''}$ appears in $\omega^\epsilon_{n^*,n}$, then there is a symbol $\widetilde{\Lambda_1'}\in\{\Lambda_1',\Lambda_1^{\prime t}\} $ with $\rm{def}(\widetilde{\Lambda_1'})>0$ such that $(\Omega_1,\widetilde{\Lambda_1'})\in  \cal{B}^+_{\rm{rk}(\Omega_1),\rm{rk}(\widetilde{\Lambda_1'})}$.

\end{itemize}

As before, we suppress various Levi subgroups from the parabolic induction.
Write $\Upsilon(\widetilde{\Omega})=\begin{bmatrix}
\mu\\
\lambda
\end{bmatrix}$ and
$\Upsilon(\Omega_1)=\begin{bmatrix}
\mu'\\
\lambda'
\end{bmatrix}$.
Let $M>n+m$ be an integer, and let $\lambda^0=[M,\lambda]$ and $\lambda^{0\prime}=[M,\lambda']$ be two partitions. By Theorem \ref{p1}, Theorem \ref{p2}, Proposition \ref{f1} and above discussion, there exists an irreducible representation $\prll$ of $\sp_{2m^*}\fq$ such that
 \begin{itemize}
\item
$\Upsilon(\Lambda)=\begin{bmatrix}
\lambda^0\\
\mu
\end{bmatrix}$
and $\rm{def}(\Lambda)=2|k|+1$;
\item
$\Lambda'\in\{\Omega',\Omega^{\prime t}\}$;
\item
$m$ is the first occurrence index of $\prll$ in the Witt tower ${\bf O}^\epsilon_{\rm{even}}$ and $\Theta^\epsilon_{m^*,m}(\prll)=\pi_{\rho,\Omega,\Omega'}$;
\item
$m^*-m=M+|k|$.
\end{itemize}
and an irreducible representation $\pi_{\rho_1,\Lambda_1,\Lambda_1'}$ of $\sp_{2n^*}\fq$ such that
 \begin{itemize}
\item
There exist $\widetilde{\Lambda_1'}\in\{\Lambda_1',\Lambda_1^{\prime t}\}$ such that $\Upsilon(\widetilde{\Lambda_1'})=\begin{bmatrix}
\lambda^{0 \prime}\\
\mu'
\end{bmatrix}$
and $\rm{def}(\widetilde{\Lambda_1'})=2(k_1+1)=2|k|>0$;
\item
$\Lambda_1'\in\{\Omega_1',\Omega_1^{\prime t}\}$ ;
\item
$n$ is the first occurrence index of $\pi_{\rho_1,\Lambda_1,\Lambda_1'}$ in the Witt tower ${\bf O}^{\ee\cdot\epsilon}_{\rm{odd}}$ and $\Theta^{\ee\cdot\epsilon}_{n^*,n}(\pi_{\rho_1,\Lambda_1,\Lambda_1'})=\pi_{\rho_1,\Omega_1,\Omega_1',\epsilon''}$;
\item
$n^*-n=M+k_1=M+|k|$.
\end{itemize}

Consider the see-saw diagram
\[
\setlength{\unitlength}{0.8cm}
\begin{picture}(20,5)
\thicklines
\put(6.5,4){$\sp_{2m^*}\times \sp_{2m^*}$}
\put(7.3,1){$\sp_{2m^*}$}
\put(12.3,4){$\o^{\ee\cdot\epsilon}_{2m+1}$}
\put(11.6,1){$\o^\epsilon_{2m_1}\times \o^{\ee}_1$}
\put(7.7,1.5){\line(0,1){2.1}}
\put(12.8,1.5){\line(0,1){2.1}}
\put(8,1.5){\line(2,1){4.2}}
\put(8,3.7){\line(2,-1){4.2}}
\end{picture}
\]

By Proposition \ref{w2}, one has
\[
\begin{aligned}
&\langle \prll\otimes\omega_{m^*}^{\ee},I^{\sp_{2m}}(\tau\otimes \pi_{\rho_1,\Lambda_1,\Lambda_1'})\rangle_{\sp_{2m^*}(\Fq)}\\
\le&\langle \Theta^\epsilon_{m,m^*}(\pi_{\rho,\Omega,\Omega'})\otimes\omega_{m^*}^{\ee},I^{\sp_{2m^*}}(\tau\otimes \pi_{\rho_1,\Lambda_1,\Lambda_1'})\rangle_{\sp_{2m^*}(\Fq)},\\
=&\langle \pi_{\rho,\Omega,\Omega'},\Theta^{\ee\cdot\epsilon}_{m^*,m}(I^{\sp_{2m^*}}(\tau\otimes \pi_{\rho_1,\Lambda_1,\Lambda_1'}))\rangle_{\o^\epsilon_{2m^*_1}(\Fq)}
\end{aligned}
\]
With same argument in the proof of Proposition \ref{7bp}, $(\prll,\pi_{\rho_1,\Lambda_1,\Lambda_1'})$ is $\ee$-strongly relevant. So $(\prll,\pi_{\rho_1,\Lambda_1,\Lambda_1'})$ is a pair of representations satisfying the conditions in Proposition \ref{van1}.
On the other hand, by Corollary \ref{ctheta2}, Proposition \ref{f1} and the proof of Proposition \ref{van1}, one has
\[
\begin{aligned}
&\langle \Theta^\epsilon_{m,m^*}(\pi_{\rho,\Omega,\Omega'})\otimes\omega_{m^*}^{\ee},I^{\sp_{2m^*}}(\tau\otimes \pi_{\rho_1,\Lambda_1,\Lambda_1'})\rangle_{\sp_{2m^*}(\Fq)}\\
=&\langle \pi_{\rho,\Lambda,\Lambda'}\otimes\omega_{m^* }^{\ee},I^{\sp_{2m^*}}(\tau\otimes \pi_{\rho_1,\Lambda_1,\Lambda_1'})\rangle_{\sp_{2m^*}(\Fq)}+\bigoplus_{\Lambda''}\langle \pi_{\rho,\Lambda'',\Lambda'}\otimes\omega_{m^* }^{\ee},I^{\sp_{2m^*}}(\tau\otimes \pi_{\rho_1,\Lambda_1,\Lambda_1'})\rangle_{\sp_{2m^*}(\Fq)}
\end{aligned}
\]
where $(\Upsilon(\Lambda'')^*)_1>M$. Note that $\rm{def}(\Lambda'')=\rm{def}(\Lambda)=2|k|+1>0$ and $|\rm{def}(\Lambda_1')|=\rm{def}(\Lambda)-1$. So If $(\Lambda'',\hat{\Lambda_1'})\in\cal{G}$ with $\hat{\Lambda_1'}\in\{\Lambda_1',\Lambda_1^{\prime t}\}$, then $(\Lambda'',\hat{\Lambda_1'})\in\cal{G}^{\rm{even},+}_{m^*,n^*}$.
Since $\Upsilon(\hat{\Lambda_1'})^*_1\le\rm{max}\{M,\mu_1'\}=M$ and $(\Upsilon(\Lambda'')^*)_1>M$, one has $\Upsilon(\Lambda'')^*\npreceq\Upsilon(\hat{\Lambda_1'})^*$. So $(\Lambda'',\Lambda_1')\notin \mathcal{G}$ and $(\Lambda'',\Lambda_1^{\prime t})\notin \mathcal{G}$. Then by Proposition \ref{van1},
\[
\langle \pi_{\rho,\Lambda'',\Lambda'}\otimes\omega_{m^* }^{\ee},I^{\sp_{2m^*}}(\tau\otimes \pi_{\rho_1,\Lambda_1,\Lambda_1'})\rangle_{\sp_{2m^*}(\Fq)}=0.
\]
Hence, by Proposition \ref{w2}, we have
\begin{equation}\label{bv1}
\begin{aligned}
&\langle \pi_{\rho,\Omega,\Omega'},I^{\o^{\ee\cdot\epsilon}_{2m+1}}((\chi\otimes\tau)\otimes\pi_{\rho_1,\Omega_1,\Omega_1',\epsilon''})\rangle_{\o^\epsilon_{2m}(\Fq)}\\
=&\langle \pi_{\rho,\Omega,\Omega'},\Theta^{\ee\cdot\epsilon}_{m^*,m}(I^{\sp_{2m^*}}(\tau\otimes \pi_{\rho_1,\Lambda_1,\Lambda_1'}))\rangle_{\o^\epsilon_{2m}(\Fq)}\\
=&\langle \Theta^\epsilon_{m,m^*}(\pi_{\rho,\Omega,\Omega'})\otimes\omega_{m^*}^{\ee},I^{\sp_{2m^*}}(\tau\otimes \pi_{\rho_1,\Lambda_1,\Lambda_1'})\rangle_{\sp_{2m^*}(\Fq)}\\
=&\langle \pi_{\rho,\Lambda,\Lambda'}\otimes\omega_{m^* }^{\ee},I^{\sp_{2m^*}}(\tau\otimes \pi_{\rho_1,\Lambda_1,\Lambda_1'})\rangle_{\sp_{2m^*}(\Fq)},\\
\end{aligned}
\end{equation}
which completes the proof by Proposition \ref{van1}.
\end{proof}

To finish the proof of Theorem \ref{9.1}, it remains to prove the following result.

\begin{proposition}\label{fnv}
Keep the assumptions in Theorem \ref{9.1}. Assume that $(\prll,\pi_{\rho_1,\Lambda_1,\Lambda_1'})$ is $\epsilon_0$-strongly relevant.
Assume that $n\ge m$. If there are $\widetilde{\Lambda_1'}\in\{\Lambda_1',\Lambda_1^{\prime t}\}$ and $\widetilde{\Lambda'}\in\{\Lambda',\Lambda^{\prime t}\}$ such that $(\Lambda,\widetilde{\Lambda_1'})$ and $(\Lambda_1,\widetilde{\Lambda'})\in \mathcal{G}$, then we have
\[
\langle  \prll\otimes \omega_n^{\ee}, I_{P}^{\sp_{2n}}(\tau\otimes\pi_{\rho_1,\Lambda_1,\Lambda_1'})\rangle _{\sp_{2n}(\Fq)}=m_\psi(\pw_{\rho} ,\pw_{\rho_1}).
\]

\end{proposition}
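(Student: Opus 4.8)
The plan is to prove Proposition \ref{fnv} by the same see-saw induction that established Proposition \ref{cusnv}, but now tracking symbols through the Howe correspondence rather than just the integers $k,h,k',h'$. The induction is on the quantity
\[
r=|\Upsilon(\Lambda)^*|+|\Upsilon(\Lambda)_*|+|\Upsilon(\Lambda')^*|+|\Upsilon(\Lambda')_*|
+|\Upsilon(\Lambda_1)^*|+|\Upsilon(\Lambda_1)_*|+|\Upsilon(\Lambda_1')^*|+|\Upsilon(\Lambda_1')_*|,
\]
with the base case $r=0$ being precisely Theorem \ref{8.6} (i.e. Proposition \ref{cusnv}), since $r=0$ forces $\Lambda,\Lambda',\Lambda_1,\Lambda_1'$ to be the cuspidal symbols and $\prll=\pkh$, $\pi_{\rho_1,\Lambda_1,\Lambda_1'}=\pkhp$. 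For the inductive step I would split into the two cases used in Proposition \ref{van1}: Case (A), where $|\Upsilon(\Lambda)^*|+|\Upsilon(\Lambda)_*|+|\Upsilon(\Lambda_1')^*|+|\Upsilon(\Lambda_1')_*|\ne 0$, and Case (B), the complementary one; and within Case (A) the four sub-cases (A.1)--(A.4) according to the sign of $\rm{def}(\Lambda)$ and whether $k=|h_1|$ or $k=|h_1|-1$. It suffices to treat (A.1) in detail.

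The heart of the argument reuses the chain of see-saw identities already set up in the proof of Proposition \ref{van1}. Write $\Upsilon(\Lambda)=\begin{bmatrix}\lambda\\\mu\end{bmatrix}$ and $\Upsilon(\widetilde{\Lambda_1'})=\begin{bmatrix}\lambda'\\\mu'\end{bmatrix}$. Under the hypothesis $(\Lambda,\widetilde{\Lambda_1'}),(\Lambda_1,\widetilde{\Lambda'})\in\mathcal{G}$, Proposition \ref{van1} shows we are necessarily in the situation $\lambda_1=\lambda_1'$ or $\lambda_1=\lambda_1'-1$, and $\mu_1=\mu_1'$ or $\mu_1'=\mu_1-1$. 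Using Proposition \ref{f1}, Proposition \ref{w1}, Proposition \ref{w2}, Corollary \ref{ctheta2} and Corollary \ref{ctheta3} exactly as in Proposition \ref{van1}, each of the see-saw diagrams (the one over $\sp_{2n}\times\sp_{2n}$ with $\o^{\ee\cdot\epsilon}_{2n_1+1}/\o^\epsilon_{2n_1}$, and the subsequent one with $n_2$, $n_2'$) converts the multiplicity $\langle \prll\otimes\omega_n^{\ee}, I_P^{\sp_{2n}}(\tau\otimes\pi_{\rho_1,\Lambda_1,\Lambda_1'})\rangle$ into the corresponding multiplicity for $(\pi_{\rho,\Gamma,\Gamma'},\pi_{\rho_1,\Gamma_1,\Gamma_1'})$, where $\Upsilon(\Gamma)=\begin{bmatrix}\lambda^2\\\mu^2\end{bmatrix}$, $\Upsilon(\widetilde{\Gamma_1'})=\begin{bmatrix}\lambda^{\prime 2}\\\mu^{\prime 2}\end{bmatrix}$, $\Gamma'=\Lambda'$, $\Gamma_1=\Lambda_1$, and similarly $\Gamma_1'=\Lambda_1'$ up to transpose. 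The key bookkeeping point, already isolated at the end of Proposition \ref{van1}, is that $\rm{def}(\Gamma)=\rm{def}(\Lambda)$, $\rm{def}(\Gamma_1')=\rm{def}(\Lambda_1')$, and the inequalities $\lambda_1\in\{\lambda_1',\lambda_1'-1\}$, $\mu_1\in\{\mu_1',\mu_1'-1\}$ are exactly what says $(\Gamma,\widetilde{\Gamma_1'})\in\mathcal{G}^{\rm{even},+}$ iff $(\Lambda,\widetilde{\Lambda_1'})\in\mathcal{G}^{\rm{even},+}$; the same holds for the pair $(\Gamma_1,\widetilde{\Gamma'})$. Since $r$ strictly drops (the $\lambda_1+\lambda_1'+\mu_1+\mu_1'>0$ of Case (A) forces a genuine decrease), the induction hypothesis applies to $(\pi_{\rho,\Gamma,\Gamma'},\pi_{\rho_1,\Gamma_1,\Gamma_1'})$, which is $\ee$-strongly relevant by Proposition \ref{van} (since the multiplicity is nonzero), and yields the value $m_\psi(\pi_\rho,\pi_{\rho_1})$ on the nose. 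Chasing back through the see-saw identities gives $\langle \prll\otimes\omega_n^{\ee}, I_P^{\sp_{2n}}(\tau\otimes\pi_{\rho_1,\Lambda_1,\Lambda_1'})\rangle=m_\psi(\pi_\rho,\pi_{\rho_1})$. Case (B) is handled by the same scheme using instead the diagrams involving $\Lambda'$ and $\Lambda_1$ (the ``other half'' of Proposition \ref{van1}), and the sub-cases with $\rm{def}<0$ or $k=|h_1|-1$ differ only in the choice of Witt tower parity and in whether one uses $\mathcal{G}^{\rm{even},-}$ or $\mathcal{G}^{\rm{odd},\pm}$.

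The main obstacle I anticipate is ensuring that the sequence of see-saw reductions is \emph{the same one} used in Proposition \ref{van1}, so that the symbols $\Gamma,\Gamma',\Gamma_1,\Gamma_1'$ appearing here literally coincide with those there; otherwise one cannot simply quote the inequalities (\ref{eq7.1})--(\ref{eq7.5}) and the defect identities. Concretely, one must verify that at each stage the first-occurrence index used in Proposition \ref{van1} (e.g.\ $n_1=n-\lambda_1-\frac{\rm{def}(\Lambda)-1}{2}$, $n_1'=n-\lambda_1'-|h_1|$, $n_2=n-\mu_1-\lambda_1$, etc.) really is the first-occurrence index, which relies on $(\Lambda,\widetilde{\Lambda_1'})\in\mathcal{G}$ (so that the symbol $\Lambda$ has its largest first-row entry exactly where Proposition \ref{f1} predicts), and that the representation produced at each stage is irreducible with the claimed symbol — all of which is guaranteed by Proposition \ref{f1} together with Corollary \ref{p3}. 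A secondary subtlety is the $m=n$ boundary case, where $\ell=0$ and $\tau$ disappears: there one must use the reflexivity $m_\psi(\pi_\rho,\pi_{\rho_1})=m_\psi(\pi_{\rho_1},\pi_\rho)$ of Proposition \ref{rho} to match the two orders of the see-saw, exactly as in the $n=m$ analysis inside Proposition \ref{cusnv}. Once these compatibility points are checked, the proof is a formal transcription of the cuspidal argument with symbols carried along.
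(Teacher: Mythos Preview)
Your overall strategy matches the paper's: induction on $r$ with base case Theorem \ref{8.6}, the same case split (A)/(B) and (A.1)--(A.4), and the same chain of see-saw diagrams from Proposition \ref{van1}. However, there is a genuine gap in the mechanism you describe for ``converting'' the multiplicity.

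In Proposition \ref{van1} the see-saw steps only produced \emph{inequalities}: one writes $\prll\subset\Theta^\epsilon_{n_1,n}(\pi_{\rho,\Omega,\Omega'})$ and concludes $\langle\prll\otimes\omega_n^{\ee},\dots\rangle\le\langle\Theta^\epsilon_{n_1,n}(\pi_{\rho,\Omega,\Omega'})\otimes\omega_n^{\ee},\dots\rangle$. For the vanishing direction that suffices, but for Proposition \ref{fnv} you need \emph{equalities}. The point you miss is that by Proposition \ref{f1} the back-lift is \emph{not} irreducible: $\Theta^\epsilon_{n_1,n}(\pi_{\rho,\Omega,\Omega'})=\prll\oplus\bigoplus_{\Lambda''}\pi_{\rho,\Lambda'',\Lambda'}$ with $(\Upsilon(\Lambda'')^*)_1>\lambda_1$. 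So your remark that ``the representation produced at each stage is irreducible with the claimed symbol'' is false for the return direction, and this is exactly where the work lies. The paper closes this gap by observing that since $(\Upsilon(\Lambda'')^*)_1>\lambda_1=\lambda_1'$ and $\rm{def}(\Lambda'')=\rm{def}(\Lambda)$, one has $(\Lambda'',\hat{\Lambda_1'})\notin\mathcal{G}$ for either choice of $\hat{\Lambda_1'}\in\{\Lambda_1',\Lambda_1^{\prime t}\}$; then the \emph{already-proved} Proposition \ref{van1} (applied to each $\pi_{\rho,\Lambda'',\Lambda'}$) kills these extra summands, upgrading the inequality to an equality. The same trick is needed at the second see-saw step (the analogous argument is packaged in the proof of Proposition \ref{bv}, equation (\ref{bv1})). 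Once you insert this ``kill the extra terms via Proposition \ref{van1}'' step at each stage, the rest of your outline (reduction to $(\pi_{\rho,\Gamma,\Gamma'},\pi_{\rho_1,\Gamma_1,\Gamma_1'})$, strict decrease of $r$, induction hypothesis giving $m_\psi(\pi_\rho,\pi_{\rho_1})$) goes through exactly as you say.
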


\begin{proof}
 As before, we suppress various Levi subgroups from the parabolic induction. We also prove the proposition by induction on
\[
r=|\Upsilon(\Lambda)^*|+|\Upsilon(\Lambda)_*|+|\Upsilon(\Lambda')^*|+|\Upsilon(\Lambda')_*|+|\Upsilon(\Lambda_1)^*|+|\Upsilon(\Lambda_1)_*|+|\Upsilon(\Lambda_1')^*|+|\Upsilon(\Lambda_1')_*|.
 \]
 For $r=0$, it is Theorem \ref{8.6}. Keep the notations
 \[
 \epsilon,k,h,k_1,h_1,\widetilde{\Lambda_1'},\lambda,\mu,\lambda',\mu',\Omega,\Omega',\Omega_1,\Omega_1',\Gamma,\Gamma',\widetilde{\Gamma'},\Gamma_1,\Gamma_1',n_1,n_1',n_2,n_2'
  \]
  in the proof of Proposition \ref{van1}. We will only prove the case (A.1) in the proof of Proposition \ref{van1} with the assumption either $\rm{def}(\Lambda_1')\ne 0$ or $\Lambda_1'=\begin{pmatrix}
-\\
-
\end{pmatrix}$, and $\lambda_1=\lambda_1'$ and $\epsilon_0=\ee$. The proof of the $\rm{def}(\Lambda_1')= 0$ and $\Lambda_1'\ne\begin{pmatrix}
-\\
-
\end{pmatrix}$ case is similar.

Since $\rm{def}(\Lambda)=2k+1$ and $|\rm{def}(\Lambda_1')|=2|h_1|=2k=\rm{def}(\Lambda)-1$, for any $\widehat{\Lambda_1'}\in \{\Lambda_1',\Lambda_1^{\prime t}\} $, we have
\[
\left\{
\begin{array}{ll}
(\Lambda,\widehat{\Lambda_1'})\in\cal{G}, &\textrm{ if }\widehat{\Lambda_1'}=\widetilde{\Lambda_1'};\\
(\Lambda,\widehat{\Lambda_1'})\notin\cal{G},&\textrm{ if }\widehat{\Lambda_1'}\ne\widetilde{\Lambda_1'}.
\end{array}
\right.
\]

Consider the see-saw diagram
\[
\setlength{\unitlength}{0.8cm}
\begin{picture}(20,5)
\thicklines
\put(6.5,4){$\sp_{2n}\times \sp_{2n}$}
\put(7.3,1){$\sp_{2n}$}
\put(12.3,4){$\o^{\ee\cdot\epsilon}_{2n_1+1}$}
\put(11.6,1){$\o^\epsilon_{2n_1}\times \o^{\ee}_1$}
\put(7.7,1.5){\line(0,1){2.1}}
\put(12.8,1.5){\line(0,1){2.1}}
\put(8,1.5){\line(2,1){4.2}}
\put(8,3.7){\line(2,-1){4.2}}
\end{picture}
\]

By Corollary \ref{ctheta2}, Proposition \ref{f1}, one has
\[
\begin{aligned}
&\langle \Theta^\epsilon_{n_1,n}(\pi_{\rho,\Omega,\Omega'})\otimes\omega_{n}^{\ee},I^{\sp_{2n}}(\tau\otimes \pi_{\rho_1,\Lambda_1,\Lambda_1'})\rangle_{\sp_{2n}(\Fq)}\\
=&\langle \pi_{\rho,\Lambda,\Lambda'}\otimes\omega_{n }^{\ee},I^{\sp_{2n}}(\tau\otimes \pi_{\rho_1,\Lambda_1,\Lambda_1'})\rangle_{\sp_{2n}(\Fq)}+\bigoplus_{\Lambda''}\langle \pi_{\rho,\Lambda'',\Lambda'}\otimes\omega_{n }^{\ee},I^{\sp_{2n}}(\tau\otimes \pi_{\rho_1,\Lambda_1,\Lambda_1'})\rangle_{\sp_{2n}(\Fq)}
\end{aligned}
\]
with $(\Upsilon(\Lambda'')^*)_1>\lambda_1=\lambda_1'$. By the proof of Proposition \ref{bv}, one has
\[
\langle \pi_{\rho,\Lambda'',\Lambda'}\otimes\omega_{n }^{\ee},I^{\sp_{2n}}(\tau\otimes \pi_{\rho_1,\Lambda_1,\Lambda_1'})\rangle_{\sp_{2n}(\Fq)}=0,
\]
and
\[
\begin{aligned}
&\langle \pi_{\rho,\Lambda,\Lambda'}\otimes\omega_{n }^{\ee},I^{\sp_{2n}}(\tau\otimes \pi_{\rho_1,\Lambda_1,\Lambda_1'})\rangle_{\sp_{2n}(\Fq)}=\langle \pi_{\rho,\Omega,\Omega'},I^{\o^{\ee\cdot\epsilon}_{2n_1+1}}((\chi\otimes\tau)\otimes\pi_{\rho_1,\Omega_1,\Omega_1',\epsilon_1})\rangle_{\o^\epsilon_{2n_1}(\Fq)}.\\
\end{aligned}
\]

Using the same see-saw arguments of Proposition \ref{van1}, and by similar arguments in the proof of Proposition \ref{bv}, we have
\[
\begin{aligned}
&\langle \pi_{\rho,\Lambda,\Lambda'}\otimes\omega_{n }^{\ee},I^{\sp_{2n}}(\tau\otimes \pi_{\rho_1,\Lambda_1,\Lambda_1'})\rangle_{\sp_{2n}(\Fq)}\\
=&\langle \pi_{\rho,\Omega,\Omega'},I^{\o^\epsilon_{2n_1+1}}((\chi\otimes\tau)\otimes\pi_{\rho_1,\Omega_1,\Omega_1',\epsilon_1})\rangle_{\o^\epsilon_{2n_1}(\Fq)}\\
=&
\left\{
\begin{array}{ll}
\langle \pi_{\rho,\Gamma,\Gamma'}\otimes\omega_{n_2' }^{\ee},I^{\sp_{2n_2'}}(\tau\otimes \pi_{\rho_1,\Gamma_1,\Gamma_1'})\rangle_{\sp_{2n_2'}(\Fq)}&\textrm{ if }\mu_1'=\mu_1;\\
\langle \pi_{\rho,\Gamma,\Gamma'}\otimes\omega_{n_2 }^{\ee},I^{\sp_{2n_2}}(\tau_1\otimes \pi_{\rho_1,\Gamma_1,\Gamma_1'})\rangle_{\sp_{2n_2}(\Fq)}&\textrm{ if }\mu_1'=\mu_1-1\textrm{ and }\ell\ne0;\\
\langle I^{\sp_{2(n_2+1)}}(\tau_1\otimes \pi_{\rho,\Gamma,\Gamma'})\otimes\omega_{n_2 +1 }^{\ee},\pi_{\rho_1,\Gamma_1,\Gamma_1'}\rangle_{\sp_{2(n_2+1)}(\Fq)}&\textrm{ if }\mu_1'=\mu_1-1\textrm{ and }\ell=0,\\
\end{array}\right.
\end{aligned}
\]
Note that $|\lambda^2|+|\mu^2|+|\lambda^{\prime 2}|+|\mu^{\prime 2}|=|\lambda|+|\mu|+|\lambda'|+|\mu'|$ if and only if $|\lambda|+|\mu|+|\lambda'|+|\mu'|=0$. By induction hypothesis, the right-hand side is equal to $m_\psi(\pw_{\rho} ,\pw_{\rho'})$. Then
\[
\langle  \prll\otimes \omega_n^{\ee}, I_{P}^{\sp_{2n}}(\tau\otimes\pi_{\rho_1,\Lambda_1,\Lambda_1'})\rangle _{\sp_{2n}(\Fq)}=m_\psi(\pw_{\rho} ,\pw_{\rho'})
\]
\end{proof}

The non-vanishing result for the Bessel case follows immediately from (\ref{bv1}) and Proposition \ref{fnv}.

\end{document}